\numberwithin{equation}{section} 
\numberwithin{figure}{section} 
\newtheorem{theorem}[equation]{Theorem}
\newtheorem{corollary}[equation]{Corollary}
\newtheorem{lemma}[equation]{Lemma}
\newtheorem{prop}[equation]{Proposition}
\theoremstyle{definition}
\newtheorem{remark}[equation]{Remark}
\newtheorem{example}[equation]{Example}
\theoremstyle{definition}
\newtheorem{definition}[equation]{Definition}
\DeclareMathOperator{\rank}{rank}
\DeclareMathOperator{\Span}{Span}
\DeclareMathOperator{\Tr}{Tr}
\DeclareMathOperator{\IG}{IG}
\DeclareMathOperator{\Num}{Num}
\DeclareMathOperator{\Vol}{Vol}
\DeclareMathOperator{\length}{length}
\newcommand{\C}{\mathbb{C}} 
\newcommand{\F}{\mathbb{F}}
\newcommand{\Q}{\mathbb{Q}}
\newcommand{\R}{\mathbb{R}}
\newcommand{\<}{\langle}
\renewcommand{\>}{\rangle}
\newcommand{\e}{\mathbf{e}}
\renewcommand{\t}{\mathbf{t}}
\newcommand{\uu}{\mathbf{u}}
\renewcommand{\v}{\mathbf{v}}
\newcommand{\w}{\mathbf{w}}
\newcommand{\x}{\mathbf{x}}
\newcommand{\y}{\mathbf{y}}
\newcommand{\cF}{{\mathcal{F}}}
\newcommand{\cH}{{\mathcal{H}}}
\newcommand{\cK}{{\mathcal{K}}}
\newcommand{\cP}{{\mathcal{P}}}
\newcommand{\cL}{{\mathcal{L}}}
\newcommand{\Z}{{\mathbb{Z}}}
\newcommand{\drawSquareFrom}[7]{%
    \coordinate (A) at (#1,#2);           
    \coordinate (B) at ($(A)+(1,0)$);     
    \coordinate (C) at ($(A)+(1,1)$);     
    \coordinate (D) at ($(A)+(0,1)$);     

    \draw[#3, thick] (A) -- (B);  
    \draw[#4, thick] (B) -- (C);  
    \draw[#5, thick] (C) -- (D);  
    \draw[#6, thick] (D) -- (A);  
    \draw[#7, thick] (A) -- (C);  
}
\newcommand{\orangeline}[2]{%
    \coordinate (A) at (#1+.5,4.2);           
    \coordinate (B) at (#2+.5,5.8);           
    \draw[orange, thick] (A) -- (B);  
}
\newcommand{\loworangeline}[1]{
    \coordinate (A) at (5.5,1.2);           
    \coordinate (B) at (#1+.5,2.8);       
    \draw[orange, thick] (A) -- (B); 
}
\newcommand{\highorangeline}[1]{
    \coordinate (A) at (#1+.5,7.2);       
    \coordinate (B) at (5.5,8.8);           
    \draw[orange, thick] (A) -- (B); 
}
\begin{document}

\title{Volume Polynomials and Log-concavity of the Characteristic Polynomials of Matroids}
\markright{Volume Polynomials and Characteristic Polynomials}
\author{Eric Katz}

\begin{abstract}
The Rota--Heron--Welsh conjecture (now a theorem of Adiprasito, Huh, and the author) asserts the log-concavity of the characteristic polynomial of matroids. 
We give an exposition of the Lorentzian polynomial proof following the work of Br\"{a}nd\'{e}n and Leake aimed at undergraduate and beginning graduate students.
\end{abstract}

\maketitle

\noindent

\section{Introduction}

Given the exciting developments in matroid theory, one might ask ``Could a proof of the Rota--Heron--Welsh conjecture be taught to undergraduates?'' --- and be too preoccupied to stop and think if one should. Still, the question is interesting, and this manuscript is an attempt at an answer.

The Rota--Heron--Welsh conjecture, resolved by Adiprasito, Huh, and the author \cite{AHK} states the characteristic polynomial of a matroid is log-concave, that is, its coefficients satisfy a certain system of inequalities. It was stated as such by Rota at the time of his ICM address \cite{Rota:ICM} following work of Read, Hoggar, Heron, and Welsh. Log-concavity often reflects the structure of zeroes of a polynomial. Since the characteristic polynomial is a generalization of the chromatic polynomial of graphs, it is natural to care about its zeroes. Little progress was made on the Rota--Heron--Welsh conjecture until Huh's revolutionary work \cite{Huh:chromatic} which recognized the coefficients of the characteristic polynomial for realizable matroids over $\C$ as invariants of a certain complex singularity satisfying a log-concavity result. Later work of Huh and the author \cite{HuhKatz} reformulated this approach using tropical geometry and extended it to all realizable matroids by employing the Khovanskii--Teissier inequality. The conjecture was resolved with Adiprasito through the development of combinatorial Hodge theory on the matroid Chow ring $A^*(M)$.

Still, it was thought that the proof in \cite{AHK} was overkill: it establishes a suite of positivity properties called the K\"{a}hler package on $A^*(M)$ where one only needs it on degree $1$ classes, i.e., those in $A^1(M)$. This amounts to showing that a certain matrix has a unique positive eigenvalue (i.e., a Lorentzian signature). This philosophy became the cornerstone for Lorentzian polynomials as developed by Br\"{a}nd\'{e}n and Huh \cite{BH:Lorentzian}. In some sense, Lorentzian polynomials rely on a bootstrap similar to but simpler than the K\"{a}hler bootstrap in \cite{AHK} that gives the Lorentzian signature condition.
The work of  Br\"{a}nd\'{e}n--Huh proved the ultra log-concavity of independence numbers of matroids, also independently resolved with similar techniques by Anari, Liu, Gharan, and Vinzant. Backman, Eur, and Simpson \cite{BES} were the first to give a Lorentzian proof of the Rota--Heron--Welsh conjecture. They identified a simplicial cone $\cK_M^\nabla$ in $A^1(M)$ with positivity properties and used it to prove that the volume polynomial was Lorentzian. A later proof by Br\"{a}nd\'{e}n and Leake \cite{BL:RHW} enlarged the purview of Lorentzian polynomials to more general cones and made use of the larger ample cone $\cK$  (see \cite{BL:oncones} for this proof's incorporation in the theory of Lorentzian polynomials on cones). 

We provide an exposition of the Lorentzian polynomial proof together with the necessary background on chromatic polynomials, matroids, and linear algebra assuming only material taught in the standard undergraduate curriculum. 

We outline the contents of this paper and the strategy of the proof. In Section~\ref{s:chromatic}, we review the chromatic polynomial of graphs, that is, to a graph $\Gamma$, there is a polynomial $P_{\Gamma}(q)$ such that for a positive integer $n$, $P_{\Gamma}(n)$ are the numbers of proper colorings of $\Gamma$ with $n$ colors. We reformulate the construction of the chromatic polynomial in terms of lattice of flats, a certain partially ordered set of subgraphs of $\Gamma$. In Section~\ref{s:matroids}, we generalize from graphs to matroids, introducing the lattice of flats, $\cL(M)$, and the rank function on it. Chromatic polynomials are generalized to characteristic polynomials in Section~\ref{s:characteristic}: to a matroid $M$, we attach the characteristic and reduced characteristic polynomial. They are written, respectively, as
\begin{align*}
  \chi_M(q)&=\mu_0q^{r+1}-\mu_1q^r+\dots+(-1)^{r+1}\mu_{r+1}q^0,\\
  \overline{\chi}_M(q)&=\mu^0q^r-\mu^1q^{r-1}+\dots+(-1)^r\mu^r.
\end{align*}
The Rota--Heron--Welsh conjecture is
\begin{theorem}
For $1\leq i\leq r$, $\mu_{i-1}\mu_{i+1}\leq \mu_i^2.$
\end{theorem}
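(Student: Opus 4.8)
The plan is to derive the inequalities $\mu_{i-1}\mu_{i+1}\le\mu_i^2$ from the \emph{Lorentzian} property of a bivariate specialization of the matroid's volume polynomial, following Br\"{a}nd\'{e}n and Leake. The first step is a reduction to the reduced characteristic polynomial. Since $\chi_M(q)=(q-1)\,\overline{\chi}_M(q)$, comparing coefficients gives $\mu_j=\mu^j+\mu^{j-1}$ for $1\le j\le r$ (together with $\mu_0=\mu^0$ and $\mu_{r+1}=\mu^r$). A standard lemma states that convolving a nonnegative log-concave sequence with no internal zeros by $(1,1)$ again yields such a sequence; since all the $\mu^k$ are positive, it therefore suffices to prove
\[
\mu^{i-1}\mu^{i+1}\le(\mu^i)^2\qquad\text{for }1\le i\le r-1 .
\]

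Next I would introduce the Chow ring $A^*(M)$ of the matroid, generated in degree one by classes $x_F$ indexed by the proper nonempty flats of $M$, equipped with its degree isomorphism $A^r(M)\cong\Z$, and the associated \emph{volume polynomial} $V_M$: the homogeneous degree-$r$ function on $A^1(M)$ sending a class $D$ to $\deg(D^r)/r!$. Fix a ground element $i$ and set
\[
\alpha=\sum_{i\in F}x_F,\qquad \beta=\sum_{i\notin F}x_F,
\]
the sums running over proper nonempty flats; both classes are nef, hence lie in the closure $\overline{\cK_M}$ of the ample cone $\cK_M\subset A^1(M)$. A direct computation counting flags in $\cL(M)$ --- equivalently, reading $\alpha$ and $\beta$ as pullbacks of hyperplane classes along the two natural morphisms out of the wonderful model --- identifies
\[
\partial_{\alpha}^{\,r-k}\,\partial_{\beta}^{\,k}\,V_M=\deg(\alpha^{r-k}\beta^{k})=\mu^k .
\]

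The heart of the matter --- and the main obstacle --- is the claim that $V_M$ is \emph{Lorentzian on $\cK_M$}, i.e.\ that for all $\v_1,\dots,\v_{r-2}\in\cK_M$ the quadratic form $\uu\mapsto\partial_{\uu}^{\,2}\,\partial_{\v_1}\cdots\partial_{\v_{r-2}}V_M$ has at most one positive eigenvalue. (Geometrically this is the degree-one Hodge--Riemann relation, a ``Lorentzian signature'' condition on a matrix.) I would establish it by induction on the rank of $M$, exploiting that a generator derivative $\partial_{x_F}V_M$ equals --- modulo the linear relations defining $A^1(M)$ --- a product of the volume polynomials of the restriction $M|_F$ and the contraction $M/F$, matroids of smaller rank to which the inductive hypothesis applies; combined with a direct computation in low rank, this must then be fed through a \emph{signature bootstrap}, the step (built on the combinatorial structure of the ample cone $\cK_M$, such as its connectedness and the way its walls correspond to such restrictions and contractions) that promotes the at-most-one-positive-eigenvalue property from the heavily differentiated forms back up to $V_M$ itself. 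This bootstrap is where essentially all of the work lies; it is a streamlined analogue of the K\"{a}hler-package bootstrap used in the combinatorial Hodge theory proof of the conjecture.

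To conclude, fix $i$ with $1\le i\le r-1$, put $a=r-1-i$ and $b=i-1$ (so $a,b\ge0$ and $a+b=r-2$), and let $q$ be the quadratic form $\uu\mapsto\partial_{\uu}^{\,2}\,\partial_{\alpha}^{\,a}\,\partial_{\beta}^{\,b}\,V_M$. Since $\alpha,\beta\in\overline{\cK_M}$ and ``at most one positive eigenvalue'' is a closed condition, $q$ has at most one positive eigenvalue, hence so does its restriction to $\Span(\alpha,\beta)$, whose Gram matrix --- by the identity of the previous step --- is
\[
\begin{pmatrix}\mu^{i-1}&\mu^i\\[2pt]\mu^i&\mu^{i+1}\end{pmatrix}.
\]
This symmetric matrix has positive diagonal, so it is not negative semidefinite; having at most one positive eigenvalue, it therefore has one positive and one nonpositive eigenvalue, so $\det=\mu^{i-1}\mu^{i+1}-(\mu^i)^2\le 0$. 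Combined with the first step, this gives $\mu_{i-1}\mu_{i+1}\le\mu_i^2$ for every $1\le i\le r$, as desired.
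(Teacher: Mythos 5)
Your proposal follows essentially the same blueprint as the paper: the volume polynomial $V_M$, the identity $D_\alpha^{r-k}D_\beta^k V_M = \mu^k$, the ample cone $\cK_M$ with $\alpha,\beta\in\overline{\cK_M}$, the Lorentzian/Hodge--Riemann property of $V_M$ established by induction on rank together with a signature bootstrap, and the concluding $2\times 2$ Gram-matrix argument on $\Span(\alpha,\beta)$. All of this matches the structure of the argument in Sections~6--9, phrased in the Chow-ring language rather than directly on $L_M=\R^{\cP(M)}/W_M$, which is a cosmetic difference.

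The one place you deviate, and where there is a real gap, is the opening reduction from the non-reduced to the reduced characteristic polynomial. Your coefficient identity $\mu_j=\mu^j+\mu^{j-1}$ is correct, but the convolution lemma you then invoke requires the sequence $(\mu^k)$ to be nonnegative \emph{with no internal zeros}; that is exactly the strict positivity $\mu^k>0$ which you assert but do not establish. The techniques available here only give $\mu^k\ge 0$, as a limit of $D_{\w}^{r-k}D_{\v}^k V_M>0$ along sequences $\v,\w\in\cK_M$ converging to $\alpha,\beta$, because $\alpha$ and $\beta$ sit only on the boundary $\overline{\cK_M}$ where Lemma~\ref{l:positivevolume} does not directly apply. (Strict positivity is in fact true for loopless matroids, but proving it requires a separate input such as the broken-circuit theorem.) The same unstated positivity is tacitly used in your final step, where you declare the Gram matrix has ``positive diagonal''; there the gap is harmless since the inequality degenerates to a triviality whenever a diagonal entry vanishes, but the reduction step genuinely fails without it. The paper sidesteps all of this cleanly: since $\chi_{M\oplus U_{1,1}}(q)=(q-1)\chi_M(q)$, one has $\chi_M=\overline{\chi}_{M\oplus U_{1,1}}$, so the non-reduced characteristic polynomial of $M$ is literally the reduced characteristic polynomial of $M\oplus U_{1,1}$ and inherits log-concavity with no positivity hypothesis at all. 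Replacing your convolution step with this observation closes the proposal.
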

In Section~\ref{s:why}, we digress to explain why one might care about log-concavity. A critical tool, the volume polynomial $V_M$ of a matroid $M$ is introduced in Section~\ref{s:volume}. 
It is defined by differential equations as a function $V_M\colon L_M\to \R$ where $L_M$ is a certain vector space attached to $M$. We relate the reduced characteristic polynomial to $V_M$ through a new proof of a lemma by the author with Huh \cite{HuhKatz}:
\begin{lemma}
    There is an equality $D_\alpha^{r-k} D_\beta^k V_M=\mu^k(M)$.
\end{lemma}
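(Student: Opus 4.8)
The plan is to prove the identity by induction on $\rank(M)=r+1$, after packaging the numbers $D_\alpha^{r-k}D_\beta^k V_M$ ($0\le k\le r$) as the coefficients of the bivariate polynomial $g(s,t):=V_M(s\alpha+t\beta)$, so that the lemma becomes the assertion $g(s,t)=\sum_{k=0}^{r}\frac{\mu^k(M)}{(r-k)!\,k!}\,s^{r-k}t^k$. Throughout we fix a ground-set element $i_0$, assume $M$ simple, and represent $\alpha$ and $\beta$ by $\sum_{i_0\in F}x_F$ and $\sum_{i_0\notin F}x_F$, the sums over proper nonempty flats $F$, so that $D_\alpha=\sum_{i_0\in F}D_{x_F}$ and $D_\beta=\sum_{i_0\notin F}D_{x_F}$. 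The base case $r=0$ is trivial. The tool for the inductive step is a deletion-type recursion for $V_M$ --- either one of its defining differential equations or an immediate consequence of them: there are linear maps $p_F\colon L_M\to L_{M|F}$ and $q_F\colon L_M\to L_{M/F}$ with $D_{x_F}V_M=(V_{M|F}\circ p_F)\cdot(V_{M/F}\circ q_F)$, where $p_F$ (resp.\ $q_F$) kills $x_G$ for $G$ incomparable to $F$, sends $x_G$ to the matching variable of $M|F$ for $G\subsetneq F$ (resp.\ of $M/F$ for $G\supsetneq F$), and --- this is the subtle part --- sends $x_F$ itself to $-\alpha_{M|F}$ (resp.\ $-\beta_{M/F}$), a correction read off from the linear relations defining $L_M$ that encodes the nonzero self-intersection of the divisor $x_F$.

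The heart of the argument is the observation that, although $\alpha$ and $\beta$ are distinct linear forms, the combination $s\alpha+t\beta$ \emph{straightens} under $p_F$ and $q_F$: a short case analysis, run separately for $i_0\in F$ and for $i_0\notin F$, shows that the correction terms conspire so that $p_F(s\alpha+t\beta)=t\,\beta_{M|F}$ and $q_F(s\alpha+t\beta)=s\,\alpha_{M/F}$. Hence $(D_{x_F}V_M)(s\alpha+t\beta)=V_{M|F}(t\,\beta_{M|F})\cdot V_{M/F}(s\,\alpha_{M/F})$, and by homogeneity and the inductive hypothesis applied to $M|F$ at the top exponent $k=\rank(F)-1$ and to $M/F$ at $k=0$ --- using the classical fact that the leading coefficient of a reduced characteristic polynomial is the absolute Möbius number, $\mu^{\rank(F)-1}(M|F)=|\mu(\hat 0,F)|$ --- this equals
\[
   \frac{|\mu(\hat 0,F)|}{(\rank(F)-1)!\,(r-\rank(F))!}\;s^{\,r-\rank(F)}\,t^{\,\rank(F)-1}.
\]
Summing $D_{x_F}V_M$ over flats $F$ with $i_0\notin F$ then computes $\partial_t g$, while the analogous computation for $D_\alpha$, which by simplicity of $M$ only sees the atom $\{i_0\}$, gives $\partial_s g(s,0)=s^{r-1}/(r-1)!$ and hence $g(s,0)=s^r/r!$ --- this is the case $k=0$. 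Integrating $\partial_t g$ in $t$ and reading off the coefficient of $s^{r-k}t^k$ reduces the lemma, for $1\le k\le r$, to the combinatorial identity
\[
   \mu^k(M)=\sum_{\substack{F\in\cL(M)\\ \rank(F)=k,\ i_0\notin F}}\bigl|\mu(\hat 0,F)\bigr|,
\]
where $\mu(\hat 0,F)$ is the Möbius function of $\cL(M)$.

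To prove this identity, expand $\overline{\chi}_M(q)=\chi_M(q)/(q-1)$ using $\chi_M(q)=\sum_{F\in\cL(M)}\mu(\hat 0,F)\,q^{\,r+1-\rank(F)}$ and $\chi_M(1)=0$; comparing coefficients of $q^{r-k}$ gives $(-1)^k\mu^k(M)=\sum_{\rank(F)\le k}\mu(\hat 0,F)$. Put $A_j:=\sum_{\rank(F)=j,\,i_0\in F}\mu(\hat 0,F)$ and $B_j:=\sum_{\rank(F)=j,\,i_0\notin F}\mu(\hat 0,F)$. For a flat $F$ with $i_0\in F$, Weisner's theorem in the interval $[\hat 0,F]$ --- whose elements $x$ with $x\vee\{i_0\}=F$ are exactly $F$ and the coatoms of $F$ not containing $i_0$ --- shows that $\mu(\hat 0,F)$ is minus the sum of $\mu(\hat 0,x)$ over the coatoms $x$ of $F$ with $i_0\notin x$; summing over all $F$ of rank $k$ with $i_0\in F$, and using that each rank-$(k-1)$ flat avoiding $i_0$ is a coatom of exactly one such $F$ (namely $x\vee\{i_0\}$), gives $A_k=-B_{k-1}$ for $k\ge1$. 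Since $A_0=0$ and $B_0=\mu(\hat 0,\hat 0)=1$, telescoping yields $\sum_{\rank(F)\le k}\mu(\hat 0,F)=\sum_{j=0}^{k}(A_j+B_j)=1+\sum_{j=1}^{k}(B_j-B_{j-1})=B_k$, and $(-1)^kB_k=\sum_{\rank(F)=k,\,i_0\notin F}|\mu(\hat 0,F)|$, as required.

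The one genuinely delicate point I anticipate is the bookkeeping of the pushforwards $p_F,q_F$ in the first step --- getting the sign of the self-intersection correction right and identifying it with $\alpha_{M|F}$ and $\beta_{M/F}$ --- since it is precisely this that makes $s\alpha+t\beta$ straighten in the second step; once that is settled, the induction runs automatically, and the combinatorial endgame is routine given that Weisner's theorem is the right input.
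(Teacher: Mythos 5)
Your proposal is correct, but it follows a genuinely different route from the paper. The paper's proof isolates the extremal case $k=r$ and handles it by induction on rank, using the decomposition $D_\beta V_M = \sum_F V_{M^F}(\pi^F(\cdot))V_{M_F}(\pi_F(\cdot)) - V_{\Tr^1(M)}$ and the fact that $\pi_F(\beta)=0$ forces further $D_\beta$'s to land on the restriction factor; it then deduces the general case by the two truncation lemmas, writing $D_\alpha^{r-k}D_\beta^k V_M = D_\beta^k V_{\Tr^{r-k}(M)} = \mu^k(\Tr^{r-k}(M)) = \mu^k(M)$. You instead compute all $k$ simultaneously by expanding $g(s,t)=V_M(s\alpha+t\beta)$: the "straightening" $\pi^F(s\alpha+t\beta)=t\beta$, $\pi_F(s\alpha+t\beta)=s\alpha$ is exactly the content of the paper's Lemma~\ref{l:alphabetaprojection}, and the inductive hypothesis is invoked only at the extremal exponents for $M^F$ and $M_F$. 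This collapses the lemma to the identity $\mu^k(M) = (-1)^k\sum_{\rank F = k,\ i_0\notin F}\mu_{M^F}$, which you derive from Weisner's theorem via the telescoping $A_k=-B_{k-1}$. Your route avoids truncation entirely (you still need $D_\alpha^r V_M=1$, which is Lemma~\ref{l:truncationvolume}, but not the $\mu^{r-i}(M)=\mu^{r-i}(\Tr^i(M))$ lemma); in exchange it imports Weisner's theorem, which is not in the paper's toolkit. Both proofs are by induction on rank through the same defining recursion for $V_M$; the paper's stays self-contained within the lemmas already proved, while yours is conceptually cleaner in that a single generating-function computation yields every coefficient at once. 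One cosmetic point: your $|\mu(\hat 0,F)|$ implicitly uses the sign-alternation $\operatorname{sgn}\mu(\hat 0,F)=(-1)^{\rank F}$ for geometric lattices (itself a Weisner/Rota fact not stated in the paper); you can sidestep this by carrying the signed quantity $(-1)^{\rank F}\mu_{M^F}$ throughout, since that is what the inductive hypothesis at the top exponent actually produces.
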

Here $\alpha$ and $\beta$ are particular elements of $L_M$ (see \cite{Ardila:intersectiontheoryonmatroids} for many other proofs of the same lemma).
We establish the positivity properties of $V_M$ on the {\em ample cone}, $\cK_M\subset L_M$ in Section~\ref{s:ample}, proving that $\cK_M$ has certain recursive properties with respect to contraction and restriction of matroids and that $\alpha$ and $\beta$ are in the closure of $\cK_M$.
In Section~\ref{s:symmetric}, we digress to prove some results on the eigenvalues of symmetric matrices, including the Perron--Frobenius theorem. Section~\ref{s:bootstrap} follows \cite{BL:oncones} to establish the Lorentzian bootstrap, giving conditions for the Hessian of derivatives of a homogeneous polynomial to have a unique positive eigenvalue. We also explain how this bootstrap relates to algebraic geometry. The Lorentzian bootstrap is applied to $V_M$ and from this, the Rota--Heron--Welsh conjecture is deduced in section~\ref{s:polc}. In section~\ref{s:generalizations}, we explain how the theory we outlined is a special case of two more general theories, Br\"{a}nd\'{e}n and Leake's theory of Lorentzian polynomials on cones \cite{BL:oncones} and Ross's Lorentzian fans \cite{Ross:Lorentzian}.

We have omitted a lot of the geometric motivation behind the theory and recommend \cite{Ardila:GoG} for a guide to the literature in that direction.

We'd like to thank Richard Ehrenborg, Christopher Eur, and Jonathan Leake for helpful comments.

\section{Chromatic Polynomials} \label{s:chromatic}

\subsection{Chromatic Polynomials}
Let $\Gamma$ be a  graph with finite vertex set $V(\Gamma)$ and finite edge set $E(\Gamma)$. We will take each edge with some orientation and let $e^+$ and $e^-$ denote its endpoints. A {\em proper coloring of $\Gamma$ with $q$ colors} for $q\in\Z_{\geq 1}$ is a function
\[c\colon V(\Gamma)\to \{1,2,\dots,q\}\]
such that for every edge $e$, $c(e^+)\neq c(e^-)$. That is, it is an assignment of one of $q$ colors to each vertex such that the endpoints of each edge must be different colors. If the graph has a loop edge, that is, an edge from a vertex to itself, then it has no colorings. The chromatic function $P_\Gamma\colon \Z_{\geq 1}\to \Z_{\geq 0}$ is defined by 
\[P_\Gamma(q)\coloneqq |\{\text{proper colorings of $\Gamma$ with $q$ colors}\}|\]
where ``$|\cdot|$'' denotes the cardinality of a set.

The function $P_G(q)$ is, in fact, a polynomial. That is, there is a polynomial $p_G(x)\in\Z[x]$ (indeed, with integer coefficients) such that for all $q\in\Z_{\geq 1}$, $P_G(q)=p_G(q)$. This observation is often proven through the deletion-contraction relation, but we will give a proof by decomposing the set of proper colorings into easier-to-count equivalence classes. For two proper colorings $c_1\colon V(\Gamma)\to \{1,2,\dots,q_1\}$ and $c_2\colon V(\Gamma)\to \{1,2,\dots,q_2\}$, we say $c_1$ and $c_2$ are equivalent (written $c_1\sim c_2$) if there exists a function $j\colon \{1,2,\dots,q_1\}\to \{1,2,\dots,q_2\}$ that is injective on the image of $c_1$ such that $j\circ c_1=c_2$. In other words, we can renumber the colors used in coloring $c_1$ to obtain $c_2$. A coloring $c\colon V(\Gamma)\to \{1,2,\dots,q\}$ is {\em surjective} if $c$ is surjective, i.e., it uses all $q$ colors. 

\begin{lemma} For any proper coloring $c_1$ of a graph $\Gamma$, there is a surjective proper coloring $c_2$ such that $c_1\sim c_2$.
\end{lemma}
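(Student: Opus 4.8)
The plan is to compress the palette of $c_1$ down to exactly those colors it actually uses. Write $c_1\colon V(\Gamma)\to\{1,2,\dots,q_1\}$ and let $S=\mathrm{im}(c_1)\subseteq\{1,2,\dots,q_1\}$ be the set of colors that actually appear on some vertex, say $|S|=k$. First I would set $q_2=k$ and choose any bijection $j_0\colon S\to\{1,2,\dots,k\}$. Then I would extend $j_0$ to a function $j\colon\{1,2,\dots,q_1\}\to\{1,2,\dots,k\}$ by defining it arbitrarily --- for instance, constantly equal to $1$ --- on $\{1,2,\dots,q_1\}\setminus S$. By construction $j$ is injective on $S=\mathrm{im}(c_1)$. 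Finally set $c_2\coloneqq j\circ c_1\colon V(\Gamma)\to\{1,2,\dots,k\}$.

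Then three things need to be checked, all routine. First, $c_1\sim c_2$: this is immediate from the definition of $\sim$, since $j$ is injective on $\mathrm{im}(c_1)$ and $j\circ c_1=c_2$. Second, $c_2$ is a proper coloring: for any edge $e$ we have $c_1(e^+)\neq c_1(e^-)$, and both values lie in $S$, on which $j$ is injective, so $c_2(e^+)=j(c_1(e^+))\neq j(c_1(e^-))=c_2(e^-)$. Third, $c_2$ is surjective onto $\{1,2,\dots,k\}$: given $\ell\in\{1,2,\dots,k\}$, pick $s\in S$ with $j_0(s)=\ell$ and then a vertex $v$ with $c_1(v)=s$, so that $c_2(v)=\ell$.

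There is essentially no obstacle here; the only point worth flagging is that the second check is exactly where the clause in the definition of $\sim$ requiring $j$ to be injective on $\mathrm{im}(c_1)$ gets used --- without it a renumbering could merge two colors appearing at the two ends of an edge and destroy properness, so the hypothesis cannot be dropped. One should also dispose of the trivial degenerate case: if $\Gamma$ has a loop edge then it has no proper colorings at all and the statement is vacuous, so we may assume $\Gamma$ is loopless, and then the construction above goes through verbatim.
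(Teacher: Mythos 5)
Your proof is correct and follows essentially the same route as the paper: set $S=c_1(V(\Gamma))$, fix a bijection $S\to\{1,\dots,|S|\}$, and compose. You are slightly more careful than the paper in that you explicitly extend $j$ to all of $\{1,\dots,q_1\}$ (as the definition of $\sim$ literally requires) and verify properness and surjectivity of $c_2$, points the paper leaves implicit.
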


\begin{proof} Set $S\coloneqq c_1(V(\Gamma))$. Write $q_2\coloneqq |S|$, fix a bijection $j\colon S\to \{1,\dots,q_2\}$, and set $c_2\coloneqq j\circ c_1$.
\end{proof}

In the above situation, we would say that {\em $c_1$ uses $|S|$ colors}.

\begin{lemma} There are finitely many surjective colorings of a graph $\Gamma$.
\end{lemma}

\begin{proof}
    Any surjective coloring $c\colon V(\Gamma)\to \{1,2,\dots,q\}$ must have $q\leq V(\Gamma)$. For such $q$, there are finitely many functions $c\colon V(\Gamma)\to\{1,2,\dots,q\}$.
\end{proof}

By combining the two lemmas, we see that 
there are only finitely many equivalence classes of colorings. We will count the number of proper colorings in an equivalence class $B$ with $q$ colors. Suppose that $B$ uses $q_B$ colors so that there is a surjection  $c\colon V(\Gamma)\to \{1,2,\dots,q_B\}$ in the class of $B$. Write $B_q$ for the set of proper colorings in $B$ with $q$ colors, so we can write $B$ as a disjoint union, $B=\sqcup_q B_q$. There is a bijection 
\begin{align*}
\left\{\text{injections }j\colon\{1,2,\dots,q_B\}\to \{1,2,\dots,q\}\right\}&\to B_q,\\ 
j&\mapsto j\circ c.
\end{align*}
Hence, there are 
\[(q)_{q_B}\coloneqq q(q-1)\dots (q-q_B+1)\]
elements of $B_q$. Observe that $(q)_{q_B}$ is a polynomial in $q$ with integer coefficients.
Now, the proper colorings of $\Gamma$ with $q$ colors can be expressed as a disjoint union $\sqcup_B B_q$ over equivalence classes, hence
\[P_\Gamma(q)=\sum_B |B_q|=\sum_B (q)_{q_B}.\]
Therefore, $P_\Gamma(q)$ is a polynomial with integer coefficients.

\begin{example}
Consider the graph consisting of a square and one of its diagonals. It has two equivalence classes of colorings (where one has three colors and the other has four):
\[
\xymatrix@R=7pt @C=7pt {
\bullet \ar@{-}[r]  \ar@{-}[d] \ar@{-}[dr] & \bullet \ar@{-}[d] \\
\bullet \ar@{-}[r] & \bullet
}\quad
\xymatrix@R=7pt @C=7pt {
1 \ar@{-}[r]  \ar@{-}[d] \ar@{-}[dr] & 2 \ar@{-}[d] \\
2 \ar@{-}[r] & 3
}\quad
\xymatrix@R=7pt @C=7pt {
1 \ar@{-}[r]  \ar@{-}[d] \ar@{-}[dr] & 2 \ar@{-}[d] \\
3 \ar@{-}[r] & 4
}
\]
Hence,
\[P_{\Gamma}(q)=(q)_3+(q)_4=q(q-1)(q-2)^2=q^4-5q^3+8q^2-4q.\]
\end{example}

We can give another approach to the polynomiality of $P_\Gamma(q)$ using a sophisticated version inclusion-exclusion. We begin by asking when a function $d\colon V(\Gamma)\to \{1,2,\dots,q\}$ fails to be a proper coloring. Let 
\[F_d\coloneqq \{e\in E(\Gamma)\mid d(e^+)= d(e^-)\},\]
be the set of edges that violate the proper coloring condition.
A function $d$ is a proper coloring exactly when $F_d$ is empty.
This set $F_d$ has the special property: it is a flat. A {\em flat} $F$ of a graph $\Gamma$ is a subset of edges $F\subset E(\Gamma)$ such that for any edge $e\in E(\Gamma)$, if there is a path from $e^+$ to $e^-$ through only edges of $F$, then $e\in F$. Indeed, if $e\in E(\Gamma)$ is such that there is a path in $F_d$ from $e^+$ to $e^-$, then we must have $d(e^+)=d(e^-)$ and so $e\in F_d$. 
Recall that a loop edge of a graph is an edge $e$ whose two endpoints are the same vertex $v$.  Observe that the set of all loop edges (denoted by $\hat{0}$) and the set $E(\Gamma)$ are both flats. The poset of flats of the above graph is shown in Figure~\ref{f:flats}. Here, the name ``flat'' is motivated by considerations in matroid theory.

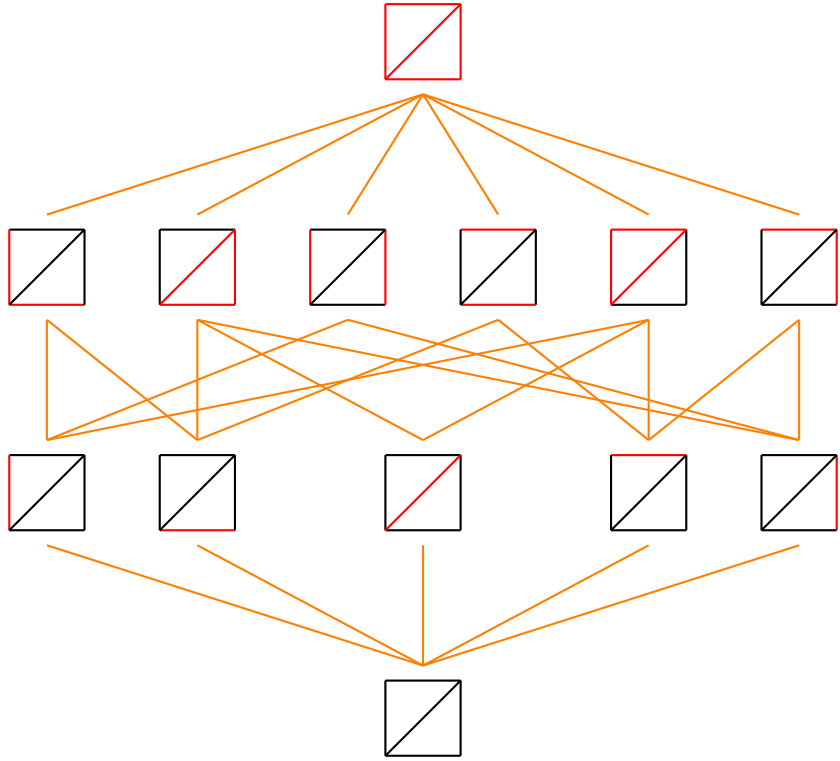
\begin{figure} \label{f:flats}             
  \centering
 \begin{tikzpicture}

\drawSquareFrom{5}{9}{red}{red}{red}{red}{red}

\drawSquareFrom{0}{6}{red}{black}{black}{red}{black}

\drawSquareFrom{2}{6}{red}{red}{black}{black}{red}

\drawSquareFrom{4}{6}{black}{red}{black}{red}{black}

\drawSquareFrom{6}{6}{red}{black}{red}{black}{black}

\drawSquareFrom{8}{6}{black}{black}{red}{red}{red}

\drawSquareFrom{10}{6}{black}{red}{red}{black}{black}

\drawSquareFrom{0}{3}{black}{black}{black}{red}{black}

\drawSquareFrom{2}{3}{red}{black}{black}{black}{black}

\drawSquareFrom{5}{3}{black}{black}{black}{black}{red}

\drawSquareFrom{8}{3}{black}{black}{red}{black}{black}

\drawSquareFrom{10}{3}{black}{red}{black}{black}{black}

\drawSquareFrom{5}{0}{black}{black}{black}{black}{black}

\orangeline{0}{0}

\orangeline{0}{4}

\orangeline{0}{8}

\orangeline{2}{0}

\orangeline{2}{2}

\orangeline{2}{6}

\orangeline{5}{2}

\orangeline{5}{8}

\orangeline{8}{6}

\orangeline{8}{8}

\orangeline{8}{10}

\orangeline{10}{2}

\orangeline{10}{4}

\orangeline{10}{10}

\loworangeline{0}
\loworangeline{2}
\loworangeline{5}
\loworangeline{8}
\loworangeline{10}

\highorangeline{0}
\highorangeline{2}
\highorangeline{4}
\highorangeline{6}
\highorangeline{8}
\highorangeline{10}


  \end{tikzpicture}

  \caption{The poset of flats (gives by edges marked in red) of a graph}
  \label{fig:test}
\end{figure}

We will make use of two properties of graphs, restriction and contraction:

\begin{definition}
  For a subset $S\subset E(\Gamma)$, we define the {\em restriction} $\Gamma^S$ to be the graph with vertices $V(\Gamma^S)\coloneqq V(\Gamma)$ and edges $S$.

  The {\em contraction} $\Gamma_S$ is defined as follows. The vertices $V(\Gamma_S)$ is the set of connected components of $\Gamma^S$.  The set of edges $E(\Gamma_S)$ is $E(\Gamma)\setminus S$, where an edge $e\in E(\Gamma_S)$ connects $v_1,v_2\in V(\Gamma_S)$ exactly when it connects the corresponding components of $\Gamma^S$.
\end{definition}

For a graph $\Gamma$, let $\kappa(\Gamma)$ be the number of connected components of $\Gamma$. Observe that 
\begin{alignat*}{2}
   |V(\Gamma^S)|=|V(\Gamma)|, &\quad  E(\Gamma^S)=S, \\
   |V(\Gamma_S)|=\kappa(\Gamma^S), &\quad E(\Gamma_S)=E(\Gamma)\setminus S,&\quad \kappa(\Gamma_S)=\kappa(\Gamma).    
\end{alignat*}

The next two lemmas are immediate.

\begin{lemma}
    A subset $S\subset E(\Gamma)$ is a flat if and only if $\Gamma_S$ has no loop edges.
\end{lemma}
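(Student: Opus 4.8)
The plan is to prove both implications by directly unwinding the definitions, since the statement is essentially a reformulation of the definition of flat in terms of the combinatorics of $\Gamma_S$. The crucial translation step is to characterize when an edge becomes a loop after contraction: an edge $e\in E(\Gamma)\setminus S=E(\Gamma_S)$ is a loop edge of $\Gamma_S$ if and only if $e^+$ and $e^-$ lie in the same connected component of $\Gamma^S$, which by the definition of connectivity holds if and only if there is a path from $e^+$ to $e^-$ using only edges of $S$. So the first thing I would do is record this equivalence carefully, noting the degenerate case where $e$ is already a loop edge of $\Gamma$ (so $e^+=e^-$): the empty path then witnesses that $e$ would be a loop of $\Gamma_S$ unless $e\in S$, which matches the earlier observation that every flat contains $\hat{0}$.

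For the forward direction, I would assume $S$ is a flat and take an arbitrary $e\in E(\Gamma_S)$, i.e.\ $e\notin S$. The flat condition, read contrapositively, says there is no path from $e^+$ to $e^-$ through edges of $S$; by the translation above, $e$ is therefore not a loop of $\Gamma_S$. Since $e$ was arbitrary, $\Gamma_S$ has no loop edges.

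Conversely, I would assume $\Gamma_S$ has no loop edges and verify the flat condition for $S$. Let $e\in E(\Gamma)$ be an edge admitting a path from $e^+$ to $e^-$ through edges of $S$; I must show $e\in S$. If $e\notin S$, then $e\in E(\Gamma_S)$, and the path forces $e^+$ and $e^-$ into the same connected component of $\Gamma^S$, so $e$ is a loop edge of $\Gamma_S$ --- contradicting the hypothesis. Hence $e\in S$, and $S$ is a flat.

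I do not anticipate a genuine obstacle here: the argument is purely definitional. The only points requiring a little care are matching the bookkeeping of ``corresponding components'' in the definition of $\Gamma_S$ with the path-connectivity description of $\Gamma^S$, and remembering that edges of $S$ are simply not edges of $\Gamma_S$, so the no-loops condition only ever constrains edges lying outside $S$ --- exactly the edges over which the flat condition quantifies nontrivially.
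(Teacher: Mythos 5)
Your argument is correct and is precisely the definitional unwinding the paper has in mind when it labels this lemma ``immediate'' (no proof is given in the text). The key observation you isolate --- that $e \in E(\Gamma)\setminus S$ is a loop of $\Gamma_S$ exactly when $e^+$ and $e^-$ lie in the same component of $\Gamma^S$, i.e.\ exactly when a path in $S$ joins them --- is the whole content, and both implications follow directly from it.
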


\begin{lemma}
  Let $F$ be a flat of $\Gamma$. There is a bijection between the following:
  \begin{enumerate}
      \item functions $d\colon V(\Gamma)\to\{1,2,\dots,q\}$ with $F_d=F$, and
      \item proper colorings of the graph $\Gamma_F$.
  \end{enumerate}
\end{lemma}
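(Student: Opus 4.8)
The plan is to realize the bijection explicitly through the quotient map. Let $\pi\colon V(\Gamma)\to V(\Gamma_F)$ send each vertex to the connected component of $\Gamma^F$ containing it; by the definition of the contraction, $\pi$ is surjective. I will send a function $d$ as in (1) to the function $\bar d$ it induces on components, send a proper coloring $c$ as in (2) to $c\circ\pi$, and then check these two assignments are well defined and mutually inverse.

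First I would treat the map from (1) to (2). Suppose $d\colon V(\Gamma)\to\{1,\dots,q\}$ satisfies $F_d=F$. Since $F\subseteq F_d$, every edge $e\in F$ has $d(e^+)=d(e^-)$; walking along paths in $\Gamma^F$ then shows $d$ is constant on each connected component of $\Gamma^F$, so it factors uniquely as $d=\bar d\circ\pi$ for some $\bar d\colon V(\Gamma_F)\to\{1,\dots,q\}$. To see $\bar d$ is a proper coloring of $\Gamma_F$, take an edge $e\in E(\Gamma_F)=E(\Gamma)\setminus F$; its endpoints in $\Gamma_F$ are the components $\pi(e^+)$ and $\pi(e^-)$, on which $\bar d$ takes the values $d(e^+)$ and $d(e^-)$. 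Because $e\notin F=F_d$, these are distinct, so $\bar d$ is proper.

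Next, the map from (2) to (1): given a proper coloring $c\colon V(\Gamma_F)\to\{1,\dots,q\}$, set $d\coloneqq c\circ\pi$. As a pullback along $\pi$, $d$ is constant on components of $\Gamma^F$, so $d(e^+)=d(e^-)$ for all $e\in F$, i.e.\ $F\subseteq F_d$. For the reverse inclusion, let $e\notin F$. By the path-closure property defining a flat, there is no path in $\Gamma^F$ joining $e^+$ to $e^-$ (else $e\in F$), so $\pi(e^+)\neq\pi(e^-)$; these are precisely the endpoints of $e$ viewed as an edge of $\Gamma_F$, so properness of $c$ gives $d(e^+)=c(\pi(e^+))\neq c(\pi(e^-))=d(e^-)$, hence $e\notin F_d$. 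Thus $F_d=F$.

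Finally I would check the two constructions are inverse: from $d$ with $F_d=F$ we obtained $\bar d$ with $\bar d\circ\pi=d$, so composing back gives $d$; and from $c$ we obtained $c\circ\pi$, whose induced map on components is again $c$ since $\pi$ is surjective and factorizations through $\pi$ are unique. The step needing the most care is the inclusion $F_d\subseteq F$ in the third paragraph: this is the only point where flatness of $F$ is genuinely used, and it is exactly the contrapositive of the path-closure condition (equivalently, the statement from the preceding lemma that $\Gamma_F$ has no loop edges).
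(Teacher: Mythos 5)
Your proof is correct and follows the natural argument; the paper declares this lemma ``immediate'' and omits any proof, so your careful verification---observing that $d$ factors through the quotient $\pi$ precisely when $F\subseteq F_d$, and that the flatness of $F$ is what delivers the reverse inclusion $F_d\subseteq F$ when pulling back a proper coloring of $\Gamma_F$---supplies exactly the intended details.
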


\begin{lemma} \label{l:mobiusequality}
    The following equality holds
    \[q^{|V(\Gamma)|}=\sum_F P_{\Gamma_F}(q)\]
    where the sum is over all flats $F$ of $\Gamma$.
\end{lemma}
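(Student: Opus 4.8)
The plan is to count the set of all functions $d\colon V(\Gamma)\to\{1,2,\dots,q\}$ in two ways. On one hand, this set has cardinality $q^{|V(\Gamma)|}$. On the other hand, I would partition it according to the associated flat $F_d$. The discussion preceding the statement shows that $F_d$ is always a flat of $\Gamma$, so every such function lies in exactly one block of the partition
\[
\{d\colon V(\Gamma)\to\{1,2,\dots,q\}\}=\bigsqcup_F\{d\colon V(\Gamma)\to\{1,2,\dots,q\}\mid F_d=F\},
\]
the union being over all flats $F$ of $\Gamma$. By the preceding lemma (functions $d$ with $F_d=F$ are in bijection with the proper colorings of $\Gamma_F$), each block has cardinality $P_{\Gamma_F}(q)$. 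Taking cardinalities of both sides gives $q^{|V(\Gamma)|}=\sum_F P_{\Gamma_F}(q)$ for every $q\in\Z_{\geq 1}$; since both sides are polynomials in $q$ agreeing at infinitely many values, this is an identity of polynomials as well.

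The only point requiring a moment's care is that the partition is indexed exactly by the flats of $\Gamma$: that $F_d$ is always a flat has already been checked, and conversely a flat $F$ is allowed to contribute an empty block for particular small values of $q$ (for instance when $\Gamma_F$ still has an edge and $q=1$), but in that case $P_{\Gamma_F}(q)=0$ as well, so the bookkeeping remains consistent. There is no substantive obstacle here — all the content is packaged in the two lemmas already proved, and the argument is essentially a one-line double count.
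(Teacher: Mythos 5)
Your proof is correct and is essentially identical to the paper's: both partition the set of all functions $d\colon V(\Gamma)\to\{1,2,\dots,q\}$ according to the flat $F_d$ and invoke the bijection between $\{d\mid F_d=F\}$ and proper colorings of $\Gamma_F$. The closing remark about agreeing at infinitely many integers is harmless but unnecessary, since the lemma is stated as an equality of the counting functions on $\Z_{\geq 1}$.
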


\begin{proof}
     Let $S_q$ denote the set of functions $d\colon V(\Gamma)\to \{1,2,\dots,q\}$. For a flat $F$, let 
     \[S_{q,F}\coloneqq \{d\in S_q\mid F_d=F\}.\]
     Since every coloring $d$ has a unique flat $F_d$, the set $S_q$ decomposes as a disjoint union over flats: $S_q=\bigsqcup_F S_{q,F}$.
     By taking the cardinality of this set equality, the conclusion follows.
\end{proof}

\begin{lemma} There is a polynomial $p_\Gamma(x)$ such that for $q\geq \Z_{\geq 1}$, $P_{\Gamma}(q)=p_{\Gamma}(q).$
\end{lemma}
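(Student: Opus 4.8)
The plan is to deduce polynomiality of $P_\Gamma$ directly from Lemma~\ref{l:mobiusequality} by strong induction on the number of edges $|E(\Gamma)|$; morally this is Möbius inversion over the poset of flats, phrased so as to avoid first setting up Möbius functions of posets. First I would dispose of loops: if $\Gamma$ has a loop edge $e$, then $c(e^+)=c(e^-)$ for \emph{every} function $c\colon V(\Gamma)\to\{1,\dots,q\}$, so $P_\Gamma(q)=0$ for all $q\in\Z_{\ge 1}$ and $p_\Gamma=0$ works. So I may assume $\Gamma$ is loopless, in which case the minimal flat $\hat 0$ (the set of loop edges) is $\emptyset$; one checks $\emptyset$ is indeed a flat (the relevant hypothesis, ``$e^+$ is joined to $e^-$ by the trivial path'', forces $e$ to be a loop, which cannot happen), and $\Gamma_\emptyset$ has vertex set the singleton components of the edgeless graph $\Gamma^\emptyset$ and edge set $E(\Gamma)$, so $\Gamma_\emptyset\cong\Gamma$ and $P_{\Gamma_\emptyset}=P_\Gamma$.

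For the base case $|E(\Gamma)|=0$, every function $V(\Gamma)\to\{1,\dots,q\}$ is a proper coloring, so $P_\Gamma(q)=q^{|V(\Gamma)|}$, a polynomial in $\Z[x]$. For the inductive step, with $\Gamma$ loopless and $|E(\Gamma)|\ge 1$, I would isolate the $F=\emptyset$ term in Lemma~\ref{l:mobiusequality}, getting
\[
  P_\Gamma(q)=q^{|V(\Gamma)|}-\sum_{\substack{F\ \text{flat of }\Gamma\\ F\neq\emptyset}}P_{\Gamma_F}(q).
\]
For each nonempty flat $F$ we have $E(\Gamma_F)=E(\Gamma)\setminus F$, so $|E(\Gamma_F)|=|E(\Gamma)|-|F|<|E(\Gamma)|$, and the inductive hypothesis applies: $P_{\Gamma_F}$ agrees with a polynomial in $\Z[x]$. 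Since there are only finitely many flats and $q^{|V(\Gamma)|}\in\Z[x]$, the displayed right-hand side is a polynomial with integer coefficients, which completes the induction and yields the desired $p_\Gamma$.

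The hard part is essentially behind us: all the real content is in Lemma~\ref{l:mobiusequality}, and the only genuine subtlety above is keeping track of loops so that the term indexed by $\hat 0$ really reproduces $P_\Gamma$ rather than some proper contraction. If one wanted the more symmetric (and more informative) statement, one would apply Lemma~\ref{l:mobiusequality} to \emph{every} contraction $\Gamma_G$ ($G$ a flat), obtaining the triangular system $q^{\kappa(\Gamma^G)}=\sum_{F\supseteq G}P_{\Gamma_F}(q)$ and hence, by Möbius inversion on the lattice of flats, the closed form $P_{\Gamma_G}(q)=\sum_{F\supseteq G}\mu(G,F)\,q^{\kappa(\Gamma^F)}$; the induction above is just the bare-hands extraction of the $G=\hat 0$ case, which is all that is needed here. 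This also sits alongside the earlier equivalence-class count, which expands the same polynomial as $P_\Gamma(q)=\sum_B (q)_{q_B}$ in the basis of falling factorials.
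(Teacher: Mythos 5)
Your proof is correct and follows essentially the same route as the paper's: induct on $|E(\Gamma)|$, with the edgeless base case $P_\Gamma(q)=q^{|V(\Gamma)|}$, and for the inductive step isolate the $F=\varnothing$ term in Lemma~\ref{l:mobiusequality} to write $P_\Gamma(q)=q^{|V(\Gamma)|}-\sum_{F\neq\varnothing}P_{\Gamma_F}(q)$ with all summands of smaller edge count. Your explicit disposal of the loop case and your verification that $\varnothing$ is a flat when $\Gamma$ is loopless (so that the $F=\varnothing$ summand really is $P_\Gamma$) is slightly more careful than the paper's terse version, which leaves that bookkeeping implicit, but the argument is the same.
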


\begin{proof}
  We induct on the number of edge of $\Gamma$. If $\Gamma$ has no edges, then $p_{\Gamma}=q^{\Gamma}$.
  
  For the inductive step, since proper colorings correspond to colorings $d$ with $F_d=\varnothing$, we have $P_{\Gamma}(q)=|S_{q,\varnothing}|$. Then 
  \[P_\Gamma(q)=q^{|V(\Gamma)|}-\sum_{F\neq \varnothing} P_{\Gamma_F}(q),\]
  which is  a polynomial.
\end{proof}

We have the following observation that the chromatic polynomial is characterized by the equality in Lemma~\ref{l:mobiusequality} together with a vanishing condition.
\begin{lemma} \label{l:characterizationchromatic}
    Let $f_\Gamma(q)\in\Z[q]$ be a collection of polynomials indexed by all finite graphs such that
    \begin{enumerate}
        \item if $\Gamma$ contains a loop edge, then $f_\Gamma(q)=0$, and
        \item for any graph $\Gamma$, $\sum_F f_{\Gamma_F}(q)=q^{|V(\Gamma)|}$ where the sum is over all flats of $\Gamma$,
    \end{enumerate}
    then for all $\Gamma$, $f_\Gamma(q)=P_\Gamma(q)$.
\end{lemma}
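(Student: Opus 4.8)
The plan is to show that the difference $g_\Gamma := f_\Gamma - P_\Gamma$ vanishes for every finite graph $\Gamma$, arguing by strong induction on the number of edges $|E(\Gamma)|$. First I would record that $g$ satisfies a ``homogeneous'' version of the two hypotheses: for a graph with a loop edge, $f_\Gamma(q) = 0$ by (1) and $P_\Gamma(q) = 0$ since such a graph has no proper coloring, so $g_\Gamma = 0$ there; and subtracting hypothesis (2) for $f_\Gamma$ from the instance of it provided by Lemma~\ref{l:mobiusequality} yields
\[ \sum_F g_{\Gamma_F}(q) = 0 \]
for every $\Gamma$, the sum running over all flats $F$ of $\Gamma$.

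The base case $|E(\Gamma)| = 0$ is immediate: the only flat is $F=\varnothing$ and $\Gamma_\varnothing = \Gamma$, so the identity above collapses to $g_\Gamma(q) = 0$. For the inductive step, given $\Gamma$ with $m \geq 1$ edges, I would first dispose of the case that $\Gamma$ has a loop edge (already handled), and otherwise note that $\varnothing$ is then a flat of $\Gamma$ with $\Gamma_\varnothing = \Gamma$, while every other flat $F$ is nonempty and hence $\Gamma_F$ has $|E(\Gamma_F)| = m - |F| < m$ edges. Isolating the $F = \varnothing$ term in $\sum_F g_{\Gamma_F}(q) = 0$ gives
\[ g_\Gamma(q) + \sum_{F \neq \varnothing} g_{\Gamma_F}(q) = 0, \]
and every term in the remaining sum vanishes by the inductive hypothesis, forcing $g_\Gamma(q) = 0$. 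This gives $f_\Gamma = P_\Gamma$ for all $\Gamma$.

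The step that needs the most care is keeping track of which flat returns $\Gamma$ itself under contraction and whether it actually occurs in the sum: this is $F = \varnothing$, which is a flat precisely when $\Gamma$ has no loop edge (equivalently, by the earlier lemma, exactly when $\Gamma_\varnothing = \Gamma$ has no loop), and this is why the loop case must be peeled off separately before the induction can be invoked. Everything else — that $E(\Gamma_F) = E(\Gamma)\setminus F$, so contracting by a nonempty flat strictly decreases the edge count, and that both hypotheses transfer to $g$ — is routine and already available from the lemmas preceding the statement.
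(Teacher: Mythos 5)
Your proof is correct and matches the paper's intended argument; the paper simply declares the result to be ``a straightforward induction on the number of edges of $\Gamma$'' and omits the details, while you supply them (working with the difference $g_\Gamma = f_\Gamma - P_\Gamma$, isolating the $F=\varnothing$ term, and carefully noting that $\varnothing$ is a flat precisely when $\Gamma$ is loopless). Nothing to add.
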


\begin{proof}
    The proof is a straightforward induction on the number of edges of $\Gamma$.
\end{proof}

We can write 
\[p_\Gamma(q)=\mu_0q^{|V(\Gamma)|}-\mu_1q^{|V(\Gamma)|}+\dots+(-1)^{|V(\Gamma)|}\mu_{|V(\Gamma)|}q^0\]
where the signs in front of the $\mu_i$'s are chosen to make each of them nonnegative as we shall see.

A consequence of our main result is:
\begin{theorem} \cite{Huh:chromatic}
    The chromatic polynomial $p_\Gamma(q)$ is log concave: for any $1\leq i\leq |V(\Gamma)|-1$,
    $\mu_{i-1}\mu_{i+1}\leq \mu_i^2.$
\end{theorem}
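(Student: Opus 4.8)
The plan is to read this off from the matroid form of the Rota--Heron--Welsh conjecture, namely the Theorem of Section~\ref{s:characteristic} (the proof of which occupies the bulk of the paper), applied to the \emph{graphic matroid} $M(\Gamma)$. The first step is to line up the combinatorics: $M(\Gamma)$ is the matroid on ground set $E(\Gamma)$ whose flats are exactly the flats of $\Gamma$ defined above (equivalently, those $S\subseteq E(\Gamma)$ for which $\Gamma_S$ has no loop edge), with rank function $S\mapsto |V(\Gamma)|-\kappa(\Gamma^S)$. Thus $\cL(M(\Gamma))$ is literally the poset of flats of $\Gamma$, we have $r(M(\Gamma))=|V(\Gamma)|-\kappa(\Gamma)$, and contraction of the matroid at a flat $F$ realizes the graph contraction: $M(\Gamma)/F=M(\Gamma_F)$ with $\kappa(\Gamma_F)=\kappa(\Gamma)$.

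The key identity to establish is
\[
P_\Gamma(q)=q^{\kappa(\Gamma)}\,\chi_{M(\Gamma)}(q).
\]
I would prove it via the uniqueness in Lemma~\ref{l:characterizationchromatic}: setting $f_\Gamma(q)\coloneqq q^{\kappa(\Gamma)}\chi_{M(\Gamma)}(q)$, condition (1) holds because a loop edge of $\Gamma$ is a loop of $M(\Gamma)$, forcing $\chi_{M(\Gamma)}=0$; and condition (2), $\sum_F f_{\Gamma_F}=q^{|V(\Gamma)|}$, becomes, after dividing by $q^{\kappa(\Gamma)}$ and using the identifications above, the identity $\sum_{F}\chi_{M(\Gamma)/F}(q)=q^{r(M(\Gamma))}$ for the matroid characteristic polynomial (the matroid analogue of Lemma~\ref{l:mobiusequality}, a standard Möbius-inversion identity over $\cL(M(\Gamma))$). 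Comparing coefficients of $p_\Gamma(q)=\mu_0q^{|V(\Gamma)|}-\mu_1q^{|V(\Gamma)|-1}+\cdots$ with those of $q^{\kappa(\Gamma)}\chi_{M(\Gamma)}(q)$ then shows that $(\mu_0,\mu_1,\dots,\mu_{|V(\Gamma)|})$ is the coefficient sequence of $\chi_{M(\Gamma)}$ followed by $\kappa(\Gamma)$ zeros.

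Now I would invoke the Theorem of Section~\ref{s:characteristic} for the matroid $M(\Gamma)$ (passing, if necessary, to its simplification, which has the same lattice of flats and hence the same characteristic polynomial): the coefficients $(a_0,\dots,a_{r(M(\Gamma))})$ of $\chi_{M(\Gamma)}$ satisfy $a_{i-1}a_{i+1}\le a_i^2$. If $\Gamma$ has a loop edge this is vacuous, since then $\chi_{M(\Gamma)}=0$ and all $\mu_i=0$. In general, appending zeros at the \emph{top} of a log-concave sequence of nonnegative reals preserves every inequality $\mu_{i-1}\mu_{i+1}\le\mu_i^2$: for $i$ below the degree of $\chi_{M(\Gamma)}$ these are exactly the matroid inequalities, and for the remaining indices at least one of $\mu_{i-1},\mu_{i+1}$ vanishes, so the inequality reads $0\le\mu_i^2$. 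This yields the asserted log-concavity for all $1\le i\le |V(\Gamma)|-1$.

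The only real obstacle here is bookkeeping rather than mathematics: matching the index and normalization conventions ($\chi_M$ written in degree $r+1$ versus $p_\Gamma$ in degree $|V(\Gamma)|$, the two differing by $\kappa(\Gamma)$ once the normalization on $M$ is fixed), and confirming that the transfer of log-concavity is not obstructed by internal zeros in the coefficient sequence --- it is not, precisely because the zeros introduced by the factor $q^{\kappa(\Gamma)}$ sit at the top of the sequence, not between nonzero coefficients. Given those checks, the statement reduces to the identity $P_\Gamma(q)=q^{\kappa(\Gamma)}\chi_{M(\Gamma)}(q)$ combined with the matroid theorem.
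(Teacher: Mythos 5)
Your proposal is correct and is exactly the route the paper intends: the paper states this theorem as ``a consequence of our main result'' without writing out the deduction, but it supplies the two needed ingredients — the lemma in Section~\ref{s:characteristic} giving $\chi_\Gamma(q)=q^{\kappa(\Gamma)}\chi_{M(\Gamma)}(q)$ (proved there by the same appeal to Lemma~\ref{l:characterizationchromatic} you use) and Theorem~\ref{t:AHK} — and your observation that the $\kappa(\Gamma)$ trailing zeros in the coefficient sequence do not obstruct log-concavity completes the argument. Two harmless remarks: the appeal to simplification is unnecessary, since the paper's $\chi_M$ is defined purely from $\cL(M)$ and Theorem~\ref{t:AHK} already covers all matroids (vacuously when $M$ has a loop); and ``appending zeros at the top'' reads a bit ambiguously — what matters, as you then make clear, is that the zeros are appended at the high-index end of $(\mu_0,\dots,\mu_{|V(\Gamma)|})$ rather than interleaved with nonzero entries.
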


\section{Matroids} \label{s:matroids}

There is a more general object than a graph, called a matroid, which possesses a polynomial invariant similar to the chromatic polynomial. Indeed, the matroid pioneer, W.\ T.\ Tutte wrote \cite[p.~497]{Tutte:Selected2}
\begin{quote}
If a theorem about graphs can be stated in terms of edges and circuits only[,] it probably
exemplifies a more general theorem about matroids.
\end{quote}
We will slightly modify the definition of the chromatic polynomial in the next section so it uses only edges and flats (instead of circuits) and thus extend it to matroids.
Our description of matroids below is very brief and we recommend \cite{Oxley:matroid, Welsh:matroid} for more details.

\begin{definition} \label{d:flats} A matroid $M$ on a finite set $E$ is a collection of subsets $\cL(M)$ of $E$ that satisfy the following conditions
\begin{enumerate}
\item[(F1)] \label{i:everything} $E\in\cL(M)$,
\item[(F2)] \label{i:intersection} if $F_1,F_2\in\cL(M)$ then $F_1\cap F_2\in\cL(M)$, and
\item[(F3)] \label{i:covering} if $F\in\cL(M)$ and $\{F_1,F_2,\dots,F_k\}$ is the set of minimal members of $\cL(M)$ properly containing $F$ then the sets $F_1\setminus F,F_2\setminus F,\dots, F_k\setminus F$ partition $E\setminus F$.  \label{iflat:3}
\end{enumerate}
\end{definition}

Here, the last condition means that $E\setminus F$ is the disjoint union of the sets $F_1\setminus F$,$F_2\setminus F$, $\dots$, $F_k\setminus F$.
Usually, we let $E$ be the set $\{0,1,\dots,n\}$ for some nonnegative integer $n$. We say sets $I,J\subseteq E$ are incomparable if neither $I\subseteq J$ nor $J\subseteq I$.
When we have flats $F\subset G$ such that $G$ is a minimal flat properly containing $F$, we say $G$ {\em covers} $F$.
Because the set of flats is closed under intersection, there is a minimal flat $\hat{0}\in \cL(M)$ given by the intersection of all flats:
$\hat{0}=\cap_F F$. Elements of $\hat{0}$ are called {\em loops}. Write $\hat{1}\coloneqq E$ for the maximal flat. Define $\cP(M)\coloneqq\cL(M)\setminus\{\hat{0},\hat{1}\}$.

\begin{example}
    A natural class of matroids are the {\em uniform matroids} $U_{r,n}$ indexed by a nonnegative integer $n$ and an integer $r$ with $0\leq r\leq n$. Set $E\coloneqq \{1,2,\dots,n\}$, and let 
    \[\cL(U_{r,n})\coloneqq \{I\subseteq E \mid |I|\leq r-1\text{ or }I=E\}.\]
    It is straightforward to verify that this is a matroid.
\end{example}

\begin{example}
    Given matroids $M$ and $M'$ on sets $E,E'$ respectively, their direct sum is the matroid $M\oplus M'$ on $E\sqcup E'$ with
    \[\cL(M\oplus M')\coloneqq \{F\sqcup F'\mid F\in\cL(M), F'\in \cL(M')\}.\]
\end{example}

\begin{definition}
    Let $M$ and $M'$ be matroids on sets $E$ and $E'$, respectively. We say that $M$ and $M'$ are isomorphic if there is a bijection $f\colon E\to E'$ such that 
    \[\cL(M')=\{f(F)\mid F\in\cL(M)\}.\]
\end{definition}

\begin{remark} \label{r:hyperplanematroids}
Matroids can arise naturally from hyperplane arrangements, which are finite sets of hyperplanes in a vector space. Let $K$ be a field and let $\{H_0,\dots,H_n\}$ be a collection of hyperplanes in a finite-dimensional $K$-vector space $V$. We consider intersections of the $H_i$'s: for $I\subseteq E\coloneqq\{0,1,\dots,n\}$, write 
\[H_I\coloneqq\bigcap_{i\in I} H_i.\]
We define a matroid by giving the set of flats: 
\[\cL(M)\coloneqq \{F\subseteq E\mid \text{for all } j\in E\setminus F, H_{F\cup\{j\}}\neq H_F\}.\] 
One should view the set of flats as being the set of distinct intersections $\{H_I\}$. 
For a given intersection, we choose a unique such $I$ by taking the maximal index set yielding that intersection: for $L=H_I$, the attached flat is
\[F_L\coloneqq \{i\in E\mid L\subset H_i\}.\]
Observe that $I\subseteq F_L$.
It is a straightforward exercise that $\cL(M)$ is the set of flats for a matroid. Not all matroids (indeed, asymptotically none of them \cite{Nelson:matroids}) arise from a hyperplane arrangement over a field.
\end{remark}

\begin{lemma} Condition (F3) may be replaced with the following condition: 
\begin{enumerate}
    \item [(F3')] for any $F\in \cL(M)$ and $i\in E\setminus F$, there is a flat $G$ that covers $F$ and contains $i$.
\end{enumerate}
\end{lemma}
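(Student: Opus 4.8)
First I would verify that (F3) implies (F3'). Given a flat $F$ and an element $i \in E \setminus F$, let $\{F_1,\dots,F_k\}$ be the set of minimal flats properly containing $F$. By (F3), the sets $F_1\setminus F,\dots,F_k\setminus F$ partition $E\setminus F$, so $i$ lies in exactly one of them, say $i \in F_j \setminus F$. Then $G := F_j$ is a flat that covers $F$ (it is minimal among flats properly containing $F$) and contains $i$, which is exactly what (F3') asserts. This direction is essentially immediate from unpacking the definitions.

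The other direction, (F3') implies (F3), is where the real work lies. Assume (F1), (F2), (F3'), and fix a flat $F$ with minimal flats $\{F_1,\dots,F_k\}$ properly containing it. I need two things: (i) the sets $F_\ell \setminus F$ cover $E\setminus F$, and (ii) they are pairwise disjoint. For (i): given $i \in E\setminus F$, (F3') produces a flat $G$ covering $F$ with $i \in G$; since $G$ is minimal among flats properly containing $F$, it must be one of the $F_\ell$, so $i \in F_\ell \setminus F$. For (ii): suppose $i \in (F_\ell \setminus F)\cap(F_m\setminus F)$ with $\ell \neq m$. Consider $F_\ell \cap F_m$, which is a flat by (F2) and contains $F$; if it equals $F$ then $i \in F$, a contradiction, so $F_\ell \cap F_m$ properly contains $F$. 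But $F_\ell \cap F_m$ is contained in $F_\ell$, which covers $F$, forcing $F_\ell \cap F_m = F_\ell$, i.e. $F_\ell \subseteq F_m$; symmetrically $F_m \subseteq F_\ell$, so $F_\ell = F_m$, contradicting $\ell \neq m$. Hence the sets are disjoint, and together with (i) they partition $E\setminus F$, establishing (F3).

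**The main obstacle** is the disjointness argument in (ii): one must be careful that "$F_\ell$ covers $F$" genuinely forces any flat strictly between $F$ and $F_\ell$ (inclusive of $F_\ell$) to equal $F$ or $F_\ell$, which is just the definition of covering, but the subtle point is noticing that $F_\ell \cap F_m$ is a flat lying between $F$ and $F_\ell$ and then playing the covering condition on both sides. Everything else is routine bookkeeping with the flat axioms; no induction or auxiliary construction is needed.
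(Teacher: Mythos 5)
Your proof is correct and follows essentially the same approach as the paper: the forward direction is immediate from (F3), and the reverse direction establishes coverage from (F3') and disjointness by showing that $F\subsetneq F_\ell\cap F_m\subseteq F_\ell$ with $F_\ell$ covering $F$ forces $F_\ell\cap F_m=F_\ell$. The only cosmetic difference is that you invoke symmetry to get $F_\ell=F_m$ directly, while the paper splits the analysis of $F_j\cap F_{j'}$ into two cases and derives a contradiction in each; these are the same argument.
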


\begin{proof}
  Suppose $M$ is a matroid. We will show that $M$ satisfies (F3'). Let $F\in \cL(M)$ and $i\in E\setminus F$. Then, there exists a flat $F_j$ covering $F$ such that $i\in F_j$.

  Suppose $\cL(M)$ is a collection of sets satisfying (F1), (F2), (F3'). Let $F\in \cL(M)$. By (F3'), there are distinct flats $F_1,\dots,F_k$ covering $F$ such that $F_1\cup\dots\cup F_k=E$. We must show that for any distinct $j$ and $j'$, $(F_j\setminus F)\cap (F_{j'}\setminus F)=\varnothing$. Otherwise, we would have an inclusion of flats
  \[F\subsetneq F_j\cap F_{j'}\subseteq F_j.\]
  If $F_j\cap F_{j'}=F_j$ then $F\subsetneq F_j\subsetneq F_{j'}$ contradicting $F_{j'}$ covering $F$. Otherwise, $F_j\cap F_{j'}\subsetneq F_j$ contradicting $F_j$ covering $F$.
\end{proof}

\begin{lemma} If $F\subseteq G$ are flats in $\cL(M)$, then the poset interval,
\[[F,G]\coloneqq \{H\in\cL(M)\mid F\subseteq H\subseteq G\},\]
considered as the collection of sets
\[\{H\setminus F\mid H\in\cL(M)\text{ and } F\subseteq H\subseteq G\}\]
can be identified with the set of flats of a loopless matroid $M_F^G$ on the set $G\setminus F$.
\end{lemma}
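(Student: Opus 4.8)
The plan is to \emph{define} $\cL(M_F^G)$ to be precisely the indicated collection $\{H\setminus F\mid H\in\cL(M),\ F\subseteq H\subseteq G\}$ of subsets of $G\setminus F$, and then to check that this collection satisfies axioms (F1), (F2), and (F3') of Definition~\ref{d:flats} (using the equivalent form (F3') from the preceding lemma), so that it is the lattice of flats of a matroid on $G\setminus F$. A preliminary observation is that the assignment $H\mapsto H\setminus F$ is a bijection from the poset interval $[F,G]$ onto $\cL(M_F^G)$ which preserves and reflects inclusions: it is injective because $H=(H\setminus F)\cup F$ whenever $F\subseteq H$, and monotonicity in both directions is clear since all sets involved contain $F$. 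This bijection is exactly the ``identification'' asserted in the statement, so once $\cL(M_F^G)$ is known to be a lattice of flats, the only remaining task is looplessness.

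Axiom (F1) is immediate: $G\setminus F\in\cL(M_F^G)$ since $G\in\cL(M)$ and $F\subseteq G\subseteq G$. For (F2), given $H_1\setminus F,H_2\setminus F\in\cL(M_F^G)$, one has $(H_1\setminus F)\cap(H_2\setminus F)=(H_1\cap H_2)\setminus F$; by (F2) for $M$ the set $H_1\cap H_2$ is a flat of $M$, and $F\subseteq H_1\cap H_2\subseteq G$, so $(H_1\cap H_2)\setminus F\in\cL(M_F^G)$.

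The one step requiring an idea is (F3'). Let $H\setminus F\in\cL(M_F^G)$ and let $i\in(G\setminus F)\setminus(H\setminus F)$; since $F\subseteq H$ this means $i\in G$, $i\notin H$, and $i\notin F$. Applying (F3') for $M$ to the flat $H$ and the element $i$, we get a flat $G'\in\cL(M)$ that covers $H$ with $i\in G'$. The key move is to pass from $G'$ to $G'\cap G$: this is a flat of $M$ containing $H$ and containing $i$, and it is contained in $G'$; because $G'$ covers $H$ while $G'\cap G\neq H$ (as $i\in(G'\cap G)\setminus H$), we must have $G'\cap G=G'$, i.e.\ $G'\subseteq G$. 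Hence $G'\in[F,G]$, and $G'$ still covers $H$ in $\cL(M)$, so by the inclusion-reflecting bijection above $G'\setminus F$ covers $H\setminus F$ in $\cL(M_F^G)$ (a flat strictly between them would pull back to one strictly between $H$ and $G'$) and contains $i$; this is (F3'). Finally, the minimum of the interval $[F,G]$ is $F$ itself, so the minimal element of $\cL(M_F^G)$ is $F\setminus F=\varnothing$, which means $M_F^G$ has no loops. I expect the passage from $G'$ to $G'\cap G$ to be the only step that is not purely mechanical; everything else is a direct transcription of the axioms.
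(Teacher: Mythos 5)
Your proof is correct, and since the paper declares this lemma a ``straightforward verification'' without supplying one, your write-up fills exactly that gap in the expected way: define $\cL(M_F^G)$ as the indicated set, note the order-isomorphism with $[F,G]$, and check (F1), (F2), (F3'). The one nontrivial observation — that passing from a covering flat $G'$ of $H$ in $M$ to $G'\cap G$ forces $G'\subseteq G$ (because $G'$ covers $H$ and $G'\cap G$ strictly contains $H$) — is right, and the looplessness follows immediately from $F\setminus F=\varnothing$ being a flat.
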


The proof of the above is a straightforward verification. We will write $M^G$ for $M_{\hat{0}}^G$ and $M_F$ for $M_F^{\hat{1}}$.

\begin{lemma}
    The set of flats of a graph $\Gamma$ form a matroid, $M(\Gamma)$. 
\end{lemma}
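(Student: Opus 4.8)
The plan is to check the three flat axioms (F1), (F2), (F3) for the collection $\cL(M(\Gamma))$ of flats of $\Gamma$, using the preceding lemma to replace (F3) by the covering condition (F3').

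Axiom (F1) is immediate: every edge of $\Gamma$ lies in $E(\Gamma)$, so the defining implication for a flat holds vacuously and $E(\Gamma)$ is a flat. For (F2), suppose $F_1,F_2$ are flats and $e$ is an edge admitting a path from $e^+$ to $e^-$ using only edges of $F_1\cap F_2$. That path uses only edges of $F_1$, so $e\in F_1$; likewise $e\in F_2$; hence $e\in F_1\cap F_2$, and $F_1\cap F_2$ is a flat.

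The content is in (F3'). Given a flat $F$ and an edge $i\in E(\Gamma)\setminus F$, I will introduce a closure operator: for $A\subseteq E(\Gamma)$, let $\mathrm{cl}(A)$ be the set of edges $e$ whose endpoints $e^+,e^-$ lie in the same connected component of the restriction $\Gamma^A$. A short argument shows $\mathrm{cl}(A)$ is a flat containing $A$ (a path through $\mathrm{cl}(A)$-edges chains together endpoint-connectivities in $\Gamma^A$), and that $\mathrm{cl}(A)$ is contained in every flat containing $A$ (an $\mathrm{cl}(A)$-witnessing path for $e$ lives in $\Gamma^{F'}$ for any flat $F'\supseteq A$, so $e\in F'$); in particular $\mathrm{cl}(F)=F$. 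I then set $G\coloneqq\mathrm{cl}(F\cup\{i\})$, which is a flat containing $i$. Since $\mathrm{cl}(F)=F$ does not contain $i$, the endpoints $i^+,i^-$ lie in distinct components $C_1\ni i^+$ and $C_2\ni i^-$ of $\Gamma^F$; adjoining the single edge $i$ merges exactly $C_1$ and $C_2$, so $G\setminus F$ is precisely the set of edges with one endpoint in $C_1$ and the other in $C_2$. In particular $i\in G\setminus F$, so $F\subsetneq G$ and $i\in G$.

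It remains to show $G$ covers $F$, which is the step requiring care. Suppose $H$ is a flat with $F\subseteq H\subseteq G$ and $H\neq F$; choose $e\in H\setminus F$, so $e$ has one endpoint in $C_1$ and the other in $C_2$. Because $F\subseteq H$, all of $C_1$ lies in one component of $\Gamma^H$ and all of $C_2$ lies in one component of $\Gamma^H$; the edge $e\in H$ then joins these, so $i^+$ and $i^-$ lie in a common component of $\Gamma^H$, and since $H$ is a flat this forces $i\in H$. Then $F\cup\{i\}\subseteq H$, hence $G=\mathrm{cl}(F\cup\{i\})\subseteq H\subseteq G$, so $H=G$. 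Thus every flat between $F$ and $G$ equals $F$ or $G$, i.e., $G$ covers $F$, which establishes (F3'). I expect this merging-of-components argument to be the only genuine obstacle; everything else is routine bookkeeping about paths and restrictions.
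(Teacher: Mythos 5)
Your proof is correct, and its skeleton matches the paper's: (F1) and (F2) are routine, and (F3') is verified by taking $G$ to be the closure of $F\cup\{i\}$, namely the set of edges whose endpoints are joined by a path inside $F\cup\{i\}$. Where you go further is the covering step. The paper's proof only observes that $G$ is the unique minimal flat containing $F\cup\{i\}$; but (F3') asks for a flat that \emph{covers} $F$, and ``unique minimal flat above $F\cup\{i\}$'' is in general strictly weaker than covering. (A chain of flats $\varnothing\subset\{1\}\subset\{1,2\}\subset\{1,2,3\}$ on $E=\{1,2,3\}$ satisfies (F1), (F2), and the unique-minimal-flat property for every $F$ and $i\notin F$, yet no flat covering $\varnothing$ contains $3$.) Your merging-of-components argument --- any flat $H$ with $F\subsetneq H\subseteq G$ contains an edge bridging $C_1$ and $C_2$, hence reconnects $i^+$ to $i^-$ in $\Gamma^H$, forcing $i\in H$ and so $H=G$ --- is precisely the verification the paper elides, and it is correct. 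So your write-up is not merely right; it makes explicit a step the paper leaves to the reader.
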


\begin{proof}
Properties (F1) and (F2) are immediate. We will verify Property (F3'): for any flat $F$ and $e\in E\setminus F$, there is a unique minimal flat containing $F\cup \{e\}$. Indeed, let $G$ be the set of edges of $\Gamma$ whose endpoints are connected by a path within $F\cup\{e\}$. This is easily seen to be a flat of $\Gamma$. Moreover, any other flat containing $F\cup\{e\}$ must contain $G$.
\end{proof}

If the graph $\Gamma$ has a loop edge $e$, then every flat of $M(\Gamma)$ must contain $e$. Indeed, $F$ is a flat and if $e$ is adjacent to a vertex $v$, then there is the empty path from $e^+=v$ to $e^-=v$. Since the edges of this path are trivially contained in $F$, we must have $e\in F$.

For a flat $G$, one can verify that the lattice of flats of the matroid attached to the restriction, $M(\Gamma^G)$ is $M(\Gamma)^G$. Similarly, one can verify for a flat $F$, the lattice of flats attached to the contraction, $M(\Gamma_F)$ is $M(\Gamma)_F$. A consequence of these observations is the following lemma.

\begin{lemma}
    Let $\Gamma$ be a graph with flats $F$ and $G$ such that $F\subset G$. Then, 
    $M(\Gamma)_F^G=M((\Gamma^G)_F)$.
\end{lemma}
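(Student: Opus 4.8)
The plan is to prove $M(\Gamma)_F^G = M((\Gamma^G)_F)$ by unwinding both sides and reducing to the two already-stated facts: that restriction of graphs induces restriction of matroids (the lattice of flats of $M(\Gamma^G)$ is $M(\Gamma)^G$) and that contraction of graphs induces contraction of matroids (the lattice of flats of $M(\Gamma_F)$ is $M(\Gamma)_F$). First I would observe that by definition $M(\Gamma)_F^G$ is obtained from $M(\Gamma)$ in two steps: restrict to $G$, then contract by $F$. Concretely, $M(\Gamma)^G$ is the matroid on ground set $G$ whose flats are the flats of $M(\Gamma)$ contained in $G$ (viewed as subsets of $G$), and then $\big(M(\Gamma)^G\big)_F$ is the matroid on $G \setminus F$ whose flats are $\{H \setminus F : H \text{ a flat of } M(\Gamma)^G,\ F \subseteq H\}$. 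So it suffices to show $M(\Gamma)_F^G = \big(M(\Gamma)^G\big)_F$, i.e. that restricting-then-contracting the matroid agrees with the interval-matroid construction $M_F^G$; this is essentially a definitional identity, since $[F,G]$ in $\cL(M)$ is the same as $[F,G]$ in $\cL(M^G)$ because every flat between $F$ and $G$ lies in $G$.

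Next I would bring in the graph-theoretic side. Apply the restriction fact to $\Gamma$ and the flat $G$: the matroid $M(\Gamma^G)$ has lattice of flats equal to $M(\Gamma)^G$. Now $F$ is a flat of $\Gamma$ contained in $G$, hence (passing through this identification) $F$ is a flat of the graph $\Gamma^G$ as well — one should check that a flat of $\Gamma$ contained in $G$ is exactly a flat of $\Gamma^G$, which follows because paths in $\Gamma^G$ through edges of $F$ are the same as paths in $\Gamma$ through edges of $F$. Then apply the contraction fact to the graph $\Gamma^G$ and its flat $F$: the matroid $M\big((\Gamma^G)_F\big)$ has lattice of flats equal to $M(\Gamma^G)_F = \big(M(\Gamma)^G\big)_F$.

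Finally I would assemble the chain of equalities:
\[
M((\Gamma^G)_F) = M(\Gamma^G)_F = \big(M(\Gamma)^G\big)_F = M(\Gamma)_F^G,
\]
where the first equality is the contraction fact applied to $\Gamma^G$, the second is the restriction fact applied to $\Gamma$ (both as lattices of flats), and the third is the definitional identification of the interval matroid. Throughout, the ground sets match up: both $M((\Gamma^G)_F)$ and $M(\Gamma)_F^G$ live on $G \setminus F$, since $E(\Gamma^G) = G$ and contracting by $F$ removes exactly the edges of $F$.

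The main obstacle, such as it is, is bookkeeping rather than mathematics: one must be careful that the identifications "lattice of flats of $M(\Gamma^S)$ equals $M(\Gamma)^S$'' and similarly for contraction are being used on the correct ground sets and that the interval $[F,G]$ is literally the same poset whether computed in $\cL(M(\Gamma))$ or in $\cL(M(\Gamma^G))$. The one genuinely-needs-a-line point is verifying that a subset $F \subseteq G$ is a flat of the graph $\Gamma$ if and only if it is a flat of the restricted graph $\Gamma^G$ — but since $\Gamma^G$ has the same vertex set as $\Gamma$ and edge set exactly $G$, a path among edges of $F \subseteq G$ exists in $\Gamma^G$ precisely when it exists in $\Gamma$, so the flat conditions coincide. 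With that in hand the proof is the three-term chain above.
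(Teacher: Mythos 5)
Your proof is correct and follows the same route the paper intends: the paper explicitly presents this lemma as a consequence of the two stated observations that restriction of graphs induces restriction of matroids ($M(\Gamma^G) = M(\Gamma)^G$) and contraction of graphs induces contraction of matroids ($M(\Gamma_F) = M(\Gamma)_F$), which you chain together after noting the definitional identity $M_F^G = (M^G)_F$. Your added care in verifying that a flat $F$ of $\Gamma$ contained in $G$ is also a flat of $\Gamma^G$, and that the interval $[F,G]$ is the same computed in $\cL(M)$ or in $\cL(M^G)$, is exactly the bookkeeping the paper leaves implicit.
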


\begin{remark}
For a hyperplane arrangement $\{H_0,\dots,H_n\}$ in an $(r+1)$-dimensional vector space $V$ with matroid $M$ and flats $F$ and $G$, $M_F$ is the matroid attached to the hyperplane arrangement in the vector space $H_F$ given by $\{H_G\mid G\text{ covers } F\}$,
and $M^G$ is the matroid attached to the hyperplane arrangement in $V$ given by $\{H_i\mid i\in G\}$.
\end{remark}

Given a containment of flats $F\subset G$ in a matroid $M$, a {\em chain of length $k$} is a collection of flats
\[F=F_0\subsetneq F_1\subsetneq \dots\subsetneq F_k=G.\]
It is {\em maximal} if it cannot be refined by inserting more flats. 

\begin{lemma} \label{l:maximalchain}
    For any flats $F\subseteq G$, all maximal chains from $F$ to $G$ in $M$ have the same length.
    Consequently, there is a rank function $\rank\colon \cL(M)\to \Z_{\geq 0}$ such that 
    \begin{enumerate}
        \item $\rank(\hat{0})=0$ and
        \item if $F'$ covers $F$, then $\rank(F')=\rank(F)+1$.
    \end{enumerate}
\end{lemma}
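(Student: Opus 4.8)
The plan is to prove the ``same length'' (Jordan–Hölder-type) statement first, since the existence of the rank function is a formal consequence of it. I will induct on the length of a \emph{shortest} maximal chain from $F$ to $G$. The base case, where the shortest chain has length $0$ or $1$, is trivial: length $0$ means $F=G$ and length $1$ means $G$ covers $F$, so every maximal chain has that length. For the inductive step, suppose we have two maximal chains
\[
F = F_0 \subsetneq F_1 \subsetneq \dots \subsetneq F_m = G,
\qquad
F = G_0 \subsetneq G_1 \subsetneq \dots \subsetneq G_\ell = G,
\]
with $m$ the shortest possible and $\ell \geq m$. If $F_1 = G_1$ we pass to the interval $[F_1, G]$, which has a maximal chain of length $m-1$, and apply induction to conclude $\ell - 1 = m - 1$. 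The interesting case is $F_1 \neq G_1$. Here I would form the flat $H \coloneqq F_1 \cap G_1$. Since $F_1$ and $G_1$ both cover $F$ and are distinct, each of them covers $H$: indeed $F \subseteq H \subsetneq F_1$ and $F \subsetneq F_1$, so $H = F$ by the covering property applied to $F_1$ over $F$ — wait, more carefully, $H \supseteq F$ and $H \subsetneq F_1$ forces $H = F$ since $F_1$ covers $F$. So actually $F_1 \cap G_1 = F$.

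The key tool I need is a \emph{local} fact: if $F_1$ and $G_1$ both cover $F$ with $F_1 \neq G_1$, then there is a flat $K$ covering both $F_1$ and $G_1$. This should follow from (F3') applied twice: take $K_0$ to be the smallest flat containing $F_1 \cup G_1$; one shows $K_0$ covers $F_1$ (and by symmetry $G_1$) by a rank/chain-counting argument, or more directly by observing that any flat strictly between $F_1$ and $K_0$ together with $G_1$ would generate a flat contradicting minimality of $K_0$. Granting such a $K$, I build the ``diamond'': extend $K$ to a maximal chain $K = K_2 \subsetneq K_3 \subsetneq \dots \subsetneq K_p = G$ inside $[K, G]$. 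Then $F \subsetneq F_1 \subsetneq K \subsetneq K_3 \subsetneq \dots \subsetneq G$ is a maximal chain through $F_1$, and $F \subsetneq G_1 \subsetneq K \subsetneq K_3 \subsetneq \dots \subsetneq G$ is a maximal chain through $G_1$. Now compare: the original $F$-chain and the new chain both pass through $F_1$, so by applying the inductive hypothesis on the interval $[F_1, G]$ (shortest chain there has length $\leq m - 1$) they have equal length, giving $m = 2 + (p - 2) = p$. Similarly the original $G$-chain through $G_1$ has length $\ell$, and the new chain through $G_1$ also has length $p$; applying induction on $[G_1, G]$ (shortest chain length $\leq p - 1 \leq m - 1$), we get $\ell = p = m$.

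Once all maximal chains from $\hat 0$ to any flat $F$ have a common length, define $\rank(F)$ to be that length. Then $\rank(\hat 0) = 0$ is immediate (the only chain is the trivial one), and if $F'$ covers $F$ then concatenating a maximal chain from $\hat 0$ to $F$ with the single step $F \subsetneq F'$ yields a maximal chain from $\hat 0$ to $F'$, so $\rank(F') = \rank(F) + 1$.

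The main obstacle is the local lemma — the existence of a flat $K$ covering two distinct flats $F_1, G_1$ that both cover a common flat $F$. Everything else is bookkeeping with the induction. I expect to prove this by taking $K$ to be the unique minimal flat containing $F_1 \cup G_1$ (which exists by closure under intersection) and then verifying it covers $F_1$: any flat $J$ with $F_1 \subseteq J \subsetneq K$ must fail to contain $G_1$ (else $J \supseteq F_1 \cup G_1$ forces $J = K$), so $G_1 \cap J = F$ (since $G_1$ covers $F$ and $F \subseteq G_1 \cap J \subsetneq G_1$); but then $J$ and $G_1$ are two flats whose join is $K$ while $J \cap G_1 = F$, and a short argument using (F3') — counting the flats covering $F$ inside $J$ versus inside $K$, or directly — shows $J = F_1$. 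This is the step where one genuinely uses the matroid axioms rather than just poset generalities, and it is worth isolating as its own lemma.
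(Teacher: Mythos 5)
Your approach is a genuinely different route from the paper's. You run the classical Jordan--H\"older argument for semimodular lattices: induct on the length of the shortest maximal chain, split on whether the two chains agree at the first step, and in the hard case $F_1 \neq G_1$ build a ``diamond'' by finding a common upper cover $K$ of $F_1$ and $G_1$, then compare everything through $K$. The paper instead takes a minimal counterexample (a pair of maximal chains of unequal lengths with the shorter one minimized over all such pairs), first shows the two chains share no intermediate flats, and then uses axiom (F3) to surgically replace the longer chain by one whose first link contains a prescribed element, forcing the two chains to agree at their first step and deriving a contradiction. Your route isolates a clean structural fact (upper semimodularity of $\cL(M)$) that is worth having; the paper's route avoids stating semimodularity and stays entirely inside ad hoc chain manipulations.

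The one genuine gap is your proof of the diamond lemma. The idea of ``counting the flats covering $F$ inside $J$ versus inside $K$'' does not close the argument as stated: the atoms of $[F,J]$ form a proper subset of the atoms of $[F,K]$ (since $G_1$ is missing), but that alone does not force $J = F_1$ --- a flat can have many atoms below it. A direct argument from (F3') does work. Pick $i \in G_1 \setminus F$; then $i \notin F_1$ since $F_1 \cap G_1 = F$. By (F3') there is a flat $J$ covering $F_1$ with $i \in J$. The intersection $J \cap K$ is a flat with $F_1 \subseteq J \cap K \subseteq J$, and since $i \in J \cap K$ but $i \notin F_1$, we have $J \cap K \neq F_1$; because $J$ covers $F_1$, this forces $J \cap K = J$, i.e.\ $J \subseteq K$. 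Next, $J \cap G_1$ is a flat with $F \subsetneq J \cap G_1 \subseteq G_1$ (strictness witnessed by $i$), so since $G_1$ covers $F$ we get $J \cap G_1 = G_1$, i.e.\ $G_1 \subseteq J$. Thus $J$ is a flat containing $F_1 \cup G_1$ and contained in the minimal such flat $K$, so $J = K$, and $K$ covers $F_1$; by symmetry it covers $G_1$. With this lemma supplied, the rest of your induction is correct: both new chains through $K$ have the same tail, and the inductive hypothesis applied in $[F_1,G]$ and $[G_1,G]$ (each with shortest maximal chain of length at most $m-1$) gives $m = p = \ell$.
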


\begin{proof}
  Suppose that there are flats $F$ and $G$ with maximal chains $\cF$ and $\cF'$ from $F$ to $G$ of different lengths. Make the choice of $F$ and $G$ such that the length of the shorter of the two maximal chains (say $\cF'$ after possibly interchanging $\cF$ and $\cF'$) is minimized, so $\length(\cF)>\length(\cF')$. By replacing $M$ with $M_F^G$, we may suppose that the chains are between $\hat{0}$ and $\hat{1}$. By replacing $E$ by $G\setminus F$ and fixing a bijection between $G\setminus F$ and $\{0,1,\dots,n\}$ for some positive integer $n$, We may suppose $E=\{0,1,\dots,n\}$. Write the chains as 
  \[\hat{0}\subsetneq F_1\subsetneq \dots \subsetneq F_{k-1}\subsetneq F_k=E,\ \hat{0}\subsetneq F'_1\subsetneq \dots\subsetneq F'_{k'-1}\subsetneq F'_{k'}=E.\]
  We claim $F_i\neq F'_j$ except for the following two cases: $(i,j)=(0,0)$; or $(i,j)=(k,k')$. Otherwise, if $F_i=F'_j$, since $k\neq k'$, we  must have $i\neq j$ or $k-i\neq k'-j$. In the first case, consider the maximal chains
  \[\hat{0}\subsetneq F_1\subsetneq \dots\subsetneq F_i,\ \hat{0}\subsetneq F'_1\subsetneq \dots\subsetneq F'_j=F_i\]
  which are of different length.
  In the second case, consider
  \[F_i\subsetneq \dots\subsetneq F_k,\ F'_j\subsetneq \dots\subsetneq F'_{k'}\]
  which are also of different length. In either case, the shorter chain is shorter than the chain $\cF'$ which contradicts our choice of $F$ and $G$.

  By relabeling the elements of $E$, may suppose $0\in F'_1\setminus \hat{0}$.
  We will prove that there is a maximal chain $\cF''$ of length at least $k$ from $\hat{0}$ to $\hat{1}$ such that $0\in F''_1$. 
  Set 
  \[i(\cF)\coloneqq \min\left(\{i\mid 0\in F_i\}\right).\]
  By possibly replacing $\cF$ by a chain of length at least $k$, we may suppose $\cF$ is chosen from maximal chains of length at least $k$ to minimize $i(\cF)$.
  If $i(\cF)=1$, then we are done.
  Otherwise, suppose $i\coloneqq i(\cF)\geq 2$. Let $G$ be the minimal flat containing $F_{i-2}\cup\{0\}$. Then since 
  \[F_{i-2}\cup\{0\}\subseteq G\cap F_i\subseteq G\]
  and $G\cap F_i$ is a flat containing $0$, we must have $G\cap F_i=G$. Therefore, $G\subseteq F_i$.
  We claim $G\neq F_i$. Suppose otherwise. Let $j\in F_{i-1}\setminus F_{i-2}$, so $F_{i-1}$ is the minimal flat of containing $F_{i-2}\cup\{j\}$. Then, $j\in (F_{i-1}\setminus F_{i-2})\cap (G\setminus F_{i-2})=F_{i-1}\setminus F_{i-2}$ contradicting flat axiom (F3).
  The only place where additional flats may be inserted in 
  \[\hat{0}\subsetneq F_1\subsetneq \dots\subsetneq F_{i-2}\subsetneq G\subsetneq F_i\subsetneq\dots\subsetneq F_k\]
  is between $G$ and $F_i$. We do so, if necessary, to produce a maximal chain $\cF''$,
  \[\hat{0}\subsetneq F_1\subsetneq \dots\subsetneq F_{i-2}\subsetneq G\subsetneq J_1\subsetneq \dots\subsetneq J_m\subsetneq F_i\subsetneq\dots\subsetneq F_k\]
  of length at least $k$.
  Since $i(\cF'')<i(\cF)$, we have a contradiction.

  Now, both $F_1$ and $F'_1$ are  minimal flats containing the element $0$. This gives the contradiction $F_1=F'_1$. 

  Therefore, for any flat $F$, the length of every maximal chain from $\hat{0}$ to $F$ is equal. We define $\rank(F)$ to be the length of that chain. It is straightforward to verify that $\rank$ has the desired properties. 
\end{proof}

\begin{definition}
    The {\em rank of a matroid} $M$ on a set $E$ is $\rank(M)\coloneqq \rank (E)$.
\end{definition}

Observe that $\rank(M\oplus M')=\rank(M)+\rank(M')$. Indeed, let 
\[\cF=\{\hat{0}\subsetneq F_1\subsetneq \dots \subsetneq F_{k-1}\subsetneq F_k=E\},\ \cF'=\{\hat{0}\subsetneq F'_1\subsetneq \dots\subsetneq F'_{k'-1}\subsetneq F'_{k'}=E'\}\]
be maximal chains of flats in $M$ and $M'$, respectively. Then
\[\cF=\{\hat{0}\subsetneq F_1\sqcup\hat{0}\subsetneq \dots \subsetneq F_k\sqcup \hat{0}\subsetneq F_k\sqcup F'_1\subsetneq \dots \subsetneq F_k\sqcup F'_{k'}=E\sqcup E'\}\]
is a maximal chain of flats in $M\oplus M'$.

\begin{lemma} \label{l:graphrank}
    For a graph $\Gamma$ with matroid $M(\Gamma)$, 
    \[\rank(G)=|V(\Gamma)|-\kappa(\Gamma^G)\]
    for $G\in\cL(M)$.
\end{lemma}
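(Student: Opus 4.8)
The plan is to verify that the function $f\colon\cL(M)\to\Z_{\geq 0}$ defined by $f(G)\coloneqq|V(\Gamma)|-\kappa(\Gamma^G)$ satisfies the two properties that, by Lemma~\ref{l:maximalchain}, characterize the rank function: namely $f(\hat{0})=0$, and $f(G')=f(G)+1$ whenever $G'$ covers $G$. Since $\rank(G)$ is, by construction, the length of any maximal chain $\hat{0}=F_0\subsetneq F_1\subsetneq\cdots\subsetneq F_k=G$, and $f$ obeys $f(F_0)=0$ together with $f(F_{i+1})=f(F_i)+1$, an induction along such a chain then yields $f(G)=k=\rank(G)$, which is the claim. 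So the whole proof reduces to checking these two properties.

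The equality $f(\hat{0})=0$ is immediate: $\hat{0}$ is the set of loop edges of $\Gamma$, so $\Gamma^{\hat{0}}$ consists of all of $V(\Gamma)$ together with only loop edges; as a loop edge joins a vertex to itself, no two distinct vertices become connected, whence $\kappa(\Gamma^{\hat{0}})=|V(\Gamma)|$ and $f(\hat{0})=0$.

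Now suppose $G'$ covers $G$ and fix an edge $e\in G'\setminus G$. The minimal flat of $\Gamma$ containing $G\cup\{e\}$ is a flat that properly contains $G$ (since $e\notin G$) and is contained in $G'$, so by minimality of $G'$ among flats properly containing $G$ it equals $G'$; by the description of $M(\Gamma)$ in the lemma asserting that the flats of a graph form a matroid, $G'$ is precisely the set of edges of $\Gamma$ whose endpoints are joined by a path using only edges of $G\cup\{e\}$. First observe that $e^+$ and $e^-$ lie in different connected components of $\Gamma^G$: otherwise a path from $e^+$ to $e^-$ through edges of $G$ would force $e\in G$, since $G$ is a flat. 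Hence adjoining $e$ to $\Gamma^G$ merges exactly two of its components, so $\kappa(\Gamma^{G\cup\{e\}})=\kappa(\Gamma^G)-1$. Every further edge of $G'$ has both its endpoints joined by a path in $G\cup\{e\}$, hence lying in a single component of $\Gamma^{G\cup\{e\}}$, so adjoining these remaining edges does not alter the partition of $V(\Gamma)$ into connected components; therefore $\kappa(\Gamma^{G'})=\kappa(\Gamma^{G\cup\{e\}})=\kappa(\Gamma^G)-1$, i.e.\ $f(G')=f(G)+1$.

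I expect the only step requiring genuine care to be this last one --- confirming that passing from the subgraph $\Gamma^{G\cup\{e\}}$ to $\Gamma^{G'}$ does not decrease $\kappa$ further --- which is dispatched by the explicit combinatorial description of the flat $G'$ generated by $G\cup\{e\}$. Everything else follows directly from the definitions of flat, restriction, and the rank function.
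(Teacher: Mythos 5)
Your proof is correct, and it takes a genuinely different route from the paper's. The paper first reduces to the case $G=\hat{1}$ by passing to $\Gamma^G$, then inducts on $\rank(M(\Gamma))$ with separate base cases for ranks $0$ and $1$; the inductive step picks an intermediate flat $F$ and uses the additivity $\rank(\hat{1})=\rank(M(\Gamma)^F)+\rank(M(\Gamma)_F)$ together with the inductive hypothesis on the restriction $\Gamma^F$ and contraction $\Gamma_F$. You instead verify directly that $f(G)\coloneqq |V(\Gamma)|-\kappa(\Gamma^G)$ satisfies the two properties that characterize the rank function in Lemma~\ref{l:maximalchain}: $f(\hat{0})=0$ and $f(G')=f(G)+1$ for a cover $G'\supsetneq G$. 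The cover step is the only delicate point, and you handle it cleanly: for $e\in G'\setminus G$, the flat axiom forces $e^+$ and $e^-$ into distinct components of $\Gamma^G$ (so adding $e$ drops $\kappa$ by exactly one), and the explicit description of the minimal flat containing $G\cup\{e\}$ --- the set of edges whose endpoints are joined by a path in $G\cup\{e\}$ --- ensures that the remaining edges of $G'$ have both endpoints already in a single component of $\Gamma^{G\cup\{e\}}$, so $\kappa$ does not drop further. Your argument is more local and elementary, avoiding the reduction to $\hat{1}$ and the rank-$1$ base case analysis; the paper's version, by contrast, reinforces the restriction/contraction and rank-additivity machinery that recurs throughout the text, which is probably why it is written that way. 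Both are valid, and yours is arguably the cleaner standalone proof of this particular lemma.
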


\begin{proof}
    By replacing $\Gamma$ with $\Gamma^G$, it suffices to show this for $G=\hat{1}$ since $|V(\Gamma)|=|V(\Gamma^G)|$.
    We prove this this by induction on $\rank(M(\Gamma))$. If $\rank(M(\Gamma))$ is $0$, then the only flat is $\hat{0}=\hat{1}=E(\Gamma)$. Hence, the only edges of $\Gamma$ are loop edges. Therefore, $\kappa(\Gamma)=|V(\Gamma)|$, and 
    \[|V(\Gamma)|-\kappa(\Gamma)=0=\rank(\hat{0}).\]

    Now, we consider the case where $\rank(M(\Gamma))=1$. Then, the only flats are $\hat{0}$ (corresponding to all the loop edges) and $\hat{1}$ (corresponding to all the edges of the graph). Let $e\in\hat{1}\setminus \hat{0}$ have endpoints $v$ and $w$. We claim that all other edges of $\hat{1}\setminus\hat{0}$ are between $v$ and $w$. Indeed, since $\hat{1}$ is the minimal flat containing $e$, all edges of $\hat{1}\setminus\hat{0}$ must have endpoints contained in $\{v,w\}$. Therefore, $\Gamma$ consists of a collection of vertices $v_0,\dots,v_{n+1}$ with edges consisting of loop edges and a collection of parallel edges between a pair of vertices. There are thus $n+1$ vertices and $n$ components, hence
  \[|V(\Gamma)|-\kappa(\Gamma)=n+1-n=1=\rank(\hat{1}).\]

    Now, let $\Gamma$ be a graph such that $\rank(M(\Gamma))\geq 2$. Let $F\in\cL(M(\Gamma))\setminus\{\hat{0},\hat{1}\}$. Then, by forming a maximal chain in $\Gamma$ by concatenating a maximal chain in $M(\Gamma)^F$ and one in $M(\Gamma)_F$ and applying Lemma~\ref{l:maximalchain}, we have
    \begin{align*}
      \rank(\hat{1})&=\rank(M(\Gamma)^F)+\rank(M(\Gamma)_F)\\
      &=\rank(M(\Gamma^F))+\rank(M(\Gamma_F))\\
      &=(|(V(\Gamma^F)|-\kappa(\Gamma^F))+(|V(\Gamma_F)|-\kappa(\Gamma_F))\\
      &=(|V(\Gamma)|-\kappa(\Gamma^F))+(\kappa(\Gamma^F)-\kappa(\Gamma))\\
      &=|V(\Gamma)|-\kappa(\Gamma). \qedhere
    \end{align*} 
\end{proof}

\begin{remark}
  For the matroid attached to a hyperplane arrangement as in Remark~\ref{r:hyperplanematroids}, the rank of a flat $F$ is given by 
  \[\rank(F)=\dim V-\dim H_F.\]
\end{remark}

\begin{definition}
  For a rank $r+1$ matroid $M$ and $i\in\{1,\dots,r\}$, the truncation $\Tr^i(M)$ is the matroid whose lattice of flats is
  \[\cL(\Tr^i(M))\coloneqq \{F\in\cL(M)\mid \rank(F)\leq r-i\}\cup \{E\}.\]
\end{definition}

It is a verification that $\Tr^i(M)$ is a matroid. Observe that $\Tr^{i+1}(M)=\Tr^1(\Tr^i(M))$. The rank of $\Tr^i(M)$ is $r+1-i$.

\begin{definition}
For a matroid $M$, the {\em flat graph} $\Gamma_M$ is the graph whose vertices are the elements of $\cL(M)\setminus\{\hat{0},\hat{1}\}$ with an edge between $F_1$ and $F_2$ if and only if $F_1\subsetneq F_2$ or $F_2\subsetneq F_1$.
\end{definition}

This graph models the containment of flats of $M$, and will later be used to establish the Lorentzian property of the volume polynomial of $M$.
 
\begin{lemma} \label{l:flatgraph}
    The flat graph of a matroid of rank at least $3$ is connected.
\end{lemma}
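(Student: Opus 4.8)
The plan is to reduce connectedness of $\Gamma_M$ to two observations: every vertex is adjacent to a rank-$1$ flat (an \emph{atom}), and any two atoms can be joined by a path of length $2$. Throughout, write $d \coloneqq \rank(M) \ge 3$, so that the vertices of $\Gamma_M$, i.e.\ the flats in $\cP(M)$, are precisely the flats of rank $1, 2, \dots, d-1$; in particular $\cP(M)$ is nonempty.

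First I would show that each $F \in \cP(M)$ with $\rank(F) \ge 2$ is joined by an edge to some atom contained in it: pick a maximal chain $\hat 0 \subsetneq F_1 \subsetneq \dots \subsetneq F_k = F$; since the chain is maximal, $F_1$ covers $\hat 0$, so $\rank(F_1) = 1$ by Lemma~\ref{l:maximalchain}, and from $\hat 0 \subsetneq F_1 \subseteq F \subsetneq \hat 1$ we get $F_1 \in \cP(M)$, whence $\{F_1, F\}$ is an edge. (Atoms themselves need no treatment.) Thus it remains to connect any two distinct atoms $a$ and $b$. Choosing $j \in b \setminus \hat 0$, the fact that $b$ covers $\hat 0$ makes $b$ the smallest flat containing $j$; in particular $j \notin a$, since $j \in a$ would force $b \subseteq a$ and then $a = b$. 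Axiom (F3') supplies a flat $G$ covering $a$ with $j \in G$, so $\rank(G) = 2$ and $b \subseteq G$ by minimality of $b$. Now the hypothesis $d \ge 3$ gives $G \ne \hat 1$, hence $G \in \cP(M)$, and the inequalities $\rank(a) = \rank(b) = 1 < 2 = \rank(G)$ show $a \subsetneq G$ and $b \subsetneq G$; so $a - G - b$ is a path in $\Gamma_M$.

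Combining the two steps proves $\Gamma_M$ connected. I expect no serious difficulty here; the only delicate point — and the sole place $d \ge 3$ is used — is confirming that the covering flat $G$ furnished by (F3') lies in $\cP(M)$ and not merely in $\cL(M)$. This is exactly the reason the statement fails in rank $2$, where, for instance, the flat graph of the uniform matroid $U_{2,3}$ consists of three isolated vertices.
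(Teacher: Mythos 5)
Your proof is correct and follows essentially the same two-step strategy as the paper's: first reduce to connecting atoms (by showing every proper non-trivial flat contains, hence is adjacent to, a rank-$1$ flat), then show any two distinct atoms lie beneath a common rank-$2$ flat, which is proper because $\rank(M)\geq 3$. The only cosmetic difference is that you obtain the rank-$2$ flat via axiom (F3') applied to one atom and an element of the other, whereas the paper forms the minimal flat containing the union of one atom and a single element of the other; these amount to the same construction.
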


\begin{proof}
  We first observe that given a flat, $F_2\neq \hat{0}$, there is a rank $1$ flat $F_1$ such that $F_1\subseteq F_2$. Indeed, take $i\in F_2\setminus \hat{0}$ and let $F_1$ be the minimal flat containing $i$. Since $F_1\cap F_2$ is a flat containing $i$, then $F_1\cap F_2=F_1$ and so $F_1\subseteq F_2$.

  Now, it suffices to show that for any distinct rank $1$ flats $F_1$, $F'_1$, there is a proper flat containing both of them. Since $F_1\not\subseteq F'_1$, there is $i\in F_1\setminus F'_1$. Let $F_2$ be the minimal flat containing $F'_1\cup\{i\}$. Then, $F_2$ must be of rank $2$. Since $F_1\cap F_2$ is a flat containing $i$, $F_1\cap F_2\neq \hat{0}$. Hence, $F_1\cap F_2=F_1$, and so $F_1\subseteq F_2$.
\end{proof}

\section{The Characteristic polynomial} \label{s:characteristic}

\begin{definition} \label{d:charpoly}
    The {\em characteristic polynomial for matroids} is a collection of polynomials, $\chi_M(q)\in \Z[q]$, one for each matroid $M$ such that
      \begin{enumerate}
          \item if $M$ is isomorphic to $M'$, then $\chi_M(q)=\chi_{M'}(q)$,
          \item if $\hat{0}\neq \varnothing$, $\chi_M(q)=0$, and
          \item $\sum_{F\in\cL(M)} \chi_{M_F}(q)=q^{\rank(M)}$ \label{i:flatsum}.
      \end{enumerate}
\end{definition}

The sum condition expresses a form of inclusion-exclusion over flats. The characteristic polynomial exists and is unique by an induction argument analogous to Lemma~\ref{l:characterizationchromatic} on the size of the set $E$. 
We will write it for a matroid $M$ of rank $r+1$ as 
\[\chi_M(q)=\mu_0q^{r+1}-\mu_1q^r+\dots+(-1)^{r+1}\mu_{r+1}q^0.\]
Here, alternating signs are chosen to ensure that each $\mu_i$ is nonnegative.
Note that the above definition characterizes the evaluation of the characteristic polynomial at an element $q_0$ in a ring $R$, i.e., if there is a collection of elements $a_{M_F}\in R$ for each flat $F\in \cL(M)$ such that for all $F\in\cL(M)$,
\[\sum_{G\in\cL(M),G\supseteq F} a_{M_G}=q_0^{\rank(M)-\rank(F)},\]
then $a_{M_{\hat{0}}}=\chi_{M_{\hat{0}}}(q_0)$.
One sees explicitly that $\chi_{U_{0,0}}(q)=1$ and $\chi_{U_{1,1}}(q)=q-1.$ 

Observe that the characteristic polynomial of the matroid $M(\Gamma)$ attached to a graph $\Gamma$ is not  the same as the chromatic polynomial of $\Gamma$. Indeed, if $\Gamma$ has no edges, $\chi_\Gamma(q)=q^{|V(\Gamma)|}$ while $\chi_{M(\Gamma)}=1$. This is because the matroid $M(\Gamma)$ does not remember the number of vertices of $\Gamma$. However, there is a relationship:
\begin{lemma}
    Let $\Gamma$ be a graph, then 
    \[\chi_{\Gamma}(q)=q^{\kappa(\Gamma)}\chi_{M(\Gamma)}(q)\]
    where $\kappa(\Gamma)$ is the number of connected components of $\Gamma$.
\end{lemma}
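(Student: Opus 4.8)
The strategy is to invoke the characterization of the chromatic polynomial in Lemma~\ref{l:characterizationchromatic}. I would introduce the collection of polynomials
\[ f_\Gamma(q) \coloneqq q^{\kappa(\Gamma)}\,\chi_{M(\Gamma)}(q)\in\Z[q], \]
indexed by finite graphs $\Gamma$, and verify its two hypotheses. Since $P_\Gamma=\chi_\Gamma$ is exactly the collection singled out by that lemma, the equality $f_\Gamma=\chi_\Gamma$ will follow immediately, which is the assertion.

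\textbf{Loop condition.} First I would check hypothesis (1): if $\Gamma$ contains a loop edge $e$, then, as observed just after the lemma identifying $M(\Gamma)$, every flat of $M(\Gamma)$ contains $e$, so $\hat 0\neq\varnothing$. By Definition~\ref{d:charpoly}(2) this gives $\chi_{M(\Gamma)}(q)=0$, hence $f_\Gamma(q)=0$.

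\textbf{Flat-sum condition.} For hypothesis (2), fix an arbitrary graph $\Gamma$. The flats of $\Gamma$ are by definition the members of $\cL(M(\Gamma))$. For a flat $F$ I would use the two facts recorded in the previous section: $\kappa(\Gamma_F)=\kappa(\Gamma)$, and the lattice of flats of $M(\Gamma_F)$ is $M(\Gamma)_F$, so $\chi_{M(\Gamma_F)}(q)=\chi_{M(\Gamma)_F}(q)$. Hence $f_{\Gamma_F}(q)=q^{\kappa(\Gamma)}\chi_{M(\Gamma)_F}(q)$, and summing over all flats $F$ of $\Gamma$ and applying Definition~\ref{d:charpoly}(3),
\[ \sum_F f_{\Gamma_F}(q)=q^{\kappa(\Gamma)}\sum_{F\in\cL(M(\Gamma))}\chi_{M(\Gamma)_F}(q)=q^{\kappa(\Gamma)}\cdot q^{\rank(M(\Gamma))}. \]
By Lemma~\ref{l:graphrank}, $\rank(M(\Gamma))=|V(\Gamma)|-\kappa(\Gamma)$, so the right-hand side equals $q^{|V(\Gamma)|}$, as required. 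Then Lemma~\ref{l:characterizationchromatic} yields $f_\Gamma(q)=P_\Gamma(q)=\chi_\Gamma(q)$, i.e.\ $\chi_\Gamma(q)=q^{\kappa(\Gamma)}\chi_{M(\Gamma)}(q)$.

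\textbf{Main obstacle.} The argument is essentially bookkeeping once the two structural identifications are in hand, so the only real point to be careful about is that the earlier claims ``the flats of $\Gamma$ are the flats of $M(\Gamma)$'' and ``$M(\Gamma_F)$ has lattice of flats $M(\Gamma)_F$'' (stated as routine verifications in Section~\ref{s:matroids}) are precisely what is being used, together with the combinatorial identities $\kappa(\Gamma_F)=\kappa(\Gamma)$ and $\rank(M(\Gamma))=|V(\Gamma)|-\kappa(\Gamma)$. No genuine difficulty is expected beyond assembling these.
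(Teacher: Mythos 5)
Your proposal is correct and follows essentially the same route as the paper: define $f_\Gamma(q)=q^{\kappa(\Gamma)}\chi_{M(\Gamma)}(q)$, verify the loop-edge vanishing and the flat-sum identity (via $\kappa(\Gamma_F)=\kappa(\Gamma)$, the identification $\cL(M(\Gamma_F))=\cL(M(\Gamma)_F)$, and $\rank(M(\Gamma))=|V(\Gamma)|-\kappa(\Gamma)$), and then invoke the uniqueness characterization in Lemma~\ref{l:characterizationchromatic}. The only cosmetic difference is that you spell out the loop-edge case in detail while the paper dismisses it as trivial.
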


\begin{proof}
    If $\Gamma$ has a loop edge, the conclusion is trivial.
    For a graph $\Gamma$, define $\chi'(q)=q^{\kappa(\Gamma)}\chi_{M(\Gamma)}(q).$ If $\Gamma$ has no edges, then $\chi_{M(\Gamma)}(q)=1$, $\kappa(\Gamma)=|V(\Gamma)|$, and
    $\chi'_{\Gamma}(q)=q^{|V(\Gamma)|}.$
    Since for any flat $F$, $\kappa(\Gamma_F)=\kappa(\Gamma)$, and so
    \[\sum_{F\in\cL(M(\Gamma))} \chi'_{\Gamma_F}(q)=\sum_{F\in\cL(M(\Gamma))}
    q^{\kappa(\Gamma)}\chi_{M(\Gamma)_F}(q)=q^{\kappa(\Gamma)} q^{\rank(M(\Gamma))}=q^{|V(\Gamma)|}.\]
    Because $\chi'_{\Gamma}$ satisfies the hypotheses of Lemma~\ref{l:characterizationchromatic}, $\chi'_{\Gamma}=\chi_{\Gamma}$ for all graphs $\Gamma$.
\end{proof}

\begin{remark}
  The characteristic polynomial has an immediate description in terms of hyperplane arrangements in vector spaces over finite fields \cite{Athanasiadis:characteristic}. Indeed, let $q$ be a prime power, i.e., $q=p^a$ for some positive integer $a$. 
  Let $H_0,\dots,H_n$ be hyperplanes in $\F_q^{r+1}$, so that $H_i$ is defined by $\{\ell_i=0\}$ for a nonzero linear expression
  \[\ell_i(\mathbf{x})=\sum_{j=1}^{r+1} a_{ij}x_j\]
  where $a_{ij}\in \F_q$, and $(x_1,\dots,x_{r+1})$ are the coordinates of $\mathbf{x}\in \F_q^{r+1}$.
  Write $W\coloneqq\cap_i H_i$, and $\kappa\coloneqq\dim W=r+1-\rank(E)$.
  Pick an algebraic closure $\overline{\F}$ of $\F_{q}$, so that for a positive integer $b$, we have a finite field $\F_{q^b}\subset \overline{\F}$. For such a finite field and $I\subseteq \{0,1,\dots,n\}$, we define the following set
  \[U_I(\F_{q^b})=\{\mathbf{x}\in \F_{q^b}^{r+1} \mid \ell_i(\mathbf{x})=0 \text{ if and only if }i\in I\}.\]
  In other words, we are counting points in $\F_{q^b}^{r+1}$ that are in $H_I$ but not in any $H_J$ for $J\supsetneq I$.
  Observe that $U_I(\F_{q^b})=\varnothing$ unless $I$ is a flat.

  We claim
  \[|U_{\hat{0}}(\F_{q^b})|=q^{b\kappa}\chi_M(q^b)\]
  for all positive integers $b$. Thus, when $\cap_i H_i=\{0\}$, the characteristic polynomial counts the points of $\F_{q^b}^{r+1}$ that are not in any hyperplane $H_i$.
  Indeed, for a fixed value of $b$, set $a_{M_F}\coloneqq q^{-b\kappa}|U_F(\F_{q^b})|$. For a flat $F$, set 
  \[W_F(\F_{q^b})\coloneqq \{\x\in \F_{q^b}^{r+1}\mid \ell_i(\mathbf{x})=0 \text{ if }i\in F\},\]
  i.e., $W_F(F_{q^b})$ is the set of $F_{q^b}$-points in the $(\rank(M)-\rank(F))$-dimensional linear subspace $H_F$.
  Then, for a positive integer $b$
  \[W_F(\F_{q^b})=\sqcup_{G\supseteq F} U_G(\F_{q^b})\]
  where the disjoint union is taken over the flats of the matroid attached to the hyperplane arrangement. Now,
  \[q^{b(r+1-\rank(F))}=|W_F(\F_{q^b})|=\sum_{G\supseteq F} |U_G(\F_{q^b})|.\]
  Therefore, multiplying by $q^{-b\kappa}$, we obtain
  \[q^{\rank(M)-\rank(F)}=\sum_{G\supseteq F} a_{M_G}.\]
  Hence, $a_{M_{\hat{0}}}=\chi_M(q^b).$

  Note that because the characteristic polynomial is a polynomial, it is determined by finitely many of its values, thus knowing $U_\varnothing(\F_{q^b})$ for sufficiently many values of $b$ determines it.
\end{remark}

\begin{remark}
  We may generalize the above remark by considering algebraic varieties over a field $k$. Let $\mathcal{V}$ be a collection of algebraic varieties defined over $K$ that contains hyperplane arrangement complements and is closed under disjoint union and Cartesian product. Let
  $\chi'\colon \mathcal{V}\to \Z[q]$
  be a function
  invariant under isomorphism such that 
  \[\chi'(\mathbb{A}_K^n)=q^n, \quad\chi'(X_1\sqcup X_2)=\chi'(X_1)+\chi'(X_2), \quad \chi'(X_1\times X_2)=\chi'(X_1)\chi'(X_2).\]
  Then, for a hyperplane arrangement $\{H_0,\dots,H_n\}$ in a finite dimensional vector space $V$ defined over $K$,
  \[\chi'(V\setminus (H_0\cup\dots\cup H_n))=q^{\dim (H_0\cap\dots\cap H_n)}\chi_M(q)\]
  where $M$ is the matroid of the hyperplane arrangement. Such a function can be realized by the Poincar\'{e} polynomial of compactly-supported cohomology 
 which must equal the characteristic polynomial \cite{OS:complements}. This point of view is discussed in \cite{Katz:MTFAG}.
\end{remark}

\begin{lemma}
    Let $M$ and $M'$ be matroids on sets $E$ and $E'$, respectively. Then, 
    \[\chi_{M\oplus M'}(q)=\chi_{M}(q)\chi_{M'}(q).\]
\end{lemma}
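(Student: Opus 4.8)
The plan is to use the uniqueness characterization of the characteristic polynomial (the three defining properties in Definition~\ref{d:charpoly}), just as was done for the chromatic polynomial via Lemma~\ref{l:characterizationchromatic} and for the graph-matroid comparison. Specifically, I would fix $M$ and $M'$ and define a candidate collection of polynomials $\psi_N(q)$ indexed by all matroids $N$, designed so that $\psi_{M\oplus M'}(q) = \chi_M(q)\chi_{M'}(q)$, and then verify that $\psi$ satisfies the three axioms characterizing the characteristic polynomial; uniqueness then forces $\psi_N = \chi_N$ for all $N$, giving the result.

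First I would set up the right indexing. The subtlety is that the sum condition (F3) of Definition~\ref{d:charpoly} for $M \oplus M'$ ranges over \emph{all} flats of $M \oplus M'$, and one needs to know what these look like. I would first record the (easy, stated) fact that $\cL(M \oplus M') = \{F \sqcup F' \mid F \in \cL(M),\, F' \in \cL(M')\}$, and moreover that contraction is compatible with direct sum: $(M \oplus M')_{F \sqcup F'} \cong M_F \oplus M'_{F'}$. This last isomorphism is the key structural input and should be a short check from the definition of $\cL$ for a direct sum and for a contraction (the interval $[F \sqcup F', \hat 1]$ in $\cL(M \oplus M')$ is the product of intervals $[F,\hat1]$ in $\cL(M)$ and $[F',\hat1]$ in $\cL(M')$). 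I would also note $\rank(M \oplus M') = \rank(M) + \rank(M')$, which is already established in the excerpt.

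With this in hand, define, for an arbitrary matroid $N$, the polynomial $\psi_N(q)$ by: if $N \cong M_F \oplus M'_{F'}$ for some flats $F \in \cL(M)$, $F' \in \cL(M')$, set $\psi_N(q) := \chi_{M_F}(q)\,\chi_{M'_{F'}}(q)$, and otherwise set $\psi_N(q) := \chi_N(q)$. (One should be slightly careful that this is well-defined, but since the two cases can only overlap when both give $\chi_{M_F \oplus M'_{F'}}$ and we will be checking $\psi$ equals $\chi$ anyway, it is cleanest to just run the induction directly on the collection of matroids arising as contractions of $M \oplus M'$.) Now verify the three axioms for this family restricted to the contractions $N = (M\oplus M')_{F \sqcup F'}$: axiom (1), isomorphism-invariance, is immediate; axiom (2), vanishing when $N$ has a loop, follows because a loop of $M_F \oplus M'_{F'}$ is a loop of $M_F$ or of $M'_{F'}$, and $\chi_{M_F}$ or $\chi_{M'_{F'}}$ then vanishes by the corresponding axiom for $\chi$; axiom (3) is the main computation:
\[
\sum_{G \sqcup G' \supseteq F \sqcup F'} \psi_{(M\oplus M')_{G \sqcup G'}}(q)
= \sum_{G \supseteq F}\ \sum_{G' \supseteq F'} \chi_{M_G}(q)\,\chi_{M'_{G'}}(q)
= \Big(\sum_{G \supseteq F}\chi_{M_G}(q)\Big)\Big(\sum_{G' \supseteq F'}\chi_{M'_{G'}}(q)\Big),
\]
which by axiom (3) for $M_F$ and $M'_{F'}$ equals $q^{\rank(M)-\rank(F)} q^{\rank(M')-\rank(F')} = q^{\rank(M\oplus M') - \rank(F \sqcup F')}$, exactly as required. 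Then the uniqueness part of Definition~\ref{d:charpoly} (the remark that the defining data determine $\chi$ for $M_{\hat0}$, applied here to $M \oplus M'$) yields $\chi_{(M\oplus M')_{\hat0}} = \chi_M(q)\chi_{M'}(q)$, i.e.\ $\chi_{M \oplus M'}(q) = \chi_M(q)\chi_{M'}(q)$.

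I expect the main obstacle to be purely bookkeeping: cleanly establishing the contraction-compatibility $(M \oplus M')_{F \sqcup F'} \cong M_F \oplus M'_{F'}$ and, relatedly, that the flats of $M \oplus M'$ over $F \sqcup F'$ biject with pairs $(G, G')$ with $G \supseteq F$, $G' \supseteq F'$ — after that the sum splits as a product of two independent sums and everything falls out. There is no analytic or combinatorially deep step here; the one place to be careful is making sure the index set of the double sum is genuinely the full product (no "diagonal" flats of the direct sum that fail to be of the form $G \sqcup G'$), which is exactly what the structural lemma on $\cL(M \oplus M')$ guarantees.
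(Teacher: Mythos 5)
Your proof is correct, and it is a mild streamlining of the paper's argument rather than a fundamentally different one. The paper inducts on $|E|+|E'|$: assuming the product formula for all proper contractions $(M\oplus M')_{F\sqcup F'}$ with $(F,F')\neq(\hat 0, \hat 0)$, it expands the sum axiom for $M\oplus M'$, factors the double sum into a product, and cancels to isolate $\chi_{M\oplus M'}(q)$. You instead verify the sum axiom at \emph{every} flat $F\sqcup F'$ of $M\oplus M'$ at once, via the same factorization of the double sum, and then invoke the uniqueness remark following Definition~\ref{d:charpoly}. These are really the same computation organized differently: the paper runs an induction over ground-set size across all matroids, while you do one Möbius-inversion-style uniqueness argument inside the single lattice $\cL(M\oplus M')$. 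Your route avoids an explicit induction in the proof itself, though the uniqueness remark it appeals to is established by a similar induction. One simplification worth making in your write-up: the detour through a globally defined family $\psi_N$ on all matroids $N$, and the ensuing well-definedness worry, is unnecessary (and you already sense this). The uniqueness remark only asks for a collection of polynomials $a_{F\sqcup F'}$ indexed by the flats of $M\oplus M'$ satisfying the sum identities, so you may simply set $a_{F\sqcup F'}\coloneqq \chi_{M_F}(q)\,\chi_{M'_{F'}}(q)$; this is manifestly well-defined, and the isomorphism $(M\oplus M')_{F\sqcup F'}\cong M_F\oplus M'_{F'}$, while true and useful intuition, is not logically needed for the verification. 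The only structural inputs you actually need are $\cL(M\oplus M')=\{F\sqcup F'\}$ and $\rank(F\sqcup F')=\rank(F)+\rank(F')$, both of which the paper records.
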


\begin{proof}
    We induct on $|E|+|E'|$. This is clearly true for $|E|=|E'|=0$. Using the inductive hypothesis, we see
    \begin{align*}
        \sum_{F\sqcup F' \in\cL(M\oplus M')} \chi_{(M\oplus M')_{F\sqcup F'}}(q)&=q^{\rank(M\oplus M')}\\
        &=q^{\rank(M)}q^{\rank(M')}\\
        &=\left(\sum_{F\in\cL(M)} \chi_{M_F}(q)\right)\left(\sum_{F'\in\cL(M')} \chi_{M_{F'}}(q)\right)\\
        &=\sum_{F\sqcup F' \in\cL(M\oplus M')} \chi_{M_F}(q)\chi_{M_{F'}}(q)\\
        &=\sum_{F\sqcup F' \in\cL(M\oplus M')\setminus \{\hat{0}\sqcup\hat{0}\}} \chi_{(M\oplus M')_{F\sqcup F'}}(q)+\chi_M(q)\chi_{M'}(q). 
    \end{align*}
    Thus, $\chi_{M\oplus M'}(q)=\chi_{M}(q)\chi_{M'}(q)$.
\end{proof}

Since  $U_{n,n}=U_{1,1}^{\oplus n}$, $\chi_{U_{n,n}}(q)=\chi_{U_{1,1}}(q)^n=(q-1)^n.$

We define the {\em M\"obius invariant of $M$} to be $\mu_M\coloneqq \chi_M(0)$. By substituting $q=0$ and an easy induction argument on $|E|$, we have the following:

\begin{lemma} \label{l:mobiusproperty}
    We have
    \begin{enumerate}
        \item if $M$ is isomorphic to $M'$, $\mu_{M}=\mu_{M'}.$
        \item if $\hat{0}\neq \varnothing$, $\mu_M=0$,
        \item if $E=\varnothing$, $\mu_M=1$, and 
        \item if $\rank(M)\geq 1$, then $\sum_{F\in\cL(M)}\mu_{M_F}=0.$
    \end{enumerate}
  These conditions characterize $\{\mu_M\}$ taken over all matroids $M$.
\end{lemma}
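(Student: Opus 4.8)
The plan is to verify the two halves of the statement in order: first that the family $\mu_M := \chi_M(0)$ satisfies properties (1)--(4), and then that these four properties determine the family uniquely.

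For the first half I would obtain each property by evaluating the defining properties of $\chi_M$ from Definition~\ref{d:charpoly} at $q = 0$. Property (1) is immediate, since $\chi_M = \chi_{M'}$ as polynomials when $M \cong M'$; property (2) is immediate, since $\chi_M$ is the zero polynomial whenever $\hat 0 \neq \varnothing$. For property (3): if $E = \varnothing$ then $\cL(M) = \{\varnothing\}$ and $\rank(M) = 0$, so the flat-sum identity $\sum_{F \in \cL(M)} \chi_{M_F}(q) = q^{\rank(M)}$ collapses to $\chi_M(q) = q^0 = 1$, hence $\mu_M = 1$. For property (4): if $\rank(M) \geq 1$, substituting $q = 0$ into the flat-sum identity gives $\sum_{F \in \cL(M)} \mu_{M_F} = 0^{\rank(M)} = 0$.

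For the characterization I would argue by induction on $|E|$ that any family $\{\nu_M\}$ satisfying (1)--(4) agrees with $\{\mu_M\}$. The base case $|E| = 0$ is forced by property (3). For the inductive step, let $|E| \geq 1$ and assume the claim for all matroids on ground sets of smaller cardinality. If $\hat 0 \neq \varnothing$, property (2) gives $\nu_M = 0 = \mu_M$. If $\hat 0 = \varnothing$, then $\hat 0 \subsetneq \hat 1$ (as $E \neq \varnothing$), so $\rank(M) \geq 1$ and property (4) applies to both families. The key point is that the summand indexed by $F = \hat 0$ is $\nu_{M_{\hat 0}} = \nu_M$ itself, because $M_{\hat 0} = M$ when $\hat 0 = \varnothing$, whereas every other flat $F \neq \varnothing$ contributes a matroid $M_F$ on the strictly smaller set $E \setminus F$, to which the inductive hypothesis applies. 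Hence $\nu_M = -\sum_{F \neq \hat 0} \nu_{M_F} = -\sum_{F \neq \hat 0} \mu_{M_F} = \mu_M$, the final equality being property (4) for the family $\mu$ verified in the first half.

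The proof is essentially bookkeeping. The one point that needs attention is the shape of the inductive step: one must notice that the flat-sum identity determines $\nu_M$ precisely because the $F = \hat 0$ term reproduces $M$ itself, and that this manoeuvre is available only when $M$ is loopless --- which is why properties (2) and (3) are needed to settle the complementary cases --- and one must carry out the two halves in the stated order, since property (4) is used for $\mu$ in the uniqueness argument only after being established for $\mu$ in the first half.
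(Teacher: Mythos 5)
Your proposal is correct and matches the paper's approach exactly: the paper's proof is the one-line remark ``By substituting $q=0$ and an easy induction argument on $|E|$,'' and your two-part argument --- evaluating the defining properties of $\chi_M$ at $q=0$ to verify (1)--(4), then inducting on $|E|$ with the key observation that the $F=\hat 0$ summand of the flat-sum identity reproduces $\nu_M$ while all other summands fall under the inductive hypothesis --- is precisely what that remark is pointing at, just spelled out fully.
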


We will employ the incidence algebra \cite{Stanley:EC1}, an important tool in the study of finite posets, to make some observations about the M\"obius invariant.

\begin{definition}
Let $\cP$ be a finite poset, and define its incidence algebra $I_K(\cP)$ over a field $K$ \cite{Stanley:EC1} to be the algebra of formal sums
\[\sum_{x\leq y} a_{x,y}[x,y]\]
where $a_{x,y}\in K$ and $[x,y]$ is an interval in $\cP$. Multiplication is induced as a bilinear operation from 
\[[u,v]\cdot [x,y]=
\begin{cases}
  [u,y] &\text{if }v=x,\\
  0 &\text{else}.
\end{cases}\]
\end{definition}

It is straightforward to prove that $I_K(\cP)$ is an associative $K$-algebra with identity
$\delta=\sum_x [x,x]$.

\begin{remark}
    An example to have in mind is the poset of integers $\cP\coloneqq\{1,2,\dots,n\}$ under $\leq$. Here, $I_{\Q}(\cP)$ is isomorphic to the algebra of upper triangular matrices. For $i\leq j$, $[i,j]$ corresponds to the elementary matrix with a $1$ in the $i$th row and $j$th column and zeroes elsewhere.
\end{remark}

It is a straightforward exercise to show that an element $\alpha$ of $I_K(\cP)$ is invertible if and only if the coefficient of $[x,x]$ in $\alpha$ is nonzero for every $x\in \cP$. 

In particular, the {\em zeta function} $\zeta\in I_K(\cP)$,
\[\zeta\coloneqq\sum_{x\leq y} [x,y].\]
is invertible. For a matroid $M$ and $\cP=\cL(M)$, the inverse of $\zeta$ encodes the M\"obius invariants: set $\mu\in I(\cL(M))$ to be
\[\mu\coloneqq \sum_{F\subseteq G} \mu_{M_F^G} [F,G].\]
Then, Lemma~\ref{l:mobiusproperty} can be rephrased as $\zeta\mu=\delta,$
hence $\mu=\zeta^{-1}$. This implies $\mu\zeta=1$ which is equivalent to the following lemma.

\begin{lemma} \label{l:dualmobiusproperty}
  The M\"obius invariants of a matroid $M$ of rank at least $1$ obey
 \[\sum_{F\in\cL(M)}\mu_{M^F}=0.\]
\end{lemma}

Henceforth, we will write $r\coloneqq \rank(M)-1$.

\begin{lemma}
The characteristic polynomial of $M$ is given by the formula
 \[\chi_M(q)=\sum_{F\in\cL(M)}\mu_{M^F}q^{r+1-\rank(F)}.\] 
\end{lemma}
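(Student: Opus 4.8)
The plan is to verify that the right-hand side
\[
f_M(q) \coloneqq \sum_{F\in\cL(M)}\mu_{M^F}\,q^{\,r+1-\rank(F)}
\]
satisfies the three defining properties of the characteristic polynomial in Definition~\ref{d:charpoly}, and then invoke the uniqueness of $\chi_M$ to conclude $f_M = \chi_M$. (Here, as usual, $r+1 = \rank(M)$.) Isomorphism-invariance is immediate from the isomorphism-invariance of the M\"obius invariant (Lemma~\ref{l:mobiusproperty}(1)) and of the rank function. If $\hat 0 \neq \varnothing$, then every flat $F$ contains a loop, so $M^F$ has a loop for every $F$, hence $\mu_{M^F}=0$ by Lemma~\ref{l:mobiusproperty}(2), giving $f_M(q)=0$.

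The substantive point is the sum condition~\eqref{i:flatsum}: $\sum_{F\in\cL(M)} f_{M_F}(q) = q^{\rank(M)}$. First I would record that for a flat $F\in\cL(M)$, the flats of $M_F$ are exactly the intervals $[F,G]$ for $G\in\cL(M)$ with $G\supseteq F$, that $\rank_{M_F}(G\setminus F) = \rank(G)-\rank(F)$, and that $(M_F)^{G\setminus F} = M_F^G$. Hence
\[
f_{M_F}(q) = \sum_{\substack{G\in\cL(M)\\ G\supseteq F}} \mu_{M_F^G}\, q^{\,\rank(M)-\rank(F) - (\rank(G)-\rank(F))} = \sum_{\substack{G\supseteq F}} \mu_{M_F^G}\, q^{\,\rank(M)-\rank(G)}.
\]
Summing over all $F\in\cL(M)$ and interchanging the order of summation,
\[
\sum_{F\in\cL(M)} f_{M_F}(q) = \sum_{G\in\cL(M)} q^{\,\rank(M)-\rank(G)} \sum_{\substack{F\in\cL(M)\\ F\subseteq G}} \mu_{M_F^G}.
\]
By Lemma~\ref{l:dualmobiusproperty} applied to the matroid $M^G$ (whose M\"obius invariants of the form $\mu_{(M^G)^F} = \mu_{M_F^G}$, summed over flats $F$ of $M^G$, i.e.\ flats $F\subseteq G$ of $M$), the inner sum vanishes whenever $\rank(M^G)\geq 1$, i.e.\ whenever $G\neq\hat 0$; and when $G=\hat 0$ it equals $\mu_{M_{\hat 0}^{\hat 0}} = \mu_{U_{0,0}} = 1$. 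Thus only the term $G=\hat 0$ survives, contributing $q^{\rank(M)-\rank(\hat 0)} = q^{\rank(M)}$, as required.

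The only mild subtlety — and the step most worth stating carefully — is the bookkeeping identifying the intervals and minors: that $(M_F)^{G\setminus F}$ "is" $M_F^G$ and that restriction of a contraction gives the interval matroid $M_F^G$, together with the additivity $\rank(G) = \rank(F) + \rank_{M_F}(G\setminus F)$ from Lemma~\ref{l:maximalchain}. Once these identifications are in place, the computation is a direct application of Lemma~\ref{l:dualmobiusproperty} and the uniqueness clause following Definition~\ref{d:charpoly}. No induction is needed beyond what is already packaged into the cited lemmas.
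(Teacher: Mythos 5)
Your strategy matches the paper's exactly: verify the three defining conditions of Definition~\ref{d:charpoly} for $f_M$, swap the order of summation, and invoke a M\"obius vanishing identity. The isomorphism-invariance and loop conditions are handled correctly, as is the identification $\rank(M_F)-\rank_{M_F}(G\setminus F)=r+1-\rank(G)$.

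There is, however, one real slip in the final step. You need the inner sum
\[
\sum_{\substack{F\in\cL(M)\\ F\subseteq G}} \mu_{M_F^G}
\]
to vanish for $G\neq\hat 0$. The correct identification is $M_F^G = (M^G)_F$ (a \emph{contraction} of $M^G$), not $(M^G)^F$ as you wrote — indeed $(M^G)^F = M^F$, which is a different matroid. Accordingly, the relevant recursion is Lemma~\ref{l:mobiusproperty}(4) applied to $M^G$, namely $\sum_{F\in\cL(M^G)}\mu_{(M^G)_F}=0$ when $\rank(M^G)\geq 1$; this is the relation $\zeta\mu=\delta$. Lemma~\ref{l:dualmobiusproperty}, which you cite, is $\mu\zeta=\delta$, i.e.\ $\sum_{F\subseteq G}\mu_{M^F}=0$ — a true statement, but it sums the wrong M\"obius invariants and does not give the vanishing you need. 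With that citation and notation corrected, the proof is the same as the paper's.
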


\begin{proof}
Write $\chi'_M(q)$ for the polynomial given by the above formula. We check that $\chi'_M(q)$ obeys the conditions of Definition~\ref{d:charpoly}. 
It is clearly an isomorphism invariant. If the minimal flat of $M$ is nonempty, the minimal flat of $M^F$ is also nonempty for each $F$, hence $\mu_{M^F}=0$, and so the above formula gives $\chi'_M=0$. Now,
\begin{align*}
    \sum_{F\in\cL(M)} \chi'_{M_F}(q)&=\sum_{F\in \cL(M)}\left(\sum_{G\in \cL(M_F)} (\mu_{M_F^G})q^{r+1-\rank(G)}\right)\\
    &=\sum_{G\in \cL(M)}\left(\sum_{F\leq G} \mu_{(M^G)_F}\right)q^{r+1-\rank(G)}=q^{r+1}. \qedhere
\end{align*}
\end{proof}

By Lemma~\ref{l:dualmobiusproperty}, if $\rank(M)\geq 1$, 
\[\chi_M(1)=\sum_{F\in\cL(M)}\mu_{M^F}=0.\]
In that case, we can define the {\em reduced characteristic polynomial} to be 
\[\overline{\chi}_M(q)=\frac{\chi_M(q)}{q-1}.\]
Write 
\[\overline{\chi}_M(q)=\mu^0q^r-\mu^1q^{r-1}+\dots+(-1)^r\mu^r.\]
We will later see, as a consequence of Lemma~\ref{l:mixedproduct} and Lemma~\ref{l:positivevolume}, that the $\mu^i$'s are nonnegative, justifying the alternating signs.

Let $[q^i]$ be the operator that extracts the coefficient of $q^i$ from a polynomial in $q$.

\begin{lemma} \label{l:reducedcharcoeff}
  We have the equality
  \[\mu^i=(-1)^{i+1}\sum_{\substack{F\in\cL(M)\\\rank(F)\geq i+1}}\mu_{M^F}.\]
\end{lemma}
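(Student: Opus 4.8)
The plan is to start from the established formula
\[\chi_M(q)=\sum_{F\in\cL(M)}\mu_{M^F}\,q^{r+1-\rank(F)}\]
and divide by $q-1$. Since $\overline{\chi}_M(q)=\chi_M(q)/(q-1)$, I want to read off the coefficient $\mu^i$, which by the chosen sign convention satisfies $[q^{r-i}]\overline{\chi}_M(q)=(-1)^i\mu^i$. The cleanest route is to use the algebraic identity $\frac{q^m-1}{q-1}=q^{m-1}+q^{m-2}+\dots+1$, applied not to $q^{r+1-\rank(F)}$ directly but to the polynomial $\chi_M(q)$ as a whole after subtracting its value at $q=1$, which is $0$ by Lemma~\ref{l:dualmobiusproperty}.

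Concretely, first I would write
\[\chi_M(q)=\sum_{F}\mu_{M^F}\bigl(q^{r+1-\rank(F)}-1\bigr),\]
which is legitimate precisely because $\sum_F\mu_{M^F}=0$. Then for each $F$ with $d\coloneqq \rank(F)$ I use
\[\frac{q^{r+1-d}-1}{q-1}=\sum_{j=0}^{r-d}q^{j},\]
so that
\[\overline{\chi}_M(q)=\sum_{F\in\cL(M)}\mu_{M^F}\sum_{j=0}^{r-\rank(F)}q^{j}.\]
Now I extract the coefficient of $q^{r-i}$: the inner sum contributes $q^{r-i}$ exactly when $r-i\le r-\rank(F)$, i.e. when $\rank(F)\le i$. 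Hence
\[[q^{r-i}]\,\overline{\chi}_M(q)=\sum_{\substack{F\in\cL(M)\\\rank(F)\le i}}\mu_{M^F}.\]
Comparing with $[q^{r-i}]\overline{\chi}_M(q)=(-1)^i\mu^i$ gives one expression for $\mu^i$; to get the stated one (a sum over $\rank(F)\ge i+1$), I again invoke $\sum_{F\in\cL(M)}\mu_{M^F}=0$ to replace the sum over $\rank(F)\le i$ by the negative of the sum over $\rank(F)\ge i+1$, yielding
\[(-1)^i\mu^i=-\sum_{\substack{F\in\cL(M)\\\rank(F)\ge i+1}}\mu_{M^F},\]
which rearranges to the claimed formula $\mu^i=(-1)^{i+1}\sum_{\rank(F)\ge i+1}\mu_{M^F}$.

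There is essentially no serious obstacle here; the only things to be careful about are the bookkeeping of the sign convention (the $(-1)^i$ versus $(-1)^{i+1}$, and matching $\mu^i$ to the coefficient of $q^{r-i}$ rather than $q^i$) and making sure the two applications of $\sum_F\mu_{M^F}=0$ — once to justify subtracting $1$ inside the sum, once to flip the index range — are both valid, which they are whenever $\rank(M)\ge 1$, the standing hypothesis under which $\overline{\chi}_M$ is defined. A minor point worth a sentence is that the geometric-series identity requires $r+1-\rank(F)\ge 1$, i.e. $\rank(F)\le r$; the only flat with $\rank(F)=r+1$ is $\hat 1=E$, for which $M^{\hat 1}=M$ and $q^{r+1-\rank(F)}-1=q^0-1=0$, so that term drops out harmlessly and the formula is unaffected.
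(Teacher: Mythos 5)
Your proof is correct, and it is essentially the same computation as the paper's: both start from the explicit formula $\chi_M(q)=\sum_F\mu_{M^F}q^{r+1-\rank(F)}$, divide by $q-1$, and read off the coefficient of $q^{r-i}$. The only cosmetic difference is that the paper expands $-1/(1-q)$ as a formal power series and directly lands on $\sum_{\rank(F)\geq i+1}\mu_{M^F}$, whereas you use the finite geometric-series identity on each term $(q^{r+1-\rank(F)}-1)/(q-1)$, arrive at the complementary sum $\sum_{\rank(F)\leq i}\mu_{M^F}$, and flip it with a second application of $\sum_F\mu_{M^F}=0$; your bookkeeping remark about the $F=\hat{1}$ term and the sign convention are both sound.
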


\begin{proof}
    Observe that
    \[\overline{\chi}_M(q)=-\frac{\chi_M(q)}{1-q}= -\chi_M(q)(1+q+q^2+\dots),\]
  and 
  \begin{align*}
  \mu^i(M)&=(-1)^i[q^{r-i}](\overline{\chi}_M)(q)\\
  &=(-1)^{i+1}\left([q^0]\chi_M(q)+\dots+[q^{r-i}]\chi_M(q)\right)\\
  &=(-1)^{i+1}\sum_{\substack{F\in\cL(M)\\\rank(F)\geq i+1}}\mu_{M^F}. \qedhere
  \end{align*}
\end{proof}

\begin{lemma}
If $\rank(M)\geq 2$,
\[\mu^{r-1}(M)=\mu^{r-1}(\Tr^1(M))\]
for $r=\rank(M)-1$.
\end{lemma}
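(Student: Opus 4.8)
The plan is to use the formula for the top two coefficients of the reduced characteristic polynomial from Lemma~\ref{l:reducedcharcoeff}, applied to $M$ and to $\Tr^1(M)$, and to compare the relevant sums over flats. By Lemma~\ref{l:reducedcharcoeff} with $i = r-1$, we have
\[
\mu^{r-1}(M) = (-1)^{r}\sum_{\substack{F\in\cL(M)\\ \rank(F)\geq r}}\mu_{M^F},
\]
so the sum ranges over flats of rank $r$ together with the flat $\hat 1 = E$ of rank $r+1$. The analogous statement for $N \coloneqq \Tr^1(M)$ (which has rank $r$, so its ``$r$'' is $r-1$) reads
\[
\mu^{r-1}(N) = \mu^{(r-1)-1}(\text{?})\ldots
\]
— more carefully, writing $s \coloneqq \rank(N)-1 = r-1$, the top coefficient $\mu^s(N) = \mu^{r-1}(N)$ is by definition simply the leading coefficient of $\overline{\chi}_N(q)$, which equals $\mu_{N^{\hat 1}} = \mu_N = \chi_N(0)$ up to sign; alternatively $\mu^{r-1}(N) = (-1)^{r-1+1}\sum_{\rank(F)\geq r}\mu_{N^F}$, but in $N$ the only flat of rank $\geq r$ is $E$ itself. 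So I would instead compare $\mu^{r-1}(M)$ and $\mu^{r-1}(N)$ directly via their defining sums, being careful about which flats survive truncation.

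The key point is the relationship between the lattices: $\cL(\Tr^1(M)) = \{F\in\cL(M)\mid \rank(F)\leq r-1\}\cup\{E\}$, so truncation deletes exactly the corank-$1$ flats (rank $r$) and keeps everything of rank $\leq r-1$ as well as $E$; moreover for a flat $F$ of $M$ with $\rank(F)\leq r-1$, the restriction $(\Tr^1 M)^F$ coincides with $M^F$ (truncating at the top does not affect intervals $[\hat 0, F]$ below the truncation level), hence $\mu_{(\Tr^1 M)^F} = \mu_{M^F}$. The subtle case is $F = E$: in $\Tr^1(M)$, $(\Tr^1 M)^E = \Tr^1(M)$ itself, whereas in $M$, $M^E = M$. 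So the two sums defining $\mu^{r-1}(M)$ and $\mu^{r-1}(N)$ differ in that the former includes the rank-$r$ flats plus the term $\mu_M$, while the latter includes only the term $\mu_{\Tr^1(M)}$. Using Lemma~\ref{l:dualmobiusproperty} (the identity $\sum_{F\in\cL(M)}\mu_{M^F}=0$) applied to $M$ one gets $\mu_M = -\sum_{\rank(F)\leq r-1}\mu_{M^F}$, and applied to $\Tr^1(M)$ one gets $\mu_{\Tr^1(M)} = -\sum_{F\in\cL(\Tr^1 M),\, F\neq E}\mu_{(\Tr^1 M)^F} = -\sum_{\rank(F)\leq r-1}\mu_{M^F}$ by the identification of restrictions above. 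These right-hand sides are equal, so $\mu_M = \mu_{\Tr^1(M)}$, and after fixing the correct sign bookkeeping one concludes $\mu^{r-1}(M) = \mu^{r-1}(\Tr^1(M))$.

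I expect the main obstacle to be purely bookkeeping: matching the index conventions (the paper's ``$r$'' shifts when passing to $\Tr^1(M)$, whose rank is one less), and correctly identifying which flats of $M$ contribute to the rank-$\geq r$ sum versus which survive in $\Tr^1(M)$, together with verifying the restriction identity $(\Tr^1 M)^F = M^F$ for $\rank(F)\leq r-1$. Once those identifications are pinned down, the equality follows from two applications of Lemma~\ref{l:dualmobiusproperty}. An alternative, possibly cleaner, route is to observe directly that $\mu^{r-1}(M)$ is (up to sign) determined by $\chi_M(q) \bmod q^2$ together with $\overline\chi_M = \chi_M/(q-1)$, and that $\Tr^1$ affects $\chi_M$ only in degrees $\geq 1$ in a controlled way; but I would lead with the flat-sum comparison since all the needed lemmas are already in hand.
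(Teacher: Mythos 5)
Your plan is the paper's plan: express $\mu^{r-1}(M)$ via Lemma~\ref{l:reducedcharcoeff}, identify the flats of $\Tr^1(M)$ of rank $\leq r-1$ with those of $M$ and note $(\Tr^1 M)^F=M^F$ for them, and close with Lemma~\ref{l:dualmobiusproperty}. You also correctly identify the target identity, namely that $\mu_M$ plus the rank-$r$ contributions should equal $\mu_{\Tr^1(M)}$.

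However, the last step contains a dropped term that is not mere ``sign bookkeeping.'' Lemma~\ref{l:dualmobiusproperty} applied to $M$ yields
\[\mu_M = -\sum_{\substack{F\in\cL(M)\\\rank(F)\leq r}}\mu_{M^F},\]
with the sum running over all proper flats (so up to rank $r$), not $\rank(F)\leq r-1$ as you wrote. Consequently your intermediate assertion $\mu_M = \mu_{\Tr^1(M)}$ is false in general: for $U_{2,3}$ one has $\mu_{U_{2,3}}=2$ but $\mu_{\Tr^1(U_{2,3})}=\mu_{U_{1,3}}=-1$ (while both have $\mu^0=1$, as the lemma asserts). The identity that actually closes the argument is the one you announced but did not quite derive:
\[\mu_M + \sum_{\substack{F\in\cL(M)\\\rank(F)=r}}\mu_{M^F}
= -\sum_{\substack{F\in\cL(M)\\\rank(F)\leq r}}\mu_{M^F} + \sum_{\substack{F\in\cL(M)\\\rank(F)=r}}\mu_{M^F}
= -\sum_{\substack{F\in\cL(M)\\\rank(F)\leq r-1}}\mu_{M^F}
= \mu_{\Tr^1(M)},\]
the last equality being Lemma~\ref{l:dualmobiusproperty} for $\Tr^1(M)$ combined with the flat identification. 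Multiplying through by $(-1)^r$ gives $\mu^{r-1}(M)=(-1)^r\mu_{\Tr^1(M)}=\mu^{r-1}(\Tr^1(M))$, which is precisely the chain of equalities in the paper's proof.
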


\begin{proof}
    Observe that
    \begin{align*}
        \mu^{r-1}(M)&=(-1)^r\left(\mu_M+\sum_{\substack{F\in\cL(M)\\\rank(F)=r}}\mu_{M^F}\right)\\
        &=(-1)^r\left(-\sum_{\substack{F\in\cL(M)\\\rank(F)\leq r}}\mu_{M^F}+\sum_{\substack{F\in\cL(M)\\\rank(F)=r}}\mu_{M^F}\right)\\
        &=(-1)^{r-1}\left(\sum_{\substack{F\in\cL(M)\\\rank(F)\leq r-1}}\mu_{M^F}\right)\\
        &=(-1)^{r}(\mu_{\Tr^1(M)})\\
        &=\mu^{r-1}(\Tr^1(M)). \qedhere
    \end{align*}
\end{proof}

By induction, we obtain:
\begin{lemma}\label{l:truncationmobius} We have 
$\mu^{r-i}(M)=\mu^{r-i}(\Tr^i(M)).$
\end{lemma}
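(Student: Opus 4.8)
The plan is to evaluate both sides using the explicit formula of Lemma~\ref{l:reducedcharcoeff} for the coefficients of the reduced characteristic polynomial, and then to match them using the dual M\"obius relation of Lemma~\ref{l:dualmobiusproperty}. I prefer this to a direct iteration of the preceding ($i=1$) lemma, which only pins down the single coefficient $\mu^{r-1}$; once $i\geq 2$ we need a coefficient that is ``far from the top'', and a naive induction on $i$ does not obviously close.

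First I would compute the right-hand side. Since $\Tr^i(M)$ has rank $r+1-i$, in the formula of Lemma~\ref{l:reducedcharcoeff} applied to $\Tr^i(M)$ (with $r-i$ playing the role of $r$) the only flat of rank $\geq (r-i)+1$ is $E$ itself, so
\[\mu^{r-i}(\Tr^i(M))=(-1)^{r-i+1}\,\mu_{\Tr^i(M)}.\]
On the other hand, Lemma~\ref{l:reducedcharcoeff} applied to $M$ gives
\[\mu^{r-i}(M)=(-1)^{r-i+1}\!\!\sum_{\substack{F\in\cL(M)\\ \rank(F)\geq r-i+1}}\!\!\mu_{M^F},\]
so, comparing, it suffices to prove the identity
\[\sum_{\substack{F\in\cL(M)\\ \rank(F)\geq r-i+1}}\mu_{M^F}=\mu_{\Tr^i(M)}.\]

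To prove this identity I would argue as follows. By Lemma~\ref{l:dualmobiusproperty} the full sum $\sum_{F\in\cL(M)}\mu_{M^F}$ vanishes (here $\rank(M)=r+1\geq 1$), so the left-hand side above equals $-\sum_{\rank(F)\leq r-i}\mu_{M^F}$. Now the defining description of $\Tr^i(M)$ shows that the flats of $M$ of rank $\leq r-i$ are exactly the flats of $\Tr^i(M)$ other than $E$; moreover, for such a flat $F$ every flat contained in $F$ also has rank $\leq r-i$, so the interval $[\hat 0,F]$ — hence the restriction $M^F$ together with its rank function — is identical whether computed in $M$ or in $\Tr^i(M)$. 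Applying Lemma~\ref{l:dualmobiusproperty} to $\Tr^i(M)$ (of rank $r+1-i\geq 1$) and separating off the top flat $E$ yields $\mu_{\Tr^i(M)}=-\sum_{F\in\cL(\Tr^i(M))\setminus\{E\}}\mu_{(\Tr^i(M))^F}=-\sum_{\rank_M(F)\leq r-i}\mu_{M^F}$, which matches. This proves the identity and hence the lemma; the extreme cases $i=r$ (where $\Tr^r(M)$ has rank $1$) and $i=1$ (which recovers the preceding lemma) are covered by the same computation.

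The only real obstacle is the bookkeeping of the rank function under truncation: one must use that $\rank_{\Tr^i(M)}$ agrees with $\rank_M$ on the retained flats, so that the threshold ``$\rank\geq r-i+1$'' really isolates the single flat $E$ inside $\Tr^i(M)$, and that restriction commutes with truncation below level $r-i$. Both facts are immediate from the definitions. Alternatively — matching the phrasing ``by induction'' — one can first upgrade the preceding lemma to the assertion that every coefficient $\mu^{k}$ with $k\le r-1$ is unchanged by a single truncation (for instance via the recursion $\overline{\chi}_M(q)=q\,\overline{\chi}_{\Tr^1(M)}(q)-\mu_M$, obtained by isolating the rank-$r$ flats in $\chi_M(q)=\sum_F\mu_{M^F}q^{r+1-\rank(F)}$) and then iterate $\Tr^i=\Tr^1\circ\cdots\circ\Tr^1$; but the direct computation above is shorter.
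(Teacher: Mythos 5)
Your argument is correct, and it usefully fills a gap that the paper glosses over: the paper's justification is only the phrase ``by induction,'' and as you observe, the naive iteration does not close. Applying the preceding lemma (the $i=1$ case) to the successive truncations yields $\mu^{r-j-1}(\Tr^{j}M)=\mu^{r-j-1}(\Tr^{j+1}M)$ for $j=0,1,\dots$, and these identities do not chain to the desired $\mu^{r-i}(M)=\mu^{r-i}(\Tr^i M)$ because the coefficient index drops at every link; the missing middle step $\mu^{r-i}(M)=\mu^{r-i}(\Tr^{1}M)$ is strictly stronger than the $i=1$ lemma. Your direct computation closes this cleanly: you identify $\mu^{r-i}(\Tr^iM)=(-1)^{r-i+1}\mu_{\Tr^iM}$ via Lemma~\ref{l:reducedcharcoeff}, then apply Lemma~\ref{l:dualmobiusproperty} to both $M$ and $\Tr^i M$ (using that restriction and rank are unchanged by truncation on the retained flats) to match this against $(-1)^{r-i+1}\sum_{\rank F\geq r-i+1}\mu_{M^F}=\mu^{r-i}(M)$. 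These are exactly the two tools the paper deploys in the $i=1$ proof; you have simply run the same computation once at the general threshold instead of iterating, which sidesteps the index mismatch. Your alternative repair — first upgrading to the recursion $\overline{\chi}_M(q)=q\,\overline{\chi}_{\Tr^1(M)}(q)-\mu_M$, which shows that \emph{every} coefficient $\mu^k$ with $k\leq r-1$ survives one truncation, and then iterating — is also valid, and has the added virtue of explaining the ``easy consequence'' the paper records immediately after this lemma.
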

Another easy consequence is
\[\overline{\chi}_{\Tr^1(M)}=\frac{\overline{\chi}_M(q)-\overline{\chi}_M(0)}{q}\]
as subtracting off the constant term and dividing by $q$ has the effect of shifting the coefficients.

We can now state the main result of this note, the Rota--Heron--Welsh conjecture (now the Adiprasito--Huh--Katz theorem):

\begin{theorem}\label{t:AHK}\cite{AHK} The characteristic polynomial is log-concave:
for $1\leq i\leq r$,
\[\mu_{i-1}\mu_{i+1}\leq \mu_i^2.\]
\end{theorem}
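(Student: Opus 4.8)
The plan is to deduce Theorem~\ref{t:AHK} from log-concavity of the reduced characteristic polynomial $\overline{\chi}_M$, to rewrite the coefficients of $\overline{\chi}_M$ in terms of the volume polynomial $V_M$, and then to read the inequality off from the Lorentzian signature of a Hessian supplied by the bootstrap of Section~\ref{s:bootstrap}. First I would reduce to $\overline{\chi}_M$. If $M$ has a loop the inequality is the trivial $0\le 0$, so assume $M$ is loopless; since $\chi_{M\oplus M'}=\chi_M\chi_{M'}$ and a product of log-concave polynomials with no internal zeros is again such, we may further assume $M$ is connected, so that all the $\mu^i$ are strictly positive (classical; it also follows from Lemma~\ref{l:positivevolume} and the recursion of Section~\ref{s:ample}). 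From $\chi_M(q)=(q-1)\overline{\chi}_M(q)$ one gets $\mu_i=\mu^i+\mu^{i-1}$ (with $\mu^{-1}=0$, $\mu_{r+1}=\mu^r$), and a one-line manipulation shows that if $\mu^0,\dots,\mu^r$ is log-concave with positive terms then so is $\mu_0,\dots,\mu_{r+1}$. Thus it suffices to prove $\mu^{k-1}\mu^{k+1}\le(\mu^k)^2$ for $1\le k\le r-1$.

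Next I would translate this into a statement about $V_M$. By the identity $D_\alpha^{r-k}D_\beta^{k}V_M=\mu^k$ (Lemma~\ref{l:mixedproduct}), set $W\coloneqq D_\alpha^{r-k-1}D_\beta^{k-1}V_M$, a homogeneous quadratic form on $L_M$; then $D_\alpha^2W=\mu^{k-1}$, $D_\alpha D_\beta W=\mu^k$, $D_\beta^2 W=\mu^{k+1}$, and the desired inequality says exactly that
\[\begin{pmatrix} D_\alpha^2 W & D_\alpha D_\beta W\\ D_\alpha D_\beta W & D_\beta^2 W\end{pmatrix}\]
has nonpositive determinant. Its diagonal entries $\mu^{k-1},\mu^{k+1}$ are positive by the first step, so this is equivalent to the matrix --- equivalently, the Hessian quadratic form of $W$ restricted to $\Span_\R(\alpha,\beta)\subseteq L_M$ --- not being positive definite, i.e.\ having at most one positive eigenvalue. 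So the whole theorem comes down to this eigenvalue bound for the quadratic $W$.

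The heart of the argument is the Lorentzian bootstrap of Section~\ref{s:bootstrap}, which supplies the bound by induction on degree: one proves that every iterated derivative $f=D_{v_1}\cdots D_{v_j}V_M$ with each $v_i\in\overline{\cK_M}$ and $\deg f\ge 2$ has Hessian with at most one positive eigenvalue on $\cK_M$. In the inductive step, fix $v$ in the interior of $\cK_M$; the Euler identity makes the Hessian quadratic form of $f$ at $v$ equal to $d(d-1)f(v)>0$ on the vector $v$ (positivity of $V_M$, hence of its ample derivatives, on $\cK_M$ being Lemma~\ref{l:positivevolume}), so there is at least one positive eigenvalue. For "at most one" one invokes Perron--Frobenius (Section~\ref{s:symmetric}): in the basis of $L_M$ indexed by flats the Hessian of $f$ at $v$ is a nonnegative symmetric matrix, and it is irreducible precisely because the flat graph $\Gamma_M$ is connected (Lemma~\ref{l:flatgraph}) and $\cK_M$ has the recursive structure under restriction and contraction established in Section~\ref{s:ample}; hence its top eigenvalue is simple with a strictly positive eigenvector $w$, and applying the inductive hypothesis to the lower-degree polynomial $D_w f$ forces every other eigenvalue of the Hessian of $f$ to be $\le 0$. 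Applying this to $W$ --- itself a degree-$2$ iterated $\alpha,\beta$-derivative of $V_M$ with $\alpha,\beta\in\overline{\cK_M}$ --- gives the bound needed above, so $\mu^{k-1}\mu^{k+1}\le(\mu^k)^2$, and the first step upgrades this to $\mu_{i-1}\mu_{i+1}\le\mu_i^2$.

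The main obstacle is the bootstrap: organizing the induction so the derivative directions remain inside the appropriate ample cones as one passes to contractions and restrictions, and verifying the irreducibility hypothesis that licenses the Perron--Frobenius input --- which is exactly what forces the prior development of $\cK_M$, its contraction/restriction recursion, the connectedness of $\Gamma_M$, and the spectral theory of nonnegative symmetric matrices. Once that machinery is in hand, the reductions in the first two steps are routine.
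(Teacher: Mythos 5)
Your high-level architecture matches the paper exactly: reduce to the reduced characteristic polynomial, express $\mu^k = D_\alpha^{r-k}D_\beta^k V_M$ (Lemma~\ref{l:mixedproduct}), deduce the inequality from a Lorentzian signature of a Hessian of derivatives of $V_M$, and prove that signature via a bootstrap. Your first step takes a genuinely different route from the paper --- you use $\mu_i = \mu^i + \mu^{i-1}$ together with the elementary fact that this recurrence preserves log-concavity of positive sequences, whereas the paper sidesteps all of this via the identity $\chi_M(q) = \overline{\chi}_{M\oplus U_{1,1}}(q)$, which realizes $\chi_M$ as a reduced characteristic polynomial at a stroke. Your route works but needs strict positivity of the $\mu^i$; the paper's route needs nothing extra.

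The serious gap is in your description of the bootstrap. You write that the Hessian $\cH_v f$ is ``a nonnegative symmetric matrix'' --- it is not; its diagonal entries $D_F^2 f$ can be (and typically are) negative, as the rank-$3$ computation $D_G^2 V_M = -1$ shows. It is only \emph{weakly} nonnegative (nonnegative off the diagonal), which is what the paper's Perron--Frobenius statement is built to handle; that is a slip rather than a fatal error, but it signals that the matrix picture you have is not quite right. The fatal step is the claim that, $w$ being the Perron eigenvector of $\cH_v f$, ``applying the inductive hypothesis to $D_w f$ forces every other eigenvalue of $\cH_v f$ to be $\le 0$.'' That inference is not justified: the inductive hypothesis tells you that $\cH_{v'}(D_w f)$ has at most one positive eigenvalue for $v'$ in the cone, and there is no mechanism relating this single derivative Hessian to the other eigenvalues of $\cH_v f$. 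The actual bootstrap (Proposition~\ref{p:bootstrap}) does something quite different: it requires the one-positive-eigenvalue hypothesis for $\cH_v(\partial_F f)$ for \emph{every} coordinate direction $\delta_F$, feeds each of these through the Hodge--Riemann--type inequality of Lemma~\ref{l:negativesemi}, and \emph{sums over all $F$} using the Eulerian identity to produce the matrix inequality $(B_v f)^2 - (d-1)B_v f \succeq 0$. This forces no eigenvalues of $B_v f$ in $(0,d-1)$, and only then does Perron--Frobenius (applied to the positive eigenvector of eigenvalue $d-1$) pin down the unique positive eigenvalue. The summation over all coordinate directions is the crux, and a single direction $w$ cannot substitute for it. Relatedly, the recursion you wave at is also more delicate than stated: $D_F V_M$ is not the volume polynomial of another matroid but a \emph{product} $V_{M^F}\cdot V_{M_F}$, which is why the paper has to prove a more general statement about the functions $f_{\cF}$ indexed by chains of flats, with irreducibility coming from joins of flat graphs, before Theorem~\ref{t:uniquepositiveeigenvalue} drops out as the $\ell=0$ case. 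Finally, the theorem is proved for directions in the open cone $\cK_M$, while $\alpha,\beta$ lie only on its boundary; the paper closes this by taking limits of sequences $\v_k,\w_k\in\cK_M$ converging to $\alpha,\beta$, a step you should make explicit.
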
 

\section{Why Log-concavity?} \label{s:why}

Why would one expect the characteristic polynomial to be log-concave and why does it matter? We briefly summarize some basic facts about log-concavity and recommend \cite{Branden:unimodality}, to which this section is indebted, for more details. 

\begin{definition}
    A finite sequence of real numbers $a_0,\dots,a_n$ is {\em unimodal} if there exists $j$ with $0\leq j\leq n$ such that
    \[a_0\leq \dots\leq a_{j-1}\leq a_j\geq a_{j+1}\geq\dots\geq a_n.\]
    The sequence is {\em log-concave} if for all $j$ with $1\leq j\leq n-1$,
    \[a_{j-1}a_{j+1}\leq a_j^2.\]
    A polynomial $p(x)\coloneqq a_0+a_1x+\dots+a_nx^n$ is {\em real-rooted} if all of its zeroes are real.
\end{definition}

For a fixed $n$, the binomial coefficients $\binom{n}{k}$ are log-concave, considered as a function of $k$. We say that the sequence $a_0,\dots,a_n$ is {\em ultra log-concave} if the sequence
\[a_0/\binom{n}{0},\dots,a_k/\binom{n}{k},\dots,a_n/\binom{n}{n}\] 
is log-concave. 

\begin{prop}
    For a sequence of nonnegative real numbers $a_0,\dots,a_n$, real-rootedness of $p(x)\coloneqq a_0+a_1x+\dots+a_nx^n$ implies the ultra log-concavity of $a_0,\dots,a_n$. Ultra log-concavity of a sequence implies log-concavity. Log-concavity implies unimodality.
\end{prop}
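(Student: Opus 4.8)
The plan is to prove the three implications in turn, from the strongest hypothesis to the weakest conclusion. All the substance is in the first implication, which is Newton's inequalities; the other two are a one-line computation and a short monotonicity argument.

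For \emph{real-rootedness $\Rightarrow$ ultra log-concavity}, I would isolate a fixed triple $a_{k-1},a_k,a_{k+1}$ (for $1\le k\le n-1$) and reduce the desired inequality to the discriminant of a quadratic. If $a_{k-1}=0$ the inequality $a_{k-1}a_{k+1}\le a_k^2$ is trivial, so I may assume $a_{k-1}>0$; discarding a common factor of $x$ I may also assume $a_0>0$, and (passing to the true degree) that $a_n>0$. Two operations preserve real-rootedness of a polynomial with nonnegative coefficients: differentiation (by Rolle's theorem a degree-$d$ polynomial with $d$ real roots has a derivative with $d-1$ real roots, i.e.\ all of them), and, when the constant term is nonzero, the reciprocal $q(x)\mapsto x^{\deg q}q(1/x)$, which merely inverts the roots. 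Apply $k-1$ derivatives to $p$ to annihilate $a_0,\dots,a_{k-2}$; then take the reciprocal; then apply enough further derivatives to annihilate all but the top three coefficients. What remains is a quadratic $A+Bx+Cx^2$, still with all roots real, where $A,B,C$ are \emph{positive} multiples of $a_{k+1},a_k,a_{k-1}$ respectively, the multipliers being explicit products of factorials coming from the differentiations.

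Real-rootedness of $A+Bx+Cx^2$ (with $C\neq0$) is exactly $B^2\ge 4AC$. Unwinding the factorial bookkeeping, with $m=n-k+1$ the degree after the first $k-1$ derivatives, this inequality collapses to $a_{k-1}a_{k+1}\,(k+1)(n-k+1)\le a_k^2\,k(n-k)$. Since $\binom{n}{k-1}\binom{n}{k+1}/\binom{n}{k}^2=\dfrac{k(n-k)}{(k+1)(n-k+1)}$, this says precisely $(a_{k-1}/\binom{n}{k-1})(a_{k+1}/\binom{n}{k+1})\le(a_k/\binom{n}{k})^2$, which is ultra log-concavity. For \emph{ultra log-concavity $\Rightarrow$ log-concavity}, write $b_k=a_k/\binom{n}{k}$; then $a_{k-1}a_{k+1}=\binom{n}{k-1}\binom{n}{k+1}\,b_{k-1}b_{k+1}\le\binom{n}{k-1}\binom{n}{k+1}\,b_k^2$, and it suffices that $\binom{n}{k-1}\binom{n}{k+1}\le\binom{n}{k}^2$, which is the displayed ratio being $\le1$. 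For \emph{log-concavity $\Rightarrow$ unimodality}, restrict to sequences with no internal zeros (the relevant case: a real-rooted polynomial with nonnegative coefficients has all roots $\le0$, hence its support is an interval of indices on which every coefficient is positive); after deleting leading and trailing zeros, all $a_k>0$, and log-concavity reads $a_{k+1}/a_k\le a_k/a_{k-1}$, so the successive ratios are non-increasing. Taking $j$ to be the last index with $a_j/a_{j-1}\ge1$ (and $j=0$ if there is none) gives $a_0\le\cdots\le a_j\ge a_{j+1}\ge\cdots\ge a_n$.

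The main obstacle is the first implication, and within it the only genuine work is checking that the tower of factorials produced by the successive differentiations and the reciprocal really does collapse to the ratio $\binom{n}{k-1}\binom{n}{k+1}/\binom{n}{k}^2$ rather than to something slightly off; the rest is routine. A secondary point worth flagging is the ``no internal zeros'' hypothesis used in the last implication: the bare inequalities $a_{i-1}a_{i+1}\le a_i^2$ for a nonnegative sequence do permit non-unimodal examples such as $1,0,0,1$, but this hypothesis is automatic in every setting in which the proposition is applied here.
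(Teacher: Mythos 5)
The paper states this proposition without proof (deferring to Br\"{a}nd\'{e}n's survey), so there is no in-text argument to compare against; your proposal supplies one and it is essentially the standard proof of Newton's inequalities. The reduction to a quadratic via ``derivative'' and ``reciprocal'' is correct, the two operations preserve real-rootedness for the reasons you give, and the factorial bookkeeping does collapse to $a_{k-1}a_{k+1}\,(k+1)(n-k+1)\le a_k^2\,k(n-k)$, which matches $\binom{n}{k-1}\binom{n}{k+1}/\binom{n}{k}^2=\frac{k(n-k)}{(k+1)(n-k+1)}$. The second and third implications are fine as written.

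Two small points worth tightening. First, when you ``discard a common factor of $x$'' and ``pass to the true degree,'' the binomials in the ultra log-concavity statement change from those of $n$ to those of the new degree; this is harmless, but one should note that $\binom{m}{k-1}\binom{m}{k+1}/\binom{m}{k}^2=\frac{k(m-k)}{(k+1)(m-k+1)}$ is increasing in $m$, so the inequality for the smaller degree is the stronger one and implies the inequality with the original $n$. Second, you are right that the literal implication ``log-concave $\Rightarrow$ unimodal'' fails for nonnegative sequences with internal zeros ($1,0,0,1$ being the standard counterexample), and right that the implicit ``no internal zeros'' hypothesis is automatic in the chain of implications the paper actually uses, since a real-rooted polynomial with nonnegative coefficients has all roots nonpositive and hence an interval of support. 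Flagging this convention explicitly is the one place where your proposal is more careful than the (absent) proof in the text.
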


In deciding the existence of a coloring (say of a four-coloring of a planar graph), it is natural to consider the roots of a chromatic polynomial. Although the roots are rarely all real, after accounting for the alternating signs of the coefficients (by replacing $\chi_\Gamma(q)$ by $\pm\chi_{\Gamma}(-q)$), it is natural to ask if the coefficients exhibit weaker properties like log-concavity or unimodality. Unimodality of chromatic polynomials was observed in examples and then conjectured by Read. Work of Hoggar and then Rota, Heron, and Welsh extended the conjecture from unimodality to log-concavity, and then to the log-concavity of characteristic polynomials of matroids. See \cite{AHK} for references.

An important motivating question beyond the methods of \cite{AHK} was Mason's conjecture, the ultra log-concavity of the number of independent sets of a matroid. For a matroid $M$, a set $I\subset E$ is {\em independent} if the rank of the minimal flat containing it is $|I|$. One defines $f_k$ to be the number of independent sets of size $k$. The log-concavity of $\{f_k\}$, a conjecture of Mason and Welsh, was resolved using the log-concavity of the characteristic polynomial and an observation of Lenz. Ultra log-concavity was established independently by Brändén--Huh \cite{BH:Lorentzian} and Anari--Liu--Gharan--Vinzant \cite{ALGV} by Lorentzian techniques.

\section{Volume polynomials} \label{s:volume}

In this section, we define the volume polynomial of a matroid, a function 
\[V_M\colon\R^{\cP(M)}\to \R\] that encodes the characteristic polynomial and other invariants.

For a loopless matroid $M$ on a ground set $E$, write $\cP(M)\coloneqq \cL(M)\setminus\{\hat{0},\hat{1}\}$. Let $\R^{\cP(M)}$ be the vector space of functions $c\colon \cP(M)\to \R$. Write $\delta_F\colon \cP(M)\to \R$ for the function defined by
\[\delta_F(G)=\begin{cases}
    1&\text{if }G=F\\
    0&\text{else}.
\end{cases}\]
We introduce the following elements of $\R^{\cP(M)}$: if $i\in E$, let
\[\alpha_i\coloneqq \sum_{F\ni i} \delta_F,\quad \beta_i\coloneqq \sum_{F\not\ni i} \delta_F.\]
Note that $\alpha_i+\beta_i=\sum_F \delta_F$.
Let $W_M\subseteq \R^{\cP(M)}$ be the vector space 
\[W_M=\Span(\{\alpha_i-\alpha_j\mid i,j\in E\}),\]
and set $L_M\coloneqq \R^{\cP(M)}/W_M$. Under the projection $\R^{\cP(M)}\to L_M$, $\alpha_i$ and $\beta_i$ have images $\alpha$ and $\beta$, respectively, that are independent of $i$. 
We write an element of $\R^{\cP(M)}$ as
\[\t=\sum_F t_F\delta_F\]
for $t_F\in\R$. For $F\in \cP(M)$ and a differentiable function $V_M\colon \R^{\cP(M)}\to \R$, we  write
\[D_FV_M\coloneqq \frac{\partial\ }{\partial t_F} V_M,\ D_{\t}V_M\coloneqq \sum_F t_F D_F V_M.
\]

There are natural linear maps 
\[\pi^F\colon \R^{\cP(M)}\to L_{M^F}, \quad\pi_F\colon \R^{\cP(M)}\to L_{M_F}\]
defined  as follows:
\[\pi^F(\delta_G)=\begin{cases}
    \delta_G &\text{if }G\subsetneq F\\
    -\alpha & \text{if }G=F\\
    0 &\text{else}
\end{cases},
\quad\pi_F(\delta_G)=\begin{cases}
    \delta_G &\text{if }G\supsetneq F\\
    -\beta & \text{if }G=F\\
    0 &\text{else}
\end{cases}.
\]
By the following lemma, both of these maps will vanish on $\alpha_i-\alpha_j=\beta_j-\beta_i$ and, thus give well-defined maps
\[\pi^F\colon L_M\to L_{M^F},\quad\pi_F\colon L_M\to L_{M_F}.\]
It is immediate that $\pi^F\times\pi_F\colon L_M\to L_{M^F}\times L_{M_F}$ is surjective.
Below, we will consider the images of $\alpha_i$ and $\beta_i$ under $\pi_F$ and $\pi^F$. We will write $\alpha$ and $\beta$ for the corresponding element of $L_{M_F}$ and $L_{M^F}$ in the target space of $\pi_F$ and $\pi^F$ depending on context.

\begin{lemma} \label{l:alphabetaprojection}
    For $F\in\cP(M)$, we have the following for any $i\in E$,
    \begin{align*}
        \pi_F(\alpha_i)=\alpha,&\quad \pi^F(\alpha_i)=0,\\
        \pi_F(\beta_i)=0,&\quad \pi^F(\beta_i)=\beta        
    \end{align*}
\end{lemma}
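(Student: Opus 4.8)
The plan is to compute each of the four images directly from the definitions of $\pi_F$ and $\pi^F$, using the fact that, since these maps descend to $L_M$ (as claimed just before the lemma), it suffices to verify the identities for the representatives $\alpha_i = \sum_{F' \ni i}\delta_{F'}$ and $\beta_i = \sum_{F' \not\ni i}\delta_{F'}$ in $\R^{\cP(M)}$. There is a mild circularity to watch: the sentence before the lemma asserts the maps are well-defined \emph{because} of the lemma, so strictly I should phrase the proof as: (a) first establish the four formulas at the level of $\R^{\cP(M)}$, observing en route that $\pi^F(\alpha_i)=\pi^F(\alpha_j)$ and $\pi_F(\alpha_i)=\pi_F(\alpha_j)$, hence both maps kill $W_M = \Span(\alpha_i - \alpha_j)$ and descend; (b) then the displayed formulas transport to $L_M$ without change.

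First I would handle $\pi_F(\alpha_i)$. By linearity, $\pi_F(\alpha_i) = \sum_{F' \ni i} \pi_F(\delta_{F'})$. Split the sum according to the trichotomy in the definition of $\pi_F$: terms with $F' \supsetneq F$ contribute $\delta_{F'}$; the term $F' = F$ (present iff $i \in F$) contributes $-\beta$; terms with $F'$ not comparable to $F$ or with $F' \subsetneq F$ contribute $0$. So when $i \notin F$, $\pi_F(\alpha_i) = \sum_{F' \supsetneq F,\, i \in F'} \delta_{F'}$, which is precisely $\alpha_i$ computed inside $L_{M_F}$ (recall $\cP(M_F)$ corresponds to flats of $M$ strictly containing $F$, other than $\hat1$; one must note $F' = \hat1$ never occurs since $i\in\hat1$ would put it in every such sum, but $\hat1\notin\cP(M_F)$ as a proper flat — actually here one uses that $\delta_{\hat1}$ is simply not a coordinate, so the sum is over $F \subsetneq F' \subsetneq \hat1$). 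When $i \in F$, I get $\pi_F(\alpha_i) = \big(\sum_{F' \supsetneq F} \delta_{F'}\big) - \beta$; but $\sum_{F'\supsetneq F}\delta_{F'}$ (over proper flats) equals $\alpha + \beta$ inside $L_{M_F}$ since every flat of $M_F$ either contains $i$ or not, and $i$ is now a loop of $M_F$... wait — more carefully, in $M_F$ the element $i\in F$ is not in the ground set $\hat1\setminus F$ at all, so the right comparison is with a fixed $i_0 \in \hat1\setminus F$: then $\sum_{F'} \delta_{F'} = \alpha_{i_0} + \beta_{i_0}$, giving $\pi_F(\alpha_i) = \alpha_{i_0}+\beta_{i_0} - \beta = \alpha$. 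The cases for $\pi^F(\alpha_i)$, $\pi_F(\beta_i)$, $\pi^F(\beta_i)$ are symmetric: e.g. $\pi^F(\beta_i) = \sum_{F'\not\ni i}\pi^F(\delta_{F'})$, where surviving terms are $F' \subsetneq F$ (contributing $\delta_{F'}$) and possibly $F' = F$ contributing $-\alpha$ iff $i\notin F$; a parallel bookkeeping yields $\beta$ in $L_{M^F}$. And $\pi^F(\alpha_i) = \sum_{F'\ni i}\pi^F(\delta_{F'})$: the only candidate surviving terms need $F'\subseteq F$, but $F'\ni i$ and $F'\subsetneq F$ combined with $F = $ minimal... here when $i\notin F$ no term $F'\subseteq F$ has $i\in F'$ so everything vanishes, giving $0$; when $i\in F$, I get $\sum_{F'\subsetneq F,\, i\in F'}\delta_{F'} - \alpha = \alpha_i^{M^F} - \alpha = 0$ inside $L_{M^F}$ since $\alpha_i$ and $\alpha$ agree there.

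The main obstacle is purely organizational rather than mathematical: keeping straight, in each of the four computations, which flats lie in $\cP(M_F)$ versus $\cP(M^F)$, remembering that $\hat0$ and $\hat1$ are excluded, and correctly identifying the leftover sums $\sum \delta_{F'}$ with $\alpha$, $\beta$, or $\alpha+\beta$ in the \emph{target} space using the right reference index. I would present one case ($\pi_F(\alpha_i)$) in full detail and remark that the other three follow by the same bookkeeping, invoking the evident symmetry $\alpha_i \leftrightarrow \beta_i$, $\subsetneq \leftrightarrow \supsetneq$, $\pi^F \leftrightarrow \pi_F$. I would also insert the one-line observation that, since none of the four images depends on $i$, the maps annihilate $\alpha_i - \alpha_j = \beta_j - \beta_i$ and therefore descend to $L_M$, which is exactly what the preceding paragraph promised.
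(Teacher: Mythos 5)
Your proposal is correct and follows essentially the same route as the paper: compute each image by linearity, split according to the trichotomy in the definitions of $\pi_F,\pi^F$, and handle the two cases $i\in F$ and $i\notin F$ separately, identifying the leftover sums with $\alpha$, $\beta$, or $\alpha+\beta$ in the target. The paper carries out all eight computations explicitly rather than appealing to the $\alpha\leftrightarrow\beta$, $\subsetneq\leftrightarrow\supsetneq$ symmetry, and it addresses the well-definedness on $L_M$ in the sentence immediately after the lemma rather than up front, but these are purely presentational differences.
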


\begin{proof}
  If $i\not\in F$, then we have
  \begin{align*}
  \pi_F(\alpha_i)&=\pi_F\left(\sum_{\substack{G\in \cP(M)\\G\ni i}} \delta_G\right)
  =\sum_{\substack{G\supseteq F\\G\ni i}} \pi_F(\delta_G)=\sum_{\substack{G\supsetneq F\\G\ni i}} \delta_G
  =\alpha,\\
  \pi^F(\alpha_i)&=\pi^F\left(\sum_{\substack{G\in \cP(M)\\G\ni i}} \delta_G\right)
  =\sum_{\substack{G\subseteq F\\G\ni i}} \pi^F(\delta_G)=0,\\
  \pi_F(\beta_i)& =\pi_F\left(\sum_{\substack{G\in \cP(M)\\G\not\ni i}} \delta_G\right)=
  \pi_F(\delta_F)+\sum_{\substack{G\supsetneq F\\G\not\ni i}} \pi_F(\delta_G)=-\beta+\beta=0,\\
   \pi^F(\beta_i)&=\pi^F\left(\sum_{\substack{G\in \cP(M)\\G\not\ni i}} \delta_G\right)=\pi^F(\delta_F)+\sum_{G\subsetneq F}\pi^F(\delta_G)
   =-\alpha+(\alpha+\beta)=\beta.
 \end{align*}
  
  If $i\in F$, 
  \begin{align*}
  \pi_F(\alpha_i)&=\pi_F\left(\sum_{\substack{G\in \cP(M)\\G\ni i}} \delta_G\right)
  =\pi_F(\delta_F) + \sum_{\substack{G\supsetneq F}} \pi_F(\delta_G)=-\beta+(\alpha+\beta)=\alpha,\\
  \pi^F(\alpha_i)&=\pi^F\left(\sum_{\substack{G\in \cP(M)\\G\ni i}} \delta_G\right)
  =\pi^F(\delta_F)+\sum_{\substack{G\subsetneq F\\G\ni i}} \pi^F(\delta_G)=-\alpha+\alpha=0,\\
  \pi_F(\beta_i)&=\pi_F\left(\sum_{\substack{G\in \cP(M)\\G\not\ni i}} \delta_G\right)
  =\sum_{\substack{G\supseteq F\\G\not\ni i}} \pi_F(\delta_G)=0,\\
  \pi^F(\beta_i)&=\pi^F\left(\sum_{\substack{G\in \cP(M)\\G\not\ni i}} \delta_G\right)
  =\sum_{\substack{G\subseteq F\\G\not\ni i}} \pi^F(\delta_G)
  =\sum_{\substack{G\subsetneq F\\G\not\ni i}} \delta_G=\beta.\qedhere\\
\end{align*}
\end{proof}

Hence, we can conclude that these linear maps are zero on $W_M$, and thus defined on $L_M.$ 

\begin{remark}
The values of $\pi_F$ and $\pi^F$ on $\delta_F$ is forced on us by their vanishing on $W_M$. Their definition was motivated by the self-intersection of a particular divisor in algebraic geometry.
\end{remark}

We can verify the following:
\begin{lemma} \label{l:mixedprojection}
  For flats $F\subsetneq G$, the following diagram commutes:
\[  \xymatrix{
  L_M\ar[r]^{\pi_{F}}\ar[d]_{\pi^{G}}&L_{M_{F}}\ar[d]^{\pi^{G}}\\
  L_{M^{G}}\ar[r]_{\pi_{F}}&L_{M_{F}^{G}},
  }\]
  i.e.,~$\pi^G(\pi_F(t))=\pi_F(\pi^G(t)).$
\end{lemma}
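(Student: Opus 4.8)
The plan is to verify the commutativity by direct computation on the basis $\{\delta_H\}$ of $\R^{\cP(M)}$ (since both composites $\pi^G\circ\pi_F$ and $\pi_F\circ\pi^G$ are linear and factor through $L_M$, it suffices to check equality before passing to the quotient). So I would fix a flat $H\in\cP(M)$ and compute $\pi^G(\pi_F(\delta_H))$ and $\pi_F(\pi^G(\delta_H))$, organizing the argument by the position of $H$ relative to the interval $[\hat 0,F]$, $(F,G)$, and the ``outside'' region. The natural case split is: (i) $H\subsetneq F$; (ii) $H=F$; (iii) $F\subsetneq H\subsetneq G$; (iv) $H=G$; (v) $H$ incomparable with $F$ or not contained in $G$ (the ``else'' cases for one or both maps). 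In each case I just unwind the piecewise definitions of $\pi_F$ and $\pi^G$, remembering that $\pi_F(\delta_F)=-\beta$ and $\pi^G(\delta_G)=-\alpha$, and that the $\delta$-terms surviving a projection are re-interpreted as basis vectors in the relevant quotient.

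The two cases that carry the real content are the ``diagonal'' ones. When $H=F$: $\pi_F(\delta_F)=-\beta\in L_{M_F}$, and I must compute $\pi^G(-\beta)$; by Lemma~\ref{l:alphabetaprojection} applied to the matroid $M_F$ (whose flat $G$ corresponds to $G$ in the interval), $\pi^G(\beta)=\beta$, so this gives $-\beta\in L_{M_F^G}$. Going the other way, $\pi^G(\delta_F)=\delta_F$ (as $F\subsetneq G$), and then $\pi_F(\delta_F)=-\beta\in L_{M_F^G}$, matching. Symmetrically, when $H=G$: $\pi^G(\delta_G)=-\alpha$, and $\pi_F(-\alpha)=-\alpha$ again by Lemma~\ref{l:alphabetaprojection} (this time for $M^G$), while $\pi_F(\delta_G)=\delta_G$ followed by $\pi^G(\delta_G)=-\alpha$; these agree. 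For $H$ strictly between $F$ and $G$, both routes simply carry $\delta_H$ to $\delta_H$; for $H\subsetneq F$, both send it to $\delta_H$ via $\pi^G$ then kill nothing under $\pi_F$... wait — one must be careful: $\pi_F(\delta_H)=0$ when $H\subsetneq F$, so that route gives $0$, and $\pi^G(\delta_H)=\delta_H$, then $\pi_F(\delta_H)=0$, so both give $0$; consistent. The remaining ``off-interval'' cases all collapse to $0$ on at least one side, and a short check shows the other side is $0$ too.

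The only subtlety worth flagging — and the step I expect to be the main (mild) obstacle — is bookkeeping the identifications: $L_{M_F^G}$ can be reached either as a restriction of $L_{M_F}$ or as a contraction of $L_{M^G}$, and I need the classes $\alpha,\beta$ in $L_{M_F^G}$ to be computed consistently along both routes. This is exactly where Lemma~\ref{l:alphabetaprojection} is invoked twice, once for the matroid $M_F$ and once for $M^G$, to guarantee $\pi^G(\beta)=\beta$ and $\pi_F(\alpha)=\alpha$ in the appropriate quotients; once that is in hand, every case is a one-line unwinding. I would present the argument compactly as a table of the four relevant regions with the two composites side by side, then dispatch the degenerate cases in a sentence, and conclude that the diagram commutes.
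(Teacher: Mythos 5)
Your proposal is correct and takes essentially the same approach as the paper: check both composites on each basis vector $\delta_H$, observe that the off-interval cases both give $0$ and that $\delta_H$ for $F\subsetneq H\subsetneq G$ passes through unchanged, and then verify the two ``diagonal'' cases $H=F$ and $H=G$ where $\pi_F(\delta_F)=-\beta$ and $\pi^G(\delta_G)=-\alpha$ appear. The only difference is that you pin down $\pi^G(\beta)=\beta$ and $\pi_F(\alpha)=\alpha$ by an explicit appeal to Lemma~\ref{l:alphabetaprojection} applied to $M_F$ and $M^G$, whereas the paper asserts these equalities directly; this is a small gain in explicitness, not a different route.
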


\begin{proof}
  We check the diagram on the image of $\delta_H$ for a flat $H$. Clearly if $H\not\in [F,G]$, the image of the two compositions is $0$. If $H\in (F,G)$ the image of the two compositions is $\delta_H$. It remains to check the identity on $\delta_F$ and $\delta_G$. Now,
  \[\pi^G(\pi_F(\delta_F))=\pi^G(-\beta)=-\beta=\pi_F(\delta_F)=\pi_F(\pi^G(\delta_F)).\]
  For $\delta_G$, we have
  \[\pi^G(\pi_F(\delta_G))=\pi^G(\delta_G)=-\alpha=\pi_F(-\alpha)=\pi_F(\pi^G(\delta_G)).\qedhere\]  
\end{proof}

Consequently, we will set $\pi_F^G\coloneqq\pi^G\circ \pi_F=\pi_F\circ \pi^G$ without ambiguity.

We will define the {\em volume polynomial} as  a function $V_M\colon L_M\to \R$. It is studied in greater depth from the point of view of matroid Chow rings in \cite{Eur:divisors}. 


\begin{definition}
    Let $f\in k[x_1,\dots,x_n]$ be a polynomial. The degree of a monomial $x_1^{d_1}x_2^{d_2}\cdots x_n^{d_n}$ is $d_1+\dots+d_n$. A polynomial is said to be homogeneous of degree $d$ if each of its monomials has degree $d$.
\end{definition} 

\begin{lemma}
    For each matroid $M$, there is a unique function $V_M\colon \R^{\cP(M)}\to\R$
    characterized by the following properties:
    \begin{enumerate}
        \item For a rank $1$ matroid $M$, $V_M=1$,
        \item if $\rank(M)>1$, then $V_M(0)=0$, and
        \item \label{i:diffeq} for any $F\in\cP(M)$, $D_F V_M(\t)=V_{M^F}(\pi^F(\t))V_{M_F}(\pi_F(\t))$.
    \end{enumerate}
    Moreover, $V_M$ is a homogeneous polynomial in $\{t_F\}$ of degree $\rank(M)-1$.
\end{lemma}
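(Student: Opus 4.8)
The plan is to prove existence and uniqueness simultaneously by induction on $\rank(M)$, and to prove homogeneity along the way. The base case $\rank(M)=1$ is immediate: $\cP(M)=\varnothing$, so $\R^{\cP(M)}=\{0\}$, and $V_M\equiv 1$ is homogeneous of degree $0$, and there are no conditions of type \eqref{i:diffeq} to check. For the inductive step, assume that for every loopless matroid $M'$ with $\rank(M')<\rank(M)$ there is a unique function $V_{M'}$ with the three listed properties, and that it is a homogeneous polynomial of degree $\rank(M')-1$.

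First I would establish \emph{uniqueness}. Suppose $V_M$ and $V_M'$ both satisfy the three properties. Their difference $g\coloneqq V_M-V_M'$ satisfies $g(0)=0$ (if $\rank(M)>1$; the rank $1$ case is already done) and $D_F g\equiv 0$ for every $F\in\cP(M)$, because by the inductive hypothesis the right-hand side $V_{M^F}(\pi^F(\t))V_{M_F}(\pi_F(\t))$ is the same polynomial for $V_M$ and $V_M'$ — here I use that $M^F$ and $M_F$ are loopless of strictly smaller rank (their ranks sum to $\rank(M)$, and $F\in\cP(M)$ forces both to be positive), so $V_{M^F}$ and $V_{M_F}$ are already uniquely determined. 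Since all first partials of $g$ vanish on $\R^{\cP(M)}$ and $g(0)=0$, we get $g\equiv 0$.

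Next, \emph{existence}. The natural construction: fix $F_0\in\cP(M)$ and try to integrate the prescribed partial derivative $\partial_{F_0} V_M = V_{M^{F_0}}(\pi^{F_0}(\t))V_{M_{F_0}}(\pi_{F_0}(\t))$ with respect to $t_{F_0}$, choosing the constant of integration (a function of the remaining variables) by recursion. More cleanly, I would exhibit the candidate directly. By the inductive hypothesis each $V_{M^F}\circ\pi^F$ and $V_{M_F}\circ\pi_F$ is a polynomial, so the proposed right-hand sides $g_F(\t)\coloneqq V_{M^F}(\pi^F(\t))V_{M_F}(\pi_F(\t))$ are polynomials of degree $(\rank(M^F)-1)+(\rank(M_F)-1)=\rank(M)-2$ each. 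The key point is that the $1$-form $\omega\coloneqq\sum_F g_F\, dt_F$ is \emph{closed}, i.e. $D_{F'} g_F = D_F g_{F'}$ for all $F,F'\in\cP(M)$; granting this, since $\R^{\cP(M)}$ is simply connected (it is a vector space), $\omega=dV_M$ for a unique polynomial $V_M$ with $V_M(0)=0$, and that $V_M$ satisfies property \eqref{i:diffeq} by construction, property (2) by the normalization of the constant, and is homogeneous of degree $(\rank(M)-2)+1=\rank(M)-1$ since each $g_F$ is homogeneous of that degree minus one. Then I would record that for rank $1$ matroids the statement holds separately, completing the induction.

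The main obstacle — and the heart of the proof — is verifying the closedness (compatibility) condition $D_{F'} g_F = D_F g_{F'}$, i.e. the mixed second derivatives agree. This is where Lemma~\ref{l:mixedprojection} and Lemma~\ref{l:alphabetaprojection} do the work: applying $D_{F'}$ to $g_F=V_{M^F}(\pi^F(\t))V_{M_F}(\pi_F(\t))$ via the chain rule expresses everything in terms of the doubly-restricted/contracted matroids $M_{F'}^F$, $M^{F}_{\ }$ composed further, and one must check the expression is symmetric in $F$ and $F'$. The argument splits into cases according to the comparability of $F$ and $F'$ in $\cL(M)$: when $F$ and $F'$ are incomparable, $\pi^F(\delta_{F'})$ and $\pi_F(\delta_{F'})$ vanish or are multiples of $\alpha,\beta$ so the cross terms collapse; when, say, $F'\subsetneq F$, one uses that $\pi_{F'}^{F'}$ composes with $\pi^F$ and $\pi_F$ compatibly via Lemma~\ref{l:mixedprojection}, together with the values $\pi_{F'}(\alpha)=\alpha$, $\pi^{F'}(\beta)=\beta$ from Lemma~\ref{l:alphabetaprojection}, to rewrite both $D_{F'}g_F$ and $D_F g_{F'}$ as the same product of three volume polynomials of the chain of minors cut out by $F'\subsetneq F$. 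I expect this case analysis to be the only genuinely laborious part; the integration/closedness formalism and the homogeneity bookkeeping are routine.
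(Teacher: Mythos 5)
Your proposal matches the paper's proof in all essentials: both proceed by induction on rank, construct $V_M$ by integrating the prescribed first partials (you phrase this as the Poincar\'e lemma for the closed $1$-form $\sum_F g_F\,dt_F$, the paper invokes ``the usual existence theorem for differential equations''), reduce the key step to the commutativity of mixed partials via a case split on the comparability of the two flats using Lemma~\ref{l:mixedprojection}, and obtain homogeneity from the degree count $\deg g_F = \rank(M)-2$ together with $V_M(0)=0$. One tiny imprecision worth fixing: when $F$ and $F'$ are incomparable and distinct, $\pi^F(\delta_{F'})$ and $\pi_F(\delta_{F'})$ simply vanish (not ``vanish or are multiples of $\alpha,\beta$''), which is why both mixed partials are identically zero in that case.
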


\begin{proof}
    We induct on the rank of $M$. For rank $1$, the case is clear. Suppose 
    $V_M$ has been constructed for all matroids of rank at most $r$. Let $M$ be a matroid of rank $r+1$. We first verify the equality of mixed partial derivatives: for any flats $F,G\in\cP(M)$, $D_F\left(D_GV_M\right)=D_G\left(D_FV_M\right)$. Then, by the usual existence theorem for differential equations, the function $V_M$ is well-defined.
    
    Let $F,G\in\cP(M)$. If $F$ and $G$ are incomparable, then
    \[D_G D_F V_M=
    D_G \left(V_{M^{F}}(\pi^{F}(t))\right)V_{M_{F}}(\pi_{F}(t))+
    V_{M^{F}}(\pi^{F}(t))D_G\left(V_{M_{F}}(\pi_{F}(t))\right)
    =0\]
    since $G$ is not a flat of either $M_F$ or $M^F$.
    Similarly $D_F D_G V_M=0.$ Now, suppose that $F$ and $G$ are comparable. We may suppose $F\subsetneq G$. Then,
    \begin{align*}
    D_G D_F V_M(\t)
    &=D_G\left(V_{M^F}(\pi^{F}(\t))V_{M_{F}}(\pi_{F}(\t))\right)\\
    &=D_G\left(V_{M^F}(\pi^{F}(\t))\right)V_{M_{F}}(\pi_{F}(\t))+
    V_{M^{F}}(\pi^{F}(\t))D_G\left(V_{M_{F}}(\pi_{F}(t))\right)\\
    &=0+V_{M^{F}}(\pi^{F}(t))V_{M_{F}^{G}}(\pi_F^G(\t))V_{M_G}(\pi_G(\t)).
    \end{align*}
    By an identical computation, this equals $D_F D_G V_M(\t)$.
    Since 
    \begin{align*}
      \deg(D_F V_M)&=\deg(V_{M^F})+\deg(V_{M_F})\\
      &=(\rank(M^F)-1)+(\rank(M_F)-1)\\
      &=\rank(M)-2,
    \end{align*}
    for all variables $t_F$,  $V_M$ has degree  equal to $\rank(M)-1$. Since $D_F V_M$ is homogeneous and $V_M(0)=0$, it follows that $V_M$ is homogeneous by induction on rank.
\end{proof}

To make \eqref{i:diffeq} hold for rank $1$ matroids, we use the convention that $V_M=0$ for a rank $0$ matroid $M$.

For $\rank(M)=2$, we have $V_M=\sum_{F\in \cP(M)} t_F$ as will be seen in Lemma~\ref{l:rank2volume}.

\begin{lemma} \label{l:derivativesofvolume}
    Let $\cF\coloneqq\{\hat{0}\subsetneq F_1\subsetneq F_2\subsetneq\dots\subsetneq F_{\ell}\subsetneq E\}$ be a chain of flats in $M$.
    Then,    
    \[D_{F_1}\cdots D_{F_{\ell}} V_M(\t)=V_{M^{F_1}}(\pi^{F_1}(\t))V_{M^{F_2}_{F_1}}(\pi^{F_2}_{F_1}(\t))\cdots V_{M^{F_\ell}_{F_{\ell-1}}}(\pi^{F_\ell}_{F_{\ell-1}}(\t))V_{M_{F_{\ell}}}(\pi_{F_{\ell}}(\t)).\]
    In particular, if $\cF$ is a maximal chain of flats with $\ell=r$, then 
    \[D_{F_1}\cdots D_{F_r} V_M=1.\]
\end{lemma}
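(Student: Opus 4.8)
The plan is to induct on $\ell$, peeling off the first derivative $D_{F_1}$ and applying the defining differential equation \eqref{i:diffeq}. For the base case $\ell=1$, the statement $D_{F_1}V_M(\t) = V_{M^{F_1}}(\pi^{F_1}(\t))\,V_{M_{F_1}}(\pi_{F_1}(\t))$ is exactly property \eqref{i:diffeq}. For the inductive step, I would apply $D_{F_1}$ first to get
\[
D_{F_1}\cdots D_{F_\ell} V_M(\t) = D_{F_2}\cdots D_{F_\ell}\bigl(V_{M^{F_1}}(\pi^{F_1}(\t))\,V_{M_{F_1}}(\pi_{F_1}(\t))\bigr).
\]
Now $F_2,\dots,F_\ell$ are all flats strictly containing $F_1$, hence none of them is a flat of $M^{F_1}$ (whose flats are those contained in $F_1$), so the product rule collapses: each $D_{F_j}$ for $j\geq 2$ annihilates the factor $V_{M^{F_1}}(\pi^{F_1}(\t))$, and we are left with
\[
V_{M^{F_1}}(\pi^{F_1}(\t))\cdot D_{F_2}\cdots D_{F_\ell}\bigl(V_{M_{F_1}}(\pi_{F_1}(\t))\bigr).
\]
The chain $\{\hat 0 \subsetneq F_2\setminus F_1 \subsetneq \dots \subsetneq F_\ell\setminus F_1 \subsetneq E\setminus F_1\}$ is a chain of flats in $M_{F_1}$ of length $\ell-1$, so the inductive hypothesis applies to $V_{M_{F_1}}$; the one subtlety is that differentiating $V_{M_{F_1}}$ with respect to $t_{F_j}$ (a coordinate on $\R^{\cP(M)}$) agrees with differentiating it with respect to the corresponding coordinate on $\R^{\cP(M_{F_1})}$ after precomposing with $\pi_{F_1}$, which holds because $\pi_{F_1}(\delta_{F_j})=\delta_{F_j}$ for $F_j\supsetneq F_1$ by the definition of $\pi_{F_1}$ (and the chain rule, with $\pi_{F_1}$ linear, contributes nothing else since the other $\delta_G$ with $G\in\cP(M)$, $G\not\supsetneq F_1$, do not appear among $F_2,\dots,F_\ell$). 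Applying the inductive hypothesis and using the identity $\pi^{F_j}_{F_1}\circ\pi_{F_1} = \pi_{F_1}^{F_j}$ (Lemma~\ref{l:mixedprojection}, suitably iterated), together with $\pi^{F_\ell}_{F_1}\circ (\text{projection})$-type compatibilities, yields the product $V_{M^{F_2}_{F_1}}(\pi^{F_2}_{F_1}(\t))\cdots V_{M^{F_\ell}_{F_{\ell-1}}}(\pi^{F_\ell}_{F_{\ell-1}}(\t))\,V_{M_{F_\ell}}(\pi_{F_\ell}(\t))$, which is what we want.

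For the final assertion, take $\cF$ maximal, so $\ell=r$ and every matroid $M^{F_j}_{F_{j-1}}$ appearing in the product has rank $1$ (since consecutive flats in a maximal chain differ by one in rank, by Lemma~\ref{l:maximalchain}), as does $M^{F_1}$ and $M_{F_r}$; each rank $1$ volume polynomial equals $1$ by property (1), so the whole product is $1$.

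\textbf{Main obstacle.} The genuinely fiddly point is the bookkeeping of projections: verifying that differentiating a composite $V_{N}\circ(\text{linear projection})$ with respect to the ambient coordinates $t_{F_j}$ reproduces, via the chain rule, the derivative of $V_N$ with respect to its own coordinates evaluated at the projected point — this rests on the explicit formulas for $\pi_F$, $\pi^F$ (in particular $\pi_F(\delta_G)=\delta_G$ for $G\supsetneq F$ and $\pi^F(\delta_G)=\delta_G$ for $G\subsetneq F$), so that the "cross terms" a naive chain rule might produce all vanish because the relevant $\delta_G$'s are not among the flats in the chain. Once one sets up the notation so that this compatibility is recorded cleanly (ideally as a small preliminary observation: for $G\supsetneq F$, $D_G(V_{M_F}\circ\pi_F) = (D_G V_{M_F})\circ\pi_F$, and dually), the induction is routine, and Lemma~\ref{l:mixedprojection} handles the only nontrivial identification of iterated projections.
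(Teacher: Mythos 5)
Your proof is correct and coincides with the paper's approach: the paper simply asserts ``This is a straightforward induction on $\ell$ using the formula for $D_FV_M(\t)$'' and leaves the details to the reader, and your argument supplies exactly those details. One small correction to the projection bookkeeping: the compatibility you actually need in the inductive step is the transitivity $\pi_{F_{j-1}}\circ\pi_{F_1}=\pi_{F_{j-1}}$ for $F_1\subsetneq F_{j-1}$ (a direct case-check on the basis vectors $\delta_G$, using $\pi_{F_{j-1}}(\beta)=0$ from Lemma~\ref{l:alphabetaprojection} for the $G=F_1$ case), which then combines with Lemma~\ref{l:mixedprojection} to give $\pi^{F_j}_{F_{j-1}}\circ\pi_{F_1}=\pi^{F_j}_{F_{j-1}}$; this is not literally ``Lemma~\ref{l:mixedprojection} iterated'' as you wrote, since that lemma concerns commuting $\pi_F$ with $\pi^G$ rather than composing two $\pi_F$'s, but it is the same flavor of routine verification you already flagged as the fiddly part.
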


\begin{proof}
    This is a straightforward induction on $\ell$ using the formula for $D_FV_M(\t)$.
\end{proof}

\begin{lemma}
  There is an equality of derivatives of $V_M$: for all $i,j\in E$,
  $D_{\alpha_i}V_M=D_{\alpha_j} V_M$.
  Consequently, the volume function $V_M$ factors 
        through the projection $\R^{\cP(M)}\to L_M$, inducing a map $V_M\colon L_M\to \R$.
\end{lemma}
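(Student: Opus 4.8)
The plan is to prove $D_{\alpha_i}V_M = D_{\alpha_j}V_M$ for all $i,j\in E$ by induction on $\rank(M)$, using the defining differential equation \eqref{i:diffeq} and Lemma~\ref{l:alphabetaprojection}. The base case $\rank(M)=1$ is trivial since $V_M=1$ is constant. For the inductive step, it suffices to show that the two polynomials $D_{\alpha_i}V_M$ and $D_{\alpha_j}V_M$ have the same partial derivative $D_F$ for every $F\in\cP(M)$, since both vanish at $0$ when $\rank(M)>1$ (as $D_{\alpha_i}V_M$ is a sum of the $D_GV_M$, each of which is $V_{M^G}\cdot V_{M_G}$, a product of lower-rank volume polynomials, which vanish at $0$ unless the factor is a constant $1$ — one must be slightly careful here, but homogeneity of $V_M$ of degree $r\geq 1$ forces $D_{\alpha_i}V_M$ to be homogeneous of degree $r-1$, so it vanishes at $0$ precisely when $r\geq 2$; for $r=1$ one checks directly that $D_{\alpha_i}V_M = \sum_{G\in\cP(M)}[q^{\,}]$\dots in fact $V_M = \sum_G t_G$ and $D_{\alpha_i}V_M = |\{G : i\in G\}|$ viewed against $\alpha_i = \sum_{G\ni i}\delta_G$, which equals the total number of rank-$1$ flats by the flat axioms — so we handle $r=1$ as a separate base-type case).

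The heart of the argument is the computation of $D_F D_{\alpha_i} V_M$ for a fixed $F\in\cP(M)$. Using $D_{\alpha_i} = \sum_{G\ni i} D_G$ and the mixed-partial formula established in the proof that $V_M$ is well-defined, we get
\[
D_F D_{\alpha_i} V_M(\t) = \sum_{G\ni i} D_F D_G V_M(\t),
\]
and each term $D_F D_G V_M$ is nonzero only when $F$ and $G$ are comparable. Splitting the sum according to whether $G\subsetneq F$, $G=F$, or $G\supsetneq F$, and applying Lemma~\ref{l:derivativesofvolume} (or directly the formula for $D_FV_M$), each surviving term factors through $\pi^F$ and $\pi_F$ as a product involving $V_{M^F}$, $V_{M_F}$, and one further volume polynomial of $M^F$ or $M_F$. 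The key point is that $D_F D_{\alpha_i} V_M$ can be reorganized as $V_{M^F}(\pi^F\t)\cdot D_{\alpha}\bigl(V_{M_F}\bigr)(\pi_F\t) + D_{\alpha}\bigl(V_{M^F}\bigr)(\pi^F\t)\cdot V_{M_F}(\pi_F\t)$, i.e.\ it is $D_{\alpha_i}$ applied to the product $V_{M^F}(\pi^F\t)V_{M_F}(\pi_F\t) = D_F V_M$ — here I am using Lemma~\ref{l:alphabetaprojection}, namely $\pi_F(\alpha_i)=\alpha$ and $\pi^F(\alpha_i)=0$, so that $\alpha_i$ pushes to $\alpha$ on the $M_F$ side and to the "$\alpha$" class on the $M^F$ side in a way independent of $i$ (one tracks both the $G\supsetneq F$ contributions, which become derivatives in the $M_F$ variables in the direction $\pi_F(\alpha_i)=\alpha$, and the $G\subsetneq F$ together with $G=F$ contributions, which assemble into a derivative in the $M^F$ variables in the direction that $\alpha_i$ maps to).

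Once $D_F D_{\alpha_i} V_M$ is written purely in terms of $\pi^F(\alpha_i)$, $\pi_F(\alpha_i)$ and the lower-rank volume polynomials $V_{M^F}, V_{M_F}$, the inductive hypothesis applied to $M^F$ and $M_F$ (both of rank $<\rank(M)$) shows that $D_{\alpha}V_{M^F}$ is independent of the chosen index in $M^F$, and likewise for $M_F$; combined with $\pi_F(\alpha_i)=\alpha$ and $\pi^F(\alpha_i)=0$ from Lemma~\ref{l:alphabetaprojection}, which are manifestly independent of $i$, we conclude $D_F D_{\alpha_i} V_M = D_F D_{\alpha_j} V_M$ for every $F$. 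Since this holds for all $F\in\cP(M)$ and both polynomials vanish at the origin (or agree in the degenerate low-rank cases checked by hand), we get $D_{\alpha_i}V_M = D_{\alpha_j}V_M$. Finally, since $W_M = \Span(\{\alpha_i - \alpha_j\})$ and we have just shown the directional derivative of $V_M$ in every direction $\alpha_i - \alpha_j$ vanishes identically, $V_M$ is constant along $W_M$, hence descends to a well-defined function on $L_M = \R^{\cP(M)}/W_M$. The main obstacle I anticipate is the bookkeeping in the middle step: correctly matching up the $G\subsetneq F$, $G = F$, and $G \supsetneq F$ pieces of $\sum_{G\ni i} D_F D_G V_M$ with the Leibniz expansion of $D_{\alpha_i}(D_F V_M)$, and in particular verifying that the $G=F$ term (present exactly when $i\in F$) supplies precisely the boundary contribution needed so that the $M^F$-part is a genuine $D_{\alpha}$-derivative — this is where Lemma~\ref{l:alphabetaprojection}'s case distinction ($i\in F$ versus $i\notin F$) does the real work.
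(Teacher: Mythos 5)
Your proposal follows essentially the same approach as the paper: induction on rank, with the inductive step showing that $D_{\alpha_i}V_M$ and $D_{\alpha_j}V_M$ satisfy the same differential equations and initial conditions, using the projections $\pi_F(\alpha_i)=\alpha$ and $\pi^F(\alpha_i)=0$ from Lemma~\ref{l:alphabetaprojection}. However, your key reorganization formula is wrong. You write
\[
D_F D_{\alpha_i} V_M \;=\; V_{M^F}(\pi^F\t)\cdot D_{\alpha}\bigl(V_{M_F}\bigr)(\pi_F\t) \;+\; D_{\alpha}\bigl(V_{M^F}\bigr)(\pi^F\t)\cdot V_{M_F}(\pi_F\t),
\]
and in prose say that $\alpha_i$ pushes to the ``$\alpha$'' class on the $M^F$ side. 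This directly contradicts the fact you yourself cite, $\pi^F(\alpha_i)=0$: by the chain rule the $M^F$ Leibniz term is $D_{\pi^F(\alpha_i)}V_{M^F}(\pi^F\t)\cdot V_{M_F}(\pi_F\t)=D_0V_{M^F}(\pi^F\t)\cdot V_{M_F}(\pi_F\t)=0$, and the correct identity is the single-term one,
\[
D_F D_{\alpha_i} V_M(\t) = D_{\alpha_i}\!\left(V_{M^F}(\pi^F\t)\,V_{M_F}(\pi_F\t)\right)=V_{M^F}(\pi^F\t)\cdot D_{\alpha}V_{M_F}(\pi_F\t),
\]
which is manifestly independent of $i$ (and this is exactly what the paper does). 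Your rank-$2$ base case is also off: you claim $D_{\alpha_i}V_M$ equals the total number of rank-$1$ flats, but by axiom (F3) the rank-$1$ flats partition $E\setminus\hat{0}$, so exactly \emph{one} rank-$1$ flat contains $i$ and $D_{\alpha_i}V_M=1$. Both slips are harmless in the sense that the corrected quantities are still independent of $i$, so your conclusion survives --- but the argument as written does not. One subtlety worth making explicit in a final writeup: the notation $D_\alpha V_{M_F}$ is only well-defined because of the inductive hypothesis applied to the lower-rank matroid $M_F$, which guarantees the derivative is independent of the choice of representative $\alpha_i$; so the induction is doing real work in that line, not just bookkeeping.
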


\begin{proof}
   We induct on the rank of $M$. This is clear for $\rank(M)=1$. For $\rank(M)=2$, observe that for any $i\in E$,
   \[D_{\alpha_i} V_M(\t)=\sum_{G\ni i} \left(V_{M^G}(\pi^G(\t))\right)\left(V^{M_G}(\pi_G(\t))\right)=1\]
   since there is only one proper non-trivial flat containing $i$, and in this situation, both $M^G$ and $M_G$ are rank $1$ matroids.
   
   For $\rank(M)\geq 3$, we will show $D_{\alpha_i} V_M$ and $D_{\alpha_j} V_M$ satisfy the same differential equations with the same initial conditions. To check the initial conditions, observe
    \[\left(D_{\alpha_i} V_M\right)(0)=\sum_{G\ni i} \left(V_{M^G}(0)\right)\left(V^{M_G}(0)\right)=0\]
    and similarly for $\left(D_{\alpha_j} V_M\right)(0)$. Now, note
    \begin{align*}
      D_F D_{\alpha_i} V_M&=D_{\alpha_i}\left(\sum_F V_{M^F}(\pi^F(\t))V_{M_F}(\pi_F(\t))\right)\\
        & = \sum_F V_{M^F}(\pi^F(\t))D_{\alpha}V_{M_F}(\pi_F(\t))        
    \end{align*}
    where we used $\pi^F(\alpha_i)=0$ and $\pi_F(\alpha_i)=\alpha$. Because the last quantity is independent of $i$, $D_{\alpha_i} V_M$ and $D_{\alpha_j} V_M$ must be equal.   
    Consequently, $D_{\alpha_i-\alpha_j}V_M=0$. Since $\alpha_i-\alpha_j$ generate $W_M$,
    $V_M$ is constant along translates of $W_M$, and $V_M$ factors through $\R^{\cP(M)}\to L_M$.  
\end{proof}

\begin{lemma} \label{l:truncationvolume}
  Identify $\R^{\cP(\Tr^1(M))}$ with the subspace of $\R^{\cP(M)}$ consisting of functions $c\colon \cP(M)\to \R$ such that $c(F)=0$ for all $F\in \cL(M)$ with $\rank(F)=\rank(M)-1$.
  For any $i\in E$, 
  \[D_{\alpha_i}V_M\Big|_{\R^{\cP(\Tr^1(M))}}=V_{\Tr^1(M)}.\]
   Consequently, $D_{\alpha_i}^rV_M=1$ holds.
\end{lemma}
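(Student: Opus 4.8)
\textbf{Proof plan for Lemma~\ref{l:truncationvolume}.}

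The strategy is to show that the restriction $D_{\alpha_i}V_M\big|_{\R^{\cP(\Tr^1(M))}}$, viewed as a function on $\R^{\cP(\Tr^1(M))}$, satisfies the three characterizing properties of the volume polynomial $V_{\Tr^1(M)}$ from the lemma that characterizes $V_M$. First I would expand, using the differential equation~\eqref{i:diffeq},
\[
D_{\alpha_i}V_M(\t)=\sum_{G\ni i}V_{M^G}(\pi^G(\t))\,V_{M_G}(\pi_G(\t)),
\]
and then observe that when $\t$ is supported on $\cP(\Tr^1(M))$, the terms coming from flats $G$ with $\rank(G)=r$ (i.e.\ $M_G$ of rank $1$) contribute $V_{M_G}\equiv 1$ multiplied by $V_{M^G}(\pi^G(\t))$, while the remaining terms (flats of rank $\le r-1$) also survive; the claim is that this whole sum reorganizes into the differential equation defining $V_{\Tr^1(M)}$. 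The key combinatorial input is that the flats of $\Tr^1(M)$ of rank $\le r-1$ are exactly the flats of $M$ of rank $\le r-1$, and for such a flat $F$, one has $(\Tr^1(M))^F = M^F$ and $(\Tr^1(M))_F = \Tr^1(M_F)$, so an inductive hypothesis on rank can be applied to $M_F$.

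Concretely, I would proceed by induction on $\rank(M)$. The base cases $\rank(M)=1,2$ are direct computation (for rank $2$, $D_{\alpha_i}V_M=1=V_{U_{1,1}}=V_{\Tr^1(M)}$). For the inductive step with $\rank(M)=r+1\ge 3$: checking property (1) is vacuous; property (2) ($V_{\Tr^1(M)}(0)=0$) follows since $(D_{\alpha_i}V_M)(0)=0$ as already computed; the heart is property (3), the differential equation. For a flat $F\in\cP(\Tr^1(M))$, I compute $D_F\big(D_{\alpha_i}V_M\big)$ on $\R^{\cP(\Tr^1(M))}$. From the computation in the previous lemma, $D_FD_{\alpha_i}V_M=\sum_{G\ni i}V_{M^G}(\pi^G(\t))\,D_\alpha V_{M_G}(\pi_G(\t))$ — no wait, more carefully one differentiates the sum $\sum_{G}V_{M^G}V_{M_G}$ in $t_F$; the surviving terms pair $F$ with flats $G$ comparable to $F$. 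The cleanest route is: $D_F D_{\alpha_i}V_M = D_{\alpha_i}(D_F V_M) = D_{\alpha_i}\big(V_{M^F}(\pi^F(\t))V_{M_F}(\pi_F(\t))\big) = V_{M^F}(\pi^F(\t))\cdot D_{\alpha}V_{M_F}(\pi_F(\t))$, using $\pi^F(\alpha_i)=0$, $\pi_F(\alpha_i)=\alpha$ from Lemma~\ref{l:alphabetaprojection}. Now restricting to $\R^{\cP(\Tr^1(M))}$ and invoking the inductive hypothesis applied to the rank-$r$ matroid $M_F$ gives $D_\alpha V_{M_F}\big|_{\R^{\cP(\Tr^1(M_F))}} = V_{\Tr^1(M_F)}$, hence $D_F D_{\alpha_i}V_M\big|_{\R^{\cP(\Tr^1(M))}} = V_{(\Tr^1 M)^F}(\pi^F(\t))\,V_{(\Tr^1 M)_F}(\pi_F(\t))$, which is exactly property~(3) for $V_{\Tr^1(M)}$. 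By uniqueness, $D_{\alpha_i}V_M\big|_{\R^{\cP(\Tr^1(M))}} = V_{\Tr^1(M)}$.

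For the final assertion, apply this identity iteratively: $\Tr^{i+1}(M)=\Tr^1(\Tr^i(M))$ and $\alpha$ restricts compatibly along the nested inclusions $\R^{\cP(\Tr^r(M))}\subset\cdots\subset\R^{\cP(M)}$, so $D_{\alpha_i}^rV_M$ restricted to $\R^{\cP(\Tr^r(M))}$ equals $V_{\Tr^r(M)}$, which is the rank-$1$ matroid's volume, namely $1$; since $D_{\alpha_i}^rV_M$ is a constant (degree $0$), this constant is $1$ everywhere. The main obstacle I anticipate is bookkeeping: verifying carefully that the identifications $(\Tr^1 M)^F=M^F$, $(\Tr^1 M)_F=\Tr^1(M_F)$ hold for $F$ of rank $\le r-1$, that the projection maps $\pi^F,\pi_F$ are compatible with the subspace inclusions, and that the $\alpha$ appearing in $D_\alpha V_{M_F}$ is genuinely the image of $\alpha_i$ so the inductive hypothesis applies verbatim. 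None of these is deep, but they must be stated precisely for the induction to close.
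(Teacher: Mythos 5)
Your proposal is correct and follows the same approach as the paper: induction on rank, commuting $D_F$ past $D_{\alpha_i}$, applying Lemma~\ref{l:alphabetaprojection} to get $\pi^F(\alpha_i)=0$ and $\pi_F(\alpha_i)=\alpha$, recognizing $(\Tr^1 M)^F = M^F$ and $(\Tr^1 M)_F = \Tr^1(M_F)$, and invoking the uniqueness of the differential-equation characterization. Your handling of the final assertion (restricting to the zero-dimensional space $\R^{\cP(\Tr^r(M))}$ and noting the constant must equal $1$ everywhere) is slightly more explicit than the paper's but is the same argument.
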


\begin{proof}
 
    We induct on $\rank(\Tr^1(M))=\rank(M)-1$.  If $\rank(\Tr^1(M))=1$, then $\rank(M)=2$, and
    \[D_{\alpha_i} V_M(\t)=D_F V_M(\t)=V_{M^F}(\pi^F(\t))V_{M_F} (\pi_F(\t))=1=V_{\Tr^1(M)} \]
    where $F$ is the unique proper flat of $M$ containing $i$.
    
    Now, consider the case where $\rank(\Tr^1(M))\geq 2$. We check that $D_{\alpha_i} V_M$ satisfies the same differential equations as $V_{\Tr^1(M)}$ with the same initial conditions. For  the initial condition, note 
    \[D_{\alpha_i} V_M(0)=\sum_{F\ni i}D_F V_M(0)=
    \sum_{F\ni i} V_{M^F}(\pi^F(0))V_{M_F}(\pi_F(0))=0.
    \]
    Now, let $F\in\cP(\Tr^1 M)$, and choose $i\in E\setminus F$
    \begin{align*}
        D_F D_{\alpha_i} V_M(\t)&=D_{\alpha_i}D_FV_M(\t)\\
        &=D_{\alpha_i}\left(V_{M^F}(\pi^F(t))V_{M_F}(\pi_F(t))\right)\\
        &=V_{M^F}(\pi^F(t))D_{\alpha_i}V_{M_F}(\pi_F(t))\\
        &=V_{M^F}(\pi^F(t))V_{\Tr^1(M_F)}(\pi_F(t))\\
        &=V_{\Tr^1(M)^F}(\pi^F(t))V_{\Tr^1(M)_F}(\pi_F(t)).
    \end{align*}
    This last quantity is equal to $D_F V_{\Tr^1(M)}(\t)$. Thus, $D_\alpha V_M$ satisfies the same differential equation as $V_{\Tr^1(M)}$. Hence, $D_\alpha V_M=V_{\Tr^1(M)}$.  

    By applying truncation $r$ times, we reach a rank $1$ matroid whose volume polynomial is $1$, hence  $D_{\alpha_i}^rV_M=1$.
\end{proof}

We will later use of the following {\em flat-avoidance lemma} which allows us to pick a representative of $\uu\in L_M$ that avoids a chain of flats:
\begin{lemma}
  Write $p\colon \R^{\cP(M)}\to \R^{\cP(M)}/W_M=L_M$ for the quotient map.
    Let 
    \[\cF=\{\hat{0}=F_0\subsetneq F_1\subsetneq\dots\subsetneq F_\ell\subsetneq E\}\]
    be a chain of flats and $\uu\in L_M$. Then, there exists $\t\in \R^{\cP(M)}$ such that $p(\t)=\uu$ and $t_{F_j}=0$ for all $j$.
\end{lemma}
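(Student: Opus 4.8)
The plan is to show that the coset $p^{-1}(\uu)$ meets the coordinate subspace defined by the vanishing of the $t_{F_j}$'s. Recall $W_M$ is spanned by the vectors $\alpha_i-\alpha_j$ for $i,j\in E$; equivalently, $W_M$ is spanned by $\alpha_i - \alpha_{i_0}$ for a fixed reference element $i_0\in E$. Start by picking any $\s = \sum_F s_F\delta_F$ with $p(\s) = \uu$. We want to correct $\s$ by an element of $W_M$ so that the coefficients indexed by the chain $\cF$ all become zero. Thus it suffices to prove the following purely linear-algebraic claim: the images of the vectors $\alpha_i - \alpha_{i_0}$ under the projection $\R^{\cP(M)} \to \R^{\cF} := \prod_{j=1}^{\ell} \R\,\delta_{F_j}$ (extracting exactly the chain coordinates) span all of $\R^{\cF}$. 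Given that, we can find $\w\in W_M$ whose chain-coordinates equal $-(s_{F_1},\dots,s_{F_\ell})$, and then $\t := \s + \w$ works.

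To prove the spanning claim, I would produce, for each $k\in\{1,\dots,\ell\}$, an element of $W_M$ whose $F_k$-coordinate is nonzero while its $F_j$-coordinate is zero for all $j\neq k$ in a suitable triangular sense; a triangular system like this immediately gives surjectivity onto $\R^{\cF}$. The $F$-coordinate of $\alpha_i - \alpha_{i_0}$ is $[\,i\in F\,] - [\,i_0\in F\,]$ (where $[\cdot]$ is the Iverson bracket). So I need, for the flat $F_k$ in the chain, a ground-set element that "separates'' $F_k$ from the flats below it in the chain in a controlled way. Here is where the flat axioms enter: since $F_{k-1}\subsetneq F_k$, choose $i_k \in F_k\setminus F_{k-1}$; also fix $i_0\notin E\setminus\hat{1}$... more carefully, choose $i_0 \in E\setminus F_\ell$ (possible since $F_\ell\subsetneq E$), so that $[\,i_0\in F_j\,]=0$ for every $j$. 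Then $\alpha_{i_k} - \alpha_{i_0}$ has $F_j$-coordinate equal to $[\,i_k\in F_j\,]$, which is $0$ for $j<k$ (as $i_k\notin F_{k-1}\supseteq F_j$) and $1$ for $j=k$. Ordering the vectors $\alpha_{i_1}-\alpha_{i_0},\dots,\alpha_{i_\ell}-\alpha_{i_0}$, the resulting $\ell\times\ell$ matrix of chain-coordinates is lower-triangular with $1$'s on the diagonal, hence invertible. This establishes the spanning claim, and with it the lemma.

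The one genuine subtlety — the step I would watch most carefully — is ensuring all the needed ground-set elements actually exist and that no degenerate case (e.g. $\ell=0$, or $F_\ell$ almost equal to $E$, or a loopless-matroid edge case) breaks the argument. The choice $i_0\in E\setminus F_\ell$ requires $F_\ell\neq E=\hat{1}$, which holds since $\cF$ is a chain of flats in $\cP(M)\cup\{\hat0,\hat1\}$ with $F_\ell\subsetneq E$; and each $i_k\in F_k\setminus F_{k-1}$ exists because $F_{k-1}\subsetneq F_k$ strictly. The case $\ell=0$ (empty chain among the proper flats) is vacuous. I would also note that the $i_k$'s need not be distinct from each other or from $i_0$ across different $k$ for the triangularity argument — only the displayed incidence pattern matters — so no further genericity is required. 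Everything else is a routine matrix inversion and back-substitution to read off the correcting vector $\w\in W_M$.
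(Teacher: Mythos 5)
Your proof is correct and takes essentially the same route as the paper: both pick $i_0\in E\setminus F_\ell$ and $i_k\in F_k\setminus F_{k-1}$ and correct by a linear combination of the vectors $\alpha_{i_k}-\alpha_{i_0}$, whose chain-coordinates form a unipotent lower-triangular system. The only difference is that the paper writes out the explicit (telescoping) solution $\sum_j(t'_{F_j}-t'_{F_{j-1}})(\alpha_{i_j}-\alpha_{i_0})$ rather than merely asserting the triangular matrix is invertible.
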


\begin{proof}
  Pick $\t'\in\R^{\cP(M)}$ mapping to $\uu$. Pick $i\in E\setminus F_{\ell}$ and for $j=1,\dots,\ell$, pick $i_j\in F_j\setminus F_{j-1}$. Set
  \[\t=\t'-\sum_{j=1}^n (t'_{F_j}-t'_{F_{j-1}})(\alpha_{i_j}-\alpha_i).\]
  It is clear that $p(\t)=p(\t')$. Also, 
  \begin{align*}
      t_{F_k}&=t'_{F_k}-\sum_{j\colon i_j\in F_k} (t'_{F_j}-t'_{F_{j-1}})\\
      &=t'_{F_k}-\sum_{j=1}^k (t'_{F_j}-t'_{F_{j-1}})\\
      &=0.\qedhere
  \end{align*}
\end{proof}

The following lemma, originally proved in \cite{HuhKatz} is discussed in detail in \cite{Ardila:intersectiontheoryonmatroids}.

\begin{lemma} \label{l:mixedproduct} Let $r=\rank(M)-1$. Then $D_\alpha^{r-k} D_\beta^k V_M=\mu^k(M)$.
\end{lemma}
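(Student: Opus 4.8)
The plan is to induct on $r = \rank(M) - 1$, and within the induction to peel off one derivative at a time using the differential equation for $V_M$ and the flat-sum characterization of $\mu^k$ via M\"obius invariants. The base case $r = 0$ (or $r=1$) is immediate: for a rank $1$ matroid $V_M = 1 = \mu^0(M)$, and by Lemma~\ref{l:rank2volume} / the explicit $\rank 2$ description $V_M = \sum_{F \in \cP(M)} t_F$, so $D_\beta V_M = |\cP(M)|$ and $D_\alpha V_M = 1$, which one checks against $\mu^0$ and $\mu^1$ directly from the characteristic polynomial.

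For the inductive step, I would first handle the ``top'' derivative $D_\alpha$: by Lemma~\ref{l:truncationvolume}, $D_\alpha V_M = V_{\Tr^1(M)}$, and by Lemma~\ref{l:truncationmobius}, $\mu^{r-i}(M) = \mu^{r-i}(\Tr^1(M))$ for $1 \le i$, i.e.\ the lower coefficients of the reduced characteristic polynomial are unchanged under truncation (and $\Tr^1(M)$ has rank $r$). So applying the inductive hypothesis to $\Tr^1(M)$, for $k \le r-1$ we get $D_\alpha^{(r-1)-k} D_\beta^k (D_\alpha V_M) = D_\alpha^{(r-1)-k} D_\beta^k V_{\Tr^1(M)} = \mu^k(\Tr^1(M)) = \mu^k(M)$. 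This settles the cases $0 \le k \le r-1$ once I confirm the derivatives commute (they do, since $\alpha, \beta$ are fixed vectors and $V_M$ is a polynomial). The one remaining case is $k = r$, i.e.\ $D_\beta^r V_M = \mu^r(M)$.

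For $k = r$, the idea is to expand $D_\beta^r V_M$ by writing $\beta = \beta_i = \sum_{F \not\ni i} \delta_F$ for a fixed $i \in E$, apply one $D_\beta = D_{\beta_i}$ using the differential equation $D_F V_M = V_{M^F}(\pi^F(\t)) V_{M_F}(\pi_F(\t))$, and use Lemma~\ref{l:alphabetaprojection}: $\pi_F(\beta_i) = 0$ and $\pi^F(\beta_i) = \beta$. Thus $D_{\beta_i} D_F V_M = (D_\beta V_{M^F})(\pi^F(\t)) \cdot V_{M_F}(\pi_F(\t))$ and iterating,
\[
D_\beta^r V_M = \sum_{F \not\ni i} \big(D_\beta^{r-1} V_{M^F}\big)(\pi^F(\t))\, V_{M_F}(\pi_F(\t)),
\]
and since $M_F$ has rank $r + 1 - \rank(F)$ while $M^F$ has rank $\rank(F)$, only flats $F$ with $\rank(F) = r$ survive after taking $r-1$ further $\beta$-derivatives of the $M^F$ factor (which needs $\rank(M^F) - 1 = \rank(F) - 1 \ge r-1$) — so $\rank(F) = r$ and $M_F$ is rank $1$, contributing $V_{M_F} = 1$. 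By induction $D_\beta^{r-1} V_{M^F} = \mu^{r-1}(M^F)$. So $D_\beta^r V_M = \sum_{F \not\ni i,\ \rank(F) = r} \mu^{r-1}(M^F)$. It remains to identify this sum with $\mu^r(M)$. By Lemma~\ref{l:reducedcharcoeff}, $\mu^r(M) = (-1)^{r+1} \sum_{\rank(G) \ge r+1} \mu_{M^G} = (-1)^{r+1} \mu_{M}$ (only $G = \hat 1$ qualifies), while $\mu^{r-1}(M^F)$ for $\rank(F) = r$ should likewise reduce to $\pm\mu_{M^F}$-type top terms — and then the sum over rank-$r$ flats not containing $i$, versus over all rank-$r$ flats, is reconciled using the partition axiom (F3): the rank-$r$ flats covering a common rank-$(r-1)$ flat partition appropriately, or more directly one uses $\sum_{F} \mu_{M^F} = 0$ type relations (Lemma~\ref{l:dualmobiusproperty}) together with the fact that every rank-$r$ flat either contains $i$ or, via the truncation $\Tr^1(M)$ whose rank-$r$ element is $\hat 1$, contributes to $\mu_{\Tr^1(M)}$.

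\textbf{Main obstacle.} The routine parts are the induction bookkeeping and the derivative computations; the genuinely delicate step is the $k = r$ case — specifically, matching the combinatorial sum $\sum_{F \not\ni i,\ \rank F = r} \mu^{r-1}(M^F)$ to $\mu^r(M)$ and checking the answer is independent of the chosen $i$. I expect this to require careful use of Lemma~\ref{l:reducedcharcoeff}, Lemma~\ref{l:dualmobiusproperty}, and the flat-partition axiom, possibly re-expressed via the truncation identity $\overline{\chi}_{\Tr^1(M)}(q) = (\overline{\chi}_M(q) - \overline{\chi}_M(0))/q$ to convert statements about rank-$r$ flats of $M$ into statements about the top flat of $\Tr^1(M)$; an alternative cleaner route is to avoid the $\beta_i$-representative entirely and instead use the flat-avoidance lemma to pick a representative of the relevant classes supported away from a maximal chain, then read off $D_\beta^r V_M$ from Lemma~\ref{l:derivativesofvolume}.
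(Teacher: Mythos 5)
Your handling of $0 \le k \le r-1$ is correct and is essentially what the paper does: commute derivatives, use $D_\alpha V_M = V_{\Tr^1(M)}$, use invariance of the lower $\mu^k$'s under truncation, and induct. The chain rule / Lemma~\ref{l:alphabetaprojection} steps in your $k=r$ analysis are also carried out correctly, and you correctly arrive at
\[
D_\beta^r V_M \;=\; \sum_{\substack{F \not\ni i\\ \rank(F)=r}} \mu^{r-1}(M^F)
\;=\; (-1)^r \sum_{\substack{F \not\ni i\\ \rank(F)=r}} \mu_{M^F}.
\]

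The gap is exactly where you flag it, and it is bigger than your sketch suggests. Matching your sum to $\mu^r(M) = (-1)^{r+1}\mu_M$ amounts to the identity
\[
\sum_{\substack{F \not\ni i\\ \rank(F)=r}} \mu_{M^F} \;=\; -\mu_M,
\]
which is Weisner's theorem applied to the atom $a=$ (minimal flat containing $i$): the terms $F$ in Weisner's sum $\sum_{F : F\vee a = \hat 1}\mu_{M^F}=0$ are exactly $\hat 1$ together with the rank-$r$ flats avoiding $i$. This is a genuine theorem about M\"obius functions of geometric lattices, and it is \emph{not} available from the lemmas the paper has set up (Lemma~\ref{l:dualmobiusproperty} is the degenerate case $a=\hat 1$, which sums over \emph{all} flats). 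The flat-partition axiom (F3), the truncation identity, and Lemma~\ref{l:dualmobiusproperty} as stated do not combine in any obvious way to give the $i$-restricted identity you need, and your ``flat-avoidance'' alternative doesn't help either, since Lemma~\ref{l:derivativesofvolume} only computes $D_{F_1}\cdots D_{F_r}V_M$ for flats $F_j$, not $D_\beta^r V_M$.

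The paper sidesteps Weisner entirely by choosing a different representative of $\beta$: it writes $D_\beta V_M = D_{\alpha+\beta} V_M - D_\alpha V_M$, where $\alpha+\beta = \sum_{F\in\cP(M)}\delta_F$ sums over \emph{all} proper flats and $D_\alpha V_M = V_{\Tr^1(M)}$. Iterating then gives
\[
D_\beta^r V_M = \sum_{\rank(F)=r}\mu^{r-1}(M^F) - \mu^{r-1}(\Tr^1(M)),
\]
and the reconciliation with $(-1)^{r+1}\mu_M$ only uses Lemma~\ref{l:dualmobiusproperty} (summing over all flats of $M$ and of $\Tr^1(M)$), not the $i$-restricted Weisner identity. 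If you want to preserve your $\beta_i$-based route, you would either need to prove Weisner's theorem from the flat axioms first, or observe that you may simply add and subtract the $F\ni i$ terms to land on the paper's expression; the cleaner path is to start from $\alpha+\beta$ rather than $\beta_i$.
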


\begin{proof}
  The proof is by induction on $\rank(M)$.
  For a rank $1$ matroid, $V_M=1=\mu^0.$

  For the inductive step, we first prove this for the case $k=r$. We note
    \begin{align*}
     D_{\beta} V_M(\t)&=D_{\alpha+\beta} V_M(\t)-D_{\alpha} V_M(\t)\\
     &=\sum_{F\in \cP(M)} V_{M^F}(\pi^F(\t))V_{M_F}(\pi_F(\t))-V_{\Tr^1(M)}
     \end{align*}
  Because $\pi_F(\beta)=0$, the function $V_{M_F}(\pi_F(t))$ is constant along $\beta$, and therefore, $D_\beta\left(V_{M_F}(\pi_F(t))\right)=0$. Hence, we obtain by Lemma~\ref{l:reducedcharcoeff} and Lemma~\ref{l:dualmobiusproperty},
  \begin{align*}
    D_{\beta}^r V_M(\t)&= \sum_{F\in\cP(M)}\left(D_{\beta}^{r-1}V_{M^F}(\pi^F(t))\right)\left(V_{M_F}(\pi_F(\t))\right)-D_{\beta^{r-1}} V_{\Tr^1(M)}(\t)\\
     & = \sum_{\substack{F\in\cP(M)\\\rank(F)=r}}\left(D_{\beta}^{r-1}V_{M^F}(\pi^F(t))\right)\left(V_{M_F}(\pi_F(\t))\right)-D_{\beta^{r-1}} V_{\Tr^1(M)}(\t)\\
     & =\sum_{\substack{F\in\cP(M)\\\rank(F)=r}}\mu^{r-1}(M^F)-\mu^{r-1}(\Tr^1(M))\\
    &= (-1)^r\left(\sum_{\substack{F\in\cP(M)\\\rank(F)=r}}\mu_{M^F}-\mu_{\Tr^1(M)}\right)\\
    &= (-1)^r\left(\sum_{\substack{F\in\cP(M)\\\rank(F)=r}}\mu_{M^F} + \sum_{\substack{F\in\cP(M)\\\rank(F)\leq r-1}}\mu_{M^F}\right)\\
    &= (-1)^{r+1}\mu_M\\
    &=\mu^r(M).
  \end{align*}
  By Lemma~\ref{l:truncationmobius} and Lemma~\ref{l:truncationvolume},
\[     D_\alpha^{r-k}D_\beta^k V_M=D_\beta^kD_\alpha^{r-k} V_M=D_\beta^k V_{\Tr^{r-k}(M)}=\mu^k(\Tr^{r-k}(M))=\mu^k(M). \qedhere
\]
\end{proof}

\subsection{Low rank matroids} \label{ss:rank2and3}

We compute the volume polynomial of low rank examples.

\begin{lemma} \label{l:rank2volume}
 Let $M$ be a rank $2$ matroid. Then, the following holds
 \[V_M(\t)=\sum_{F\in \cP(M)} t_F.\]
\end{lemma}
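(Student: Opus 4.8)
The plan is to verify that the right-hand side $W(\t) \coloneqq \sum_{F \in \cP(M)} t_F$ satisfies the three characterizing properties of $V_M$ from the defining lemma, and then invoke uniqueness. Since $M$ has rank $2$, we have $r = \rank(M) - 1 = 1$, so $V_M$ is homogeneous of degree $1$; note $W(\t)$ is visibly homogeneous of degree $1$, and $W(0) = 0$, which handles property (2). Property (1) is vacuous here since $\rank(M) = 2 \neq 1$. So the real content is property (3), the differential equation $D_F V_M(\t) = V_{M^F}(\pi^F(\t)) V_{M_F}(\pi_F(\t))$.

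For this, I would first observe that when $\rank(M) = 2$, every flat $F \in \cP(M)$ is a rank $1$ flat, and both the restriction $M^F$ and the contraction $M_F$ are rank $1$ matroids. By property (1) of the defining lemma, $V_{M^F} = 1$ and $V_{M_F} = 1$ identically. Hence $V_{M^F}(\pi^F(\t)) V_{M_F}(\pi_F(\t)) = 1$ for every $F \in \cP(M)$. On the other side, $D_F W(\t) = \dfrac{\partial}{\partial t_F} \sum_{G \in \cP(M)} t_G = 1$. So the differential equation is satisfied, and $W$ meets all the characterizing conditions; by uniqueness of $V_M$ we conclude $V_M(\t) = \sum_{F \in \cP(M)} t_F$.

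Alternatively — and perhaps more in keeping with how the surrounding lemmas are phrased — one could just invoke Lemma~\ref{l:derivativesofvolume} directly: for a rank $2$ matroid, a single flat $F \in \cP(M)$ is already a maximal chain (of length $\ell = r = 1$), so $D_F V_M = 1$ for every $F$. Since $V_M$ is homogeneous of degree $1$ in the variables $\{t_F\}_{F \in \cP(M)}$ and each first partial derivative is the constant $1$, Euler's identity (or simply integrating) forces $V_M(\t) = \sum_F t_F$.

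There is essentially no obstacle here; the only thing to be careful about is the bookkeeping that $M^F$ and $M_F$ really are rank $1$ for every proper nonempty flat $F$ of a rank $2$ matroid — this follows because $\rank$ is additive along the concatenation of maximal chains through $F$ (Lemma~\ref{l:maximalchain}), giving $\rank(M^F) + \rank(M_F) = \rank(M) = 2$ with each summand at least $1$. I would present the first (or second) argument, as it is the shortest and reuses machinery already in place.
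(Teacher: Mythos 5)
Your proof is correct but takes a genuinely different route from the paper's. The paper first passes to the quotient $L_M = \R^{\cP(M)}/W_M$, observes that in rank $2$ every $\delta_F$ projects to $\alpha$ (so $\t$ maps to $\bigl(\sum_F t_F\bigr)\alpha$ in the one-dimensional space $L_M$), and then invokes Lemma~\ref{l:truncationvolume} to get $D_\alpha V_M = V_{\Tr^1 M} = 1$, from which the formula follows. Your first argument instead verifies the defining differential equation directly at the level of $\R^{\cP(M)}$: both $M^F$ and $M_F$ have rank $1$ for every $F \in \cP(M)$, so the right-hand side $V_{M^F}(\pi^F(\t))\,V_{M_F}(\pi_F(\t))$ is identically $1$, matching $D_F\bigl(\sum_G t_G\bigr) = 1$, and uniqueness of $V_M$ finishes it. This is arguably the cleanest route, using only the characterizing properties of $V_M$ and none of the $L_M$ or truncation machinery. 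Your second argument via Lemma~\ref{l:derivativesofvolume} (a single proper flat is already a maximal chain when $r = 1$) plus Euler's identity is also fine and is closer in spirit to the paper's calculation, but again stays upstairs in $\R^{\cP(M)}$ rather than descending to $L_M$. Your parenthetical care that $\rank(M^F) = \rank(M_F) = 1$ is warranted and correct.
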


\begin{proof}
    The only elements of $\cP(M)$ correspond to rank $1$ flats. For $p\colon \R^{\cP(M)}\to L_M$, $p(\delta_F)=\alpha$. Indeed, for $i\in F\setminus\hat{0}$, $\alpha_i=\delta_F$. Therefore, it suffices to compute $V_M(\alpha)$. Since $D_{\alpha}V_M=V_{\Tr^1 M}=1$ by Lemma~\ref{l:truncationvolume}, we must have $V_M(t_F\delta_F)=t_F$.
\end{proof}

Now, we compute the volume of a rank $3$ matroid $M$. Observe that $L_M$ is spanned by $\alpha$ together with $\delta_G$ for rank $2$ flats $G$. Indeed, for a rank $1$ flat $F$ with $i\in F$, we have in $L_M$, 
\[\delta_F=\alpha_i-\sum_{G\supsetneq F} \delta_G.\]

\begin{lemma} Let $M$ be a rank $3$ matroid. 
For $\t=t_1\alpha_1+\sum_G t_G\delta_G$ where the sum is over flats of rank $2$, the volume polynomial is
\[V_M(\t)=\frac{1}{2}\left(t_1^2-\sum_{\substack{G\\ \rank(G)=2}} t_G^2\right).\]
\end{lemma}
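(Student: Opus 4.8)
The plan is to pin down $V_M$ using the characterizing differential equations (item~\eqref{i:diffeq} of the defining lemma) together with the known values of the mixed derivatives $D_\alpha^{r-k}D_\beta^k V_M$ from Lemma~\ref{l:mixedproduct}. Here $r=2$, so $V_M$ is a homogeneous quadratic in the variables $\{t_1,t_G\}$ (a choice of coordinates on $L_M$, as indicated before the statement). I would proceed in three steps: first compute the ``diagonal'' second derivatives $D_F^2 V_M$ for $F$ a rank~$2$ flat; then compute the mixed derivatives $D_1 D_G V_M$ and $D_{F} D_{F'} V_M$ for distinct rank~$2$ flats $F\neq F'$; and finally compute $D_1^2 V_M$. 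Assembling these recovers all coefficients of the quadratic (a homogeneous degree-$2$ polynomial is determined by its Hessian), and one checks the constant-of-integration issue vanishes because $V_M(0)=0$ and $V_M$ is homogeneous.

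For the diagonal terms: by item~\eqref{i:diffeq}, $D_G V_M(\t)=V_{M^G}(\pi^G(\t))\,V_{M_G}(\pi_G(\t))$ for a rank~$2$ flat $G$. Since $M_G$ is rank~$1$, $V_{M_G}=1$, so $D_G V_M = V_{M^G}(\pi^G(\t))$, and $M^G$ is a rank~$2$ matroid, so by Lemma~\ref{l:rank2volume} this is $\sum_{H\subsetneq G}\pi^G(\t)_H$, a sum of coordinates of the form $t_H$ over rank~$1$ flats $H\subsetneq G$, together with the $-\alpha$ contribution coming from $\pi^G(\delta_G)=-\alpha$. Differentiating again by $D_G$ and tracking how $t_G$ enters through the $-\alpha$ term gives $D_G^2 V_M=-1$. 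A parallel but easier bookkeeping shows $D_F D_{F'} V_M=0$ for incomparable rank~$2$ flats (neither is a flat of the relevant restriction/contraction), and $D_1 D_G V_M = D_\alpha D_G V_M = D_\alpha V_{M^G} = V_{\Tr^1(M^G)} = 1$ by Lemma~\ref{l:truncationvolume} applied to the rank~$2$ matroid $M^G$; similarly $D_1^2 V_M = D_\alpha^2 V_M = 1$ by the last line of Lemma~\ref{l:truncationvolume} with $r=2$. Here I am using that $\pi_1(\alpha)=\alpha$ and $\pi^F(\alpha)=0$ from Lemma~\ref{l:alphabetaprojection} to identify $D_{\alpha_1}$ with the $D_\alpha$ appearing in those lemmas, and that the image of $\delta_F$ in $L_M$ for a \emph{rank~1} flat $F$ is expressible via $\alpha$ and the rank~$2$ $\delta_G$'s as noted just before the statement, so these coordinates really do span.

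The main obstacle is the careful bookkeeping in the diagonal computation $D_G^2 V_M=-1$: one must correctly expand $\pi^G(\t)$ in the chosen coordinate system, remembering that $\pi^G(\delta_G)=-\alpha$ contributes a $-t_1$-type term (after re-expressing $\alpha$), so that when $V_{M^G}=\sum_{H\subsetneq G}(\cdot)_H$ is itself re-expanded via Lemma~\ref{l:rank2volume} and then differentiated in $t_G$, the sign works out to $-1$ rather than $+1$. Once the full Hessian $\partial^2 V_M/\partial t_1^2=1$, $\partial^2 V_M/\partial t_1\partial t_G=0$, $\partial^2 V_M/\partial t_G\partial t_{G'}=-\delta_{GG'}$ is established, homogeneity of degree~$2$ gives $V_M(\t)=\tfrac12\bigl(t_1^2-\sum_{\rank(G)=2}t_G^2\bigr)$ by Euler's identity (or equivalently by Taylor expansion at $0$), which is the claim.
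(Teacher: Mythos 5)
Your proposal contains an internal contradiction that betrays a chain-rule error. In the body you compute $D_1 D_G V_M = D_\alpha D_G V_M = D_\alpha V_{M^G} = V_{\Tr^1(M^G)} = 1$, yet at the end you (correctly) state that the Hessian has $\partial^2 V_M/\partial t_1\partial t_G = 0$; these cannot both hold. The faulty step is $D_\alpha D_G V_M = D_\alpha V_{M^G}$, which forgets the precomposition with $\pi^G$: as a function of $\t$, we have $D_G V_M(\t) = V_{M^G}(\pi^G(\t))$ (since $M_G$ has rank $1$, so $V_{M_G}=1$), and the chain rule gives
\[D_\alpha D_G V_M = D_\alpha\bigl(V_{M^G}\circ\pi^G\bigr) = \bigl(D_{\pi^G(\alpha)} V_{M^G}\bigr)\circ\pi^G,\]
which vanishes identically because $\pi^G(\alpha)=0$ by Lemma~\ref{l:alphabetaprojection}. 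You cite exactly this vanishing two lines later, but invoke it only to ``identify $D_{\alpha_1}$ with $D_\alpha$,'' not to annihilate the derivative: what you computed is the derivative of $V_{M^G}$ in the direction of the $\alpha$ of $L_{M^G}$, not the derivative of $D_G V_M$ in the direction of the $\alpha$ of $L_M$; those $\alpha$'s live on different spaces and $\pi^G$ kills the latter. This is precisely how the paper obtains $D_G D_\alpha V_M = 0$. With the corrected value the Hessian is as you wrote in your final sentence and the conclusion follows.

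A smaller point: in your account of $D_G^2 V_M$, the discussion of ``$\sum_{H\subsetneq G}\pi^G(\t)_H$, a sum of coordinates of the form $t_H$'' is misleading in this parametrization; $\t$ contains no rank-one summands $t_H\delta_H$, so $\pi^G(\t)$ is simply $-t_G\alpha$ in $L_{M^G}$, and $V_{M^G}(-t_G\alpha) = -t_G$ directly, giving $D_G^2 V_M = -1$ with no cancellation among $t_H$-terms to track. Apart from these issues, the overall strategy (compute every second derivative from the defining differential equations and reconstruct the homogeneous quadratic) is the same as the paper's.
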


\begin{proof}
   We first compute the derivatives of $V_M$.
  We claim that for any distinct rank $2$ flats $G$ and $G'$,
  \begin{align*}
    D_\alpha^2V_M=1,&\quad D_GD_\alpha V_M=0,\\
    D_G^2 V_M=-1&,\quad D_{G'}D_{G}V_M=0
  \end{align*}

  The first equality is a consequence of Lemma~\ref{l:mixedproduct}. The second is
  \begin{align*}
      D_\alpha D_G V_M(\t)&=D_\alpha\left(V_{M^G}(\pi^G(\t))V_{M_G}(\pi_G(\t))\right))\\
      &=D_\alpha\left(V_{M^G}(\pi^G(\t))\right)\\
      &=0.
  \end{align*}
  since $\pi^G(\alpha)=0$ and, thus, $V_{M^G}(\pi^G(\t)$ is constant along $\alpha$. For the third equality,
  \begin{align*}
       D_G^2 V_M(\t)&=D_G\left(V_{M^G}(\pi^G(\t))V_{M_G}(\pi_G(\t))\right)\\
      &=D_G\left(V_{M^G}(\pi^G(\t))\right)\\
      &=-D_\alpha V_{M^G}(\pi^G(\t))\\
      &=-1
  \end{align*}
  where we used $\pi^G(\delta_G)=-\alpha$.
  Lastly, we have
    \begin{align*}
       D_{G'}D_G V_M(\t)&=D_{G'}\left(V_{M^G}(\pi^G(\t))V_{M_G}(\pi_G(\t))\right)\\
      &=D_{G'}\left(V_{M^G}(\pi^G(\t))\right)\\
      &=0
  \end{align*}
  which follows from $\pi^G(\delta_{G'})=0$. Since $D_{\alpha}^2V_M=1$, $D_G^2V_M=-1$ and all mixed terms vanish, we obtain the conclusion.
%
\end{proof}

\section{The ample cone} \label{s:ample}

We introduce a certain subset of $\cK_M\subset L_M$ called the {\em ample cone}. Here, a cone is a subset of a real vector space closed under addition and scalar multiplication by positive real numbers.
Now, $\cK_M$ will consist of elements on which $V_M$ will be shown to have strong positivity properties. It is the insight of \cite{BL:oncones} that one could study Lorentzian polynomials on cones rather than orthants. The ample cone is a natural candidate. Henceforth, for our convenience, we will suppose that $M$ is a loopless matroid.

A function $c$ on the subsets of $E$ is {\em strictly submodular} if
\[c(\varnothing)=0,\ c(E)=0,\]
and for any incomparable sets  $\varnothing \subsetneq I_1,I_2\subsetneq E$,
\[c(I_1)+c(I_2)>c(I_1\cap I_2)+c(I_1\cup I_2).\]
It is {\em submodular} if the non-strict inequality holds for all incomparable $\varnothing \subsetneq I_1,I_2\subsetneq E$. Observe that the sum of a submodular and a strictly submodular function is strictly submodular.
The {\em ample cone} $\cK_M\subset L_M$ is the set of all elements $y_c$ that can be expressed as
\[y_c\coloneqq \sum_{F\in\cL(M)} c(F)\delta_F\]
where $c$ is a strictly submodular function.

\begin{prop} \label{p:ampleconeproperties}
  The ample cone $\cK_M$ has the following properties:
  \begin{enumerate}
      \item $\cK_M$ is a nonempty open subset of $L_M$,
      \item $\cK_M$ is a cone: it is closed under addition and under scalar multiplication by positive real numbers.
  \end{enumerate}
\end{prop}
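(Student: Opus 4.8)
The plan is to realize $\cK_M$ as the image under the linear quotient map $p\colon\R^{\cP(M)}\to L_M$ of the set $\mathcal S$ of vectors $(c(F))_{F\in\cP(M)}$ coming from strictly submodular functions $c$ (extended by $c(\hat 0)=c(\hat 1)=0$, and restricted to proper flats, which is all that matters since $\delta_{\hat 0}$ and $\delta_{\hat 1}$ do not appear). The two claims then reduce to: (i) $\mathcal S$ is a nonempty cone in $\R^{\cP(M)}$, and (ii) $p$ carries $\mathcal S$ onto an \emph{open} subset of $L_M$. Part (ii) is the only non-formal point.

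First I would dispose of the cone property. If $c_1,c_2$ are strictly submodular then so is $c_1+c_2$ (the defining inequality is additive), and so is $\lambda c_1$ for $\lambda>0$; both still vanish on $\varnothing$ and $E$. Hence $\mathcal S$ is closed under addition and positive scaling, and since $p$ is linear, $\cK_M=p(\mathcal S)$ inherits both closure properties, giving item~(2). For nonemptiness, I would exhibit one explicit strictly submodular function: e.g. $c(I)=|I|\,(|E|-|I|)$ (or, invariantly, $c(I)=\operatorname{rank}(I)(\operatorname{rank}(E)-\operatorname{rank}(I))$ using the matroid rank), and check the strict inequality on incomparable proper $I_1,I_2$ directly — for the first choice one computes $c(I_1)+c(I_2)-c(I_1\cap I_2)-c(I_1\cup I_2) = |I_1\setminus I_2|\cdot|I_2\setminus I_1|>0$ since incomparability forces both differences nonempty. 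This shows $\cK_M\neq\varnothing$, so item~(1) is reduced to openness.

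For openness, the key observation is that strict submodularity is an \emph{open} condition: the set $\mathcal S\subset\R^{\cP(M)}$ is cut out by finitely many strict linear inequalities (one for each unordered pair of incomparable proper flats $F_1,F_2$ — note $c(F_1\cap F_2)$ and $c(F_1\cup F_2)$ may involve $\hat 0,\hat 1$, where $c$ is $0$ by convention, or non-flats, which never occurs since the lattice of flats is closed under meet, and for the join one replaces $F_1\cup F_2$ by the smallest flat containing it, which only strengthens the inequality), together with the affine-linear constraints already built into our coordinates. Concretely $\mathcal S$ is an open convex cone in $\R^{\cP(M)}$. The issue is that the image of an open set under a linear map need not be open. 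To fix this I would argue that $p|_{\mathcal S}$ is an \emph{open map} by exhibiting a linear section: choose a linear splitting $s\colon L_M\to\R^{\cP(M)}$ of $p$ whose image is transverse to $W_M$, or — cleaner — note that translating a strictly submodular $c$ by an element of $W_M$ keeps it strictly submodular (since $W_M$ is spanned by the $\alpha_i-\alpha_j$, and one checks that the function $F\mapsto[\text{$i\in F$}]-[\text{$j\in F$}]$ is \emph{modular}, i.e. satisfies the submodular identity with equality, so adding any real multiple of it to a strictly submodular function preserves strict submodularity). Hence $\mathcal S = p^{-1}(\cK_M)$, i.e. $\mathcal S$ is saturated for $p$; since $\mathcal S$ is open and $p$ is a surjective linear map (hence open as a quotient map), $\cK_M = p(\mathcal S)$ is open in $L_M$.

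The main obstacle, and the step deserving the most care, is precisely this last point: verifying that $W_M$ acts on strictly submodular functions, i.e. that each generator $\alpha_i-\alpha_j$ of $W_M$ lifts to a \emph{modular} (not merely submodular) set function, so that $\mathcal S$ is a union of $W_M$-cosets and therefore $p$-saturated. Once that is in hand, openness of $\cK_M$ is automatic from openness of $\mathcal S$ and the fact that linear surjections are open maps. Everything else — the cone axioms and nonemptiness via an explicit example — is a routine computation.
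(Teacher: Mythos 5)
Your approach matches the paper's: exhibit the explicit witness $c_+(I)=|I|\,(|E|-|I|)$ for nonemptiness, use that strict submodularity is preserved by sums and positive scalings for the cone axioms, and get openness from the strictness of the defining inequalities. The paper's proof is essentially your plan with the middle detour removed.

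That detour --- the $p$-saturation argument --- rests on a worry that is misplaced. You write ``the image of an open set under a linear map need not be open,'' which is true in general but false for \emph{surjective} linear maps, which are always open (and you cite exactly this at the end: ``$p$ is a surjective linear map (hence open as a quotient map)''). So once one knows that the set of strictly submodular functions is open in $\{c\colon 2^E\to\R \mid c(\varnothing)=c(E)=0\}$ --- clear, since it is cut out there by finitely many strict linear inequalities --- the surjective linear map $c\mapsto y_c$ immediately carries it onto an open subset of $L_M$. There is no need to verify that $\mathcal S$ is a union of $W_M$-cosets. That said, the saturation claim you make is true ($c_i-c_j$ is modular and $W_M$ is spanned by the $\alpha_i-\alpha_j$, so $\mathcal S=p^{-1}(\cK_M)$); it just does more work than necessary. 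Relatedly, your assertion that $\mathcal S$ is ``cut out by finitely many strict linear inequalities in $\R^{\cP(M)}$'' is imprecise: the submodularity inequalities involve $c(I)$ for non-flat $I$, which are not coordinates of $\R^{\cP(M)}$, and the parenthetical about replacing $F_1\cup F_2$ by its flat closure does not give a correct reduction. The clean statement is that openness lives upstairs in the function space and is then pushed forward.

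Two smaller slips. Your alternative witness $c(I)=\rank(I)(\rank(E)-\rank(I))$ is not strictly submodular in general: for $U_{1,n}$ it is identically zero. Stick with $|I|\,(|E|-|I|)$. Also the identity should read $c_+(I_1)+c_+(I_2)-c_+(I_1\cap I_2)-c_+(I_1\cup I_2)=2\,|I_1\setminus I_2|\cdot|I_2\setminus I_1|$ (you dropped a factor of $2$, which does not affect positivity but should be recorded correctly).
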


\begin{proof}
    Consider the function $c_+\colon\cP(M)\to \R$ given by 
    \[c_+(I)=|I|(|E|-|I|).\]
    We claim that it is strictly submodular. Indeed, let $I_1,I_2$ be incomparable sets with $\varnothing\subsetneq I_1,I_2\subsetneq E$. Write $k=|I_1\cap I_2|$, $n_1=|I_1|$, $n_2=|I_2|$, $n=|E|$, so $|I_1\cup I_2|=n_1+n_2-k$. By exchanging $I_1$ and $I_2$, we may suppose 
    \[k<n_1\leq n_2\leq n_1+n_2-k.\]
    Then,
    \[\begin{split}
       &c_+(I_1)+c_+(I_2)-c(I_1\cup I_2)-c(I_1\cap I_2)\\
       &=n_1(n-n_1)+n_2(n-n_2)-(n_1+n_2-k)(n-n_1-n_2+k)-k(n-k)\\
       &=2n_1n_2-2k(n_1+n_2-k)\hspace{4in}
    \end{split}\]
    The function $f(x)=x(n_1+n_2-x)$ is strictly increasing on $[0,(n_1+n_2)/2]$, so since $k<n_1$,  $f(k)<f(n_1)$ from which we conclude that the above quantity is positive.
    Hence, $c_+$ induces an element of $\cK_M$. Because the submodularity condition is defined by strict inequalities, $\cK_M\subset L_M$ is an open set.
    Since submodularity is closed under addition and preserved by positive scalar multiplication, $\cK_M$ is a cone.   
 \end{proof}

For a rank $1$ matroid, $\cK_M=L_M=\{0\}$. For a rank $2$ matroid $M$ on $E=\{0,1,\dots,n\}$, we claim that $\cK_M=\{t\alpha\mid t>0\}$. 
Let $F_1,\dots,F_k$ be the rank $1$ flats of $M$, and pick $i_j\in F_j$ for each $j$, so $\alpha_{i_j}=\delta_{F_j}$.
Since $y_{c_+}\in\cK_M$ is equal in $L_M$ to 
\[y_{c_+}=\sum_j c_+(F_j)\alpha_{i_j}=\left(\sum_j c_+(F_j)\right)\alpha,\]
which is a positive multiple of $\alpha$, every positive multiple of $\alpha$ is in $\cK_M$. 
An arbitrary element $y_c\in\R^{\cP(M)}$ is equal in $L_M$ to 
\[y_c=\sum_{j=1}^k c(F_j)\delta_{F_j}=\left(\sum_j c(F_j)\right)\alpha.\]
Now since $\cup_j F_j=E$, the $F_j$'s are pairwise disjoint, and $c(E)=0$, for any strictly submodular $c$, we see by downwards induction that
\[c(F_1)+c(F_2)+\dots+c(F_k)>c(F_1\cup F_2)+c(F_3)+\dots+c(F_k)>\dots>c(F_1\cup\dots\cup F_k)=0.\]
Consequently, $y_c\in\cK_M$ implies $y_c$ is equivalent to a positive multiple of $\alpha$.
In higher ranks $\alpha$ may not be in $\cK_M$, but it is in its closure.

The log-concavity of the characteristic polynomial will follow from the positivity properties of $V_M$ on $\cK_M$. To make use of them, we will need to know that $V_M$ has some positivity properties on the subspace of $L_M$ spanned by $\alpha$ and $\beta$. The following is critical:

\begin{lemma}
    The elements $\alpha_i$ and $\beta_i$ are in the topological closure of $\cK_M$ in $L_M$. 
\end{lemma}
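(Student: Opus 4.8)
The plan is to show that both $\alpha_i$ and $\beta_i$ lie in the closure $\overline{\cK_M}$ by exhibiting, for each, an explicit family of strictly submodular functions whose associated elements $y_c \in \cK_M$ converge to $\alpha_i$ (resp.\ $\beta_i$) in $L_M$. The key observation is that $\overline{\cK_M}$ is closed under adding any element of $\cK_M$ (since $\cK_M$ is an open cone and closure is preserved under translation by cone elements, or more simply: if $y \in \overline{\cK_M}$ and $z \in \cK_M$ then $y + \tfrac1n z \in \cK_M$ for all $n$... wait, that needs $y+z\in\cK_M$; the cleaner statement is that if $y+\epsilon z\in\cK_M$ for all $\epsilon>0$ then $y\in\overline{\cK_M}$). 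So it suffices to find, for each $\epsilon>0$, a strictly submodular $c$ with $y_c = \alpha_i + \epsilon(\text{something bounded})$, or to realize $\alpha_i$ as a limit of a sequence in $\cK_M$.

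First I would handle $\alpha_i$. Recall $\alpha_i = \sum_{F\ni i}\delta_F$, which is the element $y_c$ for $c = \mathbf{1}_{\{I : i\in I\}}$ restricted appropriately (with $c(\varnothing)=c(E)=0$). This indicator-type function is submodular but not strictly so: for incomparable $I_1,I_2$ both containing $i$ or both avoiding $i$, one checks the submodular inequality is an equality. Hence $\alpha_i = y_c$ for a submodular (boundary) function. Since the sum of a submodular function and a strictly submodular function is strictly submodular (noted in the text just before Prop.~\ref{p:ampleconeproperties}), for any $\epsilon>0$ the function $c + \epsilon c_+$ is strictly submodular, so $\alpha_i + \epsilon\, y_{c_+} \in \cK_M$. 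Letting $\epsilon\to 0^+$ shows $\alpha_i \in \overline{\cK_M}$. Here I must verify carefully that $c(I) = 1$ if $i\in I$ (and $0$ otherwise), adjusted to $c(\varnothing)=c(E)=0$, is genuinely submodular on all incomparable pairs: the only subtlety is pairs where exactly one of $I_1, I_2$ contains $i$, where $c(I_1)+c(I_2) = 1$ and $c(I_1\cap I_2)+c(I_1\cup I_2) = 0 + 1 = 1$, giving equality — fine. One also checks the cases $I_1\cap I_2 = \varnothing$ or $I_1\cup I_2 = E$ separately, since $c$ is forced to $0$ there; these still satisfy the inequality.

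Next, $\beta_i = \sum_{F\not\ni i}\delta_F$. Recall $\alpha_i + \beta_i = \sum_F \delta_F = y_{c_0}$ where $c_0 \equiv 1$ on $\cP(M)$ (and $0$ on $\hat 0, \hat 1$). The function $c_0$ — the indicator of "proper nonempty proper subsets" — is submodular (in fact it's $-\mathbf{1}_{\{\varnothing\}} - \mathbf{1}_{\{E\}} + \text{const}$ in spirit; concretely $c_0(I_1)+c_0(I_2) - c_0(I_1\cap I_2) - c_0(I_1\cup I_2) = \mathbf{1}[I_1\cap I_2 = \varnothing] + \mathbf{1}[I_1\cup I_2 = E] \geq 0$). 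So by the same argument $y_{c_0}\in\overline{\cK_M}$. Then $\beta_i = y_{c_0} - \alpha_i$. But differences need not stay in the closure. Instead I would argue directly: $\beta_i = y_c$ with $c = \mathbf{1}_{\{I: i\notin I\}}$ adjusted so $c(\varnothing) = 0$ (automatic) and $c(E) = 0$ (automatic since $i\in E$); this $c$ is submodular by the identical case analysis (swap the role of "$i\in I$" with "$i\notin I$"). Hence $\beta_i + \epsilon\, y_{c_+}\in\cK_M$ for all $\epsilon>0$, and $\beta_i\in\overline{\cK_M}$.

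The main obstacle — really the only non-routine point — is confirming that the two indicator functions $I\mapsto \mathbf{1}[i\in I]$ and $I\mapsto \mathbf{1}[i\notin I]$ (each truncated to vanish at $\varnothing$ and $E$) are submodular, and in particular that the truncation at $\varnothing$ and $E$ does not break the inequality on pairs where $I_1\cap I_2=\varnothing$ or $I_1\cup I_2 = E$. In the $\alpha_i$ case with $I_1\cap I_2 = \varnothing$: then $i$ is in at most one of $I_1,I_2$, so $c(I_1)+c(I_2)\in\{0,1\}$, while $c(\varnothing)+c(I_1\cup I_2) = 0 + \mathbf{1}[i\in I_1\cup I_2] = c(I_1)+c(I_2)$ — equality. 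The case $I_1\cup I_2 = E$ is dual. So the inequality always holds (with equality on these boundary configurations), which is all that is needed since we only require submodularity, not strictness, before adding $\epsilon c_+$. Finally I would spell out the elementary topological fact used throughout: in a finite-dimensional real vector space, if $x + \epsilon z \in \cK_M$ for all $\epsilon > 0$ and some fixed $z$, then $x = \lim_{\epsilon\to 0^+}(x+\epsilon z)\in\overline{\cK_M}$.
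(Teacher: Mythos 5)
Your proposal is correct and essentially reproduces the paper's argument: represent $\alpha_i$ (resp.\ $\beta_i$) by the indicator function $c_i(I)=\mathbf{1}[i\in I]$ (resp.\ $\mathbf{1}[i\notin I]$), observe it is submodular, and note that $c_i+\varepsilon c_+$ is strictly submodular so $y_{c_i}=\lim_{\varepsilon\to 0^+}y_{c_i+\varepsilon c_+}\in\overline{\cK_M}$. One small slip to fix: for $\beta_i$ you have the boundary values reversed --- $\mathbf{1}[i\notin\varnothing]=1$, so the truncation needed is at $\varnothing$ (not $E$, which is automatically $0$); the symmetric check then goes through exactly as in the $\alpha_i$ case.
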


\begin{proof}
  We first note that any submodular $c$, the element $y_c$ is in the closure of $\cK_M$. Indeed, for any $\varepsilon>0$, $c+\varepsilon c_+$ is strictly submodular. Since
  \[y_c=\lim_{\varepsilon\to 0} y_{c+\varepsilon c_+},\]
  $y_c$ is in the topological closure of $\cK_M$.

  Define $c_i\colon  2^E\to \R$ (where $2^E$ denotes the set of all subsets of $E$) by
  \[c_i(I)=\begin{cases}
      1 &\text{if }i\in I\\
      0 &\text{else}.
  \end{cases}\]
  Then $\alpha_i=y_{c_i}$. Observe that $c_i$ satisfies the submodularity condition with equality: for all $I_1,I_2$ with $\varnothing\subsetneq I_1,I_2\subsetneq E$,
  \[c_i(I_1)+c_i(I_2)=c_i(I_1\cap I_2)+c_i(I_1\cup I_2).\]

  The argument for $\beta_i$ is similar.
\end{proof}

Observe that $\alpha_i-\alpha_j$ maps to $0\in L_M$ under $p$ and so is in the closure of $\cK_M$.

The following is the combinatorial analogue of a fact from algebraic geometry: a very ample divisor is effective.

\begin{lemma} \label{l:ampleeffectivity}
  For any $\t\in \cK_M$, there is an element $\t'\in \R^{\cP(M)}$ mapping to $\t$ under $p\colon \R^{\cP(M)}\to L_M$ such that each component of $\t'$ is nonnegative and at least one is positive.
\end{lemma}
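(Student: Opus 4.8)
Write $\t = p(y_c)$ for a strictly submodular function $c$ on $2^E$ (this is the definition of $\cK_M$). The idea is to correct $y_c$ by an element of $W_M$ — equivalently, to replace $c$ by $c+m$ for a suitable \emph{modular} function $m$ — so that every coordinate becomes nonnegative. Concretely, fix an ordering $E = \{e_1,\dots,e_{n+1}\}$, set $\lambda_k \coloneqq c(\{e_1,\dots,e_{k-1}\}) - c(\{e_1,\dots,e_k\})$, and let $m(S)\coloneqq\sum_{i\in S}\lambda_i$. This is precisely the (negated) greedy/Edmonds construction of a point of the base polytope of $c$, and the two properties I need are elementary: (i) $\sum_k \lambda_k = c(\varnothing)-c(E) = 0$ by telescoping, so $m(E)=0$; and (ii) $c(S)+m(S)\geq 0$ for every $S\subseteq E$. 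For (ii) one uses the ``diminishing marginals'' form of submodularity, $c(A\cup\{i\})-c(A)\geq c(B\cup\{i\})-c(B)$ for $A\subseteq B$, $i\notin B$, which follows from the stated inequality; because $c(\varnothing)=c(E)=0$, that inequality in fact holds for \emph{all} pairs of subsets, so there are no boundary issues. Writing $S=\{e_{i_1},\dots,e_{i_j}\}$ with $i_1<\dots<i_j$, one bounds each greedy marginal $c(\{e_1,\dots,e_{i_l}\})-c(\{e_1,\dots,e_{i_l-1}\})$ by $c(\{e_{i_1},\dots,e_{i_l}\})-c(\{e_{i_1},\dots,e_{i_{l-1}}\})$ and telescopes to get $-m(S)\leq c(S)$.

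\textbf{The lift.} Granting (i) and (ii), set $\t'\coloneqq y_{c+m} = \sum_{F\in\cP(M)}(c+m)(F)\,\delta_F$. By (ii) every coordinate of $\t'$ is nonnegative. To see $p(\t')=\t$, note that $y_m = \sum_{F\in\cP(M)} m(F)\delta_F = \sum_{i\in E}\lambda_i\sum_{F\ni i}\delta_F = \sum_{i\in E}\lambda_i\,\alpha_i$; since $\sum_i\lambda_i=0$ by (i), this equals $\sum_i\lambda_i(\alpha_i-\alpha_{i_0})$ for a fixed $i_0\in E$, hence lies in $W_M$. Therefore $p(\t') = p(y_c) + p(y_m) = \t$.

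\textbf{Positivity of a coordinate.} Here one uses that $c+m$ is still \emph{strictly} submodular, being the sum of a strictly submodular function and a modular one. Assume $\rank(M)\geq 2$ (the rank-one case is vacuous since $\cP(M)=\varnothing$). Then there are two distinct rank-$1$ flats $F_1,F_2$; they are incomparable, nonempty, and proper, with $F_1\cap F_2=\hat 0$. Strict submodularity gives
$(c+m)(F_1)+(c+m)(F_2) > (c+m)(\hat0)+(c+m)(F_1\cup F_2) = (c+m)(F_1\cup F_2)\geq 0$,
so one of $(c+m)(F_1),(c+m)(F_2)$ is strictly positive, and $F_1,F_2\in\cP(M)$. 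Hence $\t'$ has a positive coordinate.

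\textbf{Main obstacle.} The only genuine content is the fact that a submodular $c$ with $c(\varnothing)=c(E)=0$ admits a modular minorant agreeing with it at $\varnothing$ and $E$ (nonemptiness of the base polytope, i.e.\ Edmonds' greedy theorem); since the paper is aimed at undergraduates, this should be spelled out by the self-contained telescoping argument above rather than cited. Everything else — the behavior of $p$, the reduction to flats, and the last positivity step — is routine bookkeeping in $L_M$.
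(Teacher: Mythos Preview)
Your argument is correct and is essentially the same as the paper's: both replace $c$ by $c+m$ for the modular function $m$ determined by the greedy marginals along a fixed linear ordering of $E$, so that $c+m$ vanishes on the full chain $\varnothing\subsetneq\{e_1\}\subsetneq\cdots\subsetneq E$, and then invoke (strict) submodularity to conclude nonnegativity everywhere. Your presentation via diminishing marginals and the base-polytope interpretation is a bit more conceptual than the paper's explicit case analysis, and your positivity-of-one-coordinate step (two distinct rank-$1$ flats) is a cleaner variant of the paper's ``some of the above inequalities are strict'', but the underlying idea is identical.
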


\begin{proof}
   Fix a bijection between $E$ and the set $\{0,1,\dots,n\}$. Write $[i]=\{1,\dots,i\}$ with  the convention that $[0]=\varnothing$.
   Let $\t\in \cK_M$ be induced by some strictly submodular function $c$. By induction on the size of a set $I$, we see
    \[\sum_{i\in I} c(\{i\})> c(I)\] and so
    \[c(\{0\})+\cdots +c(\{n\})>c(E)=0.\]
    By adding a multiple of $\alpha_1-\alpha_0$ to $\t$, we may suppose $c(\{0\})=0$.
    Hence 
    \[c'\coloneqq c-\sum_{j=2}^n (c([j])-c([j-1]))(c_j-c_1)\]
    differs from $c$ by an element that projects to $0$ in $L_M$, and hence it represents $\t$. Since both $c_j-c_1$ and $c_j-c_1$ are submodular, $c'$ is strictly submodular.
    Moreover, 
    \begin{align*}
    c'([i])&=c([i])-\sum_{j=2}^n (c([j])-c([j-1]))(c_j([i])-c_1([i]))\\
    &=c([i])+\sum_{j=2}^i (c([j])-c([j-1])) - \sum_{j=2}^n (c([j])-c([j-1]))\\
    &=c([i])-\sum_{j=i+1}^n (c([j])-c([j-1]))\\
    &=0.
    \end{align*}
    Given $I\subsetneq E$, we must show $c'(I)\geq 0$. We consider a number of cases. 
    First, note $c'(\{0\})=0$.
    Now, for any $I\subseteq [n]$, we induct on $\max(I)$, the maximal element of $I$. If $\max(I)=1$, $I=\{1\}$, and we are done. Otherwise, let $k=\max(I)$. If $I=[k]$, then we are done. Otherwise,
    \[c'(I)+c'([k-1])>c'(I\setminus \{k\})+c'([k])\geq 0.\]
     Also, if $I\not\subseteq [n]$ (i.e., $0\in I$),
    \[c'(I)+c'([n])>c'(I\setminus \{0\})+c'(E)\geq 0.\]
    Therefore, $c'$ is nonnegative. 
    Since some of the above inequalities are strict, at least one value of $c'$ is positive.
\end{proof}

The ample cone has good properties with respect to $\pi^F$ and $\pi_F$. The following is analogous to the algebraic geometric fact that the pullback of an ample divisor class is ample.

\begin{lemma} \label{l:restriction} For any $F\in\cL(M)$, $\pi^F(\cK_M)\subseteq \cK_{M^F}$ and $\pi_F(\cK_M)\subseteq \cK_{M_F}$.
\end{lemma}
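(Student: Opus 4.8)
The plan is to take an arbitrary $\t\in\cK_M$, write it as the image in $L_M$ of $y_c$ for some strictly submodular $c\colon 2^E\to\R$, and then exhibit explicit strictly submodular functions on $2^F$ and on $2^{E\setminus F}$ whose associated elements are $\pi^F(\t)$ and $\pi_F(\t)$. Since $\pi^F$ and $\pi_F$ descend to $L_M$, the choice of representative $y_c$ is immaterial. If $F=\hat1$ then $\pi^F$ is the identity and $\pi_F$ has target $L_{M_F}=\{0\}$, and symmetrically if $F=\hat0$; so we may assume $F\in\cP(M)$, in particular $|F|\geq 1$ and $|E\setminus F|\geq 1$ since $M$ is loopless.

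For the restriction, identify $\cP(M^F)$ with $\{G\in\cL(M)\mid \varnothing\subsetneq G\subsetneq F\}$ and define $c'\colon 2^F\to\R$ by
\[c'(I)\coloneqq c(I)-\frac{|I|}{|F|}\,c(F).\]
Then $c'(\varnothing)=c'(F)=0$, and $c'$ is strictly submodular: any incomparable $\varnothing\subsetneq I_1,I_2\subsetneq F$ is an incomparable pair of proper nonempty subsets of $E$, while the correction $I\mapsto\frac{|I|}{|F|}c(F)$ is modular and so contributes $0$ to the submodular expression by inclusion--exclusion for cardinalities. Hence $y_{c'}\in\cK_{M^F}$. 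It remains to identify $y_{c'}$ with $\pi^F(y_c)$ in $L_{M^F}$. Expanding the definition of $\pi^F$ gives $\pi^F(y_c)=\sum_{\varnothing\subsetneq G\subsetneq F}c(G)\delta_G-c(F)\alpha$, whereas $y_{c'}=\sum_{\varnothing\subsetneq G\subsetneq F}c(G)\delta_G-\frac{c(F)}{|F|}\sum_{\varnothing\subsetneq G\subsetneq F}|G|\delta_G$. These agree once one notes that, in $L_{M^F}$,
\[\sum_{G\in\cP(M^F)}|G|\,\delta_G=\sum_{i\in F}\alpha_i=|F|\,\alpha,\]
where the first equality counts pairs $(i,G)$ with $i\in G$ and the second uses that $\alpha_i=\alpha$ in $L_{M^F}$.

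The contraction is symmetric. Identify $\cP(M_F)$ with $\{G\in\cL(M)\mid F\subsetneq G\subsetneq E\}$ via $G\mapsto G\setminus F$, and define $c''\colon 2^{E\setminus F}\to\R$ by
\[c''(J)\coloneqq c(J\cup F)-c(F)+\frac{|J|}{|E\setminus F|}\,c(F).\]
Then $c''(\varnothing)=c''(E\setminus F)=0$; and for incomparable $\varnothing\subsetneq J_1,J_2\subsetneq E\setminus F$ the sets $J_1\cup F,J_2\cup F$ are incomparable proper nonempty subsets of $E$ with $(J_1\cup F)\cap(J_2\cup F)=(J_1\cap J_2)\cup F$ and $(J_1\cup F)\cup(J_2\cup F)=(J_1\cup J_2)\cup F$, so strict submodularity of $c$ together with the modular and constant corrections (both of which drop out of the submodular expression) yields strict submodularity of $c''$; hence $y_{c''}\in\cK_{M_F}$. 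Writing $c''(G\setminus F)=c(G)-\frac{c(F)}{|E\setminus F|}|E\setminus G|$, one gets $y_{c''}=\sum_{F\subsetneq G\subsetneq E}c(G)\delta_{G\setminus F}-\frac{c(F)}{|E\setminus F|}\sum_{F\subsetneq G\subsetneq E}|E\setminus G|\,\delta_{G\setminus F}$, while $\pi_F(y_c)=\sum_{F\subsetneq G\subsetneq E}c(G)\delta_{G\setminus F}-c(F)\beta$, and the two coincide because, in $L_{M_F}$,
\[\sum_{J\in\cP(M_F)}\bigl|(E\setminus F)\setminus J\bigr|\,\delta_J=\sum_{i\in E\setminus F}\beta_i=|E\setminus F|\,\beta.\]

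The two submodularity checks are routine bookkeeping --- the only inputs are that modular functions and additive constants cancel out of the submodular expression --- so the real content is choosing the modular corrections precisely enough to cancel the ``$-c(F)\alpha$'' and ``$-c(F)\beta$'' terms coming from $\pi^F(\delta_F)$ and $\pi_F(\delta_F)$. The step I would verify most carefully is the pair of identities $\sum_{i\in F}\alpha_i=|F|\alpha$ in $L_{M^F}$ and $\sum_{i\in E\setminus F}\beta_i=|E\setminus F|\beta$ in $L_{M_F}$, since these are exactly what make the cancellation work and they are where the quotient by $W_{M^F}$, $W_{M_F}$ is used.
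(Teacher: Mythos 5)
Your proof is correct and rests on the same underlying idea as the paper's: modular corrections preserve strict submodularity and change $y_c$ by an element of $W$. The paper normalizes $c$ at the source, replacing it by $c - c(F)(c_i - c_j)$ for $i\in F$, $j\notin F$ so that $c(F)=0$ (which kills the $-c(F)\alpha$ and $-c(F)\beta$ contributions to both projections at once), whereas you leave $c$ alone and match $\pi^F(y_c)$ and $\pi_F(y_c)$ separately via cardinality-weighted modular corrections in the target spaces, using the identities $\sum_{i\in F}\alpha_i = |F|\alpha$ and $\sum_{i\in E\setminus F}\beta_i = |E\setminus F|\beta$ --- slightly more computational, but equivalent.
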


\begin{proof}
  Let $\t\in \cK_M$ be represented by a strictly submodular $c$. Pick $i\in F, j\notin F$. Then $c'=c-c(F)(c_i-c_j)$ is a strictly submodular function representing $\t$ vanishing on $\delta_F$. Then 
  \[\pi^F\left(\sum_{G\in\cL(M)} c'(G)\delta_G\right)=\sum_{G\in\cL(M^F)} c'(G)\delta_G\]
  is induced by $c'$ considered as a function on subsets of $F$. As the restriction of a strictly submodular function (vanishing on $F$) to subsets of $F$, it is strictly submodular.
  Similarly, 
  \[\pi_F\left(\sum_{G\in\cL(M)} c'(G)\delta_G\right)=\sum_{G\in\cL(M_F)} c'(G)\delta_G\]
   is induced by the function on subsets of $E\setminus F$ given by $c_F(I)=c_F(I\cup F)$. Again, this is strictly submodular.
\end{proof}

The main input into the Lorentzian machinery will be an  observation about the derivatives of $V_M$ that will make use of the Eulerian identity:

\begin{lemma}
  Let $f\in k[x_1,\dots,x_n]$ be a homogeneous polynomial of degree $d$. 
  Write $\partial_i f$ for $\frac{\partial f}{\partial x_i}$. We have the {\em Eulerian identity}:
  \[\sum_{i=1}^n x_i\partial_if=df\]
\end{lemma}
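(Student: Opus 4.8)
The plan is to prove the Eulerian identity by reducing to the case of a single monomial and then using linearity. First I would observe that both sides of the identity, $\sum_{i=1}^n x_i \partial_i f$ and $df$, are linear operators in $f$: the differential operator $\sum_i x_i \partial_i$ is linear, and multiplication by the scalar $d$ is linear. Since $f$ is assumed homogeneous of degree $d$, it is a $k$-linear combination of monomials each of degree exactly $d$, so it suffices to verify the identity when $f = x_1^{a_1} x_2^{a_2} \cdots x_n^{a_n}$ is a single monomial with $a_1 + a_2 + \cdots + a_n = d$.

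For such a monomial, the computation is direct: for each $i$ we have $\partial_i f = a_i x_1^{a_1}\cdots x_i^{a_i - 1}\cdots x_n^{a_n}$ (interpreted as $0$ if $a_i = 0$), so that $x_i \partial_i f = a_i f$. Summing over $i$ gives $\sum_{i=1}^n x_i \partial_i f = \left(\sum_{i=1}^n a_i\right) f = d f$, as desired. Extending back to arbitrary homogeneous $f$ by linearity completes the argument.

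There is no real obstacle here; the only point requiring a word of care is the degenerate case $a_i = 0$, where one should note that the term $x_i \partial_i f$ vanishes and contributes $0 = a_i f$ to the sum, so the formula $x_i\partial_i f = a_i f$ still holds. One could alternatively give a slicker proof by differentiating the homogeneity relation $f(\lambda x_1, \dots, \lambda x_n) = \lambda^d f(x_1, \dots, x_n)$ with respect to $\lambda$ via the chain rule and setting $\lambda = 1$, but since the paper works with polynomials over an arbitrary field $k$ (where such an analytic argument is not available without comment), the monomial-by-monomial verification is the cleanest route to record.
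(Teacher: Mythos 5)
Your proof is correct and follows essentially the same route as the paper: reduce to a single monomial by linearity and verify $\sum_i x_i\partial_i$ multiplies a degree-$d$ monomial by $d$. The only difference is that you spell out the linearity reduction and the $a_i=0$ edge case explicitly, which the paper leaves implicit.
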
 

\begin{proof}
    Observe that for any monomial $x_1^{d_1}x_2^{d_2}\cdots x_n^{d_n}$,
\[\sum_{i=1}^n x_i\partial_i (x_1^{d_1}x_2^{d_2}\cdots x_n^{d_n})=
\sum_{i=1}^n d_ix_1^{d_1}x_2^{d_2}\cdots x_n^{d_n}=dx_1^{d_1}x_2^{d_2}\cdots x_n^{d_n}.\qedhere\]
\end{proof}

\begin{lemma} \label{l:positivevolume}
    For any matroid $M$ of rank $r+1$ and $\t\in \cK_M$, $V_M(\t)>0$.
    Also, for any $k\leq r$, $\t_1,\dots,\t_{k}\in\cK_M$,
    $D_{\t_1}\cdots D_{\t_k}V_M(\t)>0$.
\end{lemma}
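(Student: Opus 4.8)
The plan is to prove both statements simultaneously by induction on $r+1 = \rank(M)$, using the recursive structure of $V_M$ via the differential equation $D_F V_M(\t) = V_{M^F}(\pi^F(\t)) V_{M_F}(\pi_F(\t))$ together with Lemma~\ref{l:restriction} (that $\pi^F$ and $\pi_F$ carry $\cK_M$ into the ample cones of the minors) and Lemma~\ref{l:ampleeffectivity} (that an ample class has a representative with nonnegative, not-all-zero coordinates). The base cases $\rank(M) = 1$ (where $V_M = 1$) and $\rank(M) = 2$ (where $V_M(\t) = \sum_F t_F$ by Lemma~\ref{l:rank2volume}, which is positive on $\cK_M$ since every ample representative has nonnegative coordinates with at least one positive, and all flats here are rank $1$) are immediate.

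For the inductive step, I would first handle the derivative statement: given $\t_1, \dots, \t_k \in \cK_M$ with $1 \le k \le r$ and $\t \in \cK_M$, I want $D_{\t_1}\cdots D_{\t_k} V_M(\t) > 0$. Write $\t_1 = \sum_F s_F \delta_F$ using a representative with all $s_F \geq 0$ and not all zero (Lemma~\ref{l:ampleeffectivity}); then $D_{\t_1} V_M = \sum_F s_F D_F V_M$, and by the differential equation $D_F V_M(\t) = V_{M^F}(\pi^F(\t)) V_{M_F}(\pi_F(\t))$. Now $M^F$ and $M_F$ each have rank strictly less than $\rank(M)$, and $\pi^F(\t_j) \in \cK_{M^F}$, $\pi_F(\t_j) \in \cK_{M_F}$ for all $j$ by Lemma~\ref{l:restriction}. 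Applying $D_{\t_2}\cdots D_{\t_k}$ to $D_F V_M$ and using the chain rule / product rule distributes the $k-1$ remaining derivatives between the two factors; each resulting term is a product of a derivative of $V_{M^F}$ along some ample classes evaluated at an ample point, times a derivative of $V_{M_F}$ along ample classes at an ample point — and each such factor is either a positive volume of a lower-rank minor (inductive hypothesis, first statement) or a positive mixed derivative of a lower-rank minor (inductive hypothesis, second statement), or possibly the "zeroth derivative" which is just $V$ of the minor at an ample point, again positive by induction. So every term in the expansion of $D_{\t_1}\cdots D_{\t_k} V_M(\t)$ is a nonnegative coefficient $s_F$ times a positive quantity, and since at least one $s_F > 0$ and for that $F$ at least one flat exists (as $\rank M^F, \rank M_F \geq 1$), the sum is strictly positive.

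For the statement $V_M(\t) > 0$ itself: apply the Eulerian identity to the homogeneous polynomial $V_M$ of degree $r$. Picking a representative $\t = \sum_F t_F \delta_F$ of the ample class with all $t_F \geq 0$ and not all zero, the Eulerian identity gives $r \cdot V_M(\t) = \sum_F t_F D_F V_M(\t) = \sum_F t_F D_{\t} V_M(\t)$... more precisely $\sum_F t_F\, (D_F V_M)(\t)$, and since $D_{\t} V_M = \sum_F t_F D_F V_M$, this equals $D_{\t} V_M(\t)$. But $D_\t V_M(\t)$ is a single directional derivative of $V_M$ along the ample class $\t$ evaluated at the ample point $\t$, which is positive by the derivative statement just proved (with $k = 1$). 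Hence $V_M(\t) = \tfrac{1}{r} D_\t V_M(\t) > 0$. (When $r = 0$ this is the rank $1$ base case, handled separately.)

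The main obstacle is bookkeeping in the inductive step for the derivative statement: one must carefully track that when $D_{\t_2} \cdots D_{\t_k}$ is applied via the product rule to $V_{M^F}(\pi^F(\t)) V_{M_F}(\pi_F(\t))$, every term in the Leibniz expansion is covered by the inductive hypothesis — in particular that the number of derivatives landing on either factor never exceeds its degree (which is guaranteed since the total number of derivatives is $k-1 \le r-1 = (\rank M^F - 1) + (\rank M_F - 1)$, and any distribution beyond a factor's degree kills that term anyway, so it may simply be discarded), and that a factor receiving zero derivatives still contributes $V$ of a lower-rank minor at an ample point, which is positive by the first part of the inductive hypothesis. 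Handling the edge case where one of $M^F$ or $M_F$ has rank $1$ (so its volume is the constant $1$ and can absorb no derivatives) is a minor point but should be mentioned. Everything else is routine once the induction is set up to prove both statements together.
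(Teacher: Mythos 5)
Your proposal is correct and takes essentially the same approach as the paper: induction on rank, representing an ample class by a nonnegative vector via Lemma~\ref{l:ampleeffectivity}, expanding $D_F V_M = V_{M^F}(\pi^F(\cdot))V_{M_F}(\pi_F(\cdot))$ and the higher mixed derivatives via the Leibniz rule, and invoking Lemma~\ref{l:restriction} together with the inductive hypothesis on the lower-rank minors $M^F$ and $M_F$. The only (immaterial) difference is the order inside the inductive step: you prove the derivative statement first and deduce $V_M(\t)>0$ from the Eulerian identity as $V_M(\t)=\tfrac{1}{r}D_{\t}V_M(\t)$, whereas the paper proves $V_M(\t)>0$ directly from the Eulerian identity and then treats the derivatives.
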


\begin{proof}
  We induct on the rank of $M$. The conclusion is trivial if $\rank(M)=1$. We first prove $V_M(\t)>0$. We write  
  \[\t=\sum t_F\delta_F\]
  where each $t_F\geq 0$ for all $F$, and $t_F>0$ for at least one $F$ using Lemma~\ref{l:ampleeffectivity}.
  By the Eulerian identity,
  \[V_M(\t)=\frac{1}{r}\sum_F t_F D_F V_M(\t)\]
  and so
  \[V_M(\t)=\frac{1}{r}\sum_F t_F V_{M^F}(\pi^F(\t))V_{M_F}(\pi_F(\t)).\]
  Since $\pi^F(\t)\in \cK_{M^F}$ and $\pi_F(\t)\in \cK_{M_F}$, by induction, each evaluation of $V$ on the right side is positive, and so the sum is positive.

  For the second statement, we induct on $k$, noting that the case of $k=0$ has already been covered. Suppose $k\geq 1$ and represent
  \[\t_k=\sum a_F\delta_F\]
  where each $a_F\geq 0$ for all $F$, and for at least one $F$, $a_F>0$.
  Then 
  \[D_{\t_1}\cdots D_{\t_k}V_M(\t)=D_{\t_1}\cdots D_{\t_{k-1}}\sum_F a_F V_{M^F}(\pi^F(\t'))V_{M_F}(\pi_F(\t')).\]
  Now, for $A\subseteq [k-1]=\{1,\dots,k-1\}$, write $A=\{i_1,\dots,i_{\ell}\}$, and define
  \[D_{\t_A}=D_{\t_{i_1}}\cdots D_{\t_{i_\ell}}.\]
  By applying the Leibniz rule repeatedly, we see that the right side is 
  \[\sum_F \left(a_F \sum_{(A,B)\mid A\sqcup B=[k-1]} D_{\t_A}V_{M^F}(\pi^F(\t))D_{\t_B}V_{M_F}(\pi_F(\t)).\right)\]
  where the inner sum is over all disjoint pairs of subsets $(A,B)$ whose union is  $[k-1]$ such that $|A|\leq \rank(F)-1$ and $|B|\leq r-\rank(F)-1$.
  Since $\pi^F(\cK_M)\subseteq \cK_{M^F}$ and $\pi_F(\cK_M)\subseteq \cK_{M_F}$, each such term is positive by induction.
\end{proof}

By combining Lemma~\ref{l:positivevolume} with Lemma~\ref{l:mixedproduct} and the observation that $\alpha$ and $\beta$ are in the closure of $\cK_M$, we obtained the following:

\begin{corollary} The signed coefficients $\mu^i(M)$ of the reduced characteristic polynomial are nonnegative. 
\end{corollary}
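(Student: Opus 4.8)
The plan is to read the nonnegativity of $\mu^k(M)$ off the positivity of derivatives of $V_M$ established in Lemma~\ref{l:positivevolume}, combined with the identification of $\mu^k(M)$ as an iterated derivative in Lemma~\ref{l:mixedproduct}. The rank $1$ case is immediate, since then $r=0$ and $\mu^0(M)=1$; so assume $r\geq 1$.

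Fix $k$ with $0\leq k\leq r$. By Lemma~\ref{l:mixedproduct}, $\mu^k(M)=D_\alpha^{r-k}D_\beta^k V_M$. Since $V_M$ is a homogeneous polynomial of degree $r$ and we apply $r$ first-order differential operators, the right-hand side is a constant; in particular it is independent of any choice of evaluation point. Moreover the assignment $(\uu,\v)\mapsto D_\uu^{r-k}D_\v^k V_M$ is a polynomial function of $(\uu,\v)\in L_M\times L_M$ (each $D_\uu$ depends linearly on $\uu$, and $V_M$ is a polynomial), hence continuous. Next I would invoke the fact that $\alpha$ and $\beta$ lie in the closure $\overline{\cK_M}$, so there are sequences $\uu_n,\v_n\in\cK_M$ with $\uu_n\to\alpha$ and $\v_n\to\beta$. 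By Proposition~\ref{p:ampleconeproperties}, $\cK_M$ is nonempty, so fix some $\t\in\cK_M$; then Lemma~\ref{l:positivevolume}, applied with $r$ derivatives all taken from $\cK_M$, gives $D_{\uu_n}^{r-k}D_{\v_n}^k V_M(\t)>0$ for every $n$, and since this value equals the constant $D_{\uu_n}^{r-k}D_{\v_n}^k V_M$, that constant is $>0$. Passing to the limit $n\to\infty$ and using continuity yields $\mu^k(M)=D_\alpha^{r-k}D_\beta^k V_M=\lim_n D_{\uu_n}^{r-k}D_{\v_n}^k V_M\geq 0$.

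The one point requiring slight care — and essentially the only obstacle — is that $\alpha$ and $\beta$ are merely in the closure of $\cK_M$, not in $\cK_M$ itself, so the limiting argument yields only $\mu^k(M)\geq 0$ rather than strict positivity; but that is exactly what is claimed, so nothing more is needed. One should also record that because the relevant derivative is a genuine constant, the evaluation point $\t\in\cK_M$ in Lemma~\ref{l:positivevolume} is irrelevant and may be chosen arbitrarily, its existence being guaranteed by Proposition~\ref{p:ampleconeproperties}. Everything else is a formal combination of results already in hand.
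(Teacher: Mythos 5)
Your argument is correct and is precisely the combination the paper has in mind: it invokes Lemma~\ref{l:mixedproduct} to identify $\mu^k(M)$ with the constant $D_\alpha^{r-k}D_\beta^k V_M$, Lemma~\ref{l:positivevolume} for strict positivity of such mixed derivatives when all directions lie in $\cK_M$, and the closure statement for $\alpha,\beta$ to pass to the limit. The only thing the paper leaves implicit that you spell out is the continuity of $(\uu,\v)\mapsto D_\uu^{r-k}D_\v^k V_M$ and the harmless choice of evaluation point, which is the right thing to make explicit.
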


\begin{remark}
  Note that, in general, $\cK_M\neq L_M$. One can see that by observing that for a rank $3$ matroid $M$ with rank $2$ flat $G$, we have $D_G^2V_M=-1$. This shows that $\delta_G\not\in \cK_M$.
\end{remark}

\section{Symmetric Matrices} \label{s:symmetric}

We now develop some tools from linear algebra to find conditions for a symmetric matrix to have a single positive eigenvalue. For the most part, we follow \cite{Serre:matrices} which we enthusiastically recommend.

Let $M_{n\times n}(\R)$ be the set of $(n\times n)$-matrices with real entries. A matrix $A$ is {\em symmetric} if $A^T=A$. Recall that a symmetric matrix has a complete set of real orthogonal eigenvectors. We write the eigenvalues (with multiplicity) in order as
\[\lambda_1(A)\leq \lambda_2(A)\leq\dots\leq \lambda_n(A).\]
The matrix $A$ induces a symmetric bilinear form,
\[\<\x,\y\>_A\coloneqq \x^TA\y.\]
If $\Lambda$ is an invertible $(n\times n)$ matrix, then
\[(\R^n,\<\cdot,\cdot\>_A)\to (\R^n,\<\cdot,\cdot\>_{\Lambda^T A \Lambda}),\quad \x\mapsto \Lambda^{-1}\x\]
is an isometery. In this case, we say $A$ and $\Lambda^T A \Lambda$ are {\em congruent}.
A matrix or vector is {\em nonnegative} (resp., positive) if all of its entries are. We will say that a matrix is {\em weakly nonnegative} (resp., weakly positive) if all of its off-diagonal are nonnegative (resp., positive).

\begin{definition}
The {\em incidence graph} $\IG(A)$ of a symmetric $(n\times n)$-matrix $A=[a_{ij}]$ is a graph whose vertices are labeled $1,2,\dots,n$ such that there is an edge between vertices $i$ and $j$ if and only if $a_{ij}\neq 0$, and there is a loop edge at vertex $i$ if and only if $a_{ii}\neq 0$. The matrix $A$ is said to be {\em irreducible} exactly when the incidence graph is connected.
\end{definition}

\begin{lemma}
A nonnegative matrix $A$ with positive entries along the diagonal is irreducible exactly when $A^k$ is positive for all sufficiently large $k$.
\end{lemma}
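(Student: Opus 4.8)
The plan is to translate positivity of entries of powers of $A$ into the combinatorics of walks in $\IG(A)$. First I would record the standard observation that, since $A=[a_{ij}]$ is nonnegative, the $(i,j)$ entry of $A^k$ is
\[(A^k)_{ij}=\sum_{i_1,\dots,i_{k-1}} a_{i i_1}a_{i_1 i_2}\cdots a_{i_{k-1} j},\]
a sum of nonnegative terms in which a given term is positive exactly when each consecutive pair of indices is joined by an edge (or loop) of $\IG(A)$. Hence $(A^k)_{ij}>0$ if and only if there is a walk of length exactly $k$ from $i$ to $j$ in $\IG(A)$.

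For the implication ``$A^k$ positive for all large $k$ $\Rightarrow$ $A$ irreducible'': if $(A^k)_{ij}>0$ for all $i,j$ for even a single value of $k$, then every pair of vertices of $\IG(A)$ is joined by a walk, so $\IG(A)$ is connected, which is the definition of $A$ being irreducible.

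For the converse, assume $\IG(A)$ is connected. Because $a_{ii}>0$ for every $i$, there is a loop at every vertex. Given any $i,j$, connectedness on $n$ vertices provides a walk from $i$ to $j$ of some length $\ell\le n-1$; for any $k\ge\ell$, prepending $k-\ell$ loop steps at $i$ yields a walk of length exactly $k$ from $i$ to $j$, so $(A^k)_{ij}>0$ by the first paragraph. Taking $k\ge n-1$ makes this hold for all pairs $(i,j)$ at once, giving $A^k>0$ for all sufficiently large $k$.

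This argument is almost entirely bookkeeping; the one point that requires care — and the sole place the positive-diagonal hypothesis enters — is the step that upgrades a path of length at most $n-1$ to a walk of exactly the prescribed length $k$, which is possible precisely because each vertex carries a loop.
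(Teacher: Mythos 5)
Your proof is correct and follows essentially the same route as the paper: express $(A^k)_{ij}$ as a sum over length-$k$ walks in $\IG(A)$, use connectedness to find a walk of length at most $n-1$ between any two vertices, and use the loops (from the positive diagonal) to pad that walk up to any prescribed length $k\ge n-1$. If anything, you make the loop-padding step more explicit than the paper, which passes from ``walk of length exactly $k$'' to ``path of length at most $k$'' without comment.
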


\begin{proof}
  View the incidence graph as a weighted graph where the edge from $i$ to $j$ is weighted by $a_{ij}$ and the loop edge at $i$ is weighted by $a_{ii}$.
  Because
  \[(A^k)_{ij}=\sum_{(i_1,\dots,i_{k-1})\in [n]^{k-1}} a_{ii_1}a_{i_1i_2}\cdots a_{i_{k-2}i_{k-1}}a_{i_{k-1}j}\]
  $(A^k)_{ij}$ is equal to the sum over all paths of length $k$ from $i$ to $j$ where each path is weighted with the product of the weights of its edges. Therefore, $(A^k)_{ij}$ is positive exactly when there is a path of length at most $k$ from $i$ to $j$. Because $\IG(A)$ has $n$ vertices, it has diameter at most $n-1$, so if there is a path from $i$ to $j$, there is a path of length at most $n-1$. Hence, $\IG(A)$ is connected if and only if $A^{n-1}$ is a positive matrix.
\end{proof}

We will need to understand the eigenvalues of symmetric matrices by studying how the inner product given by $A$ distorts lengths compared to the standard inner product.

\begin{definition} For a nonzero linear subspace $V\subset\R^n$, the {\em Rayleigh quotient} is 
\[R_A(V)\coloneqq \max_{\x\in V\setminus\{0\}} \frac{\<\x,\x\>_A}{\|\x\|^2}=\max_{\x\in V\setminus\{0\},\|\x\|=1} \<\x,\x\>_A.\]
\end{definition}

We will suppress ``$A$'' when the matrix is understood.
The following lemma makes use of the basic fact that for linear subspaces $V,W\subseteq \R^n$, 
\[\dim(V\cap W)\geq \max(0,\dim V+\dim W-n).\]

\begin{lemma} Let $A$ be a symmetric matrix with eigenvalues $\lambda_1\leq\lambda_2\leq\dots\leq\lambda_n$. Pick an orthonormal basis of eigenvectors $\v_1,\dots,\v_n$ with eigenvalues $\lambda_1,\dots,\lambda_n$, respectively.
For $V\subseteq\R^n$ with $\dim V=k$, $R(V)\geq\lambda_k$ with 
$R(\Span(\v_1,\dots,\v_k))=\lambda_k$. Hence, 
\[\lambda_k=\min_{V\mid \dim V=k} R(V).\]
\end{lemma}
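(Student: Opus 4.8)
This is the Courant–Fischer min-max characterization of eigenvalues (the ``min over $k$-dimensional subspaces of the max of the Rayleigh quotient'' half). The strategy is to prove the two inequalities implicit in the statement: (i) $R(V) \ge \lambda_k$ for every $k$-dimensional subspace $V$, and (ii) there is a specific $k$-dimensional subspace, namely $\Span(\v_1,\dots,\v_k)$, on which $R$ equals exactly $\lambda_k$. Together these give $\lambda_k = \min_{\dim V = k} R(V)$.

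For part (ii), take $\x \in \Span(\v_1,\dots,\v_k)$, write $\x = \sum_{i=1}^k c_i \v_i$, and compute directly: since the $\v_i$ are orthonormal, $\|\x\|^2 = \sum_{i=1}^k c_i^2$, while $\<\x,\x\>_A = \x^T A \x = \sum_{i=1}^k \lambda_i c_i^2$ because $A\v_i = \lambda_i \v_i$. Hence $\<\x,\x\>_A = \sum \lambda_i c_i^2 \le \lambda_k \sum c_i^2 = \lambda_k \|\x\|^2$, with equality when $\x = \v_k$. So $R(\Span(\v_1,\dots,\v_k)) = \lambda_k$.

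For part (i), let $V$ be any subspace with $\dim V = k$. Consider $W \coloneqq \Span(\v_k, \v_{k+1}, \dots, \v_n)$, which has dimension $n-k+1$. By the dimension-count fact quoted just before the lemma, $\dim(V \cap W) \ge k + (n-k+1) - n = 1$, so there is a nonzero vector $\x \in V \cap W$. Writing $\x = \sum_{i=k}^n c_i \v_i$, the same orthonormality computation gives $\<\x,\x\>_A = \sum_{i=k}^n \lambda_i c_i^2 \ge \lambda_k \sum_{i=k}^n c_i^2 = \lambda_k \|\x\|^2$, so $R(V) \ge \<\x,\x\>_A / \|\x\|^2 \ge \lambda_k$. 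Combining (i) and (ii) yields the min-max formula.

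The main obstacle, such as it is, is purely bookkeeping: making sure the index ranges in the two sub-arguments are chosen so that the inequality $\lambda_i c_i^2 \ge \lambda_k c_i^2$ (resp.\ $\le$) goes the right way — in part (ii) one needs indices $i \le k$ so that $\lambda_i \le \lambda_k$, while in part (i) one needs indices $i \ge k$ so that $\lambda_i \ge \lambda_k$, which is exactly why $W$ is taken to be the span of the \emph{top} $n-k+1$ eigenvectors. Everything else is a one-line orthonormal-basis calculation, so I would not belabor it.
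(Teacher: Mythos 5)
Your proof is correct and is essentially identical to the paper's: both are the standard Courant--Fischer argument, intersecting an arbitrary $k$-dimensional $V$ with $W = \Span(\v_k,\dots,\v_n)$ to get the lower bound and evaluating on $\Span(\v_1,\dots,\v_k)$ to show it is attained. (You present the two halves in the opposite order, but the content is the same.)
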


\begin{proof}
Pick orthonormal eigenvectors $\v_1,\dots,\v_n$ as above. Set 
\[W=\Span(\v_k,\v_{k+1},\dots,\v_n).\] 
Let $V$ be a $k$-dimensional linear subspace of $\R^n$.
Since $\dim W=n+k-1$, $\dim V\cap W\geq 1$, so there is a nonzero $\vec{x}\in V\cap W\setminus\{0\}$. Since 
\[\x=a_k\v_k+\dots+a_n\v_n,\]
\[\frac{\<x,x\>_A}{\|x\|^2}=\frac{\lambda_k a_k^2+\lambda_{k+1}a_{k+1}^2+\dots +\lambda_n a_n^2}{a_k^2+\dots+a_n^2}\geq \frac{\lambda_k a_k^2+\lambda_{k} a_{k+1}^2+\dots +\lambda_k a_n^2}{a_k^2+\dots+a_n^2}=\lambda_k.\]
Hence, $R(V)\geq \lambda_k$.

For nonzero $x\in V\coloneqq\Span(\v_1,\dots,\v_k)$, 
\[x=a_1\v_1+\dots+a_k\v_k\] and
\[\frac{\<x,x\>_A}{\|x\|^2}=\frac{\lambda_1 a_1^2+\lambda_{2}a_{2}^2+\dots +\lambda_k a_k^2}{a_1^2+\dots+a_k^2}\leq \frac{\lambda_k(a_1^2+a_2^2+\dots+a_k^2)}{a_1^2+\dots+a_k^2}=\lambda_k.\]
Therefore, $R(V)\leq \lambda_k$, and hence $R(V)=\lambda_k$.
\end{proof}

As a special case, one  has the observation that for $\x\in\R^n$, $\<\x,\x\>_A\leq \lambda_n\|x\|^2$.

Recall that the signature of a symmetric matrix $A$ is the triple $(p,n,z)$ where $p$, $n$, and $z$ is the number of positive, negative, and zero eigenvalues. By Sylvester's law of inertia, two congruent matrices have the same signature.

\begin{lemma}
For $\v\in\R^n$, $\v\neq 0$, the matrix $B\coloneqq \v\v^T$ is a symmetric rank $1$ matrix with eigenvalues $0^{n-1},\|\v\|^2$ where $\v$ is an eigenvector with eigenvalue $\|\v\|^2$.
\end{lemma}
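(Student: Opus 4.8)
The statement is entirely elementary, so the plan is just to exhibit the eigenstructure directly. First I would observe that $B = \v\v^T$ is symmetric, since $B^T = (\v\v^T)^T = \v\v^T = B$. Next I would compute $B\v = \v\v^T\v = \v(\v^T\v) = \|\v\|^2\,\v$, which shows at once that $\v$ is an eigenvector with eigenvalue $\|\v\|^2$; note $\|\v\|^2 > 0$ because $\v \neq 0$.

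For the remaining eigenvalues, I would take any $\w$ orthogonal to $\v$, i.e. $\v^T\w = 0$, and compute $B\w = \v(\v^T\w) = 0$, so every vector in the $(n-1)$-dimensional orthogonal complement $\v^\perp$ is an eigenvector with eigenvalue $0$. Since a symmetric $n\times n$ matrix has exactly $n$ eigenvalues counted with multiplicity, and we have accounted for $1 + (n-1) = n$ of them, the spectrum is precisely $0^{n-1}, \|\v\|^2$. In particular $B$ has rank $1$, as its kernel is the $(n-1)$-dimensional space $\v^\perp$.

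There is no real obstacle here; the only thing to be a little careful about is phrasing the multiplicity count so that it does not secretly assume what is being proved — but this is handled cleanly by the remark, already recorded in the excerpt, that a symmetric matrix has a complete orthogonal set of real eigenvectors, so the $n$ eigenvectors $\v$ together with an orthonormal basis of $\v^\perp$ realize all eigenvalues with multiplicity. One could alternatively invoke the trace (equal to $\|\v\|^2$) and the fact that $B$ has rank $1$ to pin down the eigenvalues, but the orthogonal-complement argument is the most transparent and self-contained.
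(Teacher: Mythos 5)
Your proof is correct and follows essentially the same route as the paper: symmetry by inspection, the eigenvalue $\|\v\|^2$ from $B\v = \|\v\|^2\v$, and the $(n-1)$-dimensional kernel $\v^\perp$. The paper's proof is just a terser version of the same argument.
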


\begin{proof}
  The matrix $B$ is clearly symmetric. The nullspace $V=\v^\perp$  is $(n-1)$-dimensional. Since $B\v=\|\v\|^2\v$, $\v$ is an eigenvector with eigenvalue $\|\v\|^2$.
\end{proof}

Now, we study how subtracting $\v\v^T$ from a matrix changes its eigenvalues.

\begin{lemma} \label{l:eigenvalueshift} Let $A$ be a symmetric matrix with at most one positive eigenvalue. Let $\v\in\R^n$. Then $B\coloneqq A-\v\v^T$ has at most one positive eigenvalue. If $A$ is nonsingular, then $B$ has at most one nonnegative eigenvalue.
\end{lemma}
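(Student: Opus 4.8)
The plan is to use the min-max (Courant--Fischer) characterization of eigenvalues proved above, namely that for a symmetric matrix $A$ with eigenvalues $\lambda_1 \le \dots \le \lambda_n$ one has $\lambda_k = \min_{\dim V = k} R_A(V)$, together with the fact that the Rayleigh quotient is monotone under passing to subspaces. The key observation is that $B = A - \v\v^T$ satisfies $\<\x,\x\>_B = \<\x,\x\>_A - (\v^T\x)^2 \le \<\x,\x\>_A$ for all $\x$, so $R_B(V) \le R_A(V)$ for every nonzero subspace $V$.

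First I would prove the first claim: suppose $A$ has at most one positive eigenvalue, i.e. $\lambda_{n-1}(A) \le 0$. Pick a $k$-dimensional subspace $V$ witnessing $\lambda_{n-1}(A)$ with $k = n-1$, so $R_A(V) = \lambda_{n-1}(A) \le 0$. Then $R_B(V) \le R_A(V) \le 0$, and since $V$ has dimension $n-1$, the min-max formula gives $\lambda_{n-1}(B) = \min_{\dim W = n-1} R_B(W) \le R_B(V) \le 0$. Hence $B$ has at most one positive eigenvalue.

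Next, the second claim: assume in addition $A$ is nonsingular, so all eigenvalues of $A$ are nonzero and $\lambda_{n-1}(A) < 0$. I want $\lambda_{n-1}(B) < 0$, i.e. $B$ has at most one nonnegative eigenvalue. Take $V = \Span(\v_1,\dots,\v_{n-1})$, the span of eigenvectors of $A$ for $\lambda_1,\dots,\lambda_{n-1}$, which achieves $R_A(V) = \lambda_{n-1}(A) < 0$. I would consider two cases according to whether $\v \in V$ or not. If $\v \notin V$, then for every nonzero $\x \in V$ we have $\v^T\x$ possibly zero, but the point is that we can do slightly better: actually the clean move is to note $R_B(V) \le \lambda_{n-1}(A) < 0$ already from the inequality $\<\x,\x\>_B \le \<\x,\x\>_A$, so $\lambda_{n-1}(B) \le R_B(V) < 0$ regardless — this handles everything at once. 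So in fact the nonsingularity is used only to upgrade $\lambda_{n-1}(A) \le 0$ to the strict $\lambda_{n-1}(A) < 0$, and the same subspace $V$ gives $\lambda_{n-1}(B) \le R_B(V) \le R_A(V) = \lambda_{n-1}(A) < 0$.

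The main subtlety to be careful about is the direction of the inequalities and the choice of subspace dimension: one must use a subspace of dimension exactly $n-1$ (not $n$, which would only bound $\lambda_n$) on which $R_A$ is already controlled, and invoke that $\lambda_{n-1}$ is the *minimum* of $R$ over $(n-1)$-dimensional subspaces so that exhibiting *one* good subspace suffices to bound $\lambda_{n-1}(B)$ from above. No genuine obstacle arises; the whole argument is a two-line application of monotonicity of the Rayleigh quotient plus the min-max formula.
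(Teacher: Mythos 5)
Your proposal is correct and is essentially the paper's argument: both pick the $(n-1)$-dimensional subspace $V$ spanned by eigenvectors of $A$ with non-positive eigenvalues, observe $\<\x,\x\>_B = \<\x,\x\>_A - (\v^T\x)^2 \le \<\x,\x\>_A$ so that $R_B(V)\le R_A(V)\le 0$ (strict if $A$ is nonsingular), and conclude via $\lambda_{n-1}(B)\le R_B(V)$. The abandoned case split on whether $\v\in V$ is, as you yourself note, unnecessary and can simply be deleted.
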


\begin{proof}
  Let $V$ be a $(n-1)$-dimensional subspace spanned by eigenvectors of $A$ corresponding to non-positive eigenvalues. Then $\<\cdot,\cdot\>_A$ is negative semi-definite on $V$. Since
  \[\<\x,\x\>_B=\<\x,\x\>_A-(\v^T\x)^2,\]
  $B$ is also negative semi-definite on $V$. Hence $\lambda_{n-1}\leq R(V)\leq 0$. If $A$ is non-singular, then $A$ and thus $B$ would be negative definite on $V$, and so $\lambda_{n-1}<0$. 
\end{proof}

The following is a special case of the Cauchy interlacing theorem:
\begin{lemma} \label{l:onepositive} Let $A$ be a symmetric matrix with at most one positive eigenvalue. Let $V\subset\R^n$ be a linear subspace with $\dim V\geq 2$. Then $\<\cdot,\cdot\>_A$ restricted to $V$ is not positive definite.
\end{lemma}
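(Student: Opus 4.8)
The plan is to use the orthonormal eigenbasis of $A$ together with the elementary dimension count for intersections of subspaces that was highlighted just before the Rayleigh quotient lemma. Since $A$ is symmetric, pick an orthonormal basis of eigenvectors $\v_1,\dots,\v_n$ with eigenvalues $\lambda_1\leq\dots\leq\lambda_n$. The hypothesis that $A$ has at most one positive eigenvalue means $\lambda_1,\dots,\lambda_{n-1}\leq 0$. Set $W\coloneqq\Span(\v_1,\dots,\v_{n-1})$, so $\dim W=n-1$ and, writing any $\x\in W$ as $\x=\sum_{i<n}a_i\v_i$, we get $\<\x,\x\>_A=\sum_{i<n}\lambda_i a_i^2\leq 0$; that is, $\<\cdot,\cdot\>_A$ is negative semi-definite on $W$.

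Next, I would invoke the inequality $\dim(V\cap W)\geq \max(0,\dim V+\dim W-n)$ recorded before the Rayleigh quotient lemma. With $\dim V\geq 2$ and $\dim W=n-1$ this gives $\dim(V\cap W)\geq 2+(n-1)-n=1$, so there is a nonzero vector $\x\in V\cap W$. Then $\x\in V$ but $\<\x,\x\>_A\leq 0$, which shows $\<\cdot,\cdot\>_A$ cannot be positive definite on $V$.

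There is essentially no obstacle here: the only thing to be a little careful about is the edge case where $A$ has no positive eigenvalue at all, but the argument above is insensitive to that since it only uses $\lambda_1,\dots,\lambda_{n-1}\leq 0$. (Alternatively, one could phrase the same proof via the min–max formula $\lambda_{n-1}=\min_{\dim W'=n-1}R(W')$: any $(n-1)$-dimensional $W'$ meets $V$ nontrivially, and $R(W')\leq 0$ forces a nonpositive value of $\<\cdot,\cdot\>_A$ on $V$.) So the write-up is short; the main point is simply to assemble the eigenbasis fact and the subspace-intersection bound in the right order.
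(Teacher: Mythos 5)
Your proof is correct and is essentially the same as the paper's: both find a nonzero vector of $V$ inside the negative-semi-definite hyperplane $\Span(\v_1,\dots,\v_{n-1})=\v_n^\perp$ via the same dimension count. The only cosmetic difference is that you fold in the case of no positive eigenvalue automatically, whereas the paper dispatches it first as trivial.
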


\begin{proof}
    This is trivial if $A$ has no positive eigenvalues.
    Let $\v_n$ be an eigenvector of $A$ corresponding to the positive eigenvalue. Then $V\cap \v_n^\perp$ is positive dimensional and contains a nonzero vector $\x$. Since $\x\in \v_n^\perp$, it lies in the span of the eigenvectors of $A$ with non-positive eigenvalues,
    \[\<\x,\x\>_A\leq \lambda_{n-1}\|\x\|^2\leq 0.\qedhere\]
\end{proof}

We include a proof of the Perron--Frobenius theorem.

\begin{theorem}
    Let $A$ be a weakly nonnegative irreducible symmetric matrix. Then $A$ has a unique largest eigenvalue. The corresponding eigenvector $\v$ can be chosen positive. All other positive eigenvectors are multiples of $\v$.
\end{theorem}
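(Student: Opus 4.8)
The plan is to reduce to a nonnegative matrix with positive diagonal entries and then leverage the positivity of high powers proved above. First I would replace $A$ by $A'\coloneqq A+cI$ for $c$ large enough that every diagonal entry of $A'$ is positive; this leaves the off-diagonal entries — and hence connectedness of the incidence graph, and hence irreducibility — unchanged, shifts every eigenvalue by $c$, and preserves all eigenvectors, so it suffices to prove the theorem for $A'$. By the lemma identifying irreducibility with positivity of powers, $(A')^k$ is a positive matrix for all $k\geq n-1$; fix such a $k$. Also $\lambda\coloneqq\lambda_n(A')\geq (A')_{ii}>0$, since the Rayleigh quotient evaluated on a standard basis vector equals a diagonal entry.

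Next I would construct a positive eigenvector for $\lambda$. Let $\v$ be a unit vector with $\<\v,\v\>_{A'}=\lambda$, which exists by the Rayleigh characterization $\lambda=\max_{\|\x\|=1}\<\x,\x\>_{A'}$. Writing $|\v|$ for the vector of absolute values of the entries of $\v$: because all entries of $A'$ are nonnegative, $|\v|^{T}A'|\v|=\sum_{i,j}(A')_{ij}|v_i||v_j|\geq\sum_{i,j}(A')_{ij}v_iv_j=\lambda$ while $\||\v|\|=1$, so $|\v|$ is also a maximizer and therefore an eigenvector: $A'|\v|=\lambda|\v|$. Applying $(A')^k$ gives $(A')^k|\v|=\lambda^k|\v|$; the left-hand side is a strictly positive vector since $(A')^k$ is positive and $|\v|$ is nonnegative and nonzero, and $\lambda^k>0$, so $\w\coloneqq|\v|$ has all entries strictly positive. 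This produces the asserted positive eigenvector.

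For the multiplicity-one claim I would argue by contradiction. If the $\lambda$-eigenspace had dimension at least $2$, choose an eigenvector $\uu$ not proportional to $\w$ and set $\mathbf{z}\coloneqq\uu-(u_1/w_1)\w$ (note $w_1>0$), a nonzero $\lambda$-eigenvector with vanishing first coordinate. The same manipulation as above applies, with $\mathbf{z}$ not necessarily normalized: $|\mathbf{z}|^{T}A'|\mathbf{z}|\geq\mathbf{z}^{T}A'\mathbf{z}=\lambda\|\mathbf{z}\|^2=\lambda\||\mathbf{z}|\|^2$ and the Rayleigh quotient is bounded above by $\lambda$, so $|\mathbf{z}|$ is again a $\lambda$-eigenvector; then $(A')^k|\mathbf{z}|=\lambda^k|\mathbf{z}|$ has strictly positive left-hand side, contradicting the vanishing first coordinate of $|\mathbf{z}|$. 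Hence the top eigenspace is one-dimensional, which is exactly the statement that $\lambda$ is the unique largest eigenvalue.

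Finally, for any positive eigenvector $\uu$ with $A'\uu=\mu\uu$, the inner product $\<\w,\uu\>$ is strictly positive since both vectors are entrywise positive, so $\uu$ is not orthogonal to $\w$; as eigenvectors of the symmetric matrix $A'$ for distinct eigenvalues are orthogonal, $\mu=\lambda$, whence $\uu$ lies in the one-dimensional top eigenspace and is a scalar multiple of $\w$. Translating these conclusions back along $A=A'-cI$ finishes the argument. The proof is essentially a direct application of the Rayleigh-quotient lemma together with the powers-positivity lemma; the only points I expect to require care are the reduction to positive diagonal entries and verifying that $|\mathbf{z}|$ remains a maximizer even though $\mathbf{z}$ is not a unit vector.
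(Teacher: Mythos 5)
Your proof is correct, but it takes a genuinely different route from the paper's. After the common reduction step (adding $cI$ to make the diagonal positive, and invoking the lemma that irreducibility gives positivity of $(A')^k$ for large $k$), the two arguments diverge. To construct the positive eigenvector, the paper follows Birkhoff's dynamical approach: it iterates $\uu_k=A^k\x/\|A^k\x\|$ for a generic positive $\x$ and identifies the limit as a positive multiple of $\v_n$. You instead use the classical absolute-value trick of Perron: if $\v$ is a unit Rayleigh maximizer, then so is $|\v|$ (since all entries of $A'$ are nonnegative), hence $|\v|$ is an eigenvector, and applying $(A')^k$ upgrades nonnegativity to strict positivity. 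To prove simplicity, the paper computes $P'_{A}(\lambda_n)>0$ via principal submatrices and a perturbation of the Rayleigh quotient, whereas you derive a contradiction: a two-dimensional top eigenspace would contain a $\lambda$-eigenvector with a vanishing coordinate, but then $|\mathbf{z}|$ is again a $\lambda$-eigenvector and $(A')^k|\mathbf{z}|=\lambda^k|\mathbf{z}|$ cannot have a zero coordinate. Your route is shorter and more elementary, and it avoids replacing $A$ by a power of itself; the paper's route is longer but showcases the variational characterization in the form of the Rayleigh--Ritz perturbation on principal submatrices, which it uses elsewhere. The one step your proof leans on implicitly is the equality case of the Rayleigh-quotient lemma — that any unit vector achieving $\lambda_n$ in the quotient lies in the $\lambda_n$-eigenspace. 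The paper states only the inequality, but the equality case falls out of the same computation (if $\sum(\lambda_n-\lambda_i)a_i^2=0$ with each summand nonnegative, then $a_i=0$ whenever $\lambda_i<\lambda_n$), so this is a minor gap worth flagging in writing but not a flaw in the argument.
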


\begin{proof}
  By replacing $A$ by $A+\lambda I_n$ for $\lambda>0$ sufficiently large, we may assume that $A$ has positive entries along the diagonal. This has the effect of adding $\lambda$ to each eigenvalue without affecting the eigenvectors. Consequently, we may suppose that each eigenvalue of $A$ is positive.
  Let $r$ be a positive integer such that for any $i,j\in\{1,\dots,n\}$ there is a path in $I(G)$ from $i$ to $j$ of length at most $r$. Consequently, for any $i\in\{1,\dots,n\}$, $A^r\e_i$ is a positive vector. Therefore, for nonzero $\x\in\R^n_{\geq 0}$, $A^r\x$ is a positive vector. Hence, $A(\R^n_{\geq 0})\subseteq \R^n_{\geq 0}$ and $A^r(\R^n_{\geq 0}\setminus\{0\})\subseteq \R^n_{>0}$.
  Replacing $A$ by $A^r$ replaces $\lambda_i$ by $\lambda_i^r$ and does not affect the relative order the eigenvalues. We may therefore do so and suppose that $A$ is positive.

  To find the largest eigenvalue, we follow the dynamic approach of \cite{Birkhoff:linear}. We first show that the largest eigenvalue of $A$ has a positive eigenvector.
  Let $\lambda_1\leq \lambda_2\leq \dots\leq \lambda_n$ be the eigenvalues of $A$. Let $\v_1,\v_2,\dots,\v_n\in\R^n$ be a corresponding complete set of eigenvectors. Pick $\x\in\R^n_{>0}$ generically such that 
  \[\x=a_1\v_1+a_2\v_2+\dots+a_n\v_n\]
  where each $a_i$ is nonzero. Set $\uu_k=\frac{A^k\x}{\|A^k\x\|}$, so $\uu_k$ is a scalar multiple of $A^k\x=\lambda_1^k a_1\v_1+\lambda_2^k a_2\v_2+\dots+\lambda_n^k a_n\v_n$. Since $S^{n-1}\cap \R^n_{\geq 0}$ is compact, $\{\uu_k\}_k$ has a limit point $\uu$. Because $\lambda^ka_n$ must dominate all other coefficients of $A^k\x$, this limit point must be a scalar multiple of $\v_n$. Since $A\uu=\lambda_n\uu$, is a positive vector, $\uu$ must also be a positive vector.

  We follow the ideas of \cite[Section~8.3.3]{Serre:matrices} to show that the largest eigenvalue of $A$ is simple by showing the characteristic polynomial of $A$, $P_A(x)=\det(xI_n-A)$ satisfies $P'_A(\lambda_n)>0$. Now, since the determinant of a matrix is a multilinear function $\phi$ of its columns, 
  \[P_A(x)=\det(xI-A)=\det((xI-A)_1,\dots,(xI-A)_n)=\phi(x\e_1-A_1,\dots,x\e_n-A_n)\]
  where $A_i$ denotes the $i$th column of $A$,
  \[P'_A(x)=\phi(\e_1,xI-A_2,\dots,xI-A_n)+\dots+\phi(xI-A_1,\dots,xI-A_{n-1},\e_n).\]
  Write $B_i$ for matrix obtained from $A$ by deleting the $i$th row and $i$th column. This is a positive symmetric matrix. Then,
  \[P'_A(x)=P_{B_1}(x)+\dots+P_{B_n}(x).\]
  Let $j_i\colon \R^{n-1}\to \R^n$ be the inclusion into the $i$th coordinate hyperplane,
  \[j_i(x_1,\dots,x_{n-1})=(x_1,\dots,x_{i-1},0,x_i,\dots,x_{n-1}).\]
  We claim that the largest eigenvalue of $B_i$ is strictly less than $\lambda_n$. Let $\v\in\R_{>0}^{n-1}$ be an eigenvector of $B_i$ corresponding to the largest eigenvalue $\lambda'$ which is positive by induction. We will wiggle $\v$ in the $\e_i$-direction to increase its norm in $\<\cdot,\cdot\>_A$. Define a vector-valued function $\w\colon\R\to \R^n$, $\w(t)=j_i(\v)+t\e_i$ and a function
  \[Q(t)\coloneqq \frac{\w(t)^TA\w(t)}{\|\w(t)\|^2}=\frac{\lambda'\|\v\|^2+
  2t\e_i^TAj_i(\v)+t^2A_{ii}}{\|\v\|^2+t^2}.\]
  Now, 
  \[Q'(0)=\frac{2\e_i^TAj_i(\v)}{\|v\|^2}.\]
This quantity is positive since $A$ is a positive matrix and $j_i(\v)$ is a nonzero nonnegative vector. Since $Q(0)=\lambda'$, there is a vector $\w\coloneqq j_i(\v)+\varepsilon \e_i$ for small $\varepsilon>0$ such that $\frac{\<\w,\w\>_A}{\|\w\|}^2>\lambda'$. Hence, we have the inequalities
\[\lambda_n\geq \frac{\<\w,\w\>_A}{\|\w\|}^2>\lambda'.\]
  Since $P_{B_i}(x)\sim x^{n-1}$ for $x\gg 0$ and $P_{B_i}(x)$'s largest zero, $\lambda'$ is less than $\lambda_n$, $P_{B_i}(\lambda_n)>0$. Consequently, $P'_A(\lambda_n)>0$.

  Since the eigenspace $E_{\lambda_n}$ is $1$-dimensional, any eigenvector of $A$ is either parallel to $\v_n$ or orthogonal to it. In the latter case, the vector cannot be positive.
\end{proof}

\section{The Lorentzian bootstrap} \label{s:bootstrap}

The following section is a treatment of some material found in \cite[Section~2]{BL:oncones} and in \cite{BH:Lorentzian}.
For a homogeneous polynomial $f$, its gradient is the vector-valued function 
$\nabla f\colon\R^n\to \R^n$ given by $(\nabla f)_i(\x)\coloneqq\partial_i f(\x)$ and its Hessian $\cH(f)$ is the $(n\times n)$-matrix valued function $\cH f\colon \R^n\to \R^{n^2}$ whose components are $(\cH f(\x))_{ij}=\partial_i\partial_j f(\x)$. We will write $\nabla_{\x}f$ and $\cH_{\x}f$ for these functions evaluated at $\x\in\R^n$ when necessary. Note that if $\deg(f)=2$, then $\cH_{\x} f$ is independent of $\x\in\R^n$.

The goal of this section is to describe conditions for $\cH_{\x}f$ to have a unique positive eigenvalue. The arguments here are quite intricate and surprising.

\begin{lemma}
Let $f$ be homogeneous of degree $d$. Then we have the identities
\begin{enumerate}
    \item \label{i:ifunction} of functions: $(\nabla_{\x} f)^T\x=d f(\x)$ and
    \item \label{i:vv} of vector-valued functions: $(\cH_{\x}f)\x=(d-1)\nabla_{\x}f.$
\end{enumerate}
\end{lemma}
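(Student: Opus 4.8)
The plan is to treat the two identities in turn, with the first being essentially a restatement of the Eulerian identity already established and the second following by applying that identity to the first-order partials.

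For \eqref{i:ifunction}, I would simply observe that $(\nabla_\x f)^T \x = \sum_{i=1}^n x_i \partial_i f(\x)$ by definition of the gradient, and that this equals $d f(\x)$ by the Eulerian identity applied to the degree-$d$ homogeneous polynomial $f$. Nothing more is needed.

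For \eqref{i:vv}, the key point is that for each fixed $j$ the partial derivative $\partial_j f$ is again homogeneous, of degree $d-1$ (each monomial of $f$ has total degree $d$, so differentiating in $x_j$ lowers the degree by exactly one, and monomials not involving $x_j$ are killed). Applying the Eulerian identity to $\partial_j f$ gives
\[
\sum_{i=1}^n x_i \,\partial_i \partial_j f(\x) = (d-1)\,\partial_j f(\x).
\]
The left-hand side is precisely the $j$-th component of $(\cH_\x f)\x$, since $(\cH_\x f)_{ij} = \partial_i \partial_j f(\x)$ and the Hessian is symmetric, while the right-hand side is $(d-1)$ times the $j$-th component of $\nabla_\x f$. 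As this holds for every $j$, the vector-valued identity $(\cH_\x f)\x = (d-1)\nabla_\x f$ follows. (Equivalently, one may differentiate the identity in \eqref{i:ifunction} with respect to $x_j$, using the product rule on $\sum_i x_i \partial_i f$, which produces $\partial_j f + \sum_i x_i \partial_i\partial_j f = d\,\partial_j f$ and hence the same conclusion.)

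There is no substantive obstacle here: both parts reduce to the Eulerian identity, once one notes that differentiation preserves homogeneity and drops the degree by one. The only thing to be careful about is bookkeeping with the indices of the Hessian and the symmetry $\partial_i\partial_j f = \partial_j\partial_i f$, which lets us read $\sum_i x_i \partial_i\partial_j f$ as the matrix–vector product $(\cH_\x f)\x$ rather than its transpose.
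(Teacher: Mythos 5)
Your proof is correct and takes essentially the same route as the paper: part \eqref{i:ifunction} is the Eulerian identity, and part \eqref{i:vv} is proved componentwise by applying the Eulerian identity to the degree-$(d-1)$ polynomial $\partial_j f$ (the paper writes the same computation with the roles of the indices $i$ and $j$ swapped, which is immaterial). Your parenthetical alternative — differentiating identity \eqref{i:ifunction} in $x_j$ via the product rule — is a slight variant but reduces to the same calculation.
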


\begin{proof}
  Equation \eqref{i:ifunction} is the Eulerian identity.
  We prove the equality in \eqref{i:vv} component-by-component. Note
  \begin{align*}
      \e_i^T(\cH_{\x}f)\x&=\sum_j \frac{\partial^2 f}{\partial x_i\partial x_j}x_j\\
      &=\sum_j x_j\frac{\partial}{\partial x_j}\left(\frac{\partial f}{\partial x_i}\right)\\
      &=(d-1)\frac{\partial f}{\partial x_i}=(d-1)\e_i^T\nabla_{\x}f.\qedhere
  \end{align*}
 \end{proof}

Henceforth, we will let $f$ be a homogeneous polynomial of degree $d$.
Under certain hypotheses, we can leverage the above lemma to analyze the eigenvalues of a matrix $B_{\x}f$, a matrix congruent to $\cH_{\x}f$.  We will show the following:
\begin{enumerate}
    \item $B_{\x}f$ has $d-1$ as an eigenvalue with positive eigenvector,
    \item $B_{\x}f$ has no eigenvalues in the range $(0,d-1)$,
\end{enumerate}
If, in addition, $B_{\x}f$ is weakly nonnegative and irreducible, it has a unique positive eigenvector. Therefore, $B_{\x}f$ and thus $\cH_{\x}f$ have a unique positive eigenvalue.

Below, $\Lambda$ is a diagonal matrix with positive entries along the diagonal, and we take the positive square root.

\begin{lemma} Let $\x$ be a positive vector. Set $\v\coloneqq \nabla_{\x}f$ so $(\cH_{\x}f)\x=(d-1)\v$. Suppose $\x$ and $\v$ are both positive vectors.
Set
\[\Lambda\coloneqq \operatorname{diag}\left(\frac{x_1}{v_1},\dots,\frac{x_n}{v_n}\right),\quad B_{\x}f\coloneqq \Lambda^{1/2}(\cH_{\x}f)\Lambda^{1/2}.\]
Then, $\Lambda^{-1/2}\x$ is a positive eigenvector of $B_{\x}f$ with eigenvalue equal to $d-1$.
\end{lemma}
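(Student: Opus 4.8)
The plan is to verify directly that $\Lambda^{-1/2}\x$ is an eigenvector of $B_{\x}f$ with eigenvalue $d-1$, and that it is positive. Positivity is immediate: $\Lambda$ is diagonal with positive entries $x_i/v_i$ (both $\x$ and $\v$ are positive vectors by hypothesis), so $\Lambda^{1/2}$ and $\Lambda^{-1/2}$ are diagonal with positive entries, and hence $\Lambda^{-1/2}\x$ is a positive vector.

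For the eigenvalue computation, I would simply unwind the definitions. We have
\[
B_{\x}f\,(\Lambda^{-1/2}\x)=\Lambda^{1/2}(\cH_{\x}f)\Lambda^{1/2}\Lambda^{-1/2}\x=\Lambda^{1/2}(\cH_{\x}f)\x.
\]
Now apply part \eqref{i:vv} of the preceding lemma: $(\cH_{\x}f)\x=(d-1)\nabla_{\x}f=(d-1)\v$. Hence
\[
B_{\x}f\,(\Lambda^{-1/2}\x)=(d-1)\Lambda^{1/2}\v.
\]
It remains to check that $\Lambda^{1/2}\v=\Lambda^{-1/2}\x$. Since $\Lambda=\operatorname{diag}(x_1/v_1,\dots,x_n/v_n)$, the $i$th entry of $\Lambda^{1/2}\v$ is $\sqrt{x_i/v_i}\cdot v_i=\sqrt{x_i v_i}$, while the $i$th entry of $\Lambda^{-1/2}\x$ is $\sqrt{v_i/x_i}\cdot x_i=\sqrt{x_i v_i}$; these agree entrywise, so $\Lambda^{1/2}\v=\Lambda^{-1/2}\x$. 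Therefore
\[
B_{\x}f\,(\Lambda^{-1/2}\x)=(d-1)\,\Lambda^{-1/2}\x,
\]
as claimed.

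There is no real obstacle here — the statement is essentially a bookkeeping consequence of the identity $(\cH_{\x}f)\x=(d-1)\nabla_{\x}f$ combined with the precise choice of the diagonal conjugating matrix $\Lambda$, which is engineered exactly so that $\Lambda^{1/2}$ intertwines $\v$ and $\Lambda^{-1/2}\x$. The only point worth care is to make sure one uses the positivity of $\x$ and $\v$ so that $\Lambda^{1/2}$ and $\Lambda^{-1/2}$ genuinely make sense as real diagonal matrices (this is why the hypothesis that both are positive vectors is included), and that the congruence $B_{\x}f=\Lambda^{1/2}(\cH_{\x}f)\Lambda^{1/2}$ with $\Lambda^{1/2}$ symmetric means $B_{\x}f$ and $\cH_{\x}f$ have the same signature — a fact that will be used downstream, though it is not needed for this particular statement.
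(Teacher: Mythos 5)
Your proof is correct and follows exactly the same computation as the paper's: apply the Euler-type identity $(\cH_{\x}f)\x=(d-1)\v$ and then use the defining property $\Lambda\v=\x$ of the conjugating diagonal matrix (you verify this entrywise; the paper writes it as $\Lambda^{1/2}\v=\Lambda^{-1/2}\Lambda\v=\Lambda^{-1/2}\x$). Your added remarks on positivity of $\Lambda^{\pm1/2}$ and the signature-preserving congruence are accurate context but not genuinely different content.
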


\begin{proof}
\[(B_{\x}f)(\Lambda^{-1/2}\x)=\Lambda^{1/2}(\cH_xf)\x=(d-1)\Lambda^{1/2}\v=(d-1)\Lambda^{-1/2}\Lambda\v=(d-1)\Lambda^{-1/2}\x.\qedhere \] 
\end{proof}

The following lemma will be critical in controlling the eigenvalues of $B_{\x}f$.
\begin{lemma} \cite[Proposition~2.33]{BH:Lorentzian} \label{l:negativesemi}
   Let $\x$ be a positive vector.
    Suppose that $\cH_\x f$ has a unique positive eigenvalue. Then the matrix
    \[C\coloneqq d\cdot f(\x)\cdot\cH_{\x}f-(d-1)\nabla_{\x} f\cdot (\nabla_{\x} f)^T\]
    is negative semi-definite.
\end{lemma}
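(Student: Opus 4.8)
The plan is to reduce the claim to a single reverse Cauchy--Schwarz inequality for the bilinear form determined by the Hessian. Abbreviate $H\coloneqq\cH_\x f$, $\v\coloneqq\nabla_\x f$, and $c\coloneqq f(\x)$, and assume $d\ge 2$ (for $d\le 1$ one has $H=0$ and $C=0$). From the two identities proved just above, $(\nabla_\x f)^T\x=d f(\x)$ and $(\cH_\x f)\x=(d-1)\nabla_\x f$, I get $H\x=(d-1)\v$ and hence $\x^TH\x=(d-1)\,\v^T\x=d(d-1)c$. The first step is the purely algebraic identity: for every $\y\in\R^n$,
\[\y^TC\y=dc\,(\y^TH\y)-(d-1)(\v^T\y)^2=\frac{1}{d-1}\Big[(\x^TH\x)(\y^TH\y)-(\x^TH\y)^2\Big],\]
obtained by writing $dc=\tfrac1{d-1}\x^TH\x$ and $\v^T\y=\tfrac1{d-1}(H\x)^T\y=\tfrac1{d-1}\x^TH\y$. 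So $C$ is negative semi-definite exactly when $(\x^TH\y)^2\ge(\x^TH\x)(\y^TH\y)$ for all $\y$; in particular the right-hand side vanishes at $\y=\x$, consistent with $C\x=0$.

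The second step is to prove this reverse Cauchy--Schwarz inequality, and the one input it needs is $\x^TH\x=d(d-1)f(\x)\ge 0$ --- this is where positivity enters, furnished in the application to the volume polynomial by Lemma~\ref{l:positivevolume} ($V_M>0$ on $\cK_M$). If $\x^TH\x=0$ the inequality is immediate since the left side is a square. If $\x^TH\x>0$, suppose some $\y$ violates it; then necessarily $\y^TH\y>0$, and $\x,\y$ are linearly independent (a multiple of $\x$ would give equality). On the plane $U\coloneqq\Span(\x,\y)$ the Gram matrix of $\<\cdot,\cdot\>_H$ in the basis $\{\x,\y\}$ is $\begin{pmatrix}\x^TH\x&\x^TH\y\\\x^TH\y&\y^TH\y\end{pmatrix}$, which has positive trace and positive determinant and is therefore positive definite; so $\<\cdot,\cdot\>_H$ is positive definite on the $2$-dimensional subspace $U$. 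Since $H=\cH_\x f$ has at most one positive eigenvalue, this contradicts Lemma~\ref{l:onepositive}, proving the inequality and hence the lemma.

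I expect the main obstacle to be not any single hard estimate but getting the first step right: recognizing that $d f(\x)\cH_\x f-(d-1)\nabla_\x f(\nabla_\x f)^T$ collapses, via the Euler identities, into exactly the form $(\x^TH\x)(\y^TH\y)-(\x^TH\y)^2$, so that the whole matter becomes a one-line consequence of Lemma~\ref{l:onepositive}. The subtle point worth flagging is that the reverse Cauchy--Schwarz step genuinely uses $f(\x)\ge 0$; it fails otherwise (e.g.\ $f=x_1^2-x_2^2-x_3^2$ at $\x=(1,2,2)$, whose Hessian has a unique positive eigenvalue), so the lemma should be read with that standing positivity in force.
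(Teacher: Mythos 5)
Your proof is correct and takes a genuinely different route from the paper's. The paper applies Lemma~\ref{l:eigenvalueshift} to $A\coloneqq d\,f(\x)\,\cH_\x f$ (so $C=A-\v\v^T$), concludes $C$ has at most one nonnegative eigenvalue when $\cH_\x f$ is nonsingular, shows $C\x=0$ to kill the remaining possibly-positive eigenvalue, and then removes the nonsingularity hypothesis by perturbing to $f_\varepsilon\coloneqq f-\varepsilon\sum x_i^d$ and taking a limit. You instead use both Euler identities to collapse $\y^TC\y$ into $\tfrac{1}{d-1}\bigl[(\x^TH\x)(\y^TH\y)-(\x^TH\y)^2\bigr]$ and then run the standard $2\times 2$ Gram-matrix argument against Lemma~\ref{l:onepositive}. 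Your approach is arguably cleaner: it dispenses with the $\varepsilon$-perturbation entirely because the degenerate case $\x^TH\x=0$ (equivalently $f(\x)=0$) is handled trivially, the left side being $-(\x^TH\y)^2$. The two routes are of course cousins --- both ultimately rest on the Rayleigh-quotient picture of symmetric matrices --- but they invoke different lemmas from Section~\ref{s:symmetric} and your reduction to a reverse Cauchy--Schwarz makes the mechanism more transparent.

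One remark worth emphasizing: you are right that the lemma as stated omits the hypothesis $f(\x)\ge 0$, and your counterexample $f=x_1^2-x_2^2-x_3^2$ at $\x=(1,2,2)$ is a genuine one. The paper's proof needs the same hypothesis in disguise: to apply Lemma~\ref{l:eigenvalueshift}, the matrix $d\,f(\x)\,\cH_\x f$ must itself have at most one positive eigenvalue (and, in the nonsingular branch, be nonsingular), which forces $f(\x)>0$; and when $f(\x)=0$ the matrix $C=-(d-1)\v\v^T$ is trivially negative semi-definite. In the application (Proposition~\ref{p:bootstrap}, where the lemma is invoked for $\partial_i f$ with $\partial_i f(\x)>0$) this positivity is guaranteed, so the gap is harmless in context, but your flagging of it is a real improvement in precision.
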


\begin{proof}
We first suppose that $\cH_x f$ is nonsingular.
By Lemma~\ref{l:eigenvalueshift}, $C$ has at most one nonnegative eigenvalue. We will show that $C$ has a nontrivial kernel and thus a zero eigenvalue. Therefore, $C$ will not be able to have any positive eigenvalues. Indeed,
\begin{align*}
C\x&=d\cdot f(\x)\cdot\cH_{\x}(f)\x-(d-1)\nabla_{\x}f\cdot ((\nabla_{\x}f)^T\x)\\
&=d\cdot f(\x)\cdot (d-1)\nabla_{\x}f-(d-1)\nabla_{\x}f\cdot df(\x)\\
&=0. 
\end{align*}

In the general case, for $\varepsilon>0$, set
\[f_\varepsilon\coloneqq f-\varepsilon(x_1^d+\dots+x_n^d).\]
Since 
\[\cH_{\x}f_{\varepsilon}=\cH_{\x}f-d(d-1)\varepsilon\operatorname{diag}\left(x_1^{d-2},\dots,x_n^{d-2}\right),\]
we have for any linear subspace $V\subset\R^n$,
\[R_{\cH_{\x}f_{\varepsilon}}(V)\leq R_{\cH_{\x}f}(V)-\varepsilon d(d-1)m^{d-2}\]
for $m=\min(x_1,\dots,x_n)$. Therefore, 
\[\lambda_{n-1}(\cH_{x}f_{\varepsilon})\leq \lambda_{n-1}(\cH_x f)-d(d-1)\varepsilon m^{d-2}<0.\]
 Hence, for sufficiently small $\varepsilon$, $0$ is not an eigenvalue for $\cH_{\x}f_\varepsilon$. Write $C_\varepsilon$ for analogue of the matrix $C$ corresponding to $f_{\varepsilon}$. By the special case, $C_{\varepsilon}$ is negative semi-definite, and hence, so is $C=\lim_{\varepsilon\to 0} C_{\varepsilon}$ as the limit of negative semi-definite matrices.
\end{proof}

\begin{lemma}
  Suppose that $\x>0$, $\partial_i f(\x)>0$ and $\cH_{\x}\partial_i f$ has exactly one positive eigenvalue for all $i$. Then, $B_{\x}f$ has no eigenvalues in the range $(0,d-1)$. 
\end{lemma}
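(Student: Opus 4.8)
The plan is to deduce the conclusion from Lemma~\ref{l:negativesemi} applied to each first partial derivative $g_i:=\partial_i f$, which is homogeneous of degree $d-1$, and then to assemble the resulting matrix inequalities. First note the statement is vacuous unless $d\geq 3$: if $d\leq 2$ then each $\partial_i f$ has degree at most $1$, so $\cH_{\x}\partial_i f=0$ has no positive eigenvalue. So assume $d\geq 3$, and abbreviate $e:=d-1$, $v_i:=\partial_i f(\x)=g_i(\x)>0$, and $h_i:=\nabla_{\x}g_i=(\cH_{\x}f)\e_i$, the $i$th column of $\cH_{\x}f$ (since $\partial_j g_i=\partial_j\partial_i f$); note also that $\x>0$ and $v_i>0$ for all $i$ guarantee that $\Lambda$ and $B_{\x}f$ are defined. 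For each $i$, the hypotheses give that $\x$ is positive, $g_i(\x)=v_i>0$, and $\cH_{\x}g_i=\cH_{\x}\partial_i f$ has a unique positive eigenvalue, so I would apply Lemma~\ref{l:negativesemi} to $g_i$ to conclude that
\[C_i:=e\,v_i\,\cH_{\x}g_i-(e-1)\,h_ih_i^T\]
is negative semi-definite.

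Next I would form the positive combination $\sum_i \tfrac{x_i}{v_i}C_i$, which remains negative semi-definite because every coefficient $x_i/v_i$ is positive. To evaluate it, apply the Eulerian identity to the degree-$(d-2)$ polynomials $\partial_j\partial_k f$, which gives $\sum_i x_i\,\cH_{\x}g_i=(d-2)\,\cH_{\x}f=(e-1)\,\cH_{\x}f$; therefore
\[\sum_i \tfrac{x_i}{v_i}C_i=e(e-1)\,\cH_{\x}f-(e-1)\sum_i \tfrac{x_i}{v_i}h_ih_i^T=(e-1)\Big(e\,\cH_{\x}f-\sum_i \tfrac{x_i}{v_i}h_ih_i^T\Big).\]
Since $e-1=d-2>0$, this yields $e\,\cH_{\x}f\preceq S$, where $S:=\sum_i \tfrac{x_i}{v_i}h_ih_i^T$ is positive semi-definite. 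Now conjugate by the positive diagonal matrix $\Lambda^{1/2}$, which preserves the relation $\preceq$: the left side becomes $e\,\Lambda^{1/2}(\cH_{\x}f)\Lambda^{1/2}=e\,B_{\x}f$, and for the right side I would use $\Lambda^{1/2}\e_i=\sqrt{x_i/v_i}\,\e_i$ to get $\Lambda^{1/2}h_i=\sqrt{v_i/x_i}\,(B_{\x}f)\e_i$, whence $\tfrac{x_i}{v_i}(\Lambda^{1/2}h_i)(\Lambda^{1/2}h_i)^T=(B_{\x}f)\e_i\e_i^T(B_{\x}f)$ and, summing over $i$ using $\sum_i\e_i\e_i^T=I$, $\Lambda^{1/2}S\Lambda^{1/2}=(B_{\x}f)^2$. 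Thus $e\,B_{\x}f\preceq(B_{\x}f)^2$, i.e., $(B_{\x}f)^2-e\,B_{\x}f\succeq 0$.

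To finish, I would diagonalize the symmetric matrix $B_{\x}f$: the eigenvalues of $(B_{\x}f)^2-e\,B_{\x}f$ are $\lambda(\lambda-e)$ as $\lambda$ runs over the eigenvalues of $B_{\x}f$, so positive semi-definiteness forces $\lambda(\lambda-e)\geq 0$, i.e., $\lambda\leq 0$ or $\lambda\geq e=d-1$, for each such $\lambda$. Hence $B_{\x}f$ has no eigenvalue in $(0,d-1)$. The step I expect to require the most care is the identity $\Lambda^{1/2}S\Lambda^{1/2}=(B_{\x}f)^2$ — recognizing that conjugating the weighted sum of rank-one pieces $h_ih_i^T$ reassembles precisely into the square of $B_{\x}f$ via $\sum_i\e_i\e_i^T=I$ — together with keeping the degree bookkeeping (the Eulerian identity at the level of second derivatives) straight so that the scalar in front comes out to be exactly $e=d-1$ rather than some other multiple.
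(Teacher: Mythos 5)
Your proof is correct and follows essentially the same route as the paper: apply Lemma~\ref{l:negativesemi} to each $\partial_i f$, weight by $x_i/\partial_i f(\x)$, sum, invoke the Eulerian identity on the entries of $\cH_{\x}f$, recognize the rank-one sum as $(\cH_{\x}f)\Lambda(\cH_{\x}f)$, and conjugate by $\Lambda^{1/2}$ to land on $(B_{\x}f)^2-(d-1)B_{\x}f\succeq 0$. The only real difference is cosmetic---you divide out the common factor $d-2$ before conjugating and you explicitly dispose of the vacuous case $d\leq 2$ (which the paper's step of cancelling $d-2$ tacitly assumes); both are fine.
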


\begin{proof}
  We claim that it suffices to show that 
  \[(B_{\x}f)^2-(d-1)B_{\x}f\]
  is positive semi-definite.
  Indeed, if this is the case, let $\y$ be an eigenvector of $B_{\x}f$ with eigenvalue $\lambda$. Then
\[0\leq \y^T((B_{\x}f)^2-(d-1)B_{\x}f)\y=\y^T(\lambda^2-(d-1)\lambda)\y=(\lambda^2-(d-1)\lambda)\|\y\|^2.\]
   Hence $\lambda^2-(d-1)\lambda=\lambda(\lambda-(d-1))$ is nonnegative.

  By Lemma~\ref{l:negativesemi} applied to $\partial_i f$, the matrix
  \[(d-1)\cdot \partial_if(\x)\cdot\cH_{\x}(\partial_if)-(d-2)\nabla_{\x} (\partial_i f)\cdot (\nabla_{\x} (\partial_i f))^T\]
  is negative semi-definite. Multiplying this matrix by the positive real number $x_i/\partial_if(\x)$ and summing over $i$, we obtain the following negative semi-definite matrix:  
  \[\begin{split}
      &\sum_{i=1}^n \frac{x_i}{\partial_i f(\x)}\left((d-1)\cdot \partial_if(\x)\cdot\cH_{\x}(\partial_if)-(d-2)\nabla_{\x} (\partial_i f)\cdot (\nabla_{\x} (\partial_i f))^T\right)\\
      &= \sum_{i=1}^n \left((d-1)\cdot x_i\cdot\cH_{\x}(\partial_if)-(d-2)\nabla_{\x} (\partial_i f)\cdot  \frac{x_i}{\partial_i f(\x)} (\nabla_{\x} \partial_i f)^T\right)\\
      &=\left((d-1)(d-2) \cH_{\x}(f)-\sum_{i=1}^n (d-2)\nabla_{\x} (\partial_i f)\cdot  \frac{x_i}{\partial_i f(\x)} (\nabla_{\x} \partial_i f)^T\right)
  \end{split}\]
  where the last equality used the Eulerian identity applied to the entries of $\cH_xf$.
  Now, the $j$th component of $\nabla_{\x}(\partial_i f)$ is $\partial_j\partial_i f$. Consequently, we have
  \[
  \sum_{i=1}^n \left(\nabla_{\x} (\partial_i f)\cdot  \frac{x_i}{\partial_i f(\x)} (\nabla_{\x} \partial_i f)^T\right)=(\cH_{\x}f)\Lambda(\cH_{\x}f).
  \]
  Hence, $(\cH_{\x}f)\Lambda(\cH_{\x}f)-(d-1)\cH_xf$ is positive semi-definite. Multiplying on the left and right by $\Lambda^{1/2}$ yields the positive semi-definiteness of
  $(B_{\x}f)^2-(d-1)B_{\x}f$.
\end{proof}

\begin{prop} \label{p:bootstrap}
    Let $f$ be homogeneous of degree $d$. Let $\x$ be a positive vector such that
    \begin{enumerate}
        \item $\cH_{\x}f$ is a weakly nonnegative irreducible matrix,
        \item $\partial_i f(\x)>0$ for all $i$, and
        \item $\cH_{\x}(\partial_i f)$ has exactly one positive eigenvalue for all $i$, 
    \end{enumerate}
  then $\cH_{\x}f$ has a unique positive eigenvalue.
\end{prop}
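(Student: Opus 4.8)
The plan is to reduce everything to the congruent matrix $B_{\x}f$ introduced above and then combine the three preceding lemmas of this section with the Perron--Frobenius theorem. First I would observe that hypothesis (2) guarantees $\v\coloneqq\nabla_{\x}f$ is a positive vector, and $\x$ is positive by assumption, so $\Lambda=\operatorname{diag}(x_1/v_1,\dots,x_n/v_n)$ is a diagonal matrix with positive diagonal entries and $B_{\x}f=\Lambda^{1/2}(\cH_{\x}f)\Lambda^{1/2}$ is well-defined and congruent to $\cH_{\x}f$. By Sylvester's law of inertia, $\cH_{\x}f$ and $B_{\x}f$ have the same signature, so it suffices to prove that $B_{\x}f$ has exactly one positive eigenvalue.

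Next I would record the structural features of $B_{\x}f$ supplied by the hypotheses and the earlier lemmas. It is symmetric. Conjugation by the positive diagonal matrix $\Lambda^{1/2}$ changes neither which off-diagonal entries vanish nor their signs, so $\IG(B_{\x}f)=\IG(\cH_{\x}f)$; hence by hypothesis (1), $B_{\x}f$ is weakly nonnegative and irreducible. By the first lemma of this section, $\Lambda^{-1/2}\x$ is a positive eigenvector of $B_{\x}f$ with eigenvalue $d-1>0$. Finally, hypotheses (2) and (3) together with $\x>0$ place us exactly in the setting of the previous lemma, so $B_{\x}f$ has no eigenvalue in the open interval $(0,d-1)$.

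To finish, I would apply the Perron--Frobenius theorem to the weakly nonnegative irreducible symmetric matrix $B_{\x}f$: its largest eigenvalue $\lambda_n$ is simple, admits a positive eigenvector, and every positive eigenvector is a scalar multiple of that one. Since $\Lambda^{-1/2}\x$ is a positive eigenvector with eigenvalue $d-1$, we must have $\lambda_n=d-1$, and this eigenvalue is simple. Any further positive eigenvalue $\lambda$ would satisfy $\lambda\le d-1$, and $\lambda\ne d-1$ by simplicity, forcing $0<\lambda<d-1$, which contradicts the spectral gap just established. Therefore $d-1$ is the unique positive eigenvalue of $B_{\x}f$, and hence of $\cH_{\x}f$.

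Given the machinery already assembled, the argument is largely bookkeeping; within this proposition the only points that need care are the observation that diagonal congruence preserves the incidence graph (so weak nonnegativity and irreducibility transfer from $\cH_{\x}f$ to $B_{\x}f$), and the interplay between Perron--Frobenius, which controls positive eigen\emph{vectors}, and the $(0,d-1)$ gap, which is what actually pins down that $d-1$ is the unique positive eigen\emph{value}. The genuinely delicate input lies outside this statement: it is the preceding lemma that $B_{\x}f$ has no eigenvalue in $(0,d-1)$, which rests on the positive semi-definiteness of $(B_{\x}f)^2-(d-1)B_{\x}f$ extracted from Lemma~\ref{l:negativesemi} applied to each $\partial_i f$.
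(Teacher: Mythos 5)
Your proof is correct and follows essentially the same route as the paper: pass to the congruent matrix $B_{\x}f$, note it is weakly nonnegative and irreducible, combine the positive eigenvector $\Lambda^{-1/2}\x$ with eigenvalue $d-1$, the spectral gap $(0,d-1)$, and Perron--Frobenius to conclude. The only difference is that you spell out the final Perron--Frobenius step in slightly more detail than the paper, which leaves it implicit.
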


\begin{proof}
  Consider $B_{\x}f$. Since $\Lambda$ is a diagonal matrix with positive entries, $B_{\x}f$ is weakly nonnegative and irreducible. Hence, it has a unique largest eigenvalue with positive eigenvector. Now, $B_{\x}f$ has a positive eigenvector, $\Lambda^{-1/2}\x$ with eigenvalue $(d-1)$ and has no eigenvalues in the range $(0,d-1)$. Hence, $B_{\x}f$ and thus, $\cH_xf$ has a unique positive eigenvalue.
\end{proof}

\begin{remark}
    The above proposition could be called the Lorentzian bootstrap. It abstracts some positivity properties from algebraic geometry (see e.g., \cite{Lazarsfeld:positivity1} for general background and \cite{Badescu} for an emphasis on the theory of surfaces). Let $X$ be a $d$-dimensional smooth projective algebraic variety. It has a numerical equivalence group $\Num(X)_{\Q}$ given by divisors (i.e.,~$\Q$-linear combinations of irreducible hypersurfaces up to the numerical equivalence relation). Within $\Num(X)_{\Q}$ is the ample cone $\cK_X$. There is a (normalized) volume function 
    $\Vol\colon (\Num(X)_{\Q}\otimes \Q)^{\otimes d}\to \Q$, a multilinear extension of  the intersection product
    \[\Vol(D_1\otimes\cdots \otimes D_d)\mapsto\frac{1}{d!}\left(D_1\cdot D_2\cdots D_d\right)
    \]
    for effective divisors $D_1,\dots,D_d\in\cK_M$ that intersect properly.

    If $X$ is an algebraic surface, the Hodge index theorem asserts that the matrix representing the intersection product
    \[D_1\otimes D_2\mapsto D_1\cdot D_2\]
    has a single positive eigenvalue. This can be phrased as the following: for an ample divisor $H$ and a numerically nontrivial divisor $D$, $D\cdot H=0$ implies $D^2<0$.

    The Hodge index theorem on surfaces induces the Khovanskii-Teissier inequality on $\Num(X)_{\Q}$ for $d$-dimensional $X$: for $D_1,D_2\in\cK_X$, 
    \[(D_1^d)(D_1^{d-2}D_2^2)\leq (D_1^{d-1}D_2)^2.\]
    We outline the proof. After replacing $D_1,D_2$ by positive integral multiples, one may suppose that they are very ample, hence realized by effective divisors. We may choose effective divisors $D_1^{(1)},\dots,D_1^{(d-1)}$, and $D_2^{(1)}$ such that $D_i^{(j)}$ is in the numerical equivalence class of $D_i$, and any subset of them meet transversely. Then, $Y\coloneqq D_1^{(1)}\cap\dots\cap D_1^{(d-2)}$ is an algebraic surface. Define the curves $Z_1,Z_2$ on $Y$ by $Z_1\coloneqq D_1^{(d-1)}\cap Y$ and $Z_2\coloneqq D_2^{(1)}\cap Y$.
    By the Hodge index theorem, the intersection pairing induced on $\Q Z_1\oplus \Q Z_2$, 
    \[(s_1Z_1+s_2Z_2)\cdot (t_1Z_1+t_2Z_2)=s_1t_1Z_1\cdot Z_1+(s_1t_2+s_2t_1)Z_1\cdot Z_2+s_2t_2Z_2\cdot Z_2\] 
    cannot be positive definite. The pairing is represented by the matrix
    \[\left[\begin{array}{cc}Z_1\cdot Z_1 & Z_1\cdot Z_2\\
    Z_2\cdot Z_1&Z_2\cdot Z_2
    \end{array}\right].\]
    Since $Z_1\cdot Z_1>0$, the determinant of the matrix must be non-positive from which one obtains
    \[(Z_1\cdot Z_1)(Z_2\cdot Z_2)-(Z_1\cdot Z_2)^2\leq 0\]
    and thus,
    \[(Z_1\cdot Z_1)(Z_2\cdot Z_2)\leq (Z_1\cdot Z_2)^2\]
    This can be rephrased as the following inequality of intersection numbers on $X$:
    \[(D_1^{d-2}\cdot D_1^2)(D_1^{d-2}\cdot D_2^2)\leq  (D_1^{d-2}\cdot D_1\cdot D_2)^2\]
    which is indeed the Khovanskii--Teissier inequality.

    One would like to prove this combinatorially. Unfortunately, the algebraic geometric technique of realizing divisor classes by transverse effective divisors does not easily translate. The insight of the Lorentzian bootstrap is that one can take a suitable basis $H_1,\dots,H_k$ of $\Num(X)$, realized as effective, irreducible divisors, and bootstrap the Hodge index theorem on surfaces arising as $(d-2)$-fold intersections of these divisors, subject to some local conditions. Here, the input is the Hodge index theorem on $H_{i_1}\cap\dots\cap H_{i_{d-2}}$, i.e.,the statement that the Hessian of $D_{H_{i_1}}\cdots D_{H_{i_{d-2}}}\left(\Vol\right)$ has a single positive eigenvalue. The output is that for arbitrary divisor classes $D_1,\dots,D_{d-2}\in \cK_X$, the Hessian of  $D_{D_1}\cdots D_{D_{d-2}}(\Vol)$ has a single positive eigenvalue, i.e., that the Hodge index theorem holds for the surface $D_1\cap \dots\cap D_{d-2}$ (at least for divisors pulled back from $X$). 
 \end{remark}

\section{Proof of Log-concavity} \label{s:polc}

In this section, we conclude the proof of the Adiprasito--Huh--Katz Theorem by deducing it from the Lorentzian signature of the Hessian of derivatives of $V_M$.

To prove irreducibility of Hessian matrices, we will need joins of graphs.
\begin{definition}
Given a graphs $\Gamma_1,\dots,\Gamma_k$, the {\em join} $\Gamma_1+\dots+\Gamma_k$ is the graph whose vertex set and edge sets are as follows:
\begin{align*}
    V(\Gamma_1+\dots+\Gamma_k)&=V(\Gamma_1)\sqcup\dots\sqcup V(\Gamma_k)\\
    E(\Gamma_1+\dots+\Gamma_k)&=E(\Gamma_1)\sqcup \dots\sqcup E(\Gamma_k)\sqcup\{vw\mid v\in V(\Gamma_i), w\in V(\Gamma_j) \text{ for }i\neq j\},
\end{align*}
that is, we take the disjoint union of $\Gamma_1,\dots,\Gamma_k$ and put an edge between each vertex of $\Gamma_i$ and each vertex of $\Gamma_j$ for $i\neq j$.
\end{definition}

We will deduce log-concavity from the following:

\begin{theorem} \label{t:uniquepositiveeigenvalue}
    For $\uu,\uu_1,\dots,\uu_k\in \cK_M$ with $k\leq \rank(M)-3$, $\cH_{\uu}(D_{\uu_1}\dots D_{\uu_k}V_M)$ has a unique positive eigenvalue. 
\end{theorem}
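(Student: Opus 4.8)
The plan is to prove, by induction, a more flexible statement from which Theorem~\ref{t:uniquepositiveeigenvalue} follows, using the Lorentzian bootstrap of Proposition~\ref{p:bootstrap} as the engine. Given loopless matroids $N_1,\dots,N_m$, consider the ``product volume polynomial'' $g\coloneqq V_{N_1}\boxtimes\cdots\boxtimes V_{N_m}$ on $L_{N_1}\oplus\cdots\oplus L_{N_m}$ given by $g(\y_1,\dots,\y_m)=\prod_j V_{N_j}(\y_j)$; its degree is $\sum_j(\rank N_j-1)$, and a rank-$1$ factor contributes the constant $1$ and may be discarded, so one may assume every $N_j$ has rank $\ge 2$. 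I would prove: for $\v,\v_1,\dots,\v_k$ in $\cK_{N_1}\times\cdots\times\cK_{N_m}$ with $\deg g-k\ge 2$, the Hessian $\cH_\v(D_{\v_1}\cdots D_{\v_k}g)$ has a unique positive eigenvalue. Taking $m=1$ and $N_1=M$ gives the theorem, since the condition $k\le\rank(M)-3$ is exactly $\deg V_M-k\ge 2$. The induction is on the integer $\deg g-k$.

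For the inductive step, fix a \emph{strictly} positive representative $\x$ of $\v$ and apply Proposition~\ref{p:bootstrap} to $f\coloneqq D_{\v_1}\cdots D_{\v_k}g$ at $\x$. Such a representative exists: Lemma~\ref{l:ampleeffectivity} produces a nonnegative representative, and the coordinates on which a strictly submodular function vanishes form a chain of flats (two incomparable vanishing flats would violate strict submodularity), so a single modification by a term $\alpha_{i_0}-\alpha_j$, with $i_0$ in the bottom and $j$ outside the top of that chain, makes every coordinate strictly positive; one does this in each factor. Condition~(2), $\partial_a f(\x)>0$ for every coordinate $a$: expand $\partial_a g$ through the differential equations defining the $V_{N_j}$ and the Leibniz rule, then invoke Lemma~\ref{l:positivevolume} factor by factor, using that $\pi^{F'}$ and $\pi_{F'}$ carry ample cones into ample cones (Lemma~\ref{l:restriction}); since $\deg g-k\ge 2$ there is room to distribute the $k$ derivatives among the factors, so the resulting sum of products of positive numbers is positive. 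Condition~(1): an off-diagonal entry $\partial_a\partial_b f(\x)$ vanishes when $a,b$ are incomparable flats within one factor (that factor's second derivative is identically zero) and is strictly positive otherwise, by the same computation (again $\deg g-k\ge 2$ leaves room). Hence the incidence graph of $\cH_\x f$ is the join $\Gamma_{N_1}+\cdots+\Gamma_{N_m}$ of flat graphs: this is connected whenever $m\ge 2$, and when $m=1$ it equals $\Gamma_{N_1}$, connected by Lemma~\ref{l:flatgraph} because $\deg g-k\ge 2$ forces $\rank(N_1)\ge 3$. So $\cH_\x f$ is weakly nonnegative and irreducible.

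Condition~(3) is where the recursion happens. For a coordinate $a$ indexed by a flat $F'\in\cP(N_j)$, the differential equation gives $D_a g=\bigl(V_{N_j^{F'}}(\pi^{F'}\,\cdot\,)\,V_{(N_j)_{F'}}(\pi_{F'}\,\cdot\,)\bigr)\boxtimes\prod_{\ell\ne j}V_{N_\ell}$, so $\partial_a f=D_{\v_1}\cdots D_{\v_k}(\partial_a g)$ factors as $g'\circ\Phi$, where $\Phi$ is the surjective linear map equal to $\pi^{F'}\times\pi_{F'}$ on the $j$-th factor and the identity on the others, and $g'=D_{\Phi\v_1}\cdots D_{\Phi\v_k}\tilde g$ for the product volume polynomial $\tilde g$ with factors $N_1,\dots,N_j^{F'},(N_j)_{F'},\dots,N_m$ (its variables $\Phi\v,\Phi\v_i$ lying in the appropriate product of ample cones by Lemma~\ref{l:restriction}). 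Discarding rank-$1$ pieces does not change the degree, so $\deg\tilde g=\deg g-1$ and $\deg\tilde g-k=(\deg g-k)-1$. Because a surjective linear substitution preserves the number of positive eigenvalues, $\cH_\x(\partial_a f)$ has the same number of positive eigenvalues as the Hessian of $g'$, which by the inductive hypothesis is exactly one when $\deg\tilde g-k\ge 2$; when the current gadget sits at the base value $\deg g-k=2$, the polynomials $\partial_a f$ are linear, $\cH_\x(\partial_a f)=0$, and condition~(3) holds in its degenerate form (in the proof of Proposition~\ref{p:bootstrap} the auxiliary matrices $C_a$ from Lemma~\ref{l:negativesemi} are then identically zero, hence negative semidefinite, and the remaining steps go through unchanged; alternatively one checks the constant Hessian directly, noting $\x^{T}(\cH_\x f)\x=2f(\x)>0$ so that Perron--Frobenius pins down the unique positive eigenvalue). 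Proposition~\ref{p:bootstrap} then yields that $\cH_\x f$ has a unique positive eigenvalue, closing the induction.

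The step I expect to be the main obstacle is precisely this packaging: to make condition~(3) of the bootstrap an instance of the inductive hypothesis one is forced to carry along products $V_{N_1}\boxtimes\cdots\boxtimes V_{N_m}$ rather than a single $V_M$, and to observe that the incidence graphs of their Hessians are joins of flat graphs --- which is exactly why joins of graphs were introduced, since a rank-$2$ factor has an edgeless flat graph and only the join renders the Hessian irreducible. Two further technical points need care: producing a strictly positive representative of an element of $\cK_M$ (required to form the diagonal matrix $\Lambda$ in the bootstrap), handled by the chain-avoidance refinement of Lemma~\ref{l:ampleeffectivity} above; and the borderline case $\deg=2$, where condition~(3) of Proposition~\ref{p:bootstrap} must be read in its degenerate form for linear polynomials, which its proof accommodates.
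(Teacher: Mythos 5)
Your overall strategy matches the paper's almost exactly: you generalize to products of volume polynomials, prove the claim by induction, and use the Lorentzian bootstrap with irreducibility supplied by a join of flat graphs. Where the paper works with products $f_{\cF}=\prod V_{M_{F_{i-1}}^{F_i}}$ attached to a chain of flats $\cF$ and inducts on $\deg f_{\cF}$, you carry along arbitrary tuples $N_1,\dots,N_m$ and induct on $\deg g - k$; these are cosmetic differences. Your observation that a strictly positive representative exists (because the set of flats on which a strictly submodular function vanishes is a chain) is a clean addition which the paper leaves implicit, and the rest of the inductive step --- surjective linear substitutions preserving the count of positive eigenvalues, the join structure of the incidence graph --- matches the paper.

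The genuine gap is in your treatment of the base case $\deg g - k = 2$, and unfortunately this is precisely the case that the Corollary on log-concavity actually invokes. Both of your proposed patches fail. For the first: it is true that when $\partial_a f$ is linear the matrices supplied by Lemma~\ref{l:negativesemi} are zero, hence negative semidefinite, but the subsequent step of the bootstrap proof deduces positive semidefiniteness of $(\cH_\x f)\Lambda(\cH_\x f)-(d-1)\cH_\x f$ by \emph{dividing out} the factor $d-2$; at $d=2$ the sum of those matrices equals $(d-2)\bigl[(d-1)\cH_\x f-(\cH_\x f)\Lambda(\cH_\x f)\bigr]=0$, which carries no information. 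For the second: Perron--Frobenius only guarantees a unique \emph{largest} eigenvalue with positive eigenvector, not a unique \emph{positive} eigenvalue, and $\x^T(\cH_\x f)\x>0$ merely says the top eigenvalue is positive. The matrix $\begin{bmatrix}2&1\\1&2\end{bmatrix}$ is weakly nonnegative, irreducible, sends positive vectors to positive vectors, and satisfies $\x^TA\x>0$, yet it has two positive eigenvalues; so these hypotheses alone do not pin down the signature. (To be fair, the paper's inductive step also asserts condition~(3) in the form ``$\cH_{\uu}(D_F D_{\uu_1}\cdots D_{\uu_k}f_{\cF})$ has exactly one positive eigenvalue'' without noting that this fails for $k=\deg f_{\cF}-2$, where that polynomial is linear and the Hessian is zero.)

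The correct repair is to notice that when $\deg g - k = 2$ the Hessian $\cH_{\uu}(D_{\uu_1}\cdots D_{\uu_k}g)$ is a constant matrix and, because $\partial_a\partial_b(D_{\uu_1}\cdots D_{\uu_{k-1}}g)$ is linear, it coincides with $\cH_{\uu_k}\bigl(D_{\uu_1}\cdots D_{\uu_{k-1}}g\bigr)$. This is the statement for the \emph{same} $g$ with $k-1$ derivatives evaluated at the ample point $\uu_k$, i.e., the case $\deg g - (k-1)=3$, which one can establish via the bootstrap. The price is that one cannot simply induct on $\deg g - k$: that would put the needed case $\deg g - k = 3$ \emph{after} the base case in the inductive order. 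Instead, induct on $\deg g$ (as the paper does); within a fixed $\deg g$, first run the bootstrap for all $k\le\deg g - 3$ (condition~(3) then lands in the inductive hypothesis with $\deg\tilde g - k = (\deg g - 1) - k \ge 2$), and then obtain the single remaining case $k=\deg g - 2$ from the case $k-1$ just established, via the identity above. With that reordering your argument closes.
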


We explain the deduction before giving the proof. Suppose the above statement is true. Then, we have the following.
\begin{corollary} \label{c:logconcave}
  Let $\uu_1,\dots,\uu_{d-2}\in\cK_M$. Then, for any $\x,\y\in L_M$ with 
  \[D_{\x}^2 D_{\uu_1}\cdots D_{\uu_{d-2}}V_M>0,\] 
  we have the inequality
  \[(D_{\x}^2 D_{\uu_1}\cdots D_{\uu_{d-2}}V_M)(D_{\y}^2 D_{\uu_1}\cdots D_{\uu_{d-2}}V_M)
  \leq (D_{\x}D_{\y} D_{\uu_1}\cdots D_{\uu_{d-2}}V_M)^2.\]
\end{corollary}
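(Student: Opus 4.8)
The plan is to observe that, granting Theorem~\ref{t:uniquepositiveeigenvalue}, the corollary is nothing but the ``reverse Cauchy--Schwarz'' inequality for a symmetric bilinear form of Lorentzian signature, which in turn is a statement about $2\times 2$ symmetric matrices. First I would set $g\coloneqq D_{\uu_1}\cdots D_{\uu_{d-2}}V_M$. Since the quantities appearing in the corollary are scalars, $g$ must be a homogeneous polynomial of degree $2$ on $L_M$, so $d-2=\rank(M)-3$, which is exactly the largest value of $k$ for which Theorem~\ref{t:uniquepositiveeigenvalue} applies. Because $g$ is quadratic, its Hessian $A\coloneqq\cH g$ is a constant symmetric matrix, $g(\z)=\tfrac12\<\z,\z\>_A$ for every $\z\in L_M$, and $D_\x D_\y\,g=\<\x,\y\>_A$ for all $\x,\y\in L_M$. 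Hence
\[
  D_{\x}^2 D_{\uu_1}\cdots D_{\uu_{d-2}}V_M=\<\x,\x\>_A,\qquad
  D_{\y}^2 D_{\uu_1}\cdots D_{\uu_{d-2}}V_M=\<\y,\y\>_A,\qquad
  D_{\x}D_{\y} D_{\uu_1}\cdots D_{\uu_{d-2}}V_M=\<\x,\y\>_A,
\]
and the claimed inequality is exactly $\<\x,\x\>_A\<\y,\y\>_A\le\<\x,\y\>_A^2$.

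Next I would apply Theorem~\ref{t:uniquepositiveeigenvalue} with any $\uu\in\cK_M$ (which is nonempty by Proposition~\ref{p:ampleconeproperties}, and the choice is irrelevant since $A$ does not depend on the base point): the matrix $\cH_{\uu}g=A$ has a unique positive eigenvalue. If $\x$ and $\y$ are linearly dependent, both sides of the inequality agree, so I may assume $V\coloneqq\Span(\x,\y)$ is two-dimensional. By Lemma~\ref{l:onepositive}, $\<\cdot,\cdot\>_A$ is not positive definite on $V$, and in the basis $\{\x,\y\}$ it is represented by the symmetric matrix
\[
  \begin{pmatrix}\<\x,\x\>_A & \<\x,\y\>_A\\[2pt] \<\x,\y\>_A & \<\y,\y\>_A\end{pmatrix},
\]
whose upper-left entry $\<\x,\x\>_A=D_{\x}^2 D_{\uu_1}\cdots D_{\uu_{d-2}}V_M$ is positive by hypothesis. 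A symmetric $2\times 2$ matrix with positive upper-left entry and positive determinant is positive definite; since this one is not positive definite, its determinant must be $\le 0$, i.e. $\<\x,\x\>_A\<\y,\y\>_A-\<\x,\y\>_A^2\le 0$, which is the desired inequality.

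I do not anticipate a genuine obstacle: all the substance lives in Theorem~\ref{t:uniquepositiveeigenvalue} (and, secondarily, in Lemma~\ref{l:onepositive}), and what remains is the standard Lorentzian reverse Cauchy--Schwarz argument. The only points deserving care are the degree bookkeeping that identifies $g$ as a quadratic form --- so that Theorem~\ref{t:uniquepositiveeigenvalue} is being invoked precisely at $k=\rank(M)-3$ --- and the remark that, $g$ being quadratic, its Hessian is a single constant matrix independent of the evaluation point, which is why the auxiliary ample class $\uu$ never appears in the final inequality.
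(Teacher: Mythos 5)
Your argument is correct and is essentially identical to the paper's proof: restrict the Lorentzian form $\<\cdot,\cdot\>_A$ with $A=\cH(D_{\uu_1}\cdots D_{\uu_{d-2}}V_M)$ to $\Span(\x,\y)$, invoke Lemma~\ref{l:onepositive} to rule out positive definiteness, and conclude from positivity of the upper-left entry that the $2\times 2$ determinant is $\le 0$. Your added remarks on the degree bookkeeping (that $g$ is quadratic, so $d-2=\rank(M)-3$ is the extremal case of Theorem~\ref{t:uniquepositiveeigenvalue} and $\cH g$ is constant) are correct and make explicit what the paper leaves implicit.
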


\begin{proof}
  If $\x$ and $\y$ are parallel, the above is automatic with the inequality replaced by equality. Therefore, we may suppose that $\x$ and $\y$ span a $2$-dimensional subspace of $V_M$.
  Let $A\coloneqq \cH(D_{\uu_1}\dots D_{\uu_{d-2}}V_M)$ which has a single positive eigenvalue. Consider the inner product $\<\cdot,\cdot\>_A$ restricted to $\R\x\oplus \R\y$. It is represented by the matrix
  \[\begin{bmatrix}
      D_{\x}^2D_{\uu_1}\cdots D_{\uu_{d-2}}V_M& D_{\x}D_{\y}D_{\uu_1}\cdots D_{\uu_{d-2}}V_M\\
       D_{\x}D_{\y}D_{\uu_1}\cdots D_{\uu_{d-2}}V_M& D_{\y}^2D_{\uu_1}\cdots D_{\uu_{d-2}}V_M
  \end{bmatrix}.\]
  By Lemma~\ref{l:onepositive}, the above matrix cannot be positive definite. Since the upper-left corner is positive, it must have non-positive determinant, hence
  \[(D_{\x}^2 D_{\uu_1}\cdots D_{\uu_{d-2}}V_M)(D_{\y}^2 D_{\uu_1}\cdots D_{\uu_{d-2}}V_M)
  -(D_{\x}D_{\y} D_{\uu_1}\cdots D_{\uu_{d-2}}V_M)^2\leq 0. \qedhere\]
\end{proof} 

\begin{corollary}
    For all $1\leq i\leq r-1$,
    \[\mu^{i-1}\mu^{i+1}\leq (\mu^i)^2\]
    Moreover, the (non-reduced) characteristic polynomial is log concave.
\end{corollary}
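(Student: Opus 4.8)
The plan is to establish the reduced inequality $\mu^{i-1}\mu^{i+1}\leq(\mu^i)^2$ for $1\leq i\leq r-1$ directly from Corollary~\ref{c:logconcave}, and then to deduce the non-reduced statement by an elementary manipulation of coefficients. Throughout I may assume $r\geq 2$: for $r\leq 1$ there is nothing to prove except, when $r=1$, the single inequality $\mu_0\mu_2\leq\mu_1^2$, which reads $\mu^1\leq(1+\mu^1)^2$ and is immediate since $\mu^1\geq 0$. (The case $r\geq 2$ is also where Corollary~\ref{c:logconcave} is available, since its proof invokes Theorem~\ref{t:uniquepositiveeigenvalue} with $r-2=\rank(M)-3$ auxiliary classes.)

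For the reduced statement, recall from Lemma~\ref{l:mixedproduct} that $\mu^k=D_\alpha^{r-k}D_\beta^k V_M$. Fix $i$ with $1\leq i\leq r-1$ and apply Corollary~\ref{c:logconcave} with $\x=\alpha$, $\y=\beta$, and $\uu_1,\dots,\uu_{r-2}$ taken to be $r-i-1$ copies of $\alpha$ together with $i-1$ copies of $\beta$ (a legitimate choice exactly because $1\leq i\leq r-1$ and $(r-i-1)+(i-1)=r-2=\deg V_M-2$). Then $D_\x^2 D_{\uu_1}\cdots D_{\uu_{r-2}}V_M=\mu^{i-1}$, $D_\y^2 D_{\uu_1}\cdots D_{\uu_{r-2}}V_M=\mu^{i+1}$, and $D_\x D_\y D_{\uu_1}\cdots D_{\uu_{r-2}}V_M=\mu^i$, so the conclusion of Corollary~\ref{c:logconcave} is precisely $\mu^{i-1}\mu^{i+1}\leq(\mu^i)^2$. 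The point needing care is that $\alpha$ and $\beta$ lie only in the closure of $\cK_M$, whereas Corollary~\ref{c:logconcave} requires the $\uu_j$ in $\cK_M$. To handle this I would write $\alpha=y_c$ and $\beta=y_{c'}$ for submodular functions $c,c'$ (as in the proof that $\alpha,\beta$ lie in the closure of $\cK_M$), fix the strictly submodular $c_+(I)=|I|(|E|-|I|)$, and run the argument with $\alpha,\beta$ replaced by $\alpha_\varepsilon\coloneqq y_{c+\varepsilon c_+}$ and $\beta_\varepsilon\coloneqq y_{c'+\varepsilon c_+}$, which lie in $\cK_M$ for every $\varepsilon>0$ since a submodular plus a strictly submodular function is strictly submodular. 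For the perturbed classes the positivity hypothesis of Corollary~\ref{c:logconcave} is automatic from Lemma~\ref{l:positivevolume}, so the inequality holds with $\alpha_\varepsilon,\beta_\varepsilon$ in place of $\alpha,\beta$; letting $\varepsilon\to 0$ and using that $V_M$ is a polynomial (so these numbers depend continuously on $\varepsilon$) yields $\mu^{i-1}\mu^{i+1}\leq(\mu^i)^2$.

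For the non-reduced statement, expanding $\chi_M(q)=(q-1)\overline{\chi}_M(q)$ gives $\mu_0=\mu^0=1$, $\mu_j=\mu^{j-1}+\mu^j$ for $1\leq j\leq r$, and $\mu_{r+1}=\mu^r$; that is, $(\mu_j)$ is the convolution of $(\mu^k)$ with $(1,1)$. The main route is to invoke the standard fact that convolving $(1,1)$ with a nonnegative log-concave sequence having no internal zeros again produces a log-concave sequence; combined with the reduced log-concavity and the already-established nonnegativity of the $\mu^k$, this gives $\mu_{j-1}\mu_{j+1}\leq\mu_j^2$ for $1\leq j\leq r$, which is Theorem~\ref{t:AHK}. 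The input that needs justification is the no-internal-zeros condition, which does not follow from log-concavity alone; it holds once each $\mu^k$ is \emph{strictly} positive for $0\leq k\leq r$, and this follows from the identity $\mu^k(M)=(-1)^{k+1}\mu_{\Tr^{r-k}(M)}$ (a consequence of Lemma~\ref{l:truncationmobius}) together with the classical fact that the M\"{o}bius invariant of a loopless matroid is nonzero, given that we already know $\mu^k(M)\geq 0$. Alternatively one can sidestep this entirely: for $2\leq j\leq r-1$ argue exactly as in the reduced case, writing $\mu_j=D_\alpha^{r-j}D_\beta^{j-1}D_{\alpha+\beta}V_M$ ($1\leq j\leq r$) and applying Corollary~\ref{c:logconcave} with $\x=\alpha$, $\y=\beta$, one auxiliary class equal to $\alpha+\beta$, and the same perturbation; and the extreme cases $j=1$, $j=r$ follow from the reduced log-concavity (at $i=1$ and $i=r-1$ respectively) together with $\mu^0=1$ by a one-line computation such as $\mu_0\mu_2=\mu^1+\mu^2\leq\mu^1+(\mu^1)^2\leq 1+2\mu^1+(\mu^1)^2=\mu_1^2$.

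The hard part here will not be depth but bookkeeping: verifying that the chosen tuples of $\alpha$'s, $\beta$'s (and $\alpha+\beta$) feed correctly into Corollary~\ref{c:logconcave}, and carrying out the perturb-and-limit passage between $\cK_M$ and its closure. The one genuinely subtle point is the no-internal-zeros condition in the non-reduced step, which is why the argument relies either on the classical nonvanishing of the M\"{o}bius invariant of a loopless matroid or on dispatching the two extreme coefficients of $\chi_M$ by hand.
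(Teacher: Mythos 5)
Your proof of the reduced inequality is essentially the paper's: perturb $\alpha$ and $\beta$ into $\cK_M$ (the paper uses generic approximating sequences $\v_k,\w_k$; you use the specific perturbation $y_{c+\varepsilon c_+}$, $y_{c'+\varepsilon c_+}$, which comes to the same thing), apply Corollary~\ref{c:logconcave} to suitable tuples of copies of these perturbed classes, and pass to the limit. That part matches.

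For the non-reduced statement your route is genuinely different from the paper's, and here the paper is noticeably slicker. The paper observes that $\overline{\chi}_{M\oplus U_{1,1}}(q) = \chi_M(q)$ (since $\chi_{U_{1,1}}(q)=q-1$ divides out the $(q-1)$ in the denominator), so the non-reduced characteristic polynomial of $M$ \emph{is} the reduced characteristic polynomial of $M\oplus U_{1,1}$; the non-reduced log-concavity for $M$ is then exactly the reduced log-concavity already proved, applied to $M\oplus U_{1,1}$. This dispatches the second claim in two lines, with no case analysis and no new hypotheses. Your primary route --- convolving the $\mu^k$-sequence with $(1,1)$ and citing the standard convolution lemma --- is correct in principle, but, as you yourself flag, it requires the ``no internal zeros'' condition, which you would settle by invoking the classical nonvanishing of the M\"{o}bius invariant of a loopless matroid. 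That fact is true but is nowhere established in the paper, so this route imports an external ingredient. Your alternative route (write $\mu_j = D_\alpha^{r-j}D_\beta^{j-1}D_{\alpha+\beta}V_M$, feed $\alpha+\beta$ in as one of the auxiliary classes, and handle $j=1$ and $j=r$ by hand from the reduced inequality) is self-contained and works, but it costs you the case analysis at the endpoints and the extra bookkeeping you anticipated. The $M\oplus U_{1,1}$ trick buys you all of this for free, which is worth noticing: whenever a reduced statement has been proved uniformly over all matroids, the non-reduced version for $M$ follows by applying it to $M\oplus U_{1,1}$.
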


\begin{proof}
    Let $\{\v_k\},\{\w_k\}$ be sequences of elements of $\cK_M$ converging to $\alpha$ and $\beta$ respectively. We apply the above Lemma with
    \[\uu_1=\dots=\uu_{i-1}=\w_k,\quad \uu_i=\dots=\uu_{r-2}=\v_k,\quad \x=\w_k,\quad \y=\v_k.\]
    Since $D_{\w_k}^{i+1}D_{\v_k}^{r-i-1}V_M>0$, we have
    \[(D_{\w_k}^{i-1}D_{\v_k}^{r-i+1}V_M)(D_{\w_k}^{i+1}D_{\v_k}^{r-i-1}V_M)\leq(D_{\w_k}^{i}D_{\v_k}^{r-i}V_M)^2.\]
    Taking the limit as $k$ goes to infinity gives $\mu^{i-1}\mu^{i+1}\leq (\mu^i)^2$.

    For the non-reduced characteristic polynomial, note
    \[\overline{\chi}_{M\oplus U_{1,1}}(q)=\frac{\chi_{M}(q)\chi_{U_{1,1}}(q)}{q-1}
    =\frac{\chi_{M}(q)(q-1)}{q-1}=\chi_M(q).\]
    Since $\chi_M(q)$ is equal to the reduced characteristic polynomial of $M\oplus U_{1,1}$, it is log-concave.
\end{proof}

We deduce Theorem~\ref{t:uniquepositiveeigenvalue} from a more general result.
For a chain of flats,
\[\cF\coloneqq\{\hat{0}=F_0\subsetneq F_1\subsetneq \dots \subsetneq F_{\ell}\subsetneq F_{\ell+1}=\hat{1}\},\] 
set 
    \[\R^{\cF}\coloneqq \R^{\cP(M_{F_0}^{F_1})}\times\dots\times \R^{\cP(M_{F_{\ell}}^{F_{\ell+1}})},\]
    and define a function $f_{\cF}\colon\R^{\cF} \to\R$
    by 
    \[f_{\cF}(\t_1,\dots,\t_{\ell})=V_{M^{F_1}}(\t_1)V_{M_{F_1}^{F_2}}(\t_2)\cdots V_{M_{F_{\ell-1}}^{F_\ell}}(\t_{l-1})V_{M_{F_{\ell}}}(\t_{\ell}). \]
Write $\pi_{\cF}\colon \R^{\cP(M)}\to \R^{\cF}$ for
\[\t\mapsto \left(\pi^{F_1}(\t),\pi_{F_1}^{F_2}(\t),\dots,\pi_{F_{\ell-1}}^{F_{\ell}}(\t),\pi_{F_{\ell}}(\t)\right),\]
so 
\[D_{F_1}\cdots D_{F_{\ell}}V_M(\t)=f_{\cF}(\pi_{\cF}(\t)).\]
Write $\cK_{\cF}\coloneqq \cK_{M_{F_0}^{F_1}}\times \dots\times\cK_{M_{F_{\ell}}^{F_{\ell+1}}}$.
\begin{prop}
    For $k\leq\deg(f_{\cF})-2=\rank(M)-\ell-3$ and 
    $\uu,\uu_1,\dots,\uu_k\in\cK_{\cF}$,
    $D_{\uu_1}\dots D_{\uu_k}f_{\cF}(\uu)$ is positive, and
    $\cH_{\uu}\left(D_{\uu_1}\dots D_{\uu_k}f_{\cF}\right)$ has a unique positive eigenvalue. 
\end{prop}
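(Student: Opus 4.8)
The plan is to prove both assertions simultaneously by strong induction on $d:=\deg(f_\cF)=\rank(M)-\ell-1$; the cases $d\le 1$ are vacuous, since then there is no admissible $k$. Write $N_j:=M_{F_j}^{F_{j+1}}$ for the loopless matroid on the $j$-th interval, so that $f_\cF=\prod_{j=0}^{\ell}V_{N_j}(\t_j)$ is a product of volume polynomials in the disjoint blocks of variables indexed by $\cP(N_0)\sqcup\dots\sqcup\cP(N_\ell)$, with $\deg V_{N_j}=\rank(N_j)-1=:d_j$ and $\sum_j d_j=d$. All Hessians will be computed on $\R^\cF:=\prod_j\R^{\cP(N_j)}$; since this surjects linearly onto $L_\cF$ and pulling a symmetric form back along a surjection only appends zero eigenvalues, the number of positive eigenvalues is unchanged, so I may work with any lift and any coordinates. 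The positivity statement needs no induction: expanding $D_{\uu_1}\cdots D_{\uu_k}f_\cF$ by the Leibniz rule distributes the $k$ derivatives among the factors $V_{N_j}$; a term placing more than $d_j$ derivatives on block $j$ vanishes, and in every remaining term each factor is a strictly positive number by Lemma~\ref{l:positivevolume}, since $\uu$ and the $\uu_i$ lie in $\cK_\cF=\prod_j\cK_{N_j}$ and hence project into each $\cK_{N_j}$. Because $\sum_j d_j=d\ge k$, at least one admissible distribution exists, so the sum is positive.

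The key structural observation is that differentiating $f_\cF$ in a coordinate direction refines the chain. If $G$ is a coordinate of $\R^\cF$, i.e.\ a flat with $F_j\subsetneq G\subsetneq F_{j+1}$ for some $j$, then the defining differential equation for $V_{N_j}$ gives $D_{\delta_G}f_\cF=f_{\cF\cup\{G\}}\circ p_G$, where $\cF\cup\{G\}$ is the refined chain and $p_G\colon\R^\cF\to\R^{\cF\cup\{G\}}$ is the surjective linear map equal to the identity off block $j$ and to $\pi^G\times\pi_G$ on block $j$; by Lemma~\ref{l:restriction}, $p_G(\cK_\cF)\subseteq\cK_{\cF\cup\{G\}}$. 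Iterating, for $G\ne G'$ one has $D_{\delta_G}D_{\delta_{G'}}f_\cF=f_{\cF\cup\{G,G'\}}\circ p_{G,G'}$ (again a surjection compatible with the ample cones) when $G,G'$ are comparable or lie in different blocks, while $D_{\delta_G}D_{\delta_{G'}}f_\cF=0$ when they are incomparable in the same block. Since $f_\cF$ descends to $L_\cF$, the matrix $\cH_\x h$ with $h:=D_{\uu_1}\cdots D_{\uu_k}f_\cF$ depends only on $\uu$, so I will choose $\x$ strictly positive: each $\uu^{(j)}\in\cK_{N_j}$ has a nonnegative representative by Lemma~\ref{l:ampleeffectivity}, and since $\uu^{(j)}-\varepsilon y_{c_+}\in\cK_{N_j}$ for small $\varepsilon>0$ (as $c-\varepsilon c_+$ stays strictly submodular, with $c_+$ as in Proposition~\ref{p:ampleconeproperties}, whose vector has all entries positive because $N_j$ is loopless), adding $\varepsilon y_{c_+}$ to a nonnegative representative of $\uu^{(j)}-\varepsilon y_{c_+}$ yields a strictly positive representative of $\uu^{(j)}$.

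I would then apply Proposition~\ref{p:bootstrap} to $h$ at $\x$; note $\deg h=d-k\ge 2$. Hypothesis (2): commuting derivatives and using the refinement formula, $\partial_G h(\x)=(D_{p_G\uu_1}\cdots D_{p_G\uu_k}f_{\cF\cup\{G\}})(p_G\x)$, which is positive by the positivity statement for $\cF\cup\{G\}$ since $k\le d-2\le\deg f_{\cF\cup\{G\}}$. Hypothesis (3): $\partial_G h$ is the pullback under $p_G$ of $D_{p_G\uu_1}\cdots D_{p_G\uu_k}f_{\cF\cup\{G\}}$, a polynomial of degree $d-1-k$; if $k\le d-3$ this has degree $\ge 2$, and by the inductive hypothesis for $\cF\cup\{G\}$ (where $\deg f_{\cF\cup\{G\}}=d-1<d$) its Hessian has a unique positive eigenvalue, hence so does $\cH_\x(\partial_G h)$; if $k=d-2$ it is linear, so $\cH_\x(\partial_G h)=0$. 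In either case $\cH_\x(\partial_G h)$ has at most one positive eigenvalue, which is all the proof of Proposition~\ref{p:bootstrap} actually uses, since it invokes Lemma~\ref{l:negativesemi}, whose proof only needs Lemma~\ref{l:eigenvalueshift}. Hypothesis (1): by the same computation $(\cH_\x h)_{GG'}=(D_{p_{G,G'}\uu_1}\cdots D_{p_{G,G'}\uu_k}f_{\cF\cup\{G,G'\}})(p_{G,G'}\x)>0$ when $G,G'$ are comparable or in different blocks (as $k\le d-2=\deg f_{\cF\cup\{G,G'\}}$) and vanishes when they are incomparable in the same block, so $\cH_\x h$ is weakly nonnegative; its incidence graph is then the join of the flat graphs $\Gamma_{N_j}$ over the blocks $j$ with $\rank(N_j)\ge 2$ (comparable flats in a block give the flat-graph edges, flats in different blocks are all adjacent, and loops are irrelevant to connectivity). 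If two or more such blocks exist the join is connected since a join of nonempty graphs is; if exactly one does, it has rank $d+1\ge 3$, so its flat graph is connected by Lemma~\ref{l:flatgraph}; and $d\ge 2$ rules out having no such block. Thus $\cH_\x h$ is irreducible, Proposition~\ref{p:bootstrap} applies, and $\cH_\uu h$ has a unique positive eigenvalue.

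I expect the irreducibility verification to be the crux: identifying $\IG(\cH_\x h)$ with a join of flat graphs and propagating connectivity through Lemma~\ref{l:flatgraph}, while keeping every ``refine the chain'' reduction inside the inductive range $k\le\deg-2$ (the borderline case $k=d-2$, where $\partial_G h$ becomes linear and $\cH_\x(\partial_G h)=0$, is precisely why the bootstrap's eigenvalue hypothesis has to be read as ``at most one positive eigenvalue''). Everything else is a careful but routine assembly of Lemmas~\ref{l:positivevolume}, \ref{l:restriction}, \ref{l:ampleeffectivity} and Proposition~\ref{p:bootstrap}, together with the fact that $\partial_G f_\cF$ is a product of volume polynomials for the refined chain.
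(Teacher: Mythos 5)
Your strategy matches the paper's closely: induct on $\deg f_\cF$, get positivity from the Leibniz rule, identify $\partial_G f_\cF$ with $f_{\cF\cup\{G\}}$ pulled back along a cone-compatible surjection, identify the incidence graph of the Hessian with a join of flat graphs, and invoke Proposition~\ref{p:bootstrap}. The positivity, refinement, and irreducibility portions are all fine, and your observation that a single contributing block must have rank $d+1\ge 3$ is a nice way to tidy up the connectivity case analysis.

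The gap is in the claim that the bootstrap works with ``$\cH_\x(\partial_G h)$ has \emph{at most} one positive eigenvalue.'' When $\deg h = 2$ (the base case $d=2,\,k=0$, and the boundary case $k=d-2$ for every $d\ge 3$), every $\partial_G h$ is linear and its Hessian is $0$, so the relaxed hypothesis is satisfied vacuously --- but the conclusion can fail. Concretely, take $f(x,y)=x^2+xy+y^2$ at $\x=(1,1)$: then $\cH_\x f=\left[\begin{smallmatrix}2&1\\1&2\end{smallmatrix}\right]$ is weakly nonnegative and irreducible, $\nabla_\x f=(3,3)>0$, and $\cH_\x(\partial_i f)=0$ for both $i$, yet $\cH_\x f$ has two positive eigenvalues ($1$ and $3$). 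The reason your reduction to Lemma~\ref{l:negativesemi} is not enough is that the proof of the ``no eigenvalues in $(0,d-1)$'' lemma ends by deducing $(\cH_\x f)\Lambda(\cH_\x f)-(d-1)\cH_\x f\ge 0$ from the negative semi-definiteness of $(d-2)\bigl[(d-1)\cH_\x f-(\cH_\x f)\Lambda(\cH_\x f)\bigr]$; when $d=2$ the factor $(d-2)$ vanishes and the inequality degenerates to $0\le 0$, so no information about $B^2-(d-1)B$ is obtained. (The paper's own write-up elides this: it asserts that $\cH_\uu(D_F D_{\uu_1}\cdots D_{\uu_k}f_\cF)$ has exactly one positive eigenvalue, which is false when $k=d-2$ since that Hessian is the zero matrix, so the published proof glosses over the same boundary case.)

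The correct repair does not pass through the bootstrap at all. For a homogeneous cubic $g$, the Hessian $\cH_\uu(g)$ is linear in $\uu$, and since $D_\v g$ is quadratic its Hessian is constant, with the identity $\cH_\uu(D_\v g)=\cH_\v(g)$ (both sides have $(i,j)$-entry $\partial_i\partial_j g(\v)$). Applying this with $g:=D_{\uu_1}\cdots D_{\uu_{d-3}}f_\cF$ (degree $3$) and $\v:=\uu_{d-2}\in\cK_\cF$ shows that $\cH_\uu(D_{\uu_1}\cdots D_{\uu_{d-2}}f_\cF)=\cH_{\uu_{d-2}}(D_{\uu_1}\cdots D_{\uu_{d-3}}f_\cF)$, which has a unique positive eigenvalue by the already-established case $k=d-3$ (with $\uu$ replaced by $\uu_{d-2}$). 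Likewise, the base case $d=2$, $k=0$ should be handled by the explicit rank-$3$ and rank-$2\times2$ computations of Subsection~\ref{ss:rank2and3}, not by the bootstrap. Restricting the bootstrap to $\deg h\ge 3$ and pulling the $k=d-2$ case down to $k=d-3$ via the Hessian identity closes the gap.
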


Because $f_{\hat{0}\subsetneq \hat{1}}=V_M$, this specializes to Theorem~\ref{t:uniquepositiveeigenvalue} when $\ell=0$.

\begin{proof}
    We induct on the degree of $f_{\cF}$. The base case is $\deg(f_{\cF})=2$ which  (after suppressing variables $\t_i$ in which $f_{\cF}$ is a constant) we have the following two cases: $f_\cF(\t)=f_M(\t)$ where $M$ is a rank $3$ matroid; or
    $f_{\cF}(\t_1,\t_2)=f_{M_1}(\t_1)f_{M_2}(\t_2)$ where $M_1$ and $M_2$ are rank $2$ matroids. In both of these cases, $k=0$. The first case is immediately covered by Subsection~\ref{ss:rank2and3}. In the second case, $f_{\cF}\colon\R^2\to \R$, given by $f_{\cF}(t_1,t_2)=t_1t_2$ has 
    \[\cH_{\uu}f_{\cF}=\left[\begin{array}{cc}0&1\\1&0\end{array}\right]\]
    which has exactly one positive eigenvalue.

    We first show $f_{\cF}(\uu)>0$. Write $r_i=\rank(F_i)-\rank(F_{i-1})-1$. By applying the Leibniz rule, $D_{\uu_1}\dots D_{\uu_k}f_{\cF}$
    is the sum of terms of the form
    \[\left(D_{\v_1}\cdots D_{\v_{k_1}}V_{M_{F_0}^{F_1}}\right)
    \left(D_{v_{k_1+1}}\cdots D_{v_{k_1+k_2}}V_{M_{F_1}^{F_2}}\right)
    \cdots 
    \left(D_{\v_{k_1+\dots+k_{\ell}+1}}\cdots D_{\v_k}V_{M_{F_{\ell}}^{F_{\ell+1}}}\right)\]
    where $(\v_1,\dots,\v_k)$ is a permutation of $(\uu_1,\dots,\uu_k)$ and
    each $k_i\leq r_i$. Each such term is positive by Lemma~\ref{l:positivevolume}.
    
    Let $S$ be the set of flats comparable to $\cF$, i.e.,
    \[S\coloneqq\cP(M_{F_0}^{F_1})\cup\dots\cup\cP(M_{F_{\ell}}^{F_{\ell+1}}),\]
    so that $\{\delta_F\}_{F\in S}$ forms a basis for $\R^{\cF}$.
    Let $F\in S$ with $F_{i-1}\subsetneq F\subsetneq F_{i}$ or some $i$. Then, write
    \[\cF'=\{\hat{0}=F_0\subsetneq\dots\subsetneq F_{i-1}\subsetneq F\subsetneq F_i\subsetneq \dots\subsetneq F_\ell\subsetneq F_{\ell+1}=\hat{1}\}\]
    and let $\pi_{\cF\to \cF'}\colon \R^{\cF}\to \R^{\cF'}$ be given by
    \[(\t_1,\dots,\t_{\ell})\mapsto \left(\t_1,\dots,\t_{i-1},\pi^F(\t_i),\pi_F(\t_i),\t_{i+1},\dots,\t_{\ell}\right).\]
    Observe that $D_Ff_{\cF}(\t)=f_{\cF'}(\pi_{\cF\to\cF'}(\t))$, and $\pi_{\cF\to\cF'}$ is surjective.
    We can generalize this construction by Lemma~\ref{l:mixedprojection}: if $\cF''$ is a refinement of $\cF$, there is a linear map $\pi_{\cF\to \cF'}\colon \R^{\cF}\to\R^{\cF''}$.

    Note that for any $F\in S$,
    \begin{align*}
    D_F\left(D_{\uu_1}\cdots D_{\uu_k}f_{\cF}\right)(\uu)
    &=D_{\uu_1}\cdots D_{\uu_k}D_Ff_{\cF}(\uu)\\
    &=D_{\uu_1}\cdots D_{\uu_k}\left(f_{\cF'}(\pi_{\cF\to\cF'}(\uu))\right)\\
    &=\left(D_{\pi_{\cF\to\cF'}(\uu_1)}\cdots D_{\pi_{\cF\to\cF'}(\uu_k)}f_{\cF'}\right)(\pi_{\cF\to\cF'}(\uu))>0
    \end{align*}
    where the last equality is the chain rule and the inequality is a consequence of $\pi_{\cF\to\cF'}(\cK_{\cF})\subseteq \cK_{\cF'}$ and the inductive hypothesis.

    Let $\Gamma_i$ be the flat graph of $M_{\Gamma_{i-1}}^{\Gamma_i}$.
    We consider $\cH_{\uu}(D_{\uu_1}\dots D_{\uu_k}f_{\cF})$ to be the Hessian with respect to the basis $\{\delta_F\}_{F\in S}$.
    We claim that the incidence graph $\Gamma$ of $\cH_{\uu}(D_{\uu_1}\dots D_{\uu_k}f_{\cF})$ is the join of $\Gamma_1,\dots,\Gamma_{\ell}$. We first show that there is an edge between $F,G\in S$ in $\Gamma$ if and only $F$ and $G$ are comparable.
    Indeed, choose distinct $F,G\in S$. If $F$ and $G$ are comparable, then let $\cF''$ be obtained from $\cF$ by adjoining $F$ and $G$. Then,
    \[D_F D_GD_{\uu_1}\cdots D_{\uu_k}f_{\cF}(\uu)=
    \left(D_{\pi_{\cF\to \cF''}(\uu_1)}\cdots D_{\pi_{\cF\to \cF''}(\uu_k)}f_{\cF''}\right)(\pi_{\cF\to \cF''}(\uu))>0\]
    If $F$ and $G$ are not comparable,
    \[D_F D_GD_{\uu_1}\cdots D_{\uu_k}f_{\cF}(\uu)=0.\]
    Now, to show $\Gamma=\Gamma_1+\dots+\Gamma_{\ell}$, we consider two cases:
    there exists $i$ such that $F_{i-1}\subsetneq F,G\subsetneq F_i$ or there does not. In the first case, $F$ and $G$ are comparable if and only if they are joined by an edge of $\Gamma_i$.
    Otherwise, after interchanging $F$ and $G$, we may suppose there exists $i<j$ such that 
    \[F_{i-1}\subsetneq F\subsetneq F_i,\quad F_{j-1}\subsetneq G\subsetneq F_j,\]
    so $F$ and $G$ corresponds to vertices of $\Gamma_i$ and $\Gamma_j$, respectively. 
    Now, $F$ and $G$ are comparable, and hence adjacent in the incidence graph by the previous case. 
    
    We claim that $\Gamma$ is connected. Indeed, $\Gamma_i$ is connected as long as $M_{\Gamma_{i-1}}^{\Gamma_i}$ is of rank at least $3$ by Lemma~\ref{l:flatgraph}, and the join of at least two nonempty graphs is always connected.  Thus, the only way in which $\Gamma$ could be disconnected is if $f_{\cF}$ is of degree $1$, which was already covered. Hence, $\cH_{\uu}(D_{\uu_1}\dots D_{\uu_k}f_{\cF})$ is an irreducible matrix. By the above computation of $D_F D_GD_{\uu_1}\cdots D_{\uu_k}f_{\cF}(\uu)$, we see that  $\cH_{\uu}(D_{\uu_1}\dots D_{\uu_k}f_{\cF})$ is weakly nonnegative.
    
    We claim for any $F\in S$, $\cH_{\uu}(D_F D_{\uu_1}\dots D_{\uu_k}f_{\cF})$ has exactly one positive eigenvalue. Indeed, let $\cF'$ be obtained from $\cF$ by adjoining $F$. Then
    \[D_FD_{\uu_1}\cdots D_{\uu_k}f_{\cF}=D_{\uu_1}\cdots D_{\uu_k}(f_{\cF'}\circ \pi_{\cF\to\cF'})\]    
    Hence for $\v,\w$, writing $\pi$ for $\pi_{\cF\to \cF'}$, we have
    \[\v^T\cH_{\uu}(D_FD_{\uu_1}\cdots D_{\uu_k}g)\w=(\pi(\v))^T\cH_{\pi(u)}(D_{\pi(\uu_1)}\cdots D_{\pi(\uu_k)}f_{\cF'})(\pi(\w)).\]
    Now, the Hessian on the right has a single positive eigenvalue by induction. We know that $\pi$ is surjective. Choose a linear map $j\colon \R^{\cF'}\to \R^{\cF}$ with $\pi\circ j=\operatorname{Id}_{\R^{\cF'}}$. Then 
    \[\R^{\cF}\cong \ker(\pi)\oplus j(\R^{\cF'}).\]
    The inner product induced by $\cH_{\uu}(D_FD_{\uu_1}\cdots D_{\uu_k}f_{\cF}))$ is $0$ on $\ker(\pi)$ and is isometric to $\cH_{\pi(u)}(D_{\pi(\uu_1)}\cdots D_{\pi(\uu_k)}f_{\cF'})$ on $j(\R^{\cF'})$. Since the latter has a unique positive eigenvalue, so does $\cH_{\uu}(D_FD_{\uu_1}\cdots D_{\uu_k}g)$.
    
    Since we have verified the conditions for the Lorentzian bootstrap, by Proposition~\ref{p:bootstrap}, the matrix $\cH_{\uu}(D_{\uu_1}\dots D_{\uu_k}f_{\cF})$ has a unique positive eigenvalue.
\end{proof}

\section{Generalizations} \label{s:generalizations}

This note, so far, has explained a special case of Lorentzian polynomials with respect to an ample cone, specifically the volume polynomial of matroids. Here, we are able to define the volume polynomial and understand the signature of its Hessian by using the recursive nature of matroids in two ways:
\begin{enumerate}
    \item any interval in the lattice of flats is itself the lattice of flats of another matroid, and
    \item the ample cone obeys $\pi_F(\cK_M)\subseteq \cK_{M_F}$ and $\pi^F(\cK_M)\subseteq \cK_{M^F}$.
\end{enumerate}
For other cases, a more general setting is required. There are two approaches: the theory of Lorentzian polynomials on cones due to Br\"{a}nd\'{e}n and Leake \cite{BL:oncones} and that of Lorentzian fans due to Ross \cite{Ross:Lorentzian}. The theory of Lorentzian polynomials on cones emphasizes the properties of polynomials and their support; while that of Lorentzian fans studies the volume polynomials of tropical fans. 

We first briefly summarize the theory of Lorentzian polynomials on cones.

\begin{definition}
    Let $\cK$ be an open convex cone in $\R^n$. A homogeneous polynomial $f\in \R[x_1,\dots,x_n]$ of degree $d\geq 1$ is call $\cK$-Lorentzian if for all $\v_1,\dots,\v_d\in\cK$,
    \begin{enumerate}
        \item $D_{\v_1}\cdots D_{\v_d}f>0$, and
        \item the Hessian $\cH(D_{\v_3}\cdots D_{\v_d}f)$ has exactly one positive eigenvalue.
    \end{enumerate}
\end{definition}

For $\cK=\R^n_{>0}$, this definition specializes to that of Lorentzian polynomials \cite{BH:Lorentzian}.
This definition immediately implies the log-concavity property seen in Corollary~\ref{c:logconcave}. 

Br\"{a}nd\'{e}n--Leake define a class of {\em hereditary} polynomials closed under taking derivatives with respect to $t_i$ and specialization $t_i\mapsto0$. This allows one to attach a natural ample cone $\cK_f$ to a hereditary polynomial $f$ that shares the recursive nature of the ample cone for matroids. They develop a criterion for determining when a hereditary polynomial is Lorentzian with respect to $\cK_f$, generalizing that in section~\ref{s:polc}. The criterion is in two parts:
\begin{enumerate}
    \item a recursive condition that the derivatives $f$ have Hessians with exactly one positive eigenvalue, and 
    \item a connectedness condition on a simplicial complex built from the support set of $f$.
\end{enumerate}
The connectedness condition translates into the irreducibility condition used in the Lorentzian bootstrap. 

Ross \cite{Ross:Lorentzian} starts with a tropical fan $\Sigma$, i.e., a balanced positively-weighted $d$-dimensional  polyhedral fan in $\R^n$. One looks at $D(\Sigma)\coloneqq \operatorname{PL}(\Sigma)/\operatorname{L}(\Sigma)$, the vector space of piecewise-linear functions on $\Sigma$ modulo linear functions on $\Sigma$. Within it is the cone $\cK(\Sigma)\subset D(\Sigma)$ of strictly convex functions. By tropical intersection theory, one defines a degree map
\[\deg_{\Sigma}\colon D(\Sigma)^{\otimes d}\to \R\]
and asks when for all $D_3,\dots,D_d\in \cK(\Sigma)$, the inner product
\begin{align*}
    D(\Sigma)\times D(\Sigma)&\to \R\\
    (D_1,D_2)&\mapsto \deg(D_1D_2D_3\dots D_d)
\end{align*}
has a unique positive eigenvalue. Ross develops a geometric version of the Lorentzian bootstrap, which guarantees this condition via a recursive condition and a connected hypothesis. The connectedness hypothesis is necessary by a counterexample due to Babaee--Huh \cite{BabaeeHuh}. Ross proves that, in a certain sense, the Lorentzian property of a fan is independent of the fan structure and depends only on the support and weighting. Tihe results of this note can be recovered by specializing this theory to the Bergman fan of matroids \cite{AK:Bergman}.

\bibliographystyle{plain}
\bibliography{references}

\end{document}